\pgfplotsset{compat=newest}
\DeclareMathAlphabet{\mathdutchcal}{U}{dutchcal}{m}{n}
\SetMathAlphabet{\mathdutchcal}{bold}{U}{dutchcal}{b}{n}
\DeclareMathAlphabet{\mathdutchbcal}{U}{dutchcal}{b}{n} 
\newtheorem{theorem}{Theorem}[section]
\newtheorem{lemma}[theorem]{Lemma}
\newtheorem{corollary}[theorem]{Corollary}
\theoremstyle{definition}
\newtheorem{definition}[theorem]{Definition}
\newtheorem{example}[theorem]{Example}
\theoremstyle{remark}
\newtheorem{remark}[theorem]{Remark}
\theoremstyle{conjecture}
\newtheorem{conjecture}[theorem]{Conjecture}
\theoremstyle{question}
\numberwithin{equation}{section}
\newcommand{\C}{\mathbb{C}}
\newcommand{\N}{\mathbb{N}}
\newcommand{\Q}{\mathbb{Q}}
\newcommand{\R}{\mathbb{R}}
\newcommand{\Z}{\mathbb{Z}}
\newcommand{\T}{\mathbb{T}}
\DeclarePairedDelimiter\abs{\lvert}{\rvert}
\DeclarePairedDelimiter\norm{\lVert}{\rVert}
\renewcommand{\epsilon}{\ensuremath\varepsilon}
\renewcommand{\phi}{\ensuremath{\varphi}}
\newcommand{\eps}{\epsilon}
\newcommand{\anyfield}{\mathbb{F}}
\NewDocumentCommand\field{e{_}}{\IfValueF{#1}{\anyfield}\IfNoValueF{#1}{\Z_{#1}}}
\newcommand{\Reals}{\R}
\newcommand{\Nats}{\N}
\newcommand{\into}{\hookrightarrow}
\newcommand{\Laplace}{\Delta}
\newcommand{\E}{\mathbb{E}}
\newcommand{\define}[1]{
    \begin{definition}
        #1 
    \end{definition}
}
\newcommand{\rmk}[1]{
    \begin{remark}
        #1 
    \end{remark}
}
\newcommand{\Prob}{\mathbb{P}}
\newcommand{\muGP}{\mu_{\Ham}}
\newcommand{\muAut}{\mu_{\Aut}}
\newcommand{\muGPH}{\mu_{\mathcal{H}}}
\newcommand{\Hil}{\mathbb{H}}
\newcommand{\ham}{\mathcal{P}_{\id}\Ham}
\newcommand{\reg}{\mathfrak{r}}
\newcommand{\ldd}{\text{law-defining datum}}
\newcommand{\ldds}{\text{law-defining data}}
\newcommand{\weakto}{\rightharpoonup}
\DeclareMathOperator{\id}{id}
\DeclareMathOperator{\vol}{vol}
\DeclareMathOperator{\supp}{supp}
\DeclareMathOperator{\Hof}{Hofer}
\DeclareMathOperator{\Ham}{Ham}
\DeclareMathOperator{\Hameo}{Hameo}
\DeclareMathOperator{\Homeo}{Homeo}
\DeclareMathOperator{\Symp}{Symp}
\DeclareMathOperator{\Aut}{Aut}
\DeclareMathOperator{\osc}{osc}
\DeclareMathOperator{\Cov}{Cov}
\DeclareMathOperator{\Var}{Var}
\DeclareMathOperator{\GL}{GL}
\definecolor{ceruleanblue}{rgb}{0.16, 0.32, 0.75}
\tikzset{    
    mypoint/.style={
        circle,
        draw,
        inner sep=.3mm
        },  
    whitepoint/.style={
        fill=white, 
        mypoint
        },  
    blackpoint/.style={
        fill=black, 
        mypoint
        },  
    textnode/.style={
        text height=2.5ex, 
        text depth=1ex
        },  
    }
\begin{document}

\title[Random Hamiltonians I]{Random Hamiltonians I: Probability measures and random walks on the Hamiltonian diffeomorphism group}
\author{Adrian Dawid}
\address{Department of Pure Mathematics and Mathematical Statistics, University of Cambridge}
\email{apd55@cam.ac.uk}

\maketitle

\begin{abstract}
    We construct a family of probability measures on the group of Hamiltonian diffeomorphisms of a closed symplectic manifold $(M,\omega)$.
    We show that these measures are Borel measures with respect to the topology induced by the Hofer metric.
    Further, we show that these measures turn any Hofer-Lipschitz function into a random variable with finite expectation.
    These measures have (for suitable choices of parameters) several desirable properties, 
    such as full support on $\Ham(M,\omega)$, explicit estimates of the measure of Hofer-balls,
    and certain controls under the action of the group.
    We also define a family of probability measures on the space of autonomous Hamiltonian diffeomorphisms.
    These measures have similar properties and give rise to a random walk on the group $\Ham(M,\omega)$.
    Finally, we show that under certain limits this construction gives rise to probability measures on the space of Hamiltonian
    homeomorphisms and on the metric completion of $\Ham(M,\omega)$ with respect to the Hofer metric and the spectral metric.
\end{abstract}

\section{Introduction}
We propose a notion of \textit{random Hamiltonian diffeomorphisms} on a closed symplectic manifold $(M,\omega)$
by constructing a family of probability measures on the group of Hamiltonian diffeomorphisms $\Ham(M,\omega)$.
These measures are constructed using Gaussian processes
and inherit many useful properties of Gaussian measures.

While in general there is no model of random diffeomorphisms on a manifold that is seen as canonical,
Gaussian processes are a canonical model for random functions on manifolds.
They appear prominently both in theory~\cite{adler-taylor-2007,castillo-et-al-2014,gine-nickl-2016} and in real-world applications~\cite{worsley-1996,rasmussen-2004}.
Based on this idea, we propose a construction specific to the symplectic setting.
In symplectic geometry, the special class of Hamiltonian diffeomorphisms is directly related to smooth functions on the underlying manifold.
This generalizes the familiar notion of Hamiltonian functions in classical mechanics. When $M$ models the phase space of a mechanical system, $\Ham(M)$ arises as the group describing all admissible motions.
In a symplectic setting, Hamiltonian diffeomorphisms are time-$1$ flows of (time-dependent) Hamiltonian vector fields. Hamiltonian vector fields are vector fields that are dual under the symplectic form to a (time-dependent) Hamiltonian function.
By considering Hamiltonian diffeomorphisms obtained from vector fields that are dual to a Gaussian process, we can construct a probability measure on the space of Hamiltonian diffeomorphisms $\Ham(M,\omega)$.
This construction can be adapted to multiple settings. We obtain a family of measures on $\Ham(M,\omega)$ 
as well another family of measures on the space of autonomous Hamiltonian diffeomorphisms $\Aut(M,\omega) \subset \Ham(M,\omega)$.

The Hofer norm, introduced by Hofer in~\cite{hofer-1990}, is a natural bi-invariant metric on $\Ham(M,\omega)$.
It plays a central role in symplectic geometry, and many relevant questions can be phrased in terms of functions on $\Ham(M,\omega)$ that are Lipschitz or at least continuous with respect to the Hofer metric.
Random dynamical systems are much-studied, though seldom in the context of symplectic geometry and Hamiltonian dynamics. What is the expected Hofer norm of a random Hamiltonian diffeomorphism?  One purpose of this paper is to provide a framework in which this is a meaningful and rigorous question — and the answer is some definite (finite, non-zero) value, which one can even aim to estimate through computer simulation.
Thus, the family of measures we construct is compatible with the Hofer norm.
The measures are Borel measures with respect to the Hofer topology. Thus, all Hofer-continuous functions on $\Ham(M,\omega)$ are well-defined random variables with respect to the measure.
Furthermore, we show that the expectation of any Hofer-Lipschitz function with respect to the measure is finite.

\subsection{Main results and strategy}\label{subsec:general-strategy}
In this text, we will work on a \textit{closed} symplectic manifold $(M,\omega)$ endowed with a compatible almost complex structure $J$.
In the following denote by $C^\infty_0$ the set
\[
C^\infty_0 \coloneqq \left\{H \in C^\infty([0,1] \times M, \Reals) \Big\vert \int_M H_t \omega^n = 0 \forall t \in [0,1]\right\}
\]
of mean-normalized time-dependent Hamiltonian functions on $M$.
Furthermore, denote the space of smooth Hamiltonian isotopies $\{\phi_t\}_{t\in[0,1]}$ starting at the identity, i.e., $\phi_0 = \id_M$, by $\ham(M,\omega)$.
Since a full Hamiltonian isotopy uniquely determines a corresponding Hamiltonian function in $C^\infty_0$,
these two spaces are homeomorphic.
Furthermore, we have a fibration
\begin{center}
    \begin{tikzcd}
        C^\infty_{0} \arrow[rr, "\cong"] &  & {\ham(M,\omega)} \arrow[rr, two heads] &  & {\Ham(M,\omega)}
    \end{tikzcd}
\end{center}
essentially given by the path-space fibration.
Our basic idea is simple: Smooth Gaussian processes (which we will introduce in Section~\ref{subsec:gaussian-processes}) 
lead to a canonical way to construct measures on $C^\infty_0$.
These measures have been intensively studied in the probability theory and statistics literature, and have many desirable properties.
We argue that the push-forward of such a measure under the fibration above gives rise to a canonical family of probability measures on $\Ham(M,\omega)$.  

We adapt a construction from~\cite{castillo-et-al-2014,nicolaescu-2014} to obtain 
a smooth Gaussian process on $M$ by using the Riemannian structure induced by a compatible almost complex structure $J$.
In order to obtain a specific Gaussian measure on $C^\infty_0$ we need to make several choices.
To simplify the notation, we incorporate all these choices into a \textit{\ldd} $\mathcal{D}$ (see Definition~\ref{def:ldd}).
A \ldd\ $\mathcal{D}$ then specifies a unique measure $\muGPH^{\mathcal{D}}$ on $C^\infty_0$. 
The push-forward of the measure $\muGPH^{\mathcal{D}}$ on $C^\infty_0$ under the fibration above gives rise to a measure on $\Ham(M,\omega)$,
which we denote by $\muGP^{\mathcal{D}}$.
For the sake of this summary, we will not go into the details of the choices that go into a \ldd .
However, we wish to highlight the regularity parameter $\reg$ which is part of a \ldd .
Intuitively, this parameter controls how much a typical Hamiltonian function sampled from the measure oscillates.
In particular, it makes sense to look at limits as $\reg \to 0$ or $\reg \to \infty$.
If we take such a limit, it is always assumed that the other choices in the \ldd\ are fixed.
With these preliminaries in place, we can now summarize the main results.
We begin with those results that are independent of the \ldd\ $\mathcal{D}$.
\begin{theorem}\label{thm:hofer-borel}
    Let $\mathcal{D}$ be any \ldd .
    Then the measure $\muGP^{\mathcal{D}}$ is a Hofer-Borel probability measure on $\Ham(M,\omega)$, i.e. $\muGP^{\mathcal{D}}(\Ham(M,\omega)) = 1$
    and any Hofer-open set $U \subset \Ham(M,\omega)$ is measurable.
\end{theorem}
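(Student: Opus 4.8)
The plan is to realize $\muGP^{\mathcal{D}}$ as the push-forward of the Gaussian measure $\muGPH^{\mathcal{D}}$ under the endpoint map of the fibration, and to deduce both assertions from the single fact that this map is continuous for the relevant topologies. Write $\operatorname{ev}\colon C^\infty_0 \to \Ham(M,\omega)$ for the composite of the homeomorphism $C^\infty_0 \cong \ham(M,\omega)$ with the projection $\ham(M,\omega) \twoheadrightarrow \Ham(M,\omega)$; concretely $\operatorname{ev}(H) = \phi^H_1$ is the time-one map of the isotopy generated by $H$, and by definition $\muGP^{\mathcal{D}} = \operatorname{ev}_* \muGPH^{\mathcal{D}}$. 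I take as input from the construction in Section~\ref{subsec:gaussian-processes} that $\muGPH^{\mathcal{D}}$ is a Borel probability measure on $C^\infty_0$ with its $C^\infty$ (Fréchet) topology, in particular $\muGPH^{\mathcal{D}}(C^\infty_0)=1$.

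The crux is to show that $\operatorname{ev}$ is continuous from the $C^\infty$-topology to the Hofer topology; I would in fact establish Lipschitz continuity with respect to the much weaker $C^0$-norm. For $H,K \in C^\infty_0$, the path $t \mapsto (\phi^H_t)^{-1}\phi^K_t$ runs from $\id_M$ to $(\phi^H_1)^{-1}\phi^K_1$ and, by the standard composition and inversion rules for generating Hamiltonians, is generated by $(K_t - H_t)\circ \phi^H_t$; since precomposition by the diffeomorphism $\phi^H_t$ leaves the set of attained values unchanged, its oscillation equals $\osc(K_t-H_t)$. Using the definition of the Hofer norm as an infimum of path energies together with the bi-invariance of the Hofer metric,
\[
d_{\Hof}\!\left(\phi^H_1,\phi^K_1\right) = \norm{(\phi^H_1)^{-1}\phi^K_1}_{\Hof} \le \int_0^1 \osc\!\left(K_t-H_t\right)dt \le 2\,\norm{H-K}_{C^0}.
\]
As $C^\infty$-convergence implies $C^0$-convergence, $\operatorname{ev}$ is continuous into $(\Ham(M,\omega),d_{\Hof})$.

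With continuity in hand both claims are immediate. A continuous map is Borel measurable, so $\operatorname{ev}^{-1}(A)$ is a Borel subset of $C^\infty_0$ for every Hofer-Borel set $A\subset\Ham(M,\omega)$, and in particular for every Hofer-open $U$; hence $\muGP^{\mathcal{D}}(A) = \muGPH^{\mathcal{D}}(\operatorname{ev}^{-1}(A))$ is well-defined on the entire Hofer-Borel $\sigma$-algebra and every Hofer-open set is measurable. Finally $\muGP^{\mathcal{D}}(\Ham(M,\omega)) = \muGPH^{\mathcal{D}}(\operatorname{ev}^{-1}(\Ham(M,\omega))) = \muGPH^{\mathcal{D}}(C^\infty_0)=1$, so $\muGP^{\mathcal{D}}$ is a probability measure.

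I expect the only delicate point to be measure-theoretic rather than geometric: one must ensure that the $\sigma$-algebra carrying $\muGPH^{\mathcal{D}}$ is genuinely the Borel $\sigma$-algebra of the Fréchet topology, since a Gaussian measure is often first produced on the cylindrical $\sigma$-algebra via a Kolmogorov-type extension. I would settle this by recalling that $C^\infty_0$ is a separable Fréchet space, on which the cylindrical and Borel $\sigma$-algebras coincide and every Gaussian measure is a Radon Borel measure; this is presumably already secured in the construction of $\muGPH^{\mathcal{D}}$, in which case the present theorem is the purely soft consequence of the Hofer-continuity of $\operatorname{ev}$ recorded above, and everything else is routine bi-invariance bookkeeping for the Hofer metric.
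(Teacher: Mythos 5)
Your proposal is correct and follows essentially the same route as the paper: the core is the Lipschitz estimate $d_{\Hof}(\phi_H^1,\phi_K^1)\le 2\norm{H-K}_\infty$ (Lemma~\ref{lem:H_to_phi_H_Lipschitz}, proved by exactly your computation with $\overline{H}\sharp K$ and the invariance of oscillation under precomposition by $\phi^t_H$), followed by pushing $\muGPH^{\mathcal{D}}$ forward along the resulting continuous map. The one ``delicate point'' you flag is handled slightly differently in the paper --- it topologizes $C^\infty_0$ with the $C^0$-topology and verifies Borel measurability of $\muGPH^{\mathcal{D}}$ directly by writing the sup-norm as a countable supremum over rational times (Lemma~\ref{lem:muGPH_Borel}), rather than invoking the coincidence of cylindrical and Borel $\sigma$-algebras on a separable Fr\'echet space --- but both resolutions are sound.
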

Thus, we know that the measure is always compatible with the Hofer topology.
However, the measure is not just Hofer-Borel, it is also adapted to the Hofer norm in the following sense:
\begin{theorem}\label{thm:expected-hofer-norm-finite}
    Let $\mathcal{D}$ be any \ldd .
    Then, 
    \[\int_{\Ham(M,\omega)} \norm{\phi}_{\Hof} \, d\muGP^{\mathcal{D}}(\phi) < \infty.\]
\end{theorem}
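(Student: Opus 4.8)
The plan is to bound the Hofer norm of a random Hamiltonian diffeomorphism by the Hofer length of the isotopy generated by the sampled Hamiltonian, and then to control the expectation of that length directly in terms of the Gaussian process. Recall that by definition the Hofer norm satisfies
\[
    \norm{\phi}_{\Hof} \le \int_0^1 \left( \max_{x \in M} H_t(x) - \min_{x \in M} H_t(x) \right) dt = \int_0^1 \osc(H_t) \, dt,
\]
where $\phi$ is the time-$1$ map of the isotopy generated by $H \in C^\infty_0$. Since $\muGP^{\mathcal{D}}$ is the push-forward of $\muGPH^{\mathcal{D}}$ under the fibration, it therefore suffices to show
\[
    \int_{C^\infty_0} \left( \int_0^1 \osc(H_t) \, dt \right) d\muGPH^{\mathcal{D}}(H) < \infty,
\]
after which the change-of-variables formula for push-forward measures transfers the finiteness to the integral over $\Ham(M,\omega)$.

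For the key estimate I would first use Tonelli's theorem (the integrand is nonnegative and measurable) to exchange the $t$-integral and the integral over $C^\infty_0$, reducing the problem to a uniform-in-$t$ bound on $\E\!\left[ \osc(H_t) \right]$. The oscillation is controlled by the sup-norm, $\osc(H_t) \le 2 \norm{H_t}_{C^0}$, so it is enough to bound the expected supremum of the Gaussian field $x \mapsto H_t(x)$ over the compact manifold $M$. This is precisely the regime where the Gaussian-process machinery adapted from~\cite{castillo-et-al-2014,nicolaescu-2014} pays off: for a centered Gaussian field with almost surely continuous (indeed smooth) sample paths on a compact space, the expected supremum is finite. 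Concretely, I would invoke the Borell--TIS inequality together with Dudley's entropy bound (or the metric-entropy estimates available once the covariance kernel and its smoothness are fixed by the \ldd) to produce a finite bound $\E\!\left[ \sup_{x} \abs{H_t(x)} \right] \le C_{\mathcal{D}}$ that does not depend on $t$.

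The main obstacle I anticipate is the uniformity in $t$. The construction produces a time-dependent Hamiltonian, so one must ensure that the finite bound on the expected supremum holds with a constant independent of $t \in [0,1]$, and that the map $t \mapsto \E[\osc(H_t)]$ is measurable so that Tonelli applies cleanly. If the law-defining datum builds $H_t$ from a fixed spatial covariance structure modulated in time (the natural reading, since the regularity parameter $\reg$ governs spatial oscillation), then the covariance kernel, and hence the metric-entropy integral feeding Dudley's bound, is uniformly controlled across $t$, and the constant $C_{\mathcal{D}}$ is genuinely $t$-free. I would make this uniformity explicit by extracting from the definition of $\muGPH^{\mathcal{D}}$ a single covariance bound valid for all $t$. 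Measurability of $t \mapsto \osc(H_t)$ along sample paths follows from joint continuity of $(t,x) \mapsto H_t(x)$, which is part of the smoothness built into the process.

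Putting these together yields
\[
    \int_{C^\infty_0} \int_0^1 \osc(H_t) \, dt \, d\muGPH^{\mathcal{D}}(H)
        = \int_0^1 \E\!\left[ \osc(H_t) \right] dt
        \le \int_0^1 2 C_{\mathcal{D}} \, dt
        = 2 C_{\mathcal{D}} < \infty,
\]
and combined with the pointwise bound $\norm{\phi}_{\Hof} \le \int_0^1 \osc(H_t)\, dt$ and the push-forward identity, this gives finiteness of $\int_{\Ham(M,\omega)} \norm{\phi}_{\Hof} \, d\muGP^{\mathcal{D}}(\phi)$. The Hofer-Borel measurability established in Theorem~\ref{thm:hofer-borel} guarantees that $\norm{\cdot}_{\Hof}$, being Hofer-continuous, is a genuine random variable, so the integral is well-defined in the first place.
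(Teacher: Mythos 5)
Your proposal follows the same overall skeleton as the paper's proof (Lemma~\ref{lem:expected-osc-norm} plus Corollary~\ref{cor:expected-hofer-norm-finite}): bound $\norm{\phi}_{\Hof}$ by $\osc(H)$, apply Fubini--Tonelli, and reduce to a $t$-uniform bound on $\E[\osc(H_t)] \le 2\,\E[\norm{H_t}_\infty]$. Where you diverge is in the central estimate. The paper never invokes Dudley's entropy bound: it bounds $\E[\norm{\widetilde H_t}_\infty]$ by brute force from the series representation~\eqref{eq:GP_H}, via the triangle inequality $\E[\norm{\widetilde H_t}_\infty] \le \sum_{n\ge 1} w_n\,\E[\abs{Z_n(t)}]\,\norm{e_n}_\infty$, the Gaussian identity $\E[\abs{Z_n(t)}] = \sqrt{2/\pi}\,\sqrt{\Var[Z_n(t)]}$, the uniform variance bound built into Definition~\ref{def:ldd}, the eigenfunction estimate~\eqref{eq:C0-estimate-eigenfunctions}, and Weyl's law against the exponential weights $w_n$. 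This termwise argument is more elementary and delivers the $t$-uniformity for free, since the variance bound in the \ldd\ is already uniform in $t$. Your Borell--TIS/Dudley route is legitimate in principle, but note two points: (i) to make the entropy integral finite and uniform in $t$ you would in practice need the same eigenfunction and weight estimates to control the canonical pseudo-metric $d_t(x,y)^2 = \E[(H_t(x)-H_t(y))^2]$ by a $t$-independent Lipschitz multiple of $d_g$, so the abstract machinery does not actually save you the explicit computation; and (ii) the theorem is stated for an arbitrary \ldd, which need not be centered, so you must split $H = \widetilde H + \E[H]$ and separately bound $\osc(\E[H_t])$ using the smoothness of $t \mapsto \E[Z_n(t)]$, as the paper does --- your appeal to ``a centered Gaussian field'' silently assumes this reduction. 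Neither point is fatal, but both need to be made explicit for the argument to close.
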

\begin{corollary}\label{cor:hofer-lipschitz-finite-expectation}
    Let $\mathcal{D}$ be any \ldd .
    For any function $f: \Ham(M,\omega) \to \Reals$ that is Lipschitz with respect to the Hofer metric, we have 
    \[\int_{\Ham(M,\omega)} f(\phi) \, d\muGP^{\mathcal{D}}(\phi) < \infty.\]
\end{corollary}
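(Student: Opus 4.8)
The plan is to deduce the corollary directly from Theorem~\ref{thm:expected-hofer-norm-finite} together with a linear-growth bound coming from the Lipschitz hypothesis. Let $L$ denote a Lipschitz constant for $f$ with respect to the Hofer metric $d_{\Hof}$. Since the Hofer norm is by definition the Hofer distance to the identity, $\norm{\phi}_{\Hof} = d_{\Hof}(\phi, \id_M)$, applying the Lipschitz estimate for $f$ to the two points $\phi$ and $\id_M$ gives the pointwise bound
\[
\abs{f(\phi)} \le \abs{f(\id_M)} + L \, d_{\Hof}(\phi, \id_M) = \abs{f(\id_M)} + L \, \norm{\phi}_{\Hof}
\]
for every $\phi \in \Ham(M,\omega)$.

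Next I would record measurability. A function that is Lipschitz with respect to $d_{\Hof}$ is in particular Hofer-continuous, so preimages of open subsets of $\Reals$ under $f$ are Hofer-open and hence lie in the Hofer-Borel $\sigma$-algebra by Theorem~\ref{thm:hofer-borel}. Thus $f$ is a genuine random variable for $\muGP^{\mathcal{D}}$, and the same reasoning (or Theorem~\ref{thm:expected-hofer-norm-finite} itself) shows that $\norm{\cdot}_{\Hof}$ is measurable.

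Finally, integrating the pointwise bound and using that $\muGP^{\mathcal{D}}$ is a probability measure (Theorem~\ref{thm:hofer-borel}), so that the constant $\abs{f(\id_M)}$ integrates against total mass one, yields
\[
\int_{\Ham(M,\omega)} \abs{f(\phi)} \, d\muGP^{\mathcal{D}}(\phi) \le \abs{f(\id_M)} + L \int_{\Ham(M,\omega)} \norm{\phi}_{\Hof} \, d\muGP^{\mathcal{D}}(\phi),
\]
and the right-hand side is finite by Theorem~\ref{thm:expected-hofer-norm-finite}. This shows $f \in L^1(\muGP^{\mathcal{D}})$, which in particular makes the integral in the statement well-defined and finite.

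There is essentially no serious obstacle here: the corollary is a soft consequence of the two preceding theorems, and the only points requiring minor care are that Lipschitz implies measurable via Theorem~\ref{thm:hofer-borel} and that the constant term $\abs{f(\id_M)}$ is integrable precisely because the measure has total mass one. All the genuine content sits in Theorem~\ref{thm:expected-hofer-norm-finite}, which supplies the finiteness of the expected Hofer norm; once that is granted, the passage to arbitrary Hofer-Lipschitz observables is immediate.
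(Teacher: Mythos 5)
Your argument is correct and is precisely the (implicit) deduction the paper intends: the corollary follows from Theorem~\ref{thm:expected-hofer-norm-finite} via the pointwise bound $\abs{f(\phi)} \le \abs{f(\id_M)} + L\norm{\phi}_{\Hof}$, with measurability supplied by Hofer-continuity and the constant term handled by total mass one. Nothing is missing.
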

These results establish some basic facts: We have obtained a family of Borel probability measures and the Hofer norm is a well-defined random variable with a finite expectation.
The same is true for all Hofer-Lipschitz measurements, e.g.\ for Hamiltonian spectral invariants.
The facts that we have so far established, are not very strong.
Indeed, the Dirac delta at $\id_M \in \Ham(M,\omega)$ is a probability measure that fulfills the same properties.
This is due to the fact that our construction is very general.
Indeed, it is possible to obtain $\delta_{\id_M}$ as $\muGP^{\mathcal{D}}$ for a choice of ``trivial'' \ldd .
Thus, any stronger result must restrict the types of \ldds\ to which it is applicable.
We will now present several such results, based on types of \ldds\ enumerated in Definition~\ref{def:ldd-types}.

We start by giving a quantitative estimate on the tails of the Hofer norm of a random Hamiltonian diffeomorphism.
We obtain the following estimate that can be seen as a \textit{concentration of measure} phenomenon:
\begin{theorem}\label{thm:hofer-norm-subgaussian}
    Let $\mathcal{D}$ be any centered \ldd.
    Then there exist constants $C,R > 0$ depending on $\mathcal{D}$ such that 
    we have 
    \begin{equation*}
        \muGP^{\mathcal{D}}(\{\phi \in \Ham(M,\omega) \mid \norm{\phi}_{\Hof} > R + u\}) \leq 2e^{-C^{-1}u^2}
    \end{equation*}
    for any $u > 0$, i.e.\ the tails of the distribution of the Hofer norm satisfy a sub-Gaussian type estimate.
\end{theorem}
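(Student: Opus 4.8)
\subsection*{Proof proposal}

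The plan is to reduce the statement to the classical Gaussian concentration inequality for Lipschitz functionals, applied to the energy functional that controls the Hofer norm. Write $\phi^1_H$ for the time-$1$ map of the isotopy generated by $H \in C^\infty_0$, so that $\muGP^{\mathcal{D}}$ is the push-forward of $\muGPH^{\mathcal{D}}$ under $H \mapsto \phi^1_H$. Since the Hofer norm is an infimum of the energy over all generating Hamiltonians, one has the pointwise bound
\[
    \norm{\phi^1_H}_{\Hof} \leq F(H) \coloneqq \int_0^1 \bigl(\max_M H_t - \min_M H_t\bigr)\, dt = \int_0^1 \osc_M H_t \, dt.
\]
The crucial observation is that $F$ is globally Lipschitz with respect to the supremum norm: from $\abs{\osc_M H_t - \osc_M G_t} \leq 2\norm{H_t - G_t}_\infty$ and integration in $t$ we get $\abs{F(H) - F(G)} \leq 2\norm{H-G}_\infty$, so $F$ is $2$-Lipschitz on the Banach space $\bigl(C^0([0,1]\times M), \norm{\cdot}_\infty\bigr)$.

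First I would realize $\muGPH^{\mathcal{D}}$ as a centered Gaussian Radon measure on this separable Banach space: because $\mathcal{D}$ is centered and the underlying process is smooth, its sample paths are almost surely continuous, so the measure is supported on $C^0$ and is genuinely a centered Gaussian measure there. The Borell--Sudakov--Tsirelson inequality (Gaussian concentration for Lipschitz functionals, see e.g.~\cite{gine-nickl-2016}) then applies. Writing $R \coloneqq \int F \, d\muGPH^{\mathcal{D}}$ for the mean and
\[
    \sigma^2 \coloneqq \sup_{(t,x)\in[0,1]\times M} \Var\bigl(H_t(x)\bigr)
\]
for the weak variance of the measure (which equals the maximal pointwise variance, since the extreme points of the dual unit ball of $C^0$ are the signed Dirac masses), it yields, for all $u > 0$,
\[
    \muGPH^{\mathcal{D}}\bigl(\{H \mid F(H) > R + u\}\bigr) \leq e^{-u^2/(8\sigma^2)},
\]
the factor $8 = 2\cdot 2^2$ coming from the Lipschitz constant of $F$.

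It then remains to transport this to $\Ham(M,\omega)$. Since $\norm{\phi^1_H}_{\Hof} \leq F(H)$ pointwise, the event $\{\norm{\phi}_{\Hof} > R+u\}$ pulls back into $\{F > R+u\}$, so that
\[
    \muGP^{\mathcal{D}}\bigl(\{\phi \mid \norm{\phi}_{\Hof} > R+u\}\bigr) \leq \muGPH^{\mathcal{D}}(F > R+u) \leq e^{-u^2/(8\sigma^2)} \leq 2e^{-C^{-1}u^2}
\]
with $C \coloneqq 8\sigma^2$; the harmless factor $2$ is inserted only to match the two-sided form of the statement. Finiteness of $R$ is immediate from $F(H) \leq 2\norm{H}_\infty$ together with Fernique's theorem, which guarantees that the supremum of a continuous centered Gaussian field has finite first moment; positivity of $R$ and $C$ can be arranged by enlarging $R$ if necessary (which only weakens the bound), while the degenerate case $\sigma = 0$ forces $F \equiv 0$ almost surely and the claim is then trivial.

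The main obstacle I anticipate is not the probabilistic core — Gaussian concentration is entirely standard once the setup is in place — but rather the analytic verification that $\muGPH^{\mathcal{D}}$ is bona fide a \emph{centered Gaussian Radon measure} on $C^0([0,1]\times M)$ with almost surely bounded sample paths and finite weak variance $\sigma^2 < \infty$. This is precisely where the regularity built into the \ldd\ and the centeredness hypothesis are needed: continuity of the sample paths guarantees both that $F$ is defined $\muGPH^{\mathcal{D}}$-almost everywhere and that $\sigma^2$, being a supremum of the continuous variance function over the compact set $[0,1]\times M$, is finite. Care must also be taken to justify the identification of the weak variance with the maximal pointwise variance, i.e.\ that the convex quadratic form $x^* \mapsto \int \langle x^*, H\rangle^2 \, d\muGPH^{\mathcal{D}}(H)$ attains its supremum over the dual unit ball at an extreme point; this is a standard convexity argument but should be spelled out explicitly.
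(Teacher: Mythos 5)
Your proposal is correct and follows essentially the same route as the paper: bound $\norm{\phi_H^1}_{\Hof}$ by the oscillation of the generating Hamiltonian, realize $\muGPH^{\mathcal{D}}$ as a centered Gaussian measure on $C^0([0,1]\times M)$, and invoke Gaussian concentration with weak variance $\sigma^2 = \sup \Var[H(t,x)]$. The only (immaterial) difference is that you apply the Lipschitz-functional form of Borell--Sudakov--Tsirelson directly to $F(H)=\osc(H)$, whereas the paper first dominates $\osc(H)$ by the sup-norm and then uses the $\norm{\cdot}_\infty$-version of Borell--TIS (Lemma~\ref{lem:Borell-TIS-infinity}); both yield the stated sub-Gaussian tail up to the choice of constants.
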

This result would be trivially true if the measure $\muGP^{\mathcal{D}}$ were concentrated at a specific Hamiltonian diffeomorphism, such as $\id_M$.
The next theorem shows that the opposite is true for suitable \ldds .
\begin{theorem}\label{thm:full-support}
    Let $\mathcal{D}$ be an exhaustive or periodically exhaustive \ldd.
    Then $\muGP^{\mathcal{D}}$ has full support on $\Ham(M,\omega)$, i.e.
    $\supp \muGP^{\mathcal{D}} = \Ham(M,\omega)$.
\end{theorem}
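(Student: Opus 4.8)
The plan is to reduce the statement to a density property of the Gaussian measure on $C^\infty_0$ and then transport it through the fibration. The key structural observation is the following standard fact about supports of push-forwards: if $\pi \colon X \to Y$ is continuous and $\mu$ is a Borel measure on $X$, then $\supp(\pi_*\mu) = \overline{\pi(\supp\mu)}$. Indeed, if $x \in \supp\mu$ and $W$ is an open neighborhood of $\pi(x)$, then $\pi^{-1}(W)$ is an open neighborhood of $x$, so $\mu(\pi^{-1}(W)) > 0$ and hence $\pi_*\mu(W) > 0$; conversely, if $\pi(x) \notin W$ for every $x \in \supp\mu$, then $\pi^{-1}(W) \cap \supp\mu = \emptyset$ and $\pi_*\mu(W) = 0$. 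Writing $\pi \colon C^\infty_0 \cong \ham(M,\omega) \twoheadrightarrow \Ham(M,\omega)$ for the flow map, this reduces the claim to showing that $\pi(\supp \muGPH^{\mathcal{D}})$ is dense in $\Ham(M,\omega)$ for the Hofer metric.

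Two ingredients make this work. First, the flow map $\pi$ is continuous from $C^\infty_0$, equipped with the Banach-space topology carrying the Gaussian measure, into $(\Ham(M,\omega), d_{\Hof})$. This follows from the elementary energy bound $\norm{\phi_H^1}_{\Hof} \le \int_0^1 \osc(H_t)\,dt$ together with its relative version comparing the time-$1$ flows of two Hamiltonians; since this oscillation is dominated by the $C^0$-norm, and the topology supporting $\muGPH^{\mathcal{D}}$ is at least as strong as the $C^0$-topology (the process is almost surely smooth), $\pi$ is in fact Hofer-Lipschitz on bounded sets. Second, I would invoke the description of the support of a Gaussian measure as a translate of the closure of its reproducing-kernel (Cameron–Martin) Hilbert space inside $C^\infty_0$. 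The role of the exhaustiveness hypothesis is precisely to guarantee that this space is dense in $C^\infty_0$: an exhaustive \ldd\ activates all frequencies of the underlying Laplace eigenbasis with nonzero weight, so its Cameron–Martin space contains every finite combination of eigenfunctions and is therefore $C^\infty$-dense, a fortiori $C^0$-dense; density is unaffected by the affine shift. For a periodically exhaustive \ldd\ the same conclusion holds after restricting to the relevant time-periodic subspace, whose time-$1$ flows still exhaust $\Ham(M,\omega)$.

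Granting these two ingredients, the argument closes quickly. Since $\supp \muGPH^{\mathcal{D}}$ is dense in $C^\infty_0$ and $\pi$ is continuous and surjective (every Hamiltonian diffeomorphism is by definition the time-$1$ flow of some mean-normalized Hamiltonian), we obtain
\[
\Ham(M,\omega) = \pi\big(C^\infty_0\big) = \pi\big(\overline{\supp\muGPH^{\mathcal{D}}}\big) \subset \overline{\pi\big(\supp\muGPH^{\mathcal{D}}\big)},
\]
so $\pi(\supp \muGPH^{\mathcal{D}})$ is Hofer-dense; combined with the push-forward identity $\supp \muGP^{\mathcal{D}} = \overline{\pi(\supp \muGPH^{\mathcal{D}})}$ this yields $\supp \muGP^{\mathcal{D}} = \Ham(M,\omega)$. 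I expect the main obstacle to be the second ingredient: pinning down the exact Cameron–Martin space of $\muGPH^{\mathcal{D}}$ from the defining datum and verifying that exhaustiveness, as formalized in Definition~\ref{def:ldd-types}, genuinely forces $C^0$-density of that space in $C^\infty_0$. Reconciling the natural regularity in which the Gaussian measure is supported (a Hölder- or Sobolev-type scale governed by $\reg$) with the weaker oscillation-based control needed for Hofer-continuity of $\pi$ is the delicate point; everything else is soft.
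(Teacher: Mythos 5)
Your soft reduction is sound as far as it goes: the identity $\supp(\pi_*\mu) \supset \overline{\pi(\supp\mu)}$ for a continuous map $\pi$, applied to the flow map $\pi(H) = \phi_H^1$, together with the Hofer-Lipschitz continuity of $\pi$ with respect to the $C^0$-norm (this is exactly the paper's Lemma~\ref{lem:H_to_phi_H_Lipschitz}, and the measure is indeed Borel for the $C^0$-topology by Lemma~\ref{lem:muGPH_Borel}), correctly reduces the theorem to showing that $\supp \muGPH^{\mathcal{D}}$ is $C^0$-dense in $C^\infty_0$. The genuine gap sits exactly where you predicted it would: the claim that exhaustiveness forces the Cameron--Martin space of the \emph{full} process $H$ to be $C^0$-dense in $C^\infty_0$. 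Exhaustiveness in Definition~\ref{def:ldd-types} is a condition on each time-process $Z_n$ \emph{separately}; it constrains nothing about the joint law of the family $\{Z_n\}_{n}$, whereas the RKHS of $H = \sum_n w_n Z_n(t)e_n(x)$ depends on that joint law. Concretely, take $Z_n = Z_1$ for every $n$, with $Z_1$ the process of Example~\ref{ex:gaussian-process-ex1}: each $Z_n$ is exhaustive and the tuple is a legitimate \ldd , yet every sample of $H$ has the form $H(t,x) = Z_1(t)\,u(x)$ for the fixed smooth function $u = \sum_n w_n e_n$, so $\phi_H^1 = \phi_u^{s}$ with $s = \int_0^1 Z_1(t)\,dt$. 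The support of $\muGP^{\mathcal{D}}$ is then contained in the closure of a single one-parameter subgroup, and full support fails. So ``each frequency is activated'' is not enough; the frequencies must be activated \emph{independently}, and your argument never invokes any cross-frequency independence.

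This is precisely why the paper's actual proof (Lemma~\ref{lem:GP_H_full_support}) assumes the \ldd\ is additionally \emph{frequency independent}: it expands the target $F$ frequency by frequency, uses independence to factor the probability of simultaneous closeness into a product $\prod_n(1-p_n)$, uses exhaustiveness of each $Z_n$ (i.e.\ full support of each $Z_n$ in $C^0([0,1])$, which is Lemma~\ref{lem:RKHS-and-support} applied one frequency at a time) to get $p_n < 1$, and Borell--TIS plus the exponential decay of $w_n$ to get $\sum_n p_n < \infty$. Your route can be repaired in the same spirit while staying structural: granting frequency independence, the covariance of $H$ is the sum of the covariances of the summands $w_n Z_n(t)e_n(x)$, so its RKHS is the Hilbert direct sum of the scaled RKHSs of the $Z_n$; its $C^0$-closure then contains every $g(t)e_n(x)$ with $g \in C^0([0,1])$, hence every finite sum of such terms, hence all of $C^\infty_0$, since the eigenfunction expansion of a smooth normalized function converges uniformly by the estimate~\eqref{eq:C0-estimate-eigenfunctions}. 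With that hypothesis added --- which the theorem statement omits but the paper's own lemma requires --- your argument closes and is genuinely different from the paper's: it trades the paper's quantitative lower bounds on ball measures (which the paper reuses later, e.g.\ in Corollary~\ref{cor:ball_estimates}) for a shorter, softer proof. The periodic case is then handled as in Corollary~\ref{cor:full_support_periodically}: every Hamiltonian diffeomorphism admits a time-periodic normalized generating Hamiltonian, so density of the support among periodic Hamiltonians suffices.
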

Thus, any Hofer ball has a positive measure under $\muGP^{\mathcal{D}}$ when $\mathcal{D}$ is exhaustive in the sense of Definition~\ref{def:ldd-types}.
Note that the above theorem alone is not sufficient to make $\muGP^{\mathcal{D}}$ a reasonable model for a random Hamiltonian diffeomorphism.
Indeed, obtaining a probability measure with full support on $\Ham(M,\omega)$ is an easy exercise.
Since $\Ham(M,\omega)$ with the Hofer metric is separable, one can simply take a countable dense subset $\{\phi_n\}_{n \in \Nats} \subset \Ham(M,\omega)$
and some sequence of weights $\{w_n\}_{n \in \Nats} \subset (0,1)$ such that $\sum_{n=1}^\infty w_n = 1$.
Then the measure $\mu = \sum_{n=1}^\infty w_n \delta_{\phi_n}$ is a probability measure with full support on $\Ham(M,\omega)$.
Thus, Theorem~\ref{thm:full-support} should be seen as a necessary rather than sufficient condition for accepting $\muGP^{\mathcal{D}}$ as a model for random Hamiltonian diffeomorphisms.
However, the lower bounds on the measure of Hofer balls, see Corollary~\ref{cor:ball_estimates}, rule out such pathological constructions.
Theorem~\ref{thm:hofer-norm-subgaussian}, Corollary~\ref{cor:hofer-lipschitz-finite-expectation} and the next theorem additionally establish that this measure is compatible with the symplectic nature of $\Ham(M,\omega)$.
These theorems also illustrate characteristically Gaussian properties of the measure $\muGP^{\mathcal{D}}$ which it inherits from $\muGPH^{\mathcal{D}}$.
Interestingly, 
these estimates can be used to prove the following result about the smoothness (as measured by the decay of the coefficients with respect to an eigenbasis of the Laplace-Beltrami operator) of Hamiltonian functions generating Hamiltonian diffeomorphisms
that form an infinite-dimensional constellation in the Hofer metric.
It is noteworthy that while the proof of this statement makes use of the existence and properties of the measure $\muGP^{\mathcal{D}}$,
the result is not itself probabilistic in nature.
\begin{theorem}\label{thm:smoothness-quasi-flats} 
    Let $\{\phi_n\}_{n \in \Nats} \subset \Ham(M,\omega)$ be a sequence of Hamiltonian diffeomorphisms such that
    there exist $\eps,\delta > 0$ such that $\norm{\phi_n}_{\Hof} \leq \eps$ for all $n \in \Nats$
    and $d_{\Hof}(\phi_n,\phi_m) \geq 2\delta$ for all $n \neq m$.
    Then for any Hamiltonians $\{H_n\}_{n \in \Nats} \subset C^\infty_0$ 
    such that $\phi_{H_n}^1 = \phi_n$ for all $n \in \Nats$, we have
    \begin{equation*}
        \lim_{n \to \infty} \sum_{k=1}^{\infty} \exp\left(\eps \lambda_k \right) \langle H_n, e_k \rangle_{L^2} = \infty,
    \end{equation*}
    for all $\eps > 0$ and any almost complex structure $J$ compatible with $\omega$.
    Here $\{e_n\}_{n \in \Nats} \subset L^2([0,1] \times M)$ denotes an orthonormal basis of eigenfunctions of $\Laplace - \partial_t^2$
    and $\{\lambda_n\}_{n \in \Nats}$ denotes the corresponding eigenvalues.
    Here $\Laplace_g$ is the Laplace-Beltrami operator associated to the Riemannian metric $g = \omega(\cdot,J\cdot)$.
\end{theorem}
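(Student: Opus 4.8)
The plan is to deduce this (non-probabilistic) statement from the lower bound on the measure of Hofer balls in Corollary~\ref{cor:ball_estimates} by a volume-packing argument, exploiting the freedom to pick the auxiliary measure. The key observation is that the weights $\exp(\eps\lambda_k)$ are precisely the squared Cameron--Martin weights of a heat-type Gaussian measure. Concretely, fix $\eps>0$ and a compatible almost complex structure $J$, set $g=\omega(\cdot,J\cdot)$, and let $\{e_k\}$, $\{\lambda_k\}$ be the eigendata of $\Laplace_g-\partial_t^2$. I would choose a centered, exhaustive \ldd\ $\mathcal D=\mathcal D(\eps,J)$ with regularity $\reg=\eps$ whose covariance is the heat operator $\exp(-\eps(\Laplace_g-\partial_t^2))$; its Cameron--Martin space is then $\mathscr H=\{H\in C^\infty_0:\norm{H}_{\mathscr H}^2:=\sum_{k}\exp(\eps\lambda_k)\langle H,e_k\rangle_{L^2}^2<\infty\}$. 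The point is that the displayed sum, read as the squared Cameron--Martin norm, is exactly $\norm{H_n}_{\mathscr H}^2$, so the theorem amounts to the claim that $\norm{H_n}_{\mathscr H}\to\infty$ for every choice of generators.

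First I would record the packing geometry. Writing $\eps_0$ for the Hofer-radius constant in the hypothesis (to distinguish it from the summation parameter $\eps$), the separation $d_{\Hof}(\phi_n,\phi_m)\ge 2\delta$ makes the open Hofer balls $B_{\Hof}(\phi_n,\delta)$ pairwise disjoint, while $\norm{\phi_n}_{\Hof}\le\eps_0$ places them all inside $B_{\Hof}(\id_M,\eps_0+\delta)$. Since $\muGP^{\mathcal D}$ is a probability measure, $\sum_n\muGP^{\mathcal D}(B_{\Hof}(\phi_n,\delta))\le1$, and therefore $\muGP^{\mathcal D}(B_{\Hof}(\phi_n,\delta))\to0$ as $n\to\infty$.

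Next I would invoke the lower ball estimate. Because $\mathcal D$ is centered and exhaustive, Corollary~\ref{cor:ball_estimates} supplies a constant $A=A(\delta,\mathcal D)>0$ such that, for every $\phi\in\Ham(M,\omega)$ and every generator $H\in C^\infty_0$ with $\phi^1_H=\phi$, one has $\muGP^{\mathcal D}(B_{\Hof}(\phi,\delta))\ge A\exp(-\tfrac12\norm{H}_{\mathscr H}^2)$ (the right-hand side being $0$ when $H\notin\mathscr H$). Combining the two facts finishes the argument by contradiction: if $\norm{H_n}_{\mathscr H}^2$ did not tend to $\infty$, some subsequence would satisfy $\norm{H_{n_j}}_{\mathscr H}^2\le K<\infty$, whence $\muGP^{\mathcal D}(B_{\Hof}(\phi_{n_j},\delta))\ge A\exp(-K/2)>0$ uniformly in $j$, contradicting $\muGP^{\mathcal D}(B_{\Hof}(\phi_{n_j},\delta))\to0$. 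Hence $\sum_k\exp(\eps\lambda_k)\langle H_n,e_k\rangle_{L^2}^2=\norm{H_n}_{\mathscr H}^2\to\infty$; as $\eps>0$ and $J$ were arbitrary, this is exactly the assertion.

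The substantive work lives in the ball estimate I am importing, and that is where I expect the main obstacle. Its proof must connect the Hofer topology on $\Ham(M,\omega)$ to the linear structure of $C^\infty_0$: using the Lipschitz bound $d_{\Hof}(\phi^1_H,\phi^1_{H'})\le\int_0^1\osc(H_t-H'_t)\,dt\le2\norm{H-H'}_{C^0}$, a symmetric convex $C^0$-ball $\{G:\norm{G}_{C^0}<\rho\}$ recentered at $H$ (with $\rho$ proportional to $\delta$) lies inside $\{F:\phi^1_F\in B_{\Hof}(\phi,\delta)\}$, so by the push-forward $\muGP^{\mathcal D}(B_{\Hof}(\phi,\delta))\ge\muGPH^{\mathcal D}(\{G:\norm{G}_{C^0}<\rho\}+H)$. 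One then applies the Cameron--Martin shift formula together with Anderson's inequality, $\muGPH^{\mathcal D}(B+H)\ge\exp(-\tfrac12\norm{H}_{\mathscr H}^2)\,\muGPH^{\mathcal D}(B)$ for symmetric convex $B$, and the positivity $\muGPH^{\mathcal D}(\{G:\norm{G}_{C^0}<\rho\})>0$, which holds since the heat covariance is nondegenerate and $\mathcal D$ is exhaustive. A subtlety worth flagging is that the $H_n$ are \emph{arbitrary} generators of $\phi_n$: this is harmless, because the measure of $B_{\Hof}(\phi_n,\delta)$ depends only on $\phi_n$ while the lower bound is valid for whichever generator is substituted, so the blow-up of $\norm{H_n}_{\mathscr H}$ is forced for every admissible choice.
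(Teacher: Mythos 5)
Your proposal is correct and follows essentially the same route as the paper: choose the heat-kernel law-defining datum (the paper's $\mathcal{D}_2$ from Theorem~\ref{thm:existence-of-ldd}, whose RKHS norm with weights $\exp(\reg\lambda_k)$ is computed in Remark~\ref{rmk:D2-RKHS}), apply the Cameron--Martin lower bound for the measure of Hofer balls, and derive a contradiction from the disjointness of the balls $B_{\Hof}(\phi_n,\delta)$ and the finiteness of the total measure --- which is exactly the paper's Lemma~\ref{lem:ball_estimate} combined with Theorem~\ref{thm:quasi-flats-RKHS-norm}. The one slip is the citation: Corollary~\ref{cor:ball_estimates} only bounds balls of radius $C+r$ with $C$ a possibly large constant depending on $\mathcal{D}$, so for the fixed radius $\delta$ you need Lemma~\ref{lem:ball_estimate} instead --- which your final paragraph in effect re-proves via the Cameron--Martin shift and Anderson's inequality, just as the paper does.
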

\begin{corollary}
    The conclusions of Theorem~\ref{thm:smoothness-quasi-flats}
    apply whenever there is a quasi-isometric embedding of $(\Reals^{\infty},d_{\infty})$ into $(\Ham(M,\omega),d_{\Hof})$.
\end{corollary}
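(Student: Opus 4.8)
The plan is to read off from the quasi-isometric embedding an infinite, bounded, uniformly separated family of Hamiltonian diffeomorphisms and then feed it directly into Theorem~\ref{thm:smoothness-quasi-flats}. Recall that a quasi-isometric embedding $\Psi\colon (\Reals^\infty, d_\infty) \to (\Ham(M,\omega), d_{\Hof})$ comes equipped with constants $A \geq 1$ and $B \geq 0$ such that
\[
A^{-1} d_\infty(x,y) - B \;\leq\; d_{\Hof}\bigl(\Psi(x), \Psi(y)\bigr) \;\leq\; A\, d_\infty(x,y) + B
\]
for all $x,y \in \Reals^\infty$. I would sample $\Psi$ along the scaled coordinate directions: writing $e_n$ for the $n$-th standard basis vector, set $\phi_n \coloneqq \Psi(R e_n)$ for a scale $R > 0$ to be fixed below, and let $\phi_0 \coloneqq \Psi(0)$. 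The elementary geometric input is that $d_\infty(R e_n, R e_m) = R$ for all $n \neq m$ and $d_\infty(R e_n, 0) = R$ for all $n$, so the sampled points lie at distance exactly $R$ from one another and from the origin; note that the $R e_n$ belong to $(\Reals^\infty, d_\infty)$ irrespective of which precise model of $\Reals^\infty$ is intended, since they are finitely supported.

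The one genuinely necessary idea is to transport this configuration to the identity using the bi-invariance of the Hofer metric, since the quasi-flat places the $\phi_n$ in a bounded ball around $\phi_0$ rather than around $\id_M$. Define $\psi_n \coloneqq \phi_0^{-1}\phi_n \in \Ham(M,\omega)$. Bi-invariance then yields
\[
\norm{\psi_n}_{\Hof} = d_{\Hof}(\phi_n, \phi_0) \leq A R + B, \qquad d_{\Hof}(\psi_n, \psi_m) = d_{\Hof}(\phi_n, \phi_m) \geq A^{-1} R - B \quad (n \neq m).
\]
Choosing $R > AB$ makes the separation bound strictly positive, so setting $\eps \coloneqq A R + B$ and $2\delta \coloneqq A^{-1} R - B > 0$ exhibits $\{\psi_n\}_{n \in \Nats}$ as a sequence with $\norm{\psi_n}_{\Hof} \leq \eps$ for all $n$ and $d_{\Hof}(\psi_n, \psi_m) \geq 2\delta$ for all $n \neq m$ — precisely the hypotheses of Theorem~\ref{thm:smoothness-quasi-flats}.

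Applying Theorem~\ref{thm:smoothness-quasi-flats} to $\{\psi_n\}$ then gives, for every choice of generating Hamiltonians $\{G_n\} \subset C^\infty_0$ with $\phi_{G_n}^1 = \psi_n$, the divergence of the exponentially weighted coefficient sum, which is the asserted conclusion. I expect no real obstacle: the analytic content sits entirely in Theorem~\ref{thm:smoothness-quasi-flats}, and the only points deserving a sentence of care are that bi-invariance is what legitimizes the centering step, and that the conclusion is stated for \emph{any} Hamiltonians generating the given diffeomorphisms — so it is harmless that the generators of $\psi_n = \phi_0^{-1}\phi_n$ bear no simple relation to those of the original $\phi_n$.
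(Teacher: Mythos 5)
Your proof is correct, and it is precisely the argument the paper leaves implicit (the corollary is stated without proof, as an immediate consequence of Theorem~\ref{thm:smoothness-quasi-flats}): sample the quasi-flat along the coordinate directions $Re_n$, use the quasi-isometry inequalities to get uniform boundedness and separation, and feed the resulting sequence into the theorem. The centering $\psi_n = \phi_0^{-1}\phi_n$ via bi-invariance is a clean way to handle the basepoint, though even it is optional --- the triangle inequality $\norm{\Psi(Re_n)}_{\Hof} \leq d_{\Hof}(\Psi(Re_n),\Psi(0)) + \norm{\Psi(0)}_{\Hof}$ would bound the original sequence directly.
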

\rmk{
    It seems likely that there is a more direct analytical way of proving this result.
    However, the fact that it arises as a consequence of the existence and properties of the measure $\muGP^{\mathcal{D}}$ is interesting in itself.
}
It is natural to ask how compatible the measure $\muGP^{\mathcal{D}}$ is with the group structure of $\Ham(M,\omega)$.
Since $\Ham(M,\omega)$ is not compact it is not possible for $\muGP$ to be invariant under the action of $\Ham(M,\omega)$ on itself by left or right multiplication.
Indeed, the behavior of this measure under composition with Hamiltonian diffeomorphisms is quite subtle, see Remark~\ref{rmk:symplectic_cameron_martin}.
However, we have the following theorem on the compatibility of $\muGP^{\mathcal{D}}$ with the group structure of $\Ham(M,\omega)$.
\begin{theorem}\label{thm:inversion-invariance}
    Assume that $\mathcal{D}$ is time-symmetric and centered.
    For any Hofer-Borel set $U \subset \Ham(M,\omega)$, we have that 
    \[\muGP^{\mathcal{D}}(U) = \muGP^{\mathcal{D}}\left(\left\{\phi^{-1} \mid \phi \in U\right\}\right),\]
    i.e., the measure $\muGP^{\mathcal{D}}$ is invariant under inversion.
\end{theorem}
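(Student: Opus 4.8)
The plan is to realize inversion on $\Ham(M,\omega)$ as the image, under the path-space fibration, of a \emph{linear} transformation of $C^\infty_0$ that preserves the Gaussian measure $\muGPH^{\mathcal{D}}$. The starting observation is the classical formula for the generator of an inverse isotopy, written in its time-reversed form. If $H \in C^\infty_0$ generates the isotopy $\{\phi_H^t\}_{t\in[0,1]}$, set $\tilde H_t \coloneqq -H_{1-t}$, and let $\gamma(t) \coloneqq \phi_H^{1-t}\circ(\phi_H^1)^{-1}$. Writing $X_t$ for the Hamiltonian vector field of $H_t$, a direct computation with the chain rule gives $\tfrac{d}{dt}\gamma(t) = -X_{1-t}\circ\gamma(t)$; since $-X_{1-t}$ is exactly the Hamiltonian vector field of $-H_{1-t}=\tilde H_t$ and $\gamma(0)=\id_M$, the isotopy $\gamma$ is the flow of $\tilde H$. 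In particular $\phi_{\tilde H}^1 = \gamma(1) = (\phi_H^1)^{-1}$, so inversion of the time-$1$ map corresponds, on the level of generating Hamiltonians, to the assignment $H \mapsto \tilde H$.

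The decisive point is that $T\colon C^\infty_0 \to C^\infty_0$, $T(H)_t = -H_{1-t}$, is \emph{linear}: it is the composition of negation $H\mapsto -H$ with the time-reversal $H_t\mapsto H_{1-t}$. A centered Gaussian law is invariant under negation, and a time-symmetric one is invariant under time reversal (this is precisely what time-symmetry of the covariance structure buys us); hence, when $\mathcal{D}$ is centered and time-symmetric, $T_*\muGPH^{\mathcal{D}} = \muGPH^{\mathcal{D}}$. Here I would invoke the standard fact that a centered Gaussian measure is preserved by any bounded linear map leaving its covariance operator invariant, and check explicitly that each of the two generators of $T$ does so: negation trivially, and time-reversal by the defining property of a time-symmetric $\mathcal{D}$.

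Assembling the pieces, let $\pi\colon \ham(M,\omega)\to\Ham(M,\omega)$ denote the fibration (evaluation at time $1$) and $\iota$ the inversion map. Under the homeomorphism $C^\infty_0 \cong \ham(M,\omega)$, the map $T$ corresponds to the isotopy-level map $\Psi\colon\{\phi^t\}\mapsto\{\phi^{1-t}\circ(\phi^1)^{-1}\}$, and the first step shows $\pi\circ\Psi = \iota\circ\pi$. Since $\muGP^{\mathcal{D}} = \pi_*\muGPH^{\mathcal{D}}$, we obtain
\[
\iota_*\muGP^{\mathcal{D}} = \iota_*\pi_*\muGPH^{\mathcal{D}} = \pi_*\Psi_*\muGPH^{\mathcal{D}} = \pi_*T_*\muGPH^{\mathcal{D}} = \pi_*\muGPH^{\mathcal{D}} = \muGP^{\mathcal{D}},
\]
which, evaluated on a Hofer-Borel set $U$, is exactly the stated identity. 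Measurability is automatic: $T$ and $\pi$ are continuous, and $\iota$ is a homeomorphism of $\Ham(M,\omega)$ for the Hofer topology, so it carries Hofer-Borel sets to Hofer-Borel sets; Theorem~\ref{thm:hofer-borel} guarantees that all push-forwards are genuine Borel probability measures.

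I expect the main obstacle to be the tension between the two descriptions of the inverse generator — the nonlinear $-H_t\circ\phi_H^t$ and the linear $-H_{1-t}$ — and the realization that only the linear representative is usable, because $\muGPH^{\mathcal{D}}$ is controlled solely under linear maps. The one genuinely load-bearing verification is therefore that negation together with time reversal leaves the covariance structure of $\muGPH^{\mathcal{D}}$ invariant, i.e.\ that ``centered and time-symmetric'' is precisely the hypothesis that makes $T$ measure-preserving; once this is in hand, the remainder is a formal push-forward computation along the commutative diagram above.
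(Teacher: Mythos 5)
Your proposal is correct and takes essentially the same route as the paper: both hinge on the identity $\phi_{\hat H}^1 = (\phi_H^1)^{-1}$ for $\hat H(t,x) = -H(1-t,x)$ (Lemma~\ref{lem:flows_composition_and_inversion}) together with the observation that centeredness handles the negation and time-symmetry handles the time reversal, so that the law of the Gaussian process $H$ is invariant under $H \mapsto \hat H$. The paper expresses this as the equality $\muGPH^{\mathcal{D}}(\mathcal{F}_U) = \muGPH^{\mathcal{D}}(\mathcal{F}_{U^{-1}})$ of event probabilities rather than as your push-forward diagram with the linear map $T$, but the mathematical content is identical.
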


It is a well-known result due to Banyaga, see~\cite{banyaga-1997}, that the group $\Ham(M,\omega)$ is generated by autonomous Hamiltonian diffeomorphisms,
i.e.\ by Hamiltonian diffeomorphisms that admit a time-independent Hamiltonian function.
We will use this fact to additionally propose a model for a random walk on $\Ham(M,\omega)$.

Recall that if $G$ is a group and $\mu$ is a probability measure on $G$, then
this gives rise to a random walk $X_n$ on $G$ defined by
$X_n = g_1 \cdots g_n$,
where $g_1, g_2, \ldots$ are independent and identically distributed $G$-valed random variables with law $\mu$.
For example $G = \Z$ and $\mu = \frac{1}{2}(\delta_1 + \delta_{-1})$ is a very simple example of a random walk on $\Z$.
Note that it makes sense for $\mu$ to not be supported on the whole group and instead to be supported on a generating set that is invariant under inversion.
The elements of this group are seen as the ``steps'' the random walker can take, as is illustrated by the above example.
The following theorem implies that it is possible to define a random walk on $\Ham(M,\omega)$ using $\Aut(M,\omega)$ as the set of ``steps''.
Indeed, there are suitable \ldds\ $\mathcal{D}$ such that a Hamiltonian diffeomorphism sampled from $\muGP^{\mathcal{D}}$ 
is almost-surely autonomous.
\begin{theorem}\label{thm:aut-measure}
    Let $\mathcal{D}$ be an autonomous \ldd.
    Then the support of the measure $\muGP^{\mathcal{D}}$ is contained in the Hofer-closure $\overline{\Aut}(M,\omega)$ of $\Aut(M,\omega) \subset \Ham(M,\omega)$.
    Furthermore, 
    there exists a Borel probability measure $\muAut^{\mathcal{D}}$ on $\Aut(M,\omega)$ such that 
    $\muGP^{\mathcal{D}}$ is the push-forward of $\muAut^{\mathcal{D}}$ under the inclusion $\Aut(M,\omega) \hookrightarrow \Ham(M,\omega)$.
\end{theorem}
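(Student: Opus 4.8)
The plan is to reduce everything to the observation that an autonomous \ldd\ is, by Definition~\ref{def:ldd-types}, arranged so that the Gaussian measure $\muGPH^{\mathcal{D}}$ is concentrated on the closed linear subspace
\[
V \coloneqq \left\{H \in C^\infty_0 \mid H_t = H_s \text{ for all } s,t \in [0,1]\right\}
\]
of time-independent, mean-normalized Hamiltonians, i.e.\ $\muGPH^{\mathcal{D}}(V) = 1$. Writing $\pi\colon C^\infty_0 \cong \ham(M,\omega) \twoheadrightarrow \Ham(M,\omega)$, $\pi(H) = \phi_H^1$, for the fibration, I would first record two facts that I need throughout. The map $\pi$ is Hofer-continuous — this is already implicit in the construction of $\muGP^{\mathcal{D}} = \pi_*\muGPH^{\mathcal{D}}$ and its Borel property in Theorem~\ref{thm:hofer-borel}. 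Moreover $\pi(V) = \Aut(M,\omega)$: a time-independent $H \in V$ generates an autonomous diffeomorphism by definition, and conversely every autonomous diffeomorphism is the time-$1$ flow of some time-independent Hamiltonian, which can be mean-normalized by subtracting a constant without altering its flow.

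For the support statement I would argue directly. The set $\overline{\Aut}(M,\omega)$ is Hofer-closed, hence Borel, and $V \subseteq \pi^{-1}\bigl(\overline{\Aut}(M,\omega)\bigr)$ because $\pi(V) = \Aut(M,\omega) \subseteq \overline{\Aut}(M,\omega)$. Consequently
\[
\muGP^{\mathcal{D}}\bigl(\overline{\Aut}(M,\omega)\bigr) = \muGPH^{\mathcal{D}}\bigl(\pi^{-1}(\overline{\Aut}(M,\omega))\bigr) \geq \muGPH^{\mathcal{D}}(V) = 1.
\]
Since $\overline{\Aut}(M,\omega)$ is closed and has full measure, it contains the support (the smallest closed set of full measure), which is exactly the first claim.

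For the second claim I would define $\muAut^{\mathcal{D}}$ to be the push-forward of the restricted Gaussian measure under the restricted flow map,
\[
\muAut^{\mathcal{D}} \coloneqq (\pi|_V)_*\bigl(\muGPH^{\mathcal{D}}|_V\bigr),
\]
viewed as a measure on $\Aut(M,\omega)$ equipped with its subspace Hofer topology. Because $\muGPH^{\mathcal{D}}(V) = 1$, the restriction $\muGPH^{\mathcal{D}}|_V$ is a Borel probability measure; since $\pi|_V\colon V \to \Aut(M,\omega)$ is continuous and hence Borel, $\muAut^{\mathcal{D}}$ is a Borel probability measure on $\Aut(M,\omega)$. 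To check that the inclusion $\iota\colon \Aut(M,\omega) \into \Ham(M,\omega)$ pushes it forward to $\muGP^{\mathcal{D}}$, I would compute, for any Hofer-Borel $U \subseteq \Ham(M,\omega)$,
\[
\iota_*\muAut^{\mathcal{D}}(U) = \muAut^{\mathcal{D}}(U \cap \Aut) = \muGPH^{\mathcal{D}}\bigl(V \cap \pi^{-1}(U)\bigr) = \muGPH^{\mathcal{D}}\bigl(\pi^{-1}(U)\bigr) = \muGP^{\mathcal{D}}(U),
\]
using $\pi(V) \subseteq \Aut(M,\omega)$ (so the constraint $\pi(H) \in \Aut$ is automatic on $V$) for the middle equality and $\muGPH^{\mathcal{D}}(V) = 1$ for the next. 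All sets appearing are genuinely Borel — $U \cap \Aut$ in the subspace topology because $\iota$ is continuous, and $V \cap \pi^{-1}(U)$ because $\pi$ is continuous — so no delicate question about whether $\Aut(M,\omega)$ is itself Borel in $\Ham(M,\omega)$ ever arises.

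I expect the only genuine obstacle to lie in the very first reduction, namely in confirming that the defining properties of an autonomous \ldd\ concentrate $\muGPH^{\mathcal{D}}$ on $V$ in the strong almost-sure sense $\muGPH^{\mathcal{D}}(V) = 1$, rather than merely in distribution. If the definition builds the underlying Gaussian field over $M$ and extends it constantly in time, this is immediate. If instead it is phrased through a covariance on $[0,1] \times M$, the one truly probabilistic step is to show that the prescribed covariance forces perfect correlation along the time direction — so that the centered Gaussian field satisfies $H(t,\cdot) = H(s,\cdot)$ almost surely and thus lands in $V$ with probability one. Everything after this reduction is continuity of $\pi$ together with routine push-forward bookkeeping.
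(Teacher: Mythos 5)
Your proposal is correct and follows essentially the same route as the paper: both arguments hinge on the observation that an autonomous \ldd\ forces the process $H$ of~\eqref{eq:GP_H} to be time-independent almost-surely, so that $\muGPH^{\mathcal{D}}$ concentrates on time-independent Hamiltonians, and both then obtain $\muAut^{\mathcal{D}}$ by restricting to this full-measure set and pushing forward under $H \mapsto \phi_H^1$. The only differences are cosmetic — you prove the support statement directly (closed full-measure set contains the support) where the paper argues by contradiction on Hofer balls, and you phrase the restriction at the level of the measure on $C^\infty_0$ rather than of the underlying probability space.
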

This allows us to define a random walk on $\Ham(M,\omega)$.
Let $\mathcal{D}$ be an autonomous \ldd\ and let $\phi_0,\phi_1,\ldots$ be independent and identically distributed $\Aut(M,\omega)$-valued random variables with law $\muAut^{\mathcal{D}}$ 
(or equivalently with law $\muGP^{\mathcal{D}}$ when seen as $\Ham(M,\omega)$-valued random variables).
Then the random walk
\begin{equation}\label{eq:random-walk}
    \Phi_n \coloneqq \phi_n \circ \cdots \circ \phi_1
\end{equation}
is a $\Ham(M,\omega)$-valued random variable.
The first interesting thing that we can say about this random walk is that its law at any step $n \in \Nats$ 
is again given by a measure following the construction outlined above.
\begin{theorem}\label{thm:random-walk-law}
    Let $\mathcal{D}$ be an autonomous \ldd\ and $(\Phi_n)_{n \in \Nats}$ be the random walk defined in~\eqref{eq:random-walk}.
    Then for any $n \in \Nats$, the law of $\Phi_n$ is given by $\muGP^{\mathcal{D}_n}$ for some \ldd\ $\mathcal{D}_n$.
\end{theorem}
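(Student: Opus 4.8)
The plan is to realize the composition $\Phi_n$ as the time-$1$ map of a single smooth Hamiltonian isotopy whose generating Hamiltonian depends \emph{linearly} on the independent autonomous Hamiltonians driving the individual steps, and then to exploit the fact that a continuous linear image of a Gaussian measure is again Gaussian.

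First I would couple the random walk with an i.i.d.\ sequence of random Hamiltonians. By Theorem~\ref{thm:aut-measure} the step law $\muAut^{\mathcal{D}}$ is the fibration push-forward of the autonomous measure $\muGPH^{\mathcal{D}}$ on $C^\infty_0$, and $\muGP^{\mathcal{D}}$ is the common $\Ham$-valued law; since the law of $\Phi_n$ depends only on the $n$-fold product law of $(\phi_1,\dots,\phi_n)$, I may assume $\phi_i = \phi^1_{H_i}$ with $H_1,\dots,H_n \in C^\infty_0$ i.i.d.\ of law $\muGPH^{\mathcal{D}}$ and each $H_i$ time-independent. Fix a smooth monotone $\beta\colon[0,1]\to[0,1]$ with $\beta(0)=0$, $\beta(1)=1$ and all derivatives vanishing at the endpoints, and define the concatenated isotopy
\[
    \psi_t = \phi_{H_k}^{\,\beta(nt-(k-1))} \circ \phi_{H_{k-1}}^1 \circ \cdots \circ \phi_{H_1}^1,
    \qquad t \in \left[\tfrac{k-1}{n},\tfrac{k}{n}\right].
\]
A direct differentiation gives $\tfrac{d}{dt}\psi_t = X_{c_k(t)H_k}\circ \psi_t$ on the $k$-th subinterval, where $c_k(t) = n\,\beta'(nt-(k-1))$ is supported there: the preceding flows are constant in $t$ and drop out, so no conjugation appears. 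Hence $\psi_t$ is generated by $\widetilde{H}_n(t,x) = \sum_{k=1}^n c_k(t)\,H_k(x)$, which lies in $C^\infty_0$ (the flatness of $\beta$ at its endpoints makes $\widetilde{H}_n$ smooth across the junctions, and each $H_k$ is mean-zero), and $\psi_1 = \phi_{H_n}^1\circ\cdots\circ\phi_{H_1}^1 = \Phi_n$.

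The crucial observation is that $(H_1,\dots,H_n)\mapsto \widetilde{H}_n$ is a continuous linear map with deterministic, time-localized coefficients, so $\widetilde{H}_n$ is a Gaussian random element of $C^\infty_0$. Its covariance is \emph{separable}: writing $C(x,y)$ for the common spatial covariance of the $H_k$ and using their independence,
\[
    \E\big[\widetilde{H}_n(t,x)\,\widetilde{H}_n(s,y)\big] = C(x,y)\,K_n(t,s),
    \qquad K_n(t,s) \coloneqq \sum_{k=1}^n c_k(t)\,c_k(s).
\]
Thus the process factors into the spatial covariance underlying $\muGPH^{\mathcal{D}}$ and a new, block-diagonal temporal kernel $K_n$. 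I would then exhibit a law-defining datum $\mathcal{D}_n$ whose associated measure $\muGPH^{\mathcal{D}_n}$ is exactly this centered Gaussian with covariance $C(x,y)K_n(t,s)$, inheriting the spatial data (the compatible $J$, spectral weights, regularity parameter $\reg$) from $\mathcal{D}$ and replacing only the temporal data by $K_n$. Pushing forward under the fibration and using $\Phi_n = \psi_1$ almost surely then identifies the law of $\Phi_n$ with the fibration push-forward of $\muGPH^{\mathcal{D}_n}$, namely $\muGP^{\mathcal{D}_n}$.

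The main obstacle is this last identification: verifying that the separable measure with covariance $C(x,y)K_n(t,s)$ genuinely arises from an admissible datum in the sense of Definition~\ref{def:ldd}. Positive-definiteness of $K_n$ is automatic since it is a Gram kernel $\sum_k c_k(t)c_k(s)$, and the required smoothness across the junctions is supplied precisely by the flat reparametrization $\beta$; what must be confirmed is that the admissible class is closed under replacing the constant-in-time temporal kernel of the autonomous $\mathcal{D}$ by $K_n$. If the framework of Definition~\ref{def:ldd} permits only temporal kernels of a restricted form, I would instead construct $\mathcal{D}_n$ by prescribing its covariance operator through its eigen-expansion in the basis of $\Laplace - \partial_t^2$, matching the separable kernel term by term, and then check the summability condition demanded of the regularity parameter $\reg$.
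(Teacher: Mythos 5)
Your proposal is correct and takes essentially the same route as the paper: the paper likewise couples the walk to i.i.d.\ autonomous Hamiltonians, concatenates their flows via a reparametrizing bump function (its Lemma~\ref{lem:composition_of_autonomous_flows}), and reads off a new \ldd\ from the resulting Hamiltonian, which is linear in the steps. The ``main obstacle'' you flag dissolves at once: interchanging the two sums in $\widetilde{H}_n(t,x)=\sum_{k=1}^n c_k(t)\sum_{m\ge 1} w_m Z^{(k)}_m e_m(x)$ exhibits the new temporal processes $\tilde{Z}_m(t)=\sum_{k=1}^n c_k(t) Z^{(k)}_m$ explicitly (this is exactly how the paper concludes), and Definition~\ref{def:ldd} imposes no structural restriction on the temporal processes beyond smooth means/covariances and uniform bounds, all of which these finite linear combinations obviously satisfy, so no eigen-expansion in $\Laplace-\partial_t^2$ is needed.
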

\begin{theorem}\label{thm:random-walk-symmetric}
    Let $\mathcal{D}$ be centered autonomous \ldd .
    Further, let $(\Phi_n)_{n \in \Nats}$ be the random walk defined in~\eqref{eq:random-walk}.
    Then for any $n \in \Nats$, the law of $\Phi_n$ is given by $\muGP^{\mathcal{D}_n}$ for a time-symmetric and centered \ldd\ $\mathcal{D}_n$.
    In particular, the law of $\Phi_n$ is invariant under inversion.
\end{theorem}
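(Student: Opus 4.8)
The plan is to deduce the statement from Theorem~\ref{thm:random-walk-law} together with Theorem~\ref{thm:inversion-invariance}. By Theorem~\ref{thm:random-walk-law} the law of $\Phi_n$ is already of the form $\muGP^{\mathcal{D}_n}$ for some \ldd\ $\mathcal{D}_n$, so the entire content to be shown is that, under the extra hypothesis that $\mathcal{D}$ is centered, this $\mathcal{D}_n$ can be taken to be both \emph{time-symmetric} and \emph{centered}. Once that is established, the final assertion — invariance of the law of $\Phi_n$ under inversion — is immediate from Theorem~\ref{thm:inversion-invariance} applied to $\mathcal{D}_n$.

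To get at $\mathcal{D}_n$ I would recall the explicit description of the generating Hamiltonian of $\Phi_n$ used in the proof of Theorem~\ref{thm:random-walk-law}. Writing $\phi_i = \phi_{H_i}^1$ with $H_1,\dots,H_n$ independent and identically distributed autonomous samples from $\muGPH^{\mathcal{D}}$, the composition $\Phi_n = \phi_n \circ \cdots \circ \phi_1$ is the time-$1$ map of the \emph{concatenated} isotopy, and since each $H_i$ is autonomous this concatenation is generated by the time-dependent Hamiltonian $\tilde H$ with $\tilde H_t = n\,H_i$ for $t \in [(i-1)/n,\, i/n]$ — the concatenation of flows, as opposed to their pointwise-in-time composition, introduces no conjugation. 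The law of $\tilde H$ is again a Gaussian measure $\muGPH^{\mathcal{D}_n}$ on $C^\infty_0$ whose covariance kernel is block-diagonal across the $n$ time-blocks, each block equal to $n^2$ times the autonomous covariance kernel of $\mathcal{D}$.

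The two required properties now follow from elementary features of this description. Centeredness is immediate: if $\mathcal{D}$ is centered then each $H_i$, hence each $n H_i$, hence the whole concatenated process $\tilde H$, has vanishing mean, so $\mathcal{D}_n$ is centered. For time-symmetry I would invoke \emph{exchangeability}: the reflection $t \mapsto 1-t$ maps the $i$-th time-block onto the $(n-i+1)$-th, so the time-reversed process $t \mapsto \tilde H_{1-t}$ is the concatenation of the \emph{same} Hamiltonians in the reversed order $H_n, \dots, H_1$. Because $H_1,\dots,H_n$ are independent and identically distributed, this reversed tuple has the same joint law as $H_1,\dots,H_n$; consequently the covariance kernel of $\tilde H$ is invariant under the simultaneous reflection $(t,s) \mapsto (1-t,\,1-s)$, which — the mean already being zero — is exactly the time-symmetry of $\mathcal{D}_n$.

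I expect the main obstacle to be bookkeeping rather than conceptual: one must match the block structure under the reflection $t \mapsto 1-t$ precisely against the chosen definition of a time-symmetric \ldd, and one must ensure that whatever smoothing or time-reparametrization is used in the proof of Theorem~\ref{thm:random-walk-law} to turn the piecewise-constant profile $t \mapsto n H_{\lceil nt\rceil}$ into an admissible smooth datum is itself chosen symmetrically about $t = 1/2$, so that it does not break the reflection symmetry. Granting this, the covariance of $\mathcal{D}_n$ is reflection-invariant and its mean vanishes, so Theorem~\ref{thm:inversion-invariance} applies verbatim and yields the inversion-invariance of the law of $\Phi_n$.
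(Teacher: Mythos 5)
Your proposal is correct and takes essentially the same route as the paper: it reuses the concatenated-Hamiltonian description from the proof of Theorem~\ref{thm:random-walk-law}, gets centredness by linearity of the mean, gets time-symmetry from the fact that reflection $t \mapsto 1-t$ permutes the i.i.d.\ blocks (with the smoothing profile chosen symmetric about $1/2$, which is exactly the condition the paper imposes on the bump function $\beta$), and then concludes via Theorem~\ref{thm:inversion-invariance}. The bookkeeping caveat you flag about the smoothing is precisely the point the paper addresses, so there is no gap.
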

Furthermore, the random walk $(\Phi_n)_{n \in \Nats}$ explores all of $\Ham(M,\omega)$ for suitable $\mathcal{D}$,
as we will establish with the next theorem.
This is of course related to the fact that $\Aut(M,\omega)$ generates $\Ham(M,\omega)$.
Note that the word metric on $\Ham(M,\omega)$ with respect to the generating set $\Aut(M,\omega)$ is called the \textit{autonomous norm}
and defined by 
\begin{equation}
    \norm{\phi}_{\Aut} \coloneqq \inf\left\{n \in \Nats \mid \exists \phi_1,\ldots,\phi_n \in \Aut(M,\omega) \text{ s.t. } \phi = \phi_1 \circ \cdots \circ \phi_n\right\}.
\end{equation}
This induces a bi-invariant metric $d_{\Aut}(\phi,\psi) = \norm{\phi^{-1} \psi}_{\Aut}$ on $\Ham(M,\omega)$.
Many of the properties of this norm and its induced metric remain mysterious.
On the one hand, it is known that it is bounded in the case of $M = \Reals^{2n}$, see~\cite{brandenbursky-kedra-2017}.
On the other hand, it is known to be unbounded when $M$ is a surface, see~\cite{brandenbursky-2014, brandenbursky-kedra-shelukhin-2018}.
Of course, by definition of our random walk, we have $\norm{\Phi_n}_{\Aut} \leq n$ for all $n \in \Nats$.
Thus, the speed at which $\Phi_n$ can ``exhaust'' $\Ham(M,\omega)$ is directly related to the growth of the autonomous norm,
as is formalized by the following theorem:
\begin{theorem}\label{thm:random-walk-filling}
    Let $\mathcal{D}$ be an autonomously exhaustive \ldd\ and $(\Phi_n)_{n \in \Nats}$ be the random walk defined in~\eqref{eq:random-walk}.
    For any $\phi \in \Ham(M,\omega)$ and $\eps > 0$,
    \[\Prob\left[ \Phi_n \in B_{\Hof}(\phi,\eps)\right] > 0\]
    for all $n \geq \norm{\phi}_{\Aut}$.
\end{theorem}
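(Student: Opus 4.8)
The plan is to combine three ingredients: the definition of the autonomous norm, the bi-invariance of the Hofer metric, and the full-support property encoded in the hypothesis that $\mathcal{D}$ is autonomously exhaustive. I expect the argument to be short, the only genuine content being the interplay between the independence of the steps and a Lipschitz estimate for composition; the one place requiring care is the precise reading of the hypothesis.

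First I would unwind the definition of $\norm{\phi}_{\Aut}$. Fix $\phi \in \Ham(M,\omega)$ and $\eps > 0$, and set $m = \norm{\phi}_{\Aut} \leq n$; by definition there are $\chi_1,\ldots,\chi_m \in \Aut(M,\omega)$ with $\phi = \chi_1 \circ \cdots \circ \chi_m$. For any $n \geq m$ I can pad this factorization with $n-m$ copies of $\id_M$ — which lies in $\Aut(M,\omega)$, being the time-$1$ flow of the zero Hamiltonian — and reindex to obtain $\psi_1,\ldots,\psi_n \in \Aut(M,\omega)$ with $\phi = \psi_n \circ \cdots \circ \psi_1$, so that the composition order matches that of the random walk~\eqref{eq:random-walk}.

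Next I would exploit the bi-invariance of $d_{\Hof}$ to linearize the effect of perturbing each factor. Changing the factors one at a time and using left- and right-invariance at each step yields the telescoping bound
\[
    d_{\Hof}(\alpha_n \circ \cdots \circ \alpha_1,\ \beta_n \circ \cdots \circ \beta_1) \leq \sum_{i=1}^n d_{\Hof}(\alpha_i,\beta_i)
\]
for all $\alpha_i,\beta_i \in \Ham(M,\omega)$. Taking $\beta_i = \psi_i$ shows that if each factor satisfies $\alpha_i \in B_{\Hof}(\psi_i,\eps/n)$, then $\alpha_n \circ \cdots \circ \alpha_1 \in B_{\Hof}(\phi,\eps)$.

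Finally I would assemble this probabilistically. Set $V_i := B_{\Hof}(\psi_i,\eps/n) \cap \Aut(M,\omega)$, a nonempty Hofer-open subset of $\Aut(M,\omega)$ since it contains $\psi_i$. As $\mathcal{D}$ is autonomously exhaustive, $\muAut^{\mathcal{D}}$ has full support on $\Aut(M,\omega)$, so $\muAut^{\mathcal{D}}(V_i) > 0$ for every $i$. Because $\phi_1,\ldots,\phi_n$ are i.i.d.\ with law $\muAut^{\mathcal{D}}$, independence together with the event inclusion from the previous step gives
\[
    \Prob\left[\Phi_n \in B_{\Hof}(\phi,\eps)\right] \geq \Prob\left[\phi_i \in V_i \text{ for all } i\right] = \prod_{i=1}^n \muAut^{\mathcal{D}}(V_i) > 0 .
\]
The step I expect to require the most care is using the hypothesis correctly: I must confirm that \emph{autonomously exhaustive} delivers positive $\muAut^{\mathcal{D}}$-mass on arbitrary Hofer-open subsets of $\Aut(M,\omega)$ — including the small neighborhoods of the padding identity $\id_M$ — rather than merely full support of $\muGP^{\mathcal{D}}$ on the closure $\overline{\Aut}(M,\omega)$, whose limit points outside $\Aut(M,\omega)$ cannot serve as genuine steps of the walk. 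The remaining pieces, namely the bi-invariance estimate and the independence, are routine.
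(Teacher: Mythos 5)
Your proof is correct and follows essentially the same route as the paper's: factor $\phi$ into $n$ autonomous steps (you pad with $\id_M$ explicitly, the paper does so implicitly), telescope via bi-invariance of $d_{\Hof}$ so that perturbing each factor by $\eps/n$ (the paper uses $\eps/(n+1)$) keeps the composition in $B_{\Hof}(\phi,\eps)$, and conclude by independence together with positivity of the $\muAut^{\mathcal{D}}$-measure of Hofer balls around elements of $\Aut(M,\omega)$. The point you flag as delicate — that autonomous exhaustiveness must yield positive mass on Hofer-open subsets of $\Aut(M,\omega)$ itself, not merely full support of $\muGP^{\mathcal{D}}$ on $\overline{\Aut}(M,\omega)$ — is exactly what the paper invokes at the corresponding step, via its support theorem for autonomous law-defining data.
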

\rmk{
    One hope is that the behavior of the random walk might be able to illuminate how quickly the shells $S(n) = \{\phi \in \Ham(M,\omega) \mid \norm{\phi}_{\Aut} = n\}$ fill up $\Ham(M,\omega)$.
    In general, this question has been hard to attack.
    In~\cite{polterovich-shelukhin-2016}, Polterovich and Shelukhin show that there are elements of $S(2)$ which are arbitrarily far away (in the Hofer distance)
    from $S(1)$. However, already the question of whether elements of $S(3)$ can be arbitrarily far from $S(2)$ is wide open.
}
\rmk{
We will see in Section~\ref{sec:existence-of-ldd}, there are canonical ways to choose a \ldd\ if $\reg$ and the almost complex structure $J$ are fixed.
In particular, suitable \ldds\ always exist.
Note that manifolds such as $\T^n, \C P^n$, and 
in general all K\"ahler manifolds $(M,\omega,I)$
come with a preferred complex structure.
It is of course sensible to choose this complex structure as part of the \ldd .
The law-defining datum $\mathcal{D}_2$ from the proof of Theorem~\ref{thm:existence-of-ldd} 
then gives rise to a measure $\muGP^{\mathcal{D}_2}$ that has full support on $\Ham(M,\omega)$
and satisfies all the properties we would want to apply the aforementioned theorems.
This is a construction that is now canonical up to the choice of the regularity parameter $\reg$.
Given some Hofer-Lipschitz function $f: \Ham(M,\omega) \to \Reals$,
e.g.\ a spectral invariant, the map $\reg \mapsto \int_{\Ham(M,\omega)} f(\phi) \, d\muGP^{\mathcal{D}_{2,\reg}}(\phi)$
can then be seen as a completely intrinsic invariant of the K\"ahler manifold $(M,\omega,I)$.
Here $\mathcal{D}_{2,\reg}$ is the \ldd\ obtained from $\mathcal{D}_2$ by replacing the given regularity parameter with $\reg$.
Ratios of expected spectral invariants could similarly be seen as intrinsic invariants of $(M,\omega,I)$.
}
\subsection{Limiting behavior and $C^0$-symplectic geometry}
Given a \ldd\ $\mathcal{D}$, one can consider the family of \ldds\ $\mathcal{D}_\reg$ for $\reg > 0$ which are obtained from $\mathcal{D}$ by replacing the given regularity parameter with $\reg$.
We now wish to understand what happens to the measures $\muGP^{\mathcal{D}_\reg}$ as $\reg \to 0$ or $\reg \to \infty$.

The limit as $\reg \to \infty$ is easily understood.
For $\reg$ large, the measure $\muGP^{\mathcal{D}_\reg}$ is concentrated more and more at the identity if $\mathcal{D}$ is centered.
The following theorem makes this precise.
\begin{theorem}\label{thm:limit-reg-infty}
    Let $\mathcal{D}$ be a centered \ldd .
    As $\reg \to \infty$, we have $\muGP^{\mathcal{D}_\reg} \weakto \delta_{\id_M}$, i.e.\ the measure $\muGP^{\mathcal{D}_\reg}$ converges weakly to $\delta_{\id_M}$.
\end{theorem}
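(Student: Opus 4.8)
The plan is to reduce the assertion of weak convergence to a concentration estimate for the Hofer norm, and then to push that estimate down from the Gaussian measure on $C^\infty_0$ to $\Ham(M,\omega)$.

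First I would recall that a net of probability measures converges weakly to a Dirac mass $\delta_{\id_M}$ as soon as it assigns asymptotically full mass to every open neighborhood of $\id_M$ (the relevant direction of the portmanteau theorem for point masses). Since the Hofer metric is bi-invariant we have $d_{\Hof}(\id_M,\phi) = \norm{\phi}_{\Hof}$, so the Hofer-open neighborhoods of $\id_M$ are exactly the balls $\{\phi \mid \norm{\phi}_{\Hof} < \eps\}$. Thus it suffices to prove that for every $\eps > 0$,
\[
\muGP^{\mathcal{D}_\reg}\left(\{\phi \in \Ham(M,\omega) \mid \norm{\phi}_{\Hof} \geq \eps\}\right) \longrightarrow 0 \quad \text{as } \reg \to \infty.
\]

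Next I would transfer this to the level of generating Hamiltonians. By construction $\muGP^{\mathcal{D}_\reg}$ is the push-forward of the Gaussian measure $\muGPH^{\mathcal{D}_\reg}$ on $C^\infty_0$ under the fibration $H \mapsto \phi_H^1$, and the Hofer norm obeys the elementary bound
\[
\norm{\phi_H^1}_{\Hof} \leq \int_0^1 \osc_M(H_t)\,dt \leq 2\,\norm{H}_{C^0([0,1]\times M)}.
\]
Hence the event $\{\norm{\phi}_{\Hof} \geq \eps\}$ pulls back into $\{\norm{H}_{C^0} \geq \eps/2\}$, so that by Markov's inequality
\[
\muGP^{\mathcal{D}_\reg}\!\left(\norm{\phi}_{\Hof} \geq \eps\right) \leq \muGPH^{\mathcal{D}_\reg}\!\left(\norm{H}_{C^0} \geq \tfrac{\eps}{2}\right) \leq \frac{2}{\eps}\,\E_{\muGPH^{\mathcal{D}_\reg}}\!\left[\norm{H}_{C^0}\right],
\]
the Markov step being legitimate because the supremum norm of the (almost surely smooth) Gaussian field has finite expectation by Fernique's theorem, a fact already underlying Theorem~\ref{thm:expected-hofer-norm-finite}.

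It therefore remains to show that $\E_{\muGPH^{\mathcal{D}_\reg}}\!\left[\norm{H}_{C^0}\right] \to 0$ as $\reg \to \infty$, and this is the heart of the matter. Here one invokes the explicit dependence of the covariance of $\muGPH^{\mathcal{D}_\reg}$ on $\reg$ built into Definition~\ref{def:ldd}: expanding the field in the $L^2$-eigenbasis $\{e_k\}$ of $\Laplace - \partial_t^2$ as $H = \sum_k a_k(\reg)\,\xi_k\, e_k$, with $\xi_k$ i.i.d.\ standard normal and the modes constant in the $M$-variable excluded by mean-normalization, centeredness of $\mathcal{D}$ makes the field mean-zero, while increasing $\reg$ damps every spectral weight $a_k(\reg)$. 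One then argues that both the pointwise variance $\sum_k a_k(\reg)^2 e_k(t,x)^2$ and the canonical metric $\rho_\reg\big((t,x),(s,y)\big)^2 = \E\big[(H(t,x)-H(s,y))^2\big]$ shrink uniformly to $0$, so that Dudley's entropy bound
\[
\E_{\muGPH^{\mathcal{D}_\reg}}\!\left[\norm{H}_{C^0}\right] \leq C\int_0^{\infty}\sqrt{\log N(\rho_\reg,\eta)}\,d\eta,
\]
with $N(\rho_\reg,\eta)$ the covering numbers of $[0,1]\times M$ in the metric $\rho_\reg$, drives the left-hand side to $0$. I expect the main obstacle to be precisely this last step: extracting from the definition of the law-defining datum a \emph{uniform}, quantitative collapse of the covariance as $\reg \to \infty$, strong enough to control the entropy integral rather than merely each individual summand. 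An alternative packaging of the same estimate would instead track the $\reg$-dependence of the constants $R(\reg),C(\reg)$ in the sub-Gaussian tail bound of Theorem~\ref{thm:hofer-norm-subgaussian} and show $R(\reg)\to 0$ together with $C(\reg)\to 0$, but either route bottlenecks at the same covariance computation.
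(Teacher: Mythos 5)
Your reduction steps are sound and coincide with the paper's own strategy: by the portmanteau theorem it suffices to show that every Hofer ball around $\id_M$ carries asymptotically full mass, and by the Lipschitz estimate $\norm{\phi_H^1}_{\Hof} \leq \osc(H) \leq 2\norm{H}_\infty$ (Lemma~\ref{lem:H_to_phi_H_Lipschitz}) this transfers to showing that $\norm{H^{(\reg)}}_\infty \to 0$ in probability under $\muGPH^{\mathcal{D}_\reg}$; your use of Markov's inequality in place of the paper's Borell--TIS bound (Lemma~\ref{lem:Borell-TIS-infinity}) is a harmless variation, since both only require $\E[\norm{H^{(\reg)}}_\infty] \to 0$ (plus, for Borell--TIS, that the maximal pointwise variance tends to $0$). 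The genuine gap is in how you propose to prove that expectation bound, which you yourself flag as the bottleneck. Your expansion $H = \sum_k a_k(\reg)\,\xi_k\, e_k$ with $\xi_k$ i.i.d.\ standard normal in an eigenbasis of $\Laplace - \partial_t^2$ is not available for a general centered \ldd: Definition~\ref{def:ldd} only requires the $Z_n$ to be Gaussian processes on $[0,1]$ with smooth mean and covariance and the uniform bounds $\Cov[Z_n(t),Z_n(t)] < C$, $\E[\norm{Z_n}_\infty] < D$; they need not be independent across $n$ (that is the separate \emph{frequency independent} property in Definition~\ref{def:ldd-types}), and they need not have any prescribed spectral form in $t$. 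The i.i.d.-coefficient structure you assume holds only for special choices such as $\mathcal{D}_2$ in Theorem~\ref{thm:existence-of-ldd}. Consequently the Dudley-entropy route --- which for a non-diagonalizable covariance would require a uniform-in-$\reg$ control of covering numbers in the canonical metric --- is left unresolved, and as stated your argument does not close.

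The missing step is in fact far more elementary and needs no entropy bound at all. Applying the triangle inequality directly to the defining series~\eqref{eq:GP_H},
\begin{equation*}
\E\left[\norm*{H^{(\reg)}}_\infty\right] \;\leq\; \sum_{n \geq 1} w_n^{(\reg)}\, \E\left[\norm{Z_n}_\infty\right]\, \norm{e_n}_\infty \;\leq\; D\, C'' \sum_{n \geq 1} \exp\left(-\tfrac{1}{2}\reg\lambda_n\right) \lambda_n^{\dim M/2},
\end{equation*}
using the uniform bound $\E[\norm{Z_n}_\infty] < D$ from Definition~\ref{def:ldd} and the eigenfunction estimate~\eqref{eq:C0-estimate-eigenfunctions}. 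For $\reg \geq \reg_0 > 0$ the summands are dominated by the summable sequence $\exp(-\tfrac{1}{2}\reg_0\lambda_n)\lambda_n^{\dim M/2}$ (summability follows from Weyl's law), and each summand tends to $0$ as $\reg \to \infty$, so the whole sum tends to $0$ by dominated convergence. This is exactly how the paper argues, reusing the bound established in the proof of Lemma~\ref{lem:expected-osc-norm}; the same termwise computation applied to the covariance gives $\sup_{t,x}\Var[H^{(\reg)}(t,x)] \to 0$, which is what feeds the Borell--TIS tail estimate in the paper's version. With this substitution for your spectral/entropy step, the rest of your proof goes through.
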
 

The limit $\reg \to 0$ is more subtle.
The regularity of the Hamiltonian functions becomes worse and worse in this limit, corresponding to more and more ``energy''
being added to the Hamiltonian function. We have to impose additional restrictions to obtain a well-defined limiting measure.
This is done by choosing a \textit{limiting type}, see Definition~\ref{def:ldd-types}.
Since the resulting limiting object will no longer be smooth Hamiltonian diffeomorphisms,
we have to choose a suitable completion or other enlargement of $\Ham(M,\omega)$.
We thereby enter the realm of $C^0$-symplectic geometry.
\begin{figure}[h]
    \begin{center}
        \begin{tikzcd}
        {\Ham(M,\omega)} \arrow[d, phantom, "\bigcap"] \arrow[drr, hook]                   &  &                                                    &  &                                   \\
        {\Hameo(M,\omega)} \arrow[d, hook] \arrow[rr, hook] &  & {\widehat{\Ham}{}^{\Hof}(M,\omega)} \arrow[rr, hook] &  & {\widehat{\Ham}^\gamma(M,\omega)} \\
        {\overline{\Ham}(M,\omega)}                          &  &                                                    &  &                                  
        \end{tikzcd}
    \end{center}
    \caption{Various enlargements of the space of Hamiltonian diffeomorphisms and their relations.
    }\label{fig:enlargements}
\end{figure}
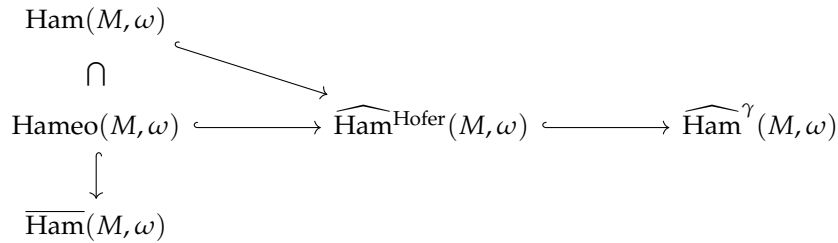

There are multiple reasonable choices, see Figure~\ref{fig:enlargements} for an illustration of some possible choices
of enlargements of $\Ham(M,\omega)$.
The easiest way to obtain a limiting object is to consider the metric completion of $\Ham(M,\omega)$ with respect to the Hofer metric.
This is a purely metric construction and no longer a space of maps.
This completion $\widehat{\Ham}{}^{\Hof}(M,\omega)$ has not been the object of intense study so far.
However, there is a finer completion of $\Ham(M,\omega)$ that has been studied in the literature,
namely the \textit{Humilière completion} $\widehat{\Ham}^\gamma(M,\omega)$
introduced in~\cite{humiliere-2008}.
This is the metric completion of $\Ham(M,\omega)$ with respect to the spectral metric $\gamma$.
Note that there is a continuous inclusion $\widehat{\Ham}{}^{\Hof}(M,\omega) \into \widehat{\Ham}^\gamma(M,\omega)$.
Even though the elements of 
these completions are not diffeomorphisms, 
some can still be interpreted geometrically.
This was first observed by Humilière in~\cite[Section 5]{humiliere-2008} and 
recently studied in more detail by Viterbo in~\cite{viterbo-2024}.
Thus, it is natural 
to ask when we obtain a limiting measure on $\widehat{\Ham}{}^{\Hof}(M,\omega)$.
By pushing this measure forward under the inclusion, we also obtain a probability measure on $\widehat{\Ham}^\gamma(M,\omega)$.
The following theorem and corollary make this precise:
\begin{theorem}\label{thm:limit-reg-0-C0}
    Let $\mathcal{D}$ be a \ldd\ of $C^0$-limiting type.
    Then there exists a Borel probability measure $\widehat{\muGP}^{\mathcal{D}_0}$
    on $\widehat{\Ham}{}^{\Hof}(M,\omega)$ such that
    as $\reg \to 0$, we have $\muGP^{\mathcal{D}_\reg} \weakto \widehat{\muGP}^{\mathcal{D}_0}$.
\end{theorem}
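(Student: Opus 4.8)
The plan is to prove the convergence upstairs, at the level of the Gaussian measures $\muGPH^{\mathcal{D}_\reg}$ on the space of Hamiltonians, and then transport it downstairs through the time-one flow map. Write $F\colon C^\infty_0 \to \Ham(M,\omega)$, $H \mapsto \phi_H^1$, for the map realizing $\muGP^{\mathcal{D}} = F_\ast\muGPH^{\mathcal{D}}$. The structural input that makes everything work is the standard bi-invariance estimate
\[
    d_{\Hof}(\phi_H^1, \phi_G^1) \;\le\; \int_0^1 \osc(H_t - G_t)\,dt \;\le\; 2\norm{H-G}_{C^0},
\]
obtained by noting that $(\phi_G^t)^{-1}\phi_H^t$ is generated by $(H_t-G_t)\circ\phi_G^t$ and that oscillation is invariant under precomposition by a diffeomorphism. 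Hence $F$ is Lipschitz from $C^\infty_0$, equipped with the $C^0$-norm on $[0,1]\times M$, into $(\Ham(M,\omega),d_{\Hof})$. Since $(\widehat{\Ham}{}^{\Hof}(M,\omega),d_{\Hof})$ is complete by definition, $F$ extends uniquely to a Lipschitz map $\widehat F\colon \overline{C^\infty_0}{}^{\,C^0} \to \widehat{\Ham}{}^{\Hof}(M,\omega)$, where $\overline{C^\infty_0}{}^{\,C^0}$ is the closure of $C^\infty_0$ inside the separable Banach space $C^0([0,1]\times M)$. With $\widehat F$ in hand, the continuous mapping theorem reduces the statement to producing a limiting Gaussian measure $\widehat{\muGPH}^{\mathcal{D}_0}$ on $\overline{C^\infty_0}{}^{\,C^0}$ with $\muGPH^{\mathcal{D}_\reg}\weakto\widehat{\muGPH}^{\mathcal{D}_0}$; I would then simply set $\widehat{\muGP}^{\mathcal{D}_0}\coloneqq\widehat F_\ast\widehat{\muGPH}^{\mathcal{D}_0}$.

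To produce this limiting Gaussian measure I would work through the covariance structure. Each $\muGPH^{\mathcal{D}_\reg}$ is Gaussian, determined by a mean $m_\reg$ and a covariance operator $K_\reg$; the $C^0$-limiting hypothesis will ensure $m_\reg$ converges in $C^0$, so I concentrate on $K_\reg$, which is diagonal in the eigenbasis $\{e_k\}$ of $\Laplace-\partial_t^2$ with variances of the form $\sigma_{k,\reg}^2 = f_\reg(\lambda_k)$ for an explicit spectral profile $f_\reg$. The definition of a $C^0$-limiting-type \ldd\ should be engineered precisely so that the profiles $f_\reg$ converge to a limit $f_0$ for which the variances $\sigma_k^2 = f_0(\lambda_k)$ decay fast enough, relative to the sup-norm growth $\norm{e_k}_{C^0}$ of the eigenfunctions, that the Gaussian series $\sum_k \sigma_k\, g_k\, e_k$ (with $g_k$ independent standard normals) converges uniformly almost surely. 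This is exactly where the $C^0$-limiting hypothesis does its work: it pins down the borderline regularity at which the limiting covariance $K_0 = \sum_k \sigma_k^2\, e_k\otimes e_k$ still defines a genuine Gaussian random element of $C^0([0,1]\times M)$, rather than only of some negative-order Sobolev space.

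With the candidate limit $\widehat{\muGPH}^{\mathcal{D}_0}$ (the Gaussian with mean $m_0$ and covariance $K_0$) identified, I would finish with the two standard ingredients for weak convergence of measures on a Polish space. First, convergence of finite-dimensional distributions: projecting onto $\linspan\{e_1,\dots,e_N\}$ yields a finite-dimensional Gaussian with covariance $\operatorname{diag}(\sigma_{k,\reg}^2)_{k\le N}$, which converges to $\operatorname{diag}(\sigma_k^2)_{k\le N}$ by the profile convergence above, and these coordinate functionals form a convergence-determining class. Second, tightness of $\{\muGPH^{\mathcal{D}_\reg}\}_{\reg}$ in $C^0([0,1]\times M)$: by the Arzel\`a--Ascoli criterion this amounts to a uniform-in-$\reg$ modulus-of-continuity bound for the sample paths, which I would extract from a Dudley entropy estimate for the canonical Gaussian pseudometric $d_\reg(x,y)^2 = \E\,\abs{X_\reg(x)-X_\reg(y)}^2$. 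Finite-dimensional convergence together with tightness gives $\muGPH^{\mathcal{D}_\reg}\weakto\widehat{\muGPH}^{\mathcal{D}_0}$ by Prokhorov's theorem, and pushing forward by $\widehat F$---which agrees with $F$ on the smooth functions supporting each $\muGPH^{\mathcal{D}_\reg}$---completes the proof.

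The hard part will be the tightness step, and more precisely the requirement that the modulus-of-continuity (equivalently Dudley entropy) bound be uniform as $\reg\to 0$. This is exactly the regime in which the smoothness of the sample paths is being lost, so the estimates that are comfortable for fixed $\reg$ degenerate at the endpoint. Verifying that the $C^0$-limiting condition is strong enough to keep the processes $X_\reg$ uniformly equicontinuous in probability down to $\reg = 0$ is the technical crux on which the whole statement rests.
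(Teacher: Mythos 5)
Your overall architecture --- exploit the Lipschitz estimate $d_{\Hof}(\phi_H^1,\phi_G^1)\le 2\norm{H-G}_\infty$ (the paper's Lemma~\ref{lem:H_to_phi_H_Lipschitz}), extend the flow map to the relevant completions, and push a limiting measure on Hamiltonians forward --- is the same as the paper's. The genuine gap is that you never actually produce the limiting measure upstairs: you reduce weak convergence of $\muGPH^{\mathcal{D}_\reg}$ to finite-dimensional convergence plus tightness, and then explicitly leave tightness (a uniform-in-$\reg$ Dudley/modulus bound, precisely at the endpoint where sample-path regularity degenerates) as ``the technical crux on which the whole statement rests.'' That is the theorem, not a detail, so the proposal is incomplete exactly where it matters. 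Moreover, the entropy route you sketch is the hard way around, and it is not what the $C^0$-limiting hypothesis is designed for. That hypothesis is a crude summability condition, $\sum_n \abs{\alpha_n}\,\norm{e_n}_\infty<\infty$ with $Z_n$ a version of $\alpha_n Z_1$, and the paper (Lemma~\ref{lem:GPH-limit-C0}) uses it in a coupling argument that bypasses tightness entirely: all the processes $H^{(\reg)}=\sum_n w_n^{(\reg)}Z_n(t)e_n(x)$ are realized on one probability space with the \emph{same} $Z_n$, and only the deterministic weights $w_n^{(\reg)}=e^{-\lambda_n\reg/2}\uparrow 1$ vary. Since $\E[\norm{Z_n}_\infty]\le\abs{\alpha_n}\,\E[\norm{Z_1}_\infty]$, the series $\sum_n Z_n(t)e_n(x)$ converges uniformly almost surely, and dominated convergence gives $H^{(\reg)}\to\hat H$ almost surely in $C^0$; almost-sure convergence implies convergence in law, and the Lipschitz estimate makes $\phi^1_{H^{(\reg)}}$ almost surely Cauchy in $\widehat{\Ham}{}^{\Hof}(M,\omega)$, whose limit's law is $\widehat{\muGP}^{\mathcal{D}_0}$. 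No finite-dimensional marginals, no Prokhorov, no entropy. (If you insist on your route, tightness can be rescued by the same mechanism --- split the series at $N$; the tail has expected sup-norm at most $2\,\E[\norm{Z_1}_\infty]\sum_{n>N}\abs{\alpha_n}\norm{e_n}_\infty$ uniformly in $\reg$, and the head is a finite sum, equicontinuous uniformly in $\reg$ since $0<w_n^{(\reg)}\le 1$ --- but that is just the coupling estimate in disguise.)

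A secondary factual error: your claim that the covariance of $\muGPH^{\mathcal{D}_\reg}$ is diagonal in an eigenbasis of $\Laplace-\partial_t^2$ with variances of the form $f_\reg(\lambda_k)$ is unwarranted. For a $C^0$-limiting type \ldd\ (centered, weakly frequency unbiased) the covariance is
\begin{equation*}
    \Cov\left[H^{(\reg)}(t,x),\,H^{(\reg)}(s,y)\right]=\kappa_1(t,s)\sum_{n\ge 1}\left(w_n^{(\reg)}\right)^2\alpha_n^2\, e_n(x)e_n(y),
\end{equation*}
which is diagonal only in the spatial eigenbasis; the time factor $\kappa_1$ is an arbitrary smooth covariance and need not diagonalize in Fourier modes of $t$ (that happens only for special choices such as $\mathcal{D}_2$, cf.\ Remark~\ref{rmk:D2-RKHS}). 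Also, $C^0$-limiting type includes centeredness, so the means $m_\reg$ vanish identically and there is nothing to check about them.
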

\begin{corollary}
    Let $\mathcal{D}$ be a \ldd\ of $C^0$-limiting type.
    Then there exists a Borel probability measure $\widehat{\muGP}^{\mathcal{D}_0}$
    on $\widehat{\Ham}^{\gamma}(M,\omega)$ such that
    as $\reg \to 0$, we have $\muGP^{\mathcal{D}_\reg} \weakto \widehat{\muGP}^{\mathcal{D}_0}$.
\end{corollary}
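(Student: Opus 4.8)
The plan is to obtain the measure on $\widehat{\Ham}^\gamma(M,\omega)$ as the push-forward of the measure $\widehat{\muGP}^{\mathcal{D}_0}$ produced by Theorem~\ref{thm:limit-reg-0-C0} under the continuous inclusion $\iota \colon \widehat{\Ham}{}^{\Hof}(M,\omega) \into \widehat{\Ham}^\gamma(M,\omega)$, and then to transport the weak convergence along $\iota$ by the continuous mapping theorem. Concretely, I would first record that $\iota$ is continuous (as noted in the text above) and hence Borel measurable, so that $\iota_\ast \widehat{\muGP}^{\mathcal{D}_0}$ is a well-defined Borel probability measure on $\widehat{\Ham}^\gamma(M,\omega)$; its total mass equals $1$ since push-forward preserves the mass of the whole space. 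Following the statement, I would abuse notation and also denote this push-forward by $\widehat{\muGP}^{\mathcal{D}_0}$.

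Next I would establish the weak convergence directly against test functions. For any bounded function $g \colon \widehat{\Ham}^\gamma(M,\omega) \to \Reals$ that is continuous with respect to the spectral metric $\gamma$, the composite $g \circ \iota$ is a bounded function on $\widehat{\Ham}{}^{\Hof}(M,\omega)$ that is continuous with respect to the Hofer metric, precisely because $\iota$ is $(\Hof,\gamma)$-continuous. By the change-of-variables formula for push-forward measures, for each $\reg > 0$ we have
\[
\int_{\widehat{\Ham}^\gamma} g \, d\bigl(\iota_\ast \muGP^{\mathcal{D}_\reg}\bigr)
= \int_{\widehat{\Ham}{}^{\Hof}} (g \circ \iota) \, d\muGP^{\mathcal{D}_\reg},
\]
and the analogous identity holds for $\widehat{\muGP}^{\mathcal{D}_0}$. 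Applying the weak convergence $\muGP^{\mathcal{D}_\reg} \weakto \widehat{\muGP}^{\mathcal{D}_0}$ on $\widehat{\Ham}{}^{\Hof}(M,\omega)$ from Theorem~\ref{thm:limit-reg-0-C0} to the admissible test function $g \circ \iota$ shows that the left-hand side converges to $\int g \, d\bigl(\iota_\ast \widehat{\muGP}^{\mathcal{D}_0}\bigr)$ as $\reg \to 0$. Since $g$ was an arbitrary bounded $\gamma$-continuous function, this is exactly the assertion $\iota_\ast \muGP^{\mathcal{D}_\reg} \weakto \iota_\ast \widehat{\muGP}^{\mathcal{D}_0}$ in $\widehat{\Ham}^\gamma(M,\omega)$.

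Finally I would identify $\iota_\ast \muGP^{\mathcal{D}_\reg}$ with $\muGP^{\mathcal{D}_\reg}$ regarded directly as a measure on $\widehat{\Ham}^\gamma(M,\omega)$: the inclusion $\Ham(M,\omega) \into \widehat{\Ham}^\gamma(M,\omega)$ factors as $\iota$ composed with $\Ham(M,\omega) \into \widehat{\Ham}{}^{\Hof}(M,\omega)$, so the two ways of pushing $\muGP^{\mathcal{D}_\reg}$ into the Humilière completion coincide. This completes the argument. There is no genuine analytic obstacle here; the only points that require care are the measurability and mass-preservation of the push-forward (both immediate from continuity of $\iota$) and the bookkeeping that weak convergence on $\widehat{\Ham}^\gamma(M,\omega)$ is tested against $\gamma$-continuous functions, so that the $(\Hof,\gamma)$-continuity of $\iota$ is exactly what permits pulling these test functions back. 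In short, the corollary is a formal consequence of Theorem~\ref{thm:limit-reg-0-C0} together with the continuous mapping theorem applied to the continuous inclusion $\iota$.
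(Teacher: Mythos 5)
Your proposal is correct and is essentially the paper's own argument: the paper treats this corollary as an immediate consequence of Theorem~\ref{thm:limit-reg-0-C0} by pushing the limiting measure forward under the continuous inclusion $\widehat{\Ham}{}^{\Hof}(M,\omega) \into \widehat{\Ham}^\gamma(M,\omega)$, which is exactly your construction. Your explicit verification via pulled-back bounded $\gamma$-continuous test functions (the continuous mapping theorem) just fills in the routine details the paper leaves implicit.
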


Another approach towards a limiting object for $\Ham(M,\omega)$
is to try to generalize the concept 
of a Hamiltonian diffeomorphism to the $C^0$-setting.
This idea was introduced by Müller and Oh in~\cite{oh-muller-2007}.
They define the group of \textit{Hamiltonian homeomorphisms} or \textit{Hameomorphisms}
denoted by $\Hameo(M,\omega)$.
We briefly recall the definition:
\begin{definition}
    Let $(\phi^t)_{t\in [0,1]}$ be a (not necessarily smooth) isotopy of $M$.
    We say that $\phi^t$ is a \textit{continuous Hamiltonian flow} (also called a \textit{hameotopy}), if there exists a sequence of smooth Hamiltonians $H_i:[0,1] \times M \to \Reals$ such that:
    \begin{enumerate}
        \item The sequence of flows $\phi_{H_i}^t$ converges to $\phi^t$ in the $C^0$-distance. Furthermore, we require this convergence to be uniform in $t$.
        \item The sequence of Hamiltonian functions $H_i$ converges uniformly to a $C^0$-function $H: [0,1] \times M \to \Reals$,
        i.e.\ $\norm{H_i - H}_{\infty} \to 0$ as $ i \to \infty$.
    \end{enumerate}
    We say that $H$ generates $\phi^t$, denote $\phi_H^t \coloneqq \phi^t$, and call $H$ a \textit{$C^0$-Hamiltonian}.
\end{definition}
\begin{definition}\label{def:hameo}
    A homeomorphism $\phi: M \to M$ is called a \textit{Hamiltonian homeomorphism} or \textit{hameomorphism}, if there exists a continuous Hamiltonian flow $\phi_H^t$ such
    that $\phi_H^1 = \phi$.
    We denote the set of all Hamiltonian homeomorphisms by $\Hameo(M,\omega)$.
\end{definition}
\rmk{
    A small notational remark: We use the notation $\norm{\cdot}_\infty$ to denote the $\sup$-norm, i.e.
    $\norm{f}_\infty = \sup_{x \in X} \norm{f(x)}$, where $f: X \to E$ maps from a topological space $X$ to a normed vector space $E$.
    Thus, the $\infty$-norm is the norm inducing the $C^0$-topology.
    We use the same notation for the $L^\infty$-norm, when dealing with $L^\infty$-functions.
}
It is established in~\cite{oh-muller-2007} that $\Hameo(M,\omega)$ is normal subgroup of the $C^0$-closure 
of $\Symp(M,\omega) \subset \Homeo(M,\omega)$.
Furthermore, it has been established in~\cite{oh-muller-2007} that 
a continuous Hamiltonian $H$ generates a unique continuous Hamiltonian flow. 
It is also true that a continuous Hamiltonian flow has a unique\footnote{The uniqueness holds once properly normalized. Otherwise, one can say that the function is unique up to addition of a function of time} 
generating $C^0$-Hamiltonian.
This was shown by Viterbo in~\cite{viterbo-2006} and Buhovsky-Seyfaddini in~\cite{buhovsky-seyfaddini-2013}.
Thus, the relationship between Hameomorphisms and $C^0$-Hamiltonian functions 
is much like in the smooth case.
Thus, it is again natural to ask under what conditions we obtain a limiting measure on $\Hameo(M,\omega)$.
The answer is given by the following theorem:
\begin{theorem}\label{thm:limit-reg-0-C1}
    Let $\mathcal{D}$ be a \ldd\ of $C^1$-limiting type.
    Then there exists a Borel probability measure $\overline{\muGP}^{\mathcal{D}_0}$
    on $\Hameo(M,\omega)$ such that  
    as $\reg \to 0$, we have $\muGP^{\mathcal{D}_\reg} \weakto \overline{\muGP}^{\mathcal{D}_0}$.
\end{theorem}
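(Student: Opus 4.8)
The plan is to establish the convergence at the level of the generating Hamiltonians and then transport it to $\Hameo(M,\omega)$ through the flow map, viewing each $\muGP^{\mathcal{D}_\reg}$ as a measure on $\Hameo(M,\omega)$ equipped with the $C^0$-topology via the inclusion $\Ham(M,\omega) \hookrightarrow \Hameo(M,\omega)$. First I would fix a single probability space carrying i.i.d.\ standard Gaussians $\{\xi_k\}_{k \in \Nats}$ and realize every measure $\muGPH^{\mathcal{D}_\reg}$ simultaneously through the Karhunen--Lo\`eve expansion $H^\reg = \sum_k \xi_k\, c_k(\reg)\, e_k$, where $\{e_k\}$ is the eigenbasis of $\Laplace - \partial_t^2$ from Theorem~\ref{thm:smoothness-quasi-flats} and the deterministic coefficients $c_k(\reg)$ carry all the $\reg$-dependence (for the regularizations used here, $c_k(\reg) = e^{-\reg \lambda_k}\, c_k(0)$, so $H^\reg$ is smooth for every $\reg > 0$). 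This is a coupling: the laws of $H^\reg$ and of $\phi^1_{H^\reg}$ are exactly $\muGPH^{\mathcal{D}_\reg}$ and $\muGP^{\mathcal{D}_\reg}$, and it reduces the desired weak convergence to an almost-sure statement about a single family of sample paths.

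Second, I would use the defining property of a \ldd\ of $C^1$-limiting type (Definition~\ref{def:ldd-types}) to show that $c_k(0)$ decays fast enough that the limiting series $H^0 = \sum_k \xi_k\, c_k(0)\, e_k$ converges almost surely in the $C^1$-topology, and that $H^\reg \to H^0$ almost surely in $C^1$ as $\reg \to 0$. The first point is the Fernique / It\^o--Nisio mechanism: finiteness of $\E\,\norm{H^0}_{C^1}$, guaranteed by the $C^1$-limiting type, yields almost-sure $C^1$ sample paths; the second is dominated convergence applied coefficientwise, since $e^{-\reg\lambda_k} \to 1$ monotonically. In particular $H^\reg \to H^0$ in $C^0$ as well, which is the first of the two conditions in the definition of a continuous Hamiltonian flow.

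Third --- and this is where I expect the main obstacle --- I would promote the $C^1$-convergence of the Hamiltonians to $C^0$-convergence of the flows, uniformly in $t \in [0,1]$. A Gronwall estimate controls $\sup_t d_{C^0}(\phi^t_{H^\reg}, \phi^t_{H^{\reg'}})$ by $\int_0^1 \norm{X_{H^\reg} - X_{H^{\reg'}}}_\infty\, ds$ times an exponential of the spatial Lipschitz constants of the approximants; the first factor tends to $0$, since $C^1$-convergence of $H^\reg$ forces $C^0$-convergence of the symplectic gradients $X_{H^\reg}$, but the second factor may blow up as $\reg \to 0$ because the limiting field $X_{H^0}$ is only continuous, so classical ODE uniqueness does not apply to the limit. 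This competition is the analytic heart of the theorem. I would resolve it by using the uniform-in-$\reg$ energy and regularity control furnished by the $C^1$-limiting type to establish equicontinuity, hence $C^0$-precompactness, of the family $\{(\phi^t_{H^\reg})\}_\reg$ via Arzel\`a--Ascoli; every subsequential limit, together with $H^\reg \to H^0$ in $C^0$, is then a continuous Hamiltonian flow generated by $H^0$ in the sense of Definition~\ref{def:hameo}. The uniqueness theory for such flows of Viterbo~\cite{viterbo-2006} and Buhovsky--Seyfaddini~\cite{buhovsky-seyfaddini-2013} shows the limit is independent of the subsequence, upgrading precompactness to genuine convergence and yielding $\phi^1_{H^0} \in \Hameo(M,\omega)$ almost surely.

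Finally, I would set $\overline{\muGP}^{\mathcal{D}_0}$ to be the law of the $\Hameo(M,\omega)$-valued random variable $\phi^1_{H^0}$; it is Borel because $\omega \mapsto \phi^1_{H^0}$ is an almost-sure $C^0$-limit of the measurable maps $\omega \mapsto \phi^1_{H^\reg}$. Since $\phi^1_{H^\reg} \to \phi^1_{H^0}$ almost surely in the $C^0$-topology, convergence in law follows, giving $\muGP^{\mathcal{D}_\reg} \weakto \overline{\muGP}^{\mathcal{D}_0}$ as $\reg \to 0$. Conceptually this is the continuous mapping theorem applied to the flow map $H \mapsto \phi^1_H$, with the coupling serving to reduce its continuity to a statement about the coupled sample paths rather than about a globally defined map on a function space.
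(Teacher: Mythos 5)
Your overall skeleton --- couple all the measures on one probability space by fixing the Gaussian randomness and putting the $\reg$-dependence into deterministic weights, prove almost-sure convergence of the Hamiltonians, promote it to almost-sure $C^0$-convergence of the time-$1$ maps, and take the law of the limit --- is exactly the paper's strategy. But the step you yourself flag as the analytic heart contains a genuine gap, and as written the argument is internally inconsistent there. You first (correctly) observe that $C^1$-convergence of $H^{(\reg)}$ only gives $C^0$-control of the vector fields $X_{H^{(\reg)}}$, so the Gronwall exponential of their spatial Lipschitz constants may blow up; you then claim $C^0$-precompactness of the flows $\{\phi^t_{H^{(\reg)}}\}_\reg$ via Arzel\`a--Ascoli from ``uniform-in-$\reg$ energy and regularity control.'' Equicontinuity in the space variable of a family of flows is, however, precisely what uniform Lipschitz (or at least Osgood-type) bounds on the generating vector fields would provide; uniform $C^0$-bounds on $X_{H^{(\reg)}}$ are not enough (flows of uniformly bounded fields with degenerating Lipschitz constants need not admit any common modulus of continuity), and you offer no other mechanism. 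So the precompactness claim is unsupported, and without it the uniqueness theory has nothing to act on.

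What closes the gap --- and what the paper actually does --- is the observation that the $C^1$-limiting condition $\sum_n \abs{\alpha_n}\,\lambda_n \norm{e_n}_\infty < \infty$ controls \emph{second} spatial derivatives of $H^{(\reg)}$, not just first ones, because of two nontrivial eigenfunction estimates: $\norm{\nabla e_n}_\infty \leq C \lambda_n \norm{e_n}_\infty$ (Shi--Xu) and $\norm{\nabla^2 e_n}_\infty \leq C' \lambda_n \norm{e_n}_\infty$ (Cheng--Thalmaier--Wang). Writing $D X_{H_s} = DJ\,\nabla H_s + J\,\nabla^2 H_s$, these give an almost-sure bound on $\norm{D X_{H^{(\reg)}}}_\infty$ that is uniform in $\reg$, hence via Gronwall a uniform almost-sure bound on $\norm{D\phi^t_{H^{(\reg)}}}_\infty$ --- so the exponential factor you worried about in fact never blows up, and your Arzel\`a--Ascoli/uniqueness detour becomes unnecessary (and, incidentally, your citation there points the wrong way: uniqueness of the flow generated by a given continuous Hamiltonian is due to Oh--M\"uller, while Viterbo and Buhovsky--Seyfaddini prove uniqueness of the Hamiltonian given the flow). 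With the uniform differential bound in hand, the paper concludes differently from you: it invokes the Joksimovi\'c--Seyfaddini inequality
\begin{equation*}
    d_{C^0}(\phi,\psi) \leq C \cdot \sqrt{d_{\Hof}(\phi,\psi)} \cdot \norm{D(\phi\psi^{-1})}_\infty,
\end{equation*}
combines the Hofer-convergence of $\phi^1_{H^{(\reg)}}$ (already established in the $C^0$-limiting argument via Lemma~\ref{lem:H_to_phi_H_Lipschitz}) with the uniform bound on the differentials to get almost-sure $C^0$-Cauchyness of the flows, and then uses completeness of $d_{C^0}$ together with uniform convergence of the Hamiltonians to place the limit in $\Hameo(M,\omega)$. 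If you add the two eigenfunction estimates to your argument, either conclusion route works; without them, the theorem is out of reach by the means you describe.
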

\begin{corollary}
    Let $\mathcal{D}$ be a \ldd\ of $C^1$-limiting type.
    Then there exists a Borel probability measure $\overline{\muGP}^{\mathcal{D}_0}$
    on $\overline{\Ham}(M,\omega)$ such that
    as $\reg \to 0$, we have $\muGP^{\mathcal{D}_\reg} \weakto \overline{\muGP}^{\mathcal{D}_0}$.
    Here $\overline{\Ham}(M,\omega)$ denotes the $C^0$-closure of $\Ham(M,\omega)$ in $\Homeo(M,\omega)$.
\end{corollary}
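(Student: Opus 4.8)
The plan is to deduce the corollary from Theorem~\ref{thm:limit-reg-0-C1} by transporting the limiting measure along the natural inclusion $\iota : \Hameo(M,\omega) \hookrightarrow \overline{\Ham}(M,\omega)$ and invoking the functoriality of weak convergence under continuous maps. First I would check that this inclusion is well-defined and continuous. By Definition~\ref{def:hameo} every hameomorphism $\phi$ is the time-$1$ map $\phi_H^1$ of a continuous Hamiltonian flow, and by the first clause of the definition of a hameotopy the flows $\phi_{H_i}^t$ converge to $\phi_H^t$ in the $C^0$-distance; in particular $\phi_{H_i}^1 \in \Ham(M,\omega)$ converges to $\phi$ in $C^0$, so $\phi \in \overline{\Ham}(M,\omega)$. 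Hence $\Hameo(M,\omega) \subseteq \overline{\Ham}(M,\omega)$ as subsets of $\Homeo(M,\omega)$, matching the hooked arrow in Figure~\ref{fig:enlargements}. Since $\overline{\Ham}(M,\omega)$ carries the subspace $C^0$-topology and the topology on $\Hameo(M,\omega)$ is at least as fine as the $C^0$-topology, the map $\iota$ is continuous and in particular Borel-measurable.

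Second I would define the measure $\overline{\muGP}^{\mathcal{D}_0}$ on $\overline{\Ham}(M,\omega)$ as the pushforward $\iota_\ast \overline{\muGP}^{\mathcal{D}_0}$ of the measure furnished by Theorem~\ref{thm:limit-reg-0-C1}, reusing the same symbol by a slight abuse of notation. As the pushforward of a Borel probability measure along a continuous (hence Borel) map, this is automatically a Borel probability measure on $\overline{\Ham}(M,\omega)$: for every $C^0$-Borel set $B \subseteq \overline{\Ham}(M,\omega)$ the preimage $\iota^{-1}(B)$ is Borel in $\Hameo(M,\omega)$, so $\overline{\muGP}^{\mathcal{D}_0}(B) \coloneqq \overline{\muGP}^{\mathcal{D}_0}(\iota^{-1}(B))$ is well-defined and assigns total mass $1$.

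Third I would establish the weak convergence. By Theorem~\ref{thm:hofer-borel} each $\muGP^{\mathcal{D}_\reg}$ is a probability measure supported on $\Ham(M,\omega) \subseteq \Hameo(M,\omega)$, and $\iota$ restricts to the identity on $\Ham(M,\omega)$. Hence, for any bounded $C^0$-continuous test function $f : \overline{\Ham}(M,\omega) \to \Reals$, the composite $f \circ \iota$ is bounded and continuous on $\Hameo(M,\omega)$ and
\[
\int_{\overline{\Ham}(M,\omega)} f \, d\muGP^{\mathcal{D}_\reg} = \int_{\Hameo(M,\omega)} (f \circ \iota) \, d\muGP^{\mathcal{D}_\reg}.
\]
By Theorem~\ref{thm:limit-reg-0-C1} the right-hand side converges, as $\reg \to 0$, to $\int_{\Hameo(M,\omega)} (f \circ \iota) \, d\overline{\muGP}^{\mathcal{D}_0} = \int_{\overline{\Ham}(M,\omega)} f \, d\iota_\ast\overline{\muGP}^{\mathcal{D}_0}$. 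As $f$ was an arbitrary bounded continuous function, this is precisely the assertion $\muGP^{\mathcal{D}_\reg} \weakto \iota_\ast\overline{\muGP}^{\mathcal{D}_0}$ on $\overline{\Ham}(M,\omega)$.

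The argument is almost entirely formal, and the only point requiring care is the continuity of $\iota$, i.e.\ the compatibility of the topology on $\Hameo(M,\omega)$ used in Theorem~\ref{thm:limit-reg-0-C1} with the subspace $C^0$-topology on $\overline{\Ham}(M,\omega)$; this is exactly the statement that the hameomorphism topology refines the $C^0$-topology, which is immediate from the first clause of the hameotopy definition. I would also record that $\overline{\Ham}(M,\omega)$, being a subspace of $\Homeo(M,\omega)$ in the $C^0$-metric, is separable and metrizable, so that the characterization of weak convergence through bounded continuous test functions employed above is legitimate and no quantitative control on $\reg$ beyond Theorem~\ref{thm:limit-reg-0-C1} is required.
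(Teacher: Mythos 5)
Your proposal is correct and is essentially the intended argument: the paper states this corollary without separate proof, as an immediate consequence of Theorem~\ref{thm:limit-reg-0-C1} obtained by pushing the limiting measure forward under the inclusion $\Hameo(M,\omega) \hookrightarrow \overline{\Ham}(M,\omega)$ (both carrying the $C^0$-topology) and invoking the continuous mapping theorem for weak convergence, exactly as you do. Your verification that $\Hameo(M,\omega) \subseteq \overline{\Ham}(M,\omega)$ via the defining $C^0$-approximation of a hameotopy, and that the $\reg$-dependent measures are unchanged under the inclusion since they are supported on $\Ham(M,\omega)$, supplies precisely the details the paper leaves implicit.
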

It is noteworthy that the proof of this result relies on a highly non-trivial result from~\cite{joksimovic-seyfaddini-2024} relating the Hofer and $C^0$-norms and the norm of the differential of a Hamiltonian diffeomorphism.
Thus, this theorem makes essential use of the symplectic nature of $\Ham(M,\omega)$
and $\Hameo(M,\omega)$.

\subsection{Previous approaches and related work} 
Gaussian processes have a rich interaction with geometry.
A comprehensive account of the theory of Gaussian processes on manifolds is given in~\cite{adler-taylor-2007}.
This book gives various ways in which the properties of Gaussian processes on manifolds 
can be related to topological and geometric properties of the underlying manifold.
Another prime example of a connection between smooth Gaussian processes and Riemannian geometry is given in~\cite{nicolaescu-2014},
where a specific model of a Gaussian process on a Riemannian manifold is constructed using the eigenfunctions of the Laplace-Beltrami operator.
Nicolaescu then shows that the asymptotic behavior of the statistics of these Gaussian processes completely recover the Riemannian metric as the regularity of the function becomes worse, see~\cite[Theorem 1.7]{nicolaescu-2014}.

Gaussian processes that model vector fields on a manifold have also been studied in the literature.
In particular, there has been recent interest in Gaussian Reeb and Beltrami vector fields, as in~\cite{cordona-2023, enciso-2023}. 
A particularly noteworthy result in this direction is that a canonical model of Gaussian random
Beltrami fields exhibits arbitrarily high computational complexity with probability 1, see~\cite[Theorem 4]{cordona-2023}.
The dynamics, specifically the non-integrability, of specific models of random Hamiltonian vector fields have been studied in~\cite{enciso-2024}.
Here the authors consider the Hamiltonian vector field generated by the physical Hamiltonian $H(q,p) = \frac{1}{2} p^2 + V(q)$, where $V$ is a random potential.
In particular, the case of a Gaussian random potential is considered.

More general models of random Hamiltonian functions are also considered in the symplectic literature.
In particular, Viterbo studied the Hamiltonian-Jacobi equation for a random Hamiltonian function $H: T^*\Reals^n \to \Reals$ in~\cite{viterbo-2025}.
The results of this paper are in some sense agnostic to the underlying distribution of the random Hamiltonian function, as long as some conditions are satisfied.
Another randomized approach to Hamiltonian flows in $\Reals^{2n}$ is taken in~\cite{pelayo-rezakhanlou-2025}.
Here the dynamics, in particular the periodic orbits, of a random Hamiltonian function on $\Reals^{2n}$ are studied.
Several physically-motivated models of random Hamiltonian functions are considered, see~\cite[Section 2]{pelayo-rezakhanlou-2025}.
The authors of~\cite{pelayo-rezakhanlou-2025} also consider the case of a Gaussian random Hamiltonian function~\cite[Remark 2.12]{pelayo-rezakhanlou-2025}.

\subsection{Future work}
This paper only introduces the measure $\muGP^{\mathcal{D}}$ and establishes 
the most basic properties of this class of probability measures.
Many questions remain unanswered.
For example the statistics of fixed points and periodic orbits of Hamiltonian diffeomorphisms sampled from $\muGP^{\mathcal{D}}$ are not studied here.
Additionally, in Section~\ref{sec:simulations}
we present some numerical simulations.
These simulations strongly suggest the presence of 
diffusion effects and Crofton type formulas for Lagrangian intersections.
We make an explicit conjecture for such a Crofton-type formula in Conjecture~\ref{conj:crofton}.
A follow-up paper will be dedicated to
the rigorous study of these phenomena.
\subsection{Acknowledgements}
I would like to thank my advisor Ivan Smith for many invaluable discussions and for his adamant support of this project. 
Furthermore, I would like to thank Richard Nickl for explaining basic Gaussian process theory to me and particularly for pointing out reference~\cite{castillo-et-al-2014}.
Additionally, I would like to thank Jonny Evans for an enlightening discussion about Crofton formulas for Lagrangian intersections.
During the writing
of this paper I was supported by EPSRC Horizon Europe Guarantee grant ``Floer theory beyond Floer (FloerPlus35)`` (project reference EP/X030660/1).
\section{Background}
\label{sec:background}

\subsection{Symplectic geometry}
\label{subsec:symplectic-geometry}
In this section we will review the necessary background on symplectic geometry.
A reader well-versed in the subject is advised to skip this section.
We will keep this review brief and refer to~\cite{cannas-da-silva-2008} and~\cite{mcduff-salamon-1998} for more details.
\begin{definition}\label{def:symplectic-manifold}
    Let $M$ be a $2n$-dimensional manifold and $\omega \in \Omega^2(M)$ be a closed 2-form on $M$.
    Then $(M,\omega)$ is called a \textit{symplectic manifold} if $\omega$ is non-degenerate, i.e.\ if $\omega^n$ is a volume form on $M$.
\end{definition}
\begin{definition}\label{def:symplectomorphism}
    Let $(M,\omega)$ be a symplectic manifold.
    A diffeomorphism $\phi: M \to M$ is called a \textit{symplectomorphism} if $\phi^* \omega = \omega$.
\end{definition}
We will later be most interested in closed symplectic manifolds. Thus, let us assume in the remainder of this section that $M$ is closed, i.e.\ compact and without boundary.
We will further always assume that $M$ is connected.
Then any smooth function on $M$ gives rise to a vector field under the symplectic form $\omega$.
We call such functions and vector fields Hamiltonian functions and Hamiltonian vector fields, respectively.
\begin{definition}
    Let $H \in C^\infty(M, \Reals)$ be a time-independent Hamiltonian function\footnote{Such a Hamiltonian is also called autonomous.}. 
    Then the vector field $X_H$ on $M$ given by the equation $\omega(\cdot, X_H) = dH$ is called the \textit{(autonomous) Hamiltonian vector field} of $H$.
    Let $H \in C^{\infty}([0,1] \times M,\Reals)$ be a time-dependent Hamiltonian function and let $H_t(x) \coloneqq H(t,x)$. Then the time-dependent vector field $X_H$ given by $\omega(\cdot,X_H(t,\cdot)) = dH_t$ for all $t \in [0,1]$ is called the \textit{Hamiltonian 
    vector field} of $H$. 
\end{definition}
Note that $H$ is not uniquely determined by $X_H$,
but that it is unique up to a constant.
Thus, it is convenient to consider \textit{normalized} Hamiltonian functions.
\begin{definition}\label{def:normalized-hamiltonian}
    A Hamiltonian function $H \in C^\infty([0,1] \times M,\Reals)$ is called \textit{normalized}
    if for all $t \in [0,1]$ we have 
    \[\int_M H_t \omega^n = 0.\]
\end{definition}
Note that there is a one-to-one correspondence between normalized autonomous Hamiltonian functions and Hamiltonian vector fields.
Recall from Section~\ref{subsec:general-strategy} that we introduced the notation $C^\infty_0$ for time-dependent Hamiltonian functions that are normalized.
We will be interested in Hamiltonian flows, i.e.\ the flows of (time-dependent) Hamiltonian vector fields.
\begin{definition}
    A diffeomorphism $\phi: M \to M$ is called \textit{Hamiltonian diffeomorphism} if it is the time-$1$ flow of a Hamiltonian vector field,
    i.e.\ if there exists a Hamiltonian function $H \in C^{\infty}([0,1] \times M,\Reals)$ and a smooth family $(\phi_t)_{t \in [0,1]}$ of diffeomorphisms such that 
    $\phi_1 = \phi$ and $\phi_0 = \id_M$ and such that for all $t \in (0,1)$ we have 
    \[\frac{d}{dt}\phi_t(x) = X_H(\phi_t(x))\]
    for all $x \in M$.
\end{definition} 
It is easy to see that Hamiltonian diffeomorphisms are symplectomorphisms.
We call the family $(\phi_t)_{t \in [0,1]}$ in the definition above the \textit{Hamiltonian flow} of $H$.
Given any Hamiltonian function $H \in C^{\infty}([0,1] \times M,\Reals)$, we denote the time-$t$ flow of $X_H$ by $\phi_H^t$.
The following standard lemma describes how we can manipulate Hamiltonian flows via their Hamiltonian functions. 
The proof of this lemma is by straightforward computation and is left to the reader. 
\begin{lemma}\label{lem:flows_composition_and_inversion} 
    Let $F,G \in C^\infty([0,1]\times M , \Reals)$ be arbitrary.
    Set 
    \begin{align*}
        F \sharp G(t,x) &\coloneqq F(t,x) + G(t,(\phi^t_F)^{-1}(x)), \\
        \overline{F}(t,x) &\coloneqq -F(t,\phi^t_F(x)) \\
        \hat{F}(t,x) &\coloneqq -F(1-t,x).
    \end{align*}
    Then for any $t \in [0,1]$ we have
    \begin{enumerate}
        \item $\phi_{F\sharp G}^t = \phi_F^t \circ \phi_G^t$;
        \item $\phi_{\overline{F}}^t = (\phi_F^t)^{-1}$; and 
        \item $\phi_{\hat{F}}^t = \phi_F^{1-t} \circ (\phi_F^1)^{-1}$.
    \end{enumerate}
\end{lemma}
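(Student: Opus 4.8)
The plan is to prove each of the three identities by the standard uniqueness-of-flows argument. A smooth time-dependent vector field on the closed manifold $M$ generates a unique isotopy through the identity (Picard--Lindel\"of, using compactness of $M$), so it suffices to check, in each case, that the proposed right-hand isotopy $\psi_t$ satisfies $\psi_0 = \id_M$ and solves the defining ODE $\frac{d}{dt}\psi_t(x) = X_K(t,\psi_t(x))$, where $K$ is the corresponding Hamiltonian ($F\sharp G$, $\overline{F}$, or $\hat F$). Before touching the three parts I would isolate the single structural ingredient they all rely on: the naturality of Hamiltonian vector fields under symplectomorphisms, namely $\phi_* X_H = X_{H \circ \phi^{-1}}$ for any symplectomorphism $\phi$ and any $H$. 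With the paper's convention $\omega(\cdot, X_H) = dH$ this is a one-line verification from $\phi^*\omega = \omega$, and together with the linearity of $H \mapsto X_H$ it supplies essentially all the algebra the lemma needs.

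For part (1), set $\psi_t = \phi_F^t \circ \phi_G^t$, so that $\psi_0 = \id_M$. Differentiating in $t$ and applying the product rule to the two separate occurrences of $t$ (one in each flow parameter) gives $\frac{d}{dt}\psi_t(x) = X_F(t,\psi_t(x)) + d\phi_F^t\big(X_G(t,\phi_G^t(x))\big)$, where $d\phi_F^t$ is taken at $\phi_G^t(x)$. The second summand is precisely the push-forward $\big((\phi_F^t)_* X_G(t,\cdot)\big)$ evaluated at $\psi_t(x)$, which by naturality equals $X_{G(t,(\phi_F^t)^{-1}(\cdot))}$ at $\psi_t(x)$; by linearity of $H \mapsto X_H$ the two terms combine into $X_{F\sharp G}(t,\psi_t(x))$, matching the definition $F\sharp G(t,x) = F(t,x) + G(t,(\phi_F^t)^{-1}(x))$. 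Hence $\psi_t$ solves the ODE for $\phi^t_{F\sharp G}$ and the identity follows.

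For part (2), I would differentiate the relation $\phi_F^t\circ\eta_t = \id_M$ for $\eta_t := (\phi_F^t)^{-1}$, obtaining $\frac{d}{dt}\eta_t(x) = -(d\phi_F^t|_{\eta_t(x)})^{-1}X_F(t,x)$, and separately compute, using naturality applied to $\overline F(t,\cdot) = -F(t,\phi_F^t(\cdot))$, that $X_{\overline F}(t,\cdot) = -\big((\phi_F^t)^{-1}\big)_* X_F(t,\cdot)$; evaluating the latter at $\eta_t(x)$ reproduces the same expression, and $\eta_0 = \id_M$. For part (3), with $\zeta_t := \phi_F^{1-t}\circ(\phi_F^1)^{-1}$ the point $y := (\phi_F^1)^{-1}(x)$ is constant in $t$, so the chain rule gives $\frac{d}{dt}\zeta_t(x) = -X_F(1-t,\zeta_t(x))$, which matches $X_{\hat F}(t,\zeta_t(x)) = -X_F(1-t,\zeta_t(x))$ directly from $\hat F(t,\cdot) = -F(1-t,\cdot)$ and linearity; the initial condition $\zeta_0 = \id_M$ holds since $\phi_F^1\circ(\phi_F^1)^{-1} = \id_M$.

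I do not expect a genuine obstacle here, as the result is elementary and the author rightly leaves it to the reader. The only points demanding care are the bookkeeping of differentials and push-forwards (in particular keeping track of the base point at which $d\phi_F^t$ is evaluated), the product rule applied to isotopies in which $t$ appears both as the flow parameter and inside the Hamiltonian, and consistency with the sign convention $\omega(\cdot,X_H) = dH$. Verifying the naturality identity $\phi_* X_H = X_{H\circ\phi^{-1}}$ once and carefully removes essentially all of this friction, after which the three parts are parallel computations.
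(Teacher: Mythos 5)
Your proof is correct, and it is precisely the ``straightforward computation'' that the paper omits: the paper leaves this lemma to the reader, and your argument --- uniqueness of flows on a closed manifold combined with the naturality identity $\phi_* X_H = X_{H\circ\phi^{-1}}$ and linearity of $H \mapsto X_H$ --- is the standard intended verification. All three ODE checks (including the sign bookkeeping in parts (2) and (3) and the base points of the differentials in part (1)) are handled correctly.
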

This lemma implies that we can compose Hamiltonian flows in a natural way. In particular, the composition of two Hamiltonian diffeomorphisms is again a Hamiltonian diffeomorphism.
Furthermore, the inverse of a Hamiltonian diffeomorphism is again a Hamiltonian diffeomorphism.
Thus, we obtain the following standard result:
\begin{theorem}
    Let $(M,\omega)$ be a symplectic manifold. 
    The set of Hamiltonian diffeomorphisms $\Ham(M,\omega)$ is a subgroup of the diffeomorphism group.
\end{theorem}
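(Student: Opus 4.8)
The plan is to verify the standard subgroup criterion for $\Ham(M,\omega)$ viewed as a subset of $\Diff(M)$: that it contains the identity, is closed under composition, and is closed under taking inverses. Since every Hamiltonian diffeomorphism is by construction a diffeomorphism (indeed a symplectomorphism, as remarked above), $\Ham(M,\omega)$ is genuinely a subset of $\Diff(M)$, so it suffices to check these three closure properties.

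First I would observe that $\id_M \in \Ham(M,\omega)$: taking the zero Hamiltonian $H \equiv 0$ yields $X_H = 0$, whose time-$1$ flow is $\id_M$. Hence $\Ham(M,\omega)$ is nonempty and contains the neutral element of $\Diff(M)$.

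The heart of the argument is then simply an application of Lemma~\ref{lem:flows_composition_and_inversion}. Given $\phi,\psi \in \Ham(M,\omega)$, I would choose Hamiltonian functions $F,G \in C^\infty([0,1]\times M,\Reals)$ with $\phi = \phi_F^1$ and $\psi = \phi_G^1$. Part (1) of the lemma gives $\phi \circ \psi = \phi_F^1 \circ \phi_G^1 = \phi_{F\sharp G}^1$, exhibiting $\phi \circ \psi$ as the time-$1$ flow of the Hamiltonian $F\sharp G$ and hence as an element of $\Ham(M,\omega)$. Likewise, part (2) gives $\phi^{-1} = (\phi_F^1)^{-1} = \phi_{\overline{F}}^1$, so $\phi^{-1}$ is the Hamiltonian diffeomorphism generated by $\overline{F}$. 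This establishes closure under both composition and inversion, completing the subgroup criterion.

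The only point requiring a moment's care — and essentially the only place anything could go wrong — is that the functions $F\sharp G$ and $\overline{F}$ supplied by the lemma must themselves be bona fide smooth time-dependent Hamiltonians. This is immediate from their defining formulas, since each is assembled from $F$, $G$, and the smoothly-varying flow maps $\phi_F^t$ by composition and addition, operations that preserve smoothness; in particular no normalization hypothesis is needed, as the lemma applies to arbitrary $F,G \in C^\infty([0,1]\times M,\Reals)$. Because all the analytic content has already been packaged into Lemma~\ref{lem:flows_composition_and_inversion}, there is no genuine obstacle remaining, and the subgroup property follows at once.
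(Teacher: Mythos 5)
Your proposal is correct and follows exactly the paper's route: the paper also deduces the subgroup property directly from Lemma~\ref{lem:flows_composition_and_inversion}, using $F\sharp G$ for closure under composition and $\overline{F}$ for closure under inversion, with the identity handled trivially. Your extra remark on smoothness of $F\sharp G$ and $\overline{F}$ is a fine (if brief) addition but introduces nothing beyond what the paper leaves implicit.
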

We should note that for autonomous Hamiltonian functions,
there is an alternative formula for the Hamiltonian function of the composition.
Since we will make use of this fact, we formalize it through the following lemma:
\begin{lemma}\label{lem:composition_of_autonomous_flows}
    Let $H_1,\ldots,H_k$ be autonomous Hamiltonian functions
    and let $\beta: \Reals \to \Reals_{\geq 0}$
    be a smooth bump function such that $\supp \beta \Subset (0,1)$ and $\int_0^1 \beta(t) dt = 1$.

    Then for the Hamiltonian function $H$ given by
    \[
    H(t,x) = \sum_{n=1}^k k\cdot\beta(kt-n+1) \cdot H_n(x),
    \]
    we have $\phi_H ^ 1 = \phi_{H_k}^1 \circ \cdots \circ \phi_{H_1}^1$.
    \end{lemma}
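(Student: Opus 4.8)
The plan is to exploit the disjoint-support structure of the summands, reducing the claim to a single time-reparametrization argument per block. First I would note that, since $\supp\beta\Subset(0,1)$, the $n$-th summand $k\beta(kt-n+1)H_n(x)$ is supported in $t$ precisely on $I_n\coloneqq\left(\tfrac{n-1}{k},\tfrac{n}{k}\right)$, and that these intervals are pairwise disjoint. Hence on the closed block $\overline{I_n}$ the sum collapses to its $n$-th term, $H(t,\cdot)=k\beta(kt-n+1)H_n$, while on the gaps between consecutive blocks $H$ vanishes and the flow is stationary. Writing the time-$1$ flow as a telescoping product,
\[
\phi_H^1=\left(\phi_H^{k/k}\circ\left(\phi_H^{(k-1)/k}\right)^{-1}\right)\circ\cdots\circ\left(\phi_H^{1/k}\circ\left(\phi_H^{0}\right)^{-1}\right),
\]
it then suffices to identify each block flow $\phi_H^{n/k}\circ\left(\phi_H^{(n-1)/k}\right)^{-1}$ with $\phi_{H_n}^1$.

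Next I would carry out the reparametrization on a single block. Because $H_n$ is autonomous, $X_{H_n}$ is $t$-independent, and on $I_n$ the vector field of $H$ is $X_H(t,\cdot)=k\beta(kt-n+1)\,X_{H_n}$. Setting $s(t)\coloneqq\int_{(n-1)/k}^{t}k\beta(k\tau-n+1)\,d\tau=\int_0^{kt-n+1}\beta(u)\,du$ (via the substitution $u=k\tau-n+1$), I would verify directly that, for fixed $p$, the curve $t\mapsto\phi_{H_n}^{s(t)}(p)$ solves $\dot\gamma=k\beta(kt-n+1)X_{H_n}(\gamma)$ with $\gamma\!\left(\tfrac{n-1}{k}\right)=p$: differentiating and using $\tfrac{d}{ds}\phi_{H_n}^s=X_{H_n}\circ\phi_{H_n}^s$ together with $s'(t)=k\beta(kt-n+1)$ reproduces exactly this ODE. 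By uniqueness of solutions, the block flow of $H$ coincides with $p\mapsto\phi_{H_n}^{s(t)}(p)$; since $\int_0^1\beta=1$ forces $s\!\left(\tfrac{n}{k}\right)=1$, evaluating at the right endpoint yields $\phi_H^{n/k}=\phi_{H_n}^1\circ\phi_H^{(n-1)/k}$, i.e.\ the block flow is precisely $\phi_{H_n}^1$.

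Composing in order then gives $\phi_H^1=\phi_{H_k}^1\circ\cdots\circ\phi_{H_1}^1$. I do not expect a serious analytic obstacle; the only points requiring care are bookkeeping. The most delicate is (i) the possible degeneracy of the reparametrization where $\beta$ vanishes inside a block, which is exactly why I would argue via \emph{uniqueness of ODE solutions} rather than by inverting $s(t)$. The remaining care is (ii) using the compact containment $\supp\beta\Subset(0,1)$ to guarantee that the summands have genuinely disjoint supports, so that both the telescoping identity and the block-by-block reasoning are valid, and (iii) tracking the composition order, since the block over $I_1$ acts first and therefore appears rightmost in the final product. Mean-normalization of the $H_n$ is irrelevant to the flow identity, as adding a constant to a Hamiltonian leaves its vector field unchanged; by linearity of the fibrewise integral it does, however, ensure $H\in C^\infty_0$ whenever each $H_n$ is normalized.
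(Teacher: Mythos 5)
Your proof is correct and takes essentially the same approach as the paper: both reparametrize each autonomous flow via $s(t)=\int_0^{kt-n+1}\beta(u)\,du$, check that the resulting curve solves the ODE $\dot\gamma = X_H(\gamma)$ on each block, and conclude by uniqueness of solutions, using $\int_0^1\beta = 1$ to get time $1$ of each $\phi_{H_n}$. The only difference is organizational — you telescope $\phi_H^1$ into block flow maps $\phi_H^{n/k}\circ(\phi_H^{(n-1)/k})^{-1}$, whereas the paper concatenates the blocks into a single integral curve on $[0,1]$ — but the underlying computation is identical.
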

\begin{proof}
    This lemma has a clear geometric interpretation: we simply run the dynamics generated by each $H_n$ in order.
    Time and energy are being rescaled to allow for all of this to happen within the unit time interval $[0,1]$.
    Let us now make this precise.
    Let $x \in M$ be arbitrary.
    Then we define a path $\gamma: [0,1] \to M$ starting at $x$ by
    $\gamma(t) = \phi^{\int_0^t k\beta(ks-n+1)ds}_{H_n}(\gamma((n-1)/k))$ for $t \in [(n-1)/k,n/k]$ and $n = 1,\ldots,k$.
    Because the support of $\beta$ is a compact subset of $(0,1)$, this is a smooth path and clearly 
    $(\phi_{H_k}^1 \circ \cdots \circ \phi_{H_1}^1)(x) = \gamma(1)$.
    Here we use that the integral of $\beta$ is one, and thus $\int_0^{1/k} k \beta(ks) ds = 1$.
    Now, we have that 
    \begin{equation*}
    \dot\gamma(t) = k \beta(kt-n+1)X_{H_n}\left(\phi^{\int_0^t k\beta(ks-n+1)ds}_{H_n}(\gamma((n-1)/k))\right)
    = k \beta(kt-n+1)X_{H_n}(\gamma(t))
    \end{equation*}
    for $t \in [(n-1)/k,n/k]$.
    Thus, 
    \begin{equation*}
        \dot\gamma(t) = \sum_{n=1}^k k\cdot\beta(kt-n+1) \cdot X_{H_n}(\gamma(t)),
    \end{equation*}
    which implies that $\dot\gamma(t) = X_H(\gamma(t))$ for all $t\in [0,1]$.
    Thus, $\gamma(1) = \phi^1_H(x)$.
    Since $x$ was arbitrary, we have established the claim.
\end{proof}

It is a deep fact, first discovered by Hofer in~\cite{hofer-1990}, that the group $\Ham(M, \omega)$ admits a bi-invariant metric, which is now known as the \textit{Hofer metric}.
Moreover, this metric can be seen as an infinite-dimensional Finsler metric on the space of Hamiltonian diffeomorphisms.
Recall that a Finsler structure on a manifold $M$ is smooth family of norms on the tangent spaces $T_xM$ for $x \in M$.
Now assume we have a smooth Hamiltonian flow (or path) $\phi_t: M \to M$ for $t \in [0,1]$.
Then at any time $t \in [0,1]$ we have $\frac{d}{dt}\phi_t = X_F$ for some autonomous Hamiltonian function $F$.
Thus, we can identify the tangent space $T_{\phi_t}\Ham(M,\omega)$ with the space of autonomous Hamiltonian functions $C^\infty(M,\Reals)/\Reals$ defined up to a constant.
This space is canonically identified with the space $C^\infty_0(M,\Reals)$ of smooth normalized functions on $M$.
Then we can define the following norm on $C^\infty_0(M,\Reals)$:
\[\norm{f}_{\osc} \coloneqq \max_{x \in M} f(x) - \min_{x \in M} f(x).\]
Then we can define the (Hofer) length of the Hamiltonian flow $\phi_t$ as
\[\text{length}(\phi_t) \coloneqq \int_0^1 \norm*{\frac{d}{dt}\phi_t}_{\osc} dt.\]
It is easy to see that the Hamiltonian paths from $\id_M$ to a specified Hamiltonian diffeomorphism $\phi$ are in one-to-one correspondence with the Hamiltonian functions $H \in C^\infty([0,1] \times M,\Reals)$ such that $\phi = \phi_H^1$.
This motivates the following definition:
\begin{definition}\label{def:osc}
    Let $H \in C^{\infty}([0,1] \times M,\Reals)$ be a Hamiltonian. Then the \textit{oscillation} of $H$ is given by 
    \[ \osc (H) \coloneqq \int_0^1 \norm{H_t}_{\osc} dt = \int_0^1 \left[ \max_{x \in M} H(t,x) - \min_{x \in M} H(t,x) \right] dt.\]
\end{definition}
On a Finsler manifold, the distance between two points is given by the infimum of the lengths of all paths connecting them.
We take same approach to defining the Hofer distance between two Hamiltonian diffeomorphisms. Indeed, it suffices to define the \textit{Hofer norm} of Hamiltonian diffeomorphisms.
\begin{definition}\label{def:hofer-norm}
    Let $\phi: M \to M$ be a Hamiltonian diffeomorphism. Then the \textit{Hofer norm} of $\phi$ is given by 
    \[\norm{\phi}_{\Hof} \coloneqq \inf \left\{ \osc (H) \mid H \in C^\infty([0,1]\times M, \Reals), \phi_H^1 = \phi \right\}.\]
    For any $\phi, \psi \in \Ham(M,\omega)$, we define the \textit{Hofer distance} between $\phi$ and $\psi$ as
    \[d_{\Hof}(\phi,\psi) \coloneqq \norm{\phi^{-1} \circ \psi}_{\Hof}.\]
\end{definition}
The following theorem is due to Hofer~\cite{hofer-1990} in the case of $\Reals^{2n}$ and was later generalized to all symplectic manifolds Lalonde and McDuff~\cite{lalonde-mcduff-1995}.
\begin{theorem}\label{thm:hofer-metric}
    Let $(M,\omega)$ be a symplectic manifold. 
    Then the Hofer distance $d_{\Hof}$ is a bi-invariant metric on the group $\Ham(M,\omega)$.
\end{theorem}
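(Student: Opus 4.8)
The plan is to separate the statement into two very different kinds of claims: the \emph{algebraic} assertions (symmetry, the triangle inequality, and left- and right-invariance), which follow formally from the flow-manipulation formulas of Lemma~\ref{lem:flows_composition_and_inversion}, and the single \emph{analytic} assertion of non-degeneracy, namely that $\norm{\phi}_{\Hof} = 0$ forces $\phi = \id_M$, which is the genuinely deep part. I would first reduce everything except non-degeneracy to three properties of the norm: inverse-invariance $\norm{\phi^{-1}}_{\Hof} = \norm{\phi}_{\Hof}$, subadditivity $\norm{\phi\circ\psi}_{\Hof} \leq \norm{\phi}_{\Hof} + \norm{\psi}_{\Hof}$, and conjugation-invariance $\norm{\alpha\circ\phi\circ\alpha^{-1}}_{\Hof} = \norm{\phi}_{\Hof}$ for every symplectomorphism $\alpha$. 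Note that non-negativity of $d_{\Hof}$ and $d_{\Hof}(\phi,\phi) = \norm{\id_M}_{\Hof} = 0$ are immediate, so these really are the only remaining axioms.

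To establish the three norm properties I would work at the level of generating Hamiltonians, using two elementary facts about $\norm{\cdot}_{\osc}$: it is subadditive, $\norm{a+b}_{\osc} \leq \norm{a}_{\osc} + \norm{b}_{\osc}$, and it is unchanged under precomposition with a diffeomorphism of $M$ (which merely permutes the values of a time slice). For inverse-invariance, if $\phi_F^1 = \phi$ then $\phi_{\overline{F}}^1 = \phi^{-1}$ by Lemma~\ref{lem:flows_composition_and_inversion}, and since $\overline{F}(t,\cdot) = -F(t,\phi_F^t(\cdot))$ one gets $\norm{\overline{F}_t}_{\osc} = \norm{F_t}_{\osc}$, hence $\osc(\overline{F}) = \osc(F)$; taking infima gives $\norm{\phi^{-1}}_{\Hof} \leq \norm{\phi}_{\Hof}$, and the reverse follows by applying this to $\phi^{-1}$. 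For subadditivity, if $\phi_F^1 = \phi$ and $\phi_G^1 = \psi$ then $\phi_{F\sharp G}^1 = \phi\circ\psi$, and $\norm{(F\sharp G)_t}_{\osc} \leq \norm{F_t}_{\osc} + \norm{G_t}_{\osc}$ because precomposing $G_t$ with the diffeomorphism $(\phi_F^t)^{-1}$ leaves its oscillation unchanged; integrating over $t$ and taking infima yields the claim. For conjugation-invariance I would use the standard fact that for a symplectomorphism $\alpha$ the flow of $F\circ\alpha^{-1}$ is $\alpha\circ\phi_F^t\circ\alpha^{-1}$, together with $\osc(F\circ\alpha^{-1}) = \osc(F)$. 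With these in hand, symmetry is $d_{\Hof}(\phi,\psi) = \norm{\phi^{-1}\psi}_{\Hof} = \norm{(\phi^{-1}\psi)^{-1}}_{\Hof} = d_{\Hof}(\psi,\phi)$; the triangle inequality follows by writing $\phi^{-1}\chi = (\phi^{-1}\psi)(\psi^{-1}\chi)$ and applying subadditivity; left-invariance is immediate from $d_{\Hof}(\phi,\psi) = \norm{\phi^{-1}\psi}_{\Hof}$; and right-invariance follows from conjugation-invariance after writing $(\phi\alpha)^{-1}(\psi\alpha) = \alpha^{-1}(\phi^{-1}\psi)\alpha$.

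The hard part will be non-degeneracy: that $\phi \neq \id_M$ implies $\norm{\phi}_{\Hof} > 0$. This cannot be extracted from the flow formulas alone and is exactly where the symplectic geometry enters. The approach I would take is via \emph{displacement energy}: if $\phi(p) \neq p$ for some $p$, then $\phi$ displaces a small ball $B$ around $p$ off itself, and one shows that any Hamiltonian generating $\phi$ has oscillation bounded below by the displacement energy of $B$, which in turn is bounded below by a positive symplectic capacity of $B$. On $\Reals^{2n}$ this is Hofer's original argument in~\cite{hofer-1990}; the extension to arbitrary closed symplectic manifolds, where one must produce a capacity satisfying the requisite energy--capacity inequality, is the theorem of Lalonde and McDuff~\cite{lalonde-mcduff-1995}. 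I would invoke their positivity result for this single step rather than reprove it, since the algebraic reductions above are elementary while this lower bound is the deep input on which bi-invariance-as-a-genuine-metric ultimately rests.
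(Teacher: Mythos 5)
Your proposal is correct, and it is consistent with how the paper handles this statement: the paper gives no proof at all, citing the result as background (Hofer~\cite{hofer-1990} for $\Reals^{2n}$, Lalonde--McDuff~\cite{lalonde-mcduff-1995} in general), and your argument carries out the standard elementary reduction --- symmetry, triangle inequality, and bi-invariance from the norm properties obtained via Lemma~\ref{lem:flows_composition_and_inversion} and conjugation-invariance of the oscillation --- while correctly isolating non-degeneracy as the single deep step and deferring it to the same energy--capacity input of~\cite{lalonde-mcduff-1995}. Your individual computations (inverse-invariance via $\overline{F}$, subadditivity via $F\sharp G$, right-invariance via conjugation) are all accurate, so there is nothing to correct.
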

We refer the reader to the excellent survey~\cite{polterovich-2001} for more details on the Hofer metric and other aspects of the geometry of $\Ham(M,\omega)$.
\subsection{Sobolev spaces} 
We will now briefly review the necessary background on Sobolev spaces on Riemannian manifolds.
Since we will later only look at those Riemannian manifolds that arise from symplectic manifolds, we will directly assume that we have a symplectic manifold $(M,\omega)$.
\begin{definition}
    \label{def:almost-complex-structure}
    Let $(M,\omega)$ be a closed symplectic manifold.
    A smooth bundle map $J: TM \to TM$ is called an \textit{almost-complex structure} if it satisfies $J^2 = -\id_{TM}$.
    If $\omega(\cdot, J \cdot)$ is a Riemannian metric on $M$, then $J$ is called \textit{$\omega$-compatible}.
\end{definition}
In the following, we will assume that $(M,\omega)$ is a closed symplectic manifold.
Further, let $J$ be an $\omega$-compatible almost-complex structure.
We denote the induced Riemannian metric by $g_{\omega,J}$, i.e.\ for any $p \in M$ and $v,w \in T_pM$ we have $g_{\omega,J}(v,w) = \omega(v, Jw)$.
Note that the below holds for any Riemannian metric on $M$.

We denote the $L^2$-Sobolev space with regularity $2k$ on $M$ by $W^{2k,2}(M)$.
It is defined as follows 
\[W^{2k,2}(M) \coloneqq \{u\mid u, \Laplace u, \Laplace^2 u, \dots, \Laplace^k u \in L^2(M)\}\]
with Sobolev norm 
\begin{equation*}
\norm{u}_{W^{2k,2}}^2 \coloneqq \sum_{l = 0}^{k} \norm{\Laplace^l u}_{L^2}^2.
\end{equation*}
Here $\Laplace$ is the Laplace-Beltrami operator on $M$ induced by the Riemannian metric $g_{\omega,J}$, 
i.e.\ it is the densly-defined operator $\Laplace: D \subset L^2(M) \to L^2(M)$ given by $\Laplace = -\text{div}_g \circ \nabla_g$.
Here $\nabla_g$ is the gradient operator on $M$ and $\text{div}_g$ is the divergence operator on $M$, both with respect to the Riemannian metric $g_{\omega,J}$.

Under specific circumstances, the Sobolev space $W^{2k,2}(M)$ embeds continuously into the space $C^m(M)$ of $m$-times continuously differentiable functions on $M$.
This is made precise by the Sobolev embedding theorem. 
See e.g.~\cite[Thm. 7.1]{grigoryan-2012} for a proof of this theorem.
\begin{theorem}[Sobolev embedding theorem]\label{thm:sobolev-emb}
    For any $m \geq 0$ and $k > \frac{m}{2} + \frac{\dim M}{4}$ 
    there is a continuous embedding
    \begin{equation*}
        W^{2k,2}(M) \into C^m(M).
    \end{equation*}
\end{theorem}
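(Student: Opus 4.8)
The plan is to reduce the intrinsic Sobolev space $W^{2k,2}(M)$, defined here through powers of the Laplace--Beltrami operator, to the classical (chart-based) Sobolev space and then invoke the Euclidean Sobolev embedding locally. Writing $n \coloneqq \dim M$, I would first establish that the norm $\norm{u}_{W^{2k,2}}$ is equivalent to the standard Sobolev norm $\norm{u}_{H^{2k}} \coloneqq \sum_{\abs{\alpha} \le 2k} \norm{\partial^\alpha u}_{L^2}$ measured in a fixed finite atlas with a subordinate partition of unity, and then patch the local Euclidean embedding $H^{2k}(\Reals^n) \into C^m$ back together on $M$. The numerology already matches: the classical embedding requires differentiability order strictly above $m + \tfrac{n}{2}$, and here the order is $2k$, so the hypothesis $k > \tfrac{m}{2} + \tfrac{\dim M}{4}$ is precisely $2k > m + \tfrac{n}{2}$.

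The analytic heart of the argument is elliptic regularity. Since $g = g_{\omega,J}$ makes $M$ a closed Riemannian manifold and $\Laplace$ is a second-order elliptic operator with smooth coefficients, the global elliptic estimate $\norm{u}_{H^{s+2}} \le C\big(\norm{\Laplace u}_{H^{s}} + \norm{u}_{L^2}\big)$ holds on $M$. Iterating this $k$ times and using $\Laplace^l u \in L^2$ for $0 \le l \le k$ yields
\[
    \norm{u}_{H^{2k}} \le C \sum_{l=0}^{k} \norm{\Laplace^l u}_{L^2} \le C' \norm{u}_{W^{2k,2}},
\]
where the last step is just the equivalence of the $\ell^1$- and $\ell^2$-norms on finitely many summands. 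The reverse inequality $\norm{\Laplace^l u}_{L^2} \le C\norm{u}_{H^{2l}} \le C \norm{u}_{H^{2k}}$ is immediate because each $\Laplace^l$ is a differential operator of order $2l \le 2k$. This shows $W^{2k,2}(M) = H^{2k}(M)$ with equivalent norms, so it suffices to prove the embedding for $H^{2k}(M)$.

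With the norms identified, I would fix a finite cover of $M$ by coordinate charts $\{(U_i,\psi_i)\}$ and a smooth partition of unity $\{\chi_i\}$ subordinate to it. For each $i$ the pushed-forward function $(\chi_i u)\circ \psi_i^{-1}$ is a compactly supported element of $H^{2k}(\Reals^n)$, and the classical Sobolev embedding (see the cited~\cite[Thm.~7.1]{grigoryan-2012} or any standard reference) gives, under $2k > m + \tfrac{n}{2}$, a continuous inclusion into $C^m$ together with $\norm{(\chi_i u)\circ\psi_i^{-1}}_{C^m(\Reals^n)} \le C_i \norm{\chi_i u}_{H^{2k}}$. Summing over the finitely many charts, using $u = \sum_i \chi_i u$ and that the transition maps and cutoffs are smooth with bounded derivatives (so they distort the $H^{2k}$- and $C^m$-norms by at most a constant), produces the global bound $\norm{u}_{C^m(M)} \le C \norm{u}_{H^{2k}(M)} \le C' \norm{u}_{W^{2k,2}(M)}$, which is exactly the asserted continuous embedding.

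I expect the main obstacle to be the elliptic regularity step, specifically the passage from control of the single quantity $\Laplace^k u$ to control of all mixed derivatives of order up to $2k$. The cleanest route is to prove the order-two estimate first in a chart, where $\Laplace$ is a genuine elliptic operator amenable to the Fourier-analytic / G\aa rding estimate, then globalize via the partition of unity, and finally bootstrap in the order $s$; some care is needed so that the commutators $[\Laplace,\chi_i]$, which are only first order, are absorbed into lower-order terms at each stage. An alternative that avoids charts is to diagonalize $\Laplace$ in an $L^2$-orthonormal eigenbasis $\{e_j\}$ with eigenvalues $\lambda_j$, observe that $\norm{u}_{W^{2k,2}}^2 \asymp \sum_j (1+\lambda_j)^{2k}\langle u,e_j\rangle_{L^2}^2$, and combine eigenfunction sup-norm bounds with Weyl's law $\lambda_j \asymp j^{2/\dim M}$; this is conceptually appealing but, with only elementary eigenfunction bounds, tends to yield a non-sharp exponent, so for the stated threshold the local approach is preferable.
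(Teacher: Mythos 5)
Your argument is correct: the equivalence $W^{2k,2}(M)=H^{2k}(M)$ via iterated global elliptic regularity on a closed manifold, followed by the chart-and-partition-of-unity reduction to the Euclidean embedding under $2k>m+\tfrac{n}{2}$, is exactly the standard proof, and it is essentially the argument behind the reference the paper cites for this statement (the paper itself gives no proof, quoting~\cite[Thm.~7.1]{grigoryan-2012} as background). Your caveats — bootstrapping distributional control of $\Laplace^l u$ up to full $H^{2k}$ regularity, absorbing the first-order commutators $[\Laplace,\chi_i]$, and the non-sharpness of the naive spectral alternative — are the right ones, and none of them is an obstruction.
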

In the following, we will be interested in time-dependent functions on $M$.
Thus, we introduce the densly-defined operator $\Laplace_t$ given by 
\[\Laplace_t \coloneqq \Laplace - \partial_t^2: D \subset L^2([0,1] \times M) \to L^2([0,1] \times M).\]
With this in place we obtain the Sobolev space 
\[W^{2k,2}([0,1] \times M) \coloneqq \{u: u, \Laplace_t u, \Laplace_t^2 u, \dots, \Laplace_t^k u \in L^2([0,1] \times M)\}\]
with Sobolev norm 
\begin{equation*}
\norm{u}_{W^{2k,2}}^2 \coloneqq \sum_{l = 0}^{k} \norm{\Laplace_t^l u}_{L^2}^2.
\end{equation*}
The Sobolev embedding theorem still gives us a continuous embedding of the following form:
\begin{corollary}
    For any $m \geq 0$ and $k > \frac{m}{2} + \frac{\dim M + 1}{4}$ 
    there is a continuous embedding
    \begin{equation*}
        W^{2k,2}([0,1] \times M) \into C^m([0,1] \times M).
    \end{equation*}
\end{corollary}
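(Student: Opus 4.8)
The plan is to reduce the statement to Theorem~\ref{thm:sobolev-emb} applied to a genuinely \emph{closed} Riemannian manifold of dimension $\dim M + 1$. Concretely, let $\T_2 = \Reals/2\Z$ be the circle of circumference $2$ with its standard metric $dt^2$, and form the closed Riemannian manifold $N \coloneqq \T_2 \times M$ with the product metric $dt^2 \oplus g_{\omega,J}$. Embedding $[0,1] \into \T_2$ in the obvious way, we have $[0,1] \times M \subset N$, and on the interior the Laplace--Beltrami operator of $N$ is exactly $\Laplace_N = \Laplace - \partial_t^2 = \Laplace_t$. Since $\dim N = \dim M + 1$, Theorem~\ref{thm:sobolev-emb} (which holds for any closed Riemannian manifold, the cited proof being general) yields a continuous embedding $W^{2k,2}(N) \into C^m(N)$ precisely when $k > \frac{m}{2} + \frac{\dim M + 1}{4}$, i.e.\ under the hypothesis of the corollary. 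It therefore suffices to produce a bounded linear \emph{extension operator}
\[
    E \colon W^{2k,2}([0,1] \times M) \longrightarrow W^{2k,2}(N), \qquad (Eu)\big|_{[0,1]\times M} = u,
\]
for then $u = (Eu)|_{[0,1]\times M} \in C^m([0,1]\times M)$ with $\norm{u}_{C^m} \leq \norm{Eu}_{C^m(N)} \lesssim \norm{Eu}_{W^{2k,2}(N)} \lesssim \norm{u}_{W^{2k,2}([0,1]\times M)}$.

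Since $M$ is closed, the only boundary of $[0,1]\times M$ lies in the time variable, at $t \in \{0,1\}$, so $E$ can be taken to act only in $t$ and to commute with everything happening on $M$. First I would note that the naive even reflection $u(t,x) \mapsto u(|t|,x)$ (extended $2$-periodically) does \emph{not} suffice: at $t = 0$ and $t = 1$ the odd-order $t$-derivatives generically jump, so the reflected function lands in $W^{2,2}$ only if Neumann-type conditions $\partial_t u = 0$ hold at the boundary, which we cannot assume. The standard remedy is a higher-order (Seeley--Hestenes) reflection: across each endpoint one extends by a finite combination $\sum_i c_i\, u(-\lambda_i t, x)$ with distinct scales $\lambda_i > 0$ and coefficients $c_i$ solving the Vandermonde system that matches all $t$-derivatives up to order $2k-1$ across the seam. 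Localizing near the two endpoints with a partition of unity in $t$ and gluing, this produces a linear $E$ with $Eu|_{[0,1]\times M} = u$ that is bounded on the standard Sobolev scale $H^s$ for every $0 \le s \le 2k$.

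The main obstacle is that boundedness of $E$ must be measured in the $\Laplace_t$-based norm $\sum_{l=0}^{k} \norm{\Laplace_t^l \cdot}_{L^2}^2$ that defines $W^{2k,2}$, rather than in a norm built from all partial derivatives. I would bridge this by first proving that, on $[0,1]\times M$, the defining norm is equivalent to the standard Sobolev norm $\norm{\cdot}_{H^{2k}}$ given by all covariant derivatives up to order $2k$. Decomposing $u = \sum_j u_j(t)\,e_j(x)$ in an $L^2(M)$-eigenbasis of $\Laplace$ with $\Laplace e_j = \mu_j e_j$, one has $\norm{\Laplace_t^l u}_{L^2}^2 = \sum_j \norm{(\mu_j - \partial_t^2)^l u_j}_{L^2([0,1])}^2$, so the defining norm controls exactly the quantities $\mu_j^{a}\,\norm{\partial_t^{2b} u_j}_{L^2([0,1])}$ with $a + b \le k$. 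The odd-order $t$-derivatives and the intermediate $M$-derivatives missing from this list are recovered via the one-dimensional interpolation inequality $\norm{\partial_t^c f}_{L^2([0,1])}^2 \lesssim \norm{\partial_t^{c-1} f}_{L^2([0,1])}^2 + \norm{\partial_t^{c+1} f}_{L^2([0,1])}^2$ (valid up to the boundary, with constants independent of the spectral parameter $\mu_j$) together with elliptic estimates for $\Laplace$ on the closed factor $M$; the weight bookkeeping closes because the symbols $\sum_{a+b\le k}\mu_j^{2a}\xi^{4b}$ and $\sum_{2a+c\le 2k}\mu_j^{2a}\xi^{2c}$ are comparable uniformly in $(\mu_j,\xi)$.

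Granting this equivalence on $[0,1]\times M$ and (even more easily, there being no boundary) on $N$, the classical $H^{2k}$-boundedness of the Seeley extension upgrades to boundedness of $E$ as a map $W^{2k,2}([0,1]\times M) \to W^{2k,2}(N)$, and chaining the displayed inequalities completes the argument. I expect the norm-equivalence up to the boundary — controlling the odd and mixed $t$-derivatives near $t \in \{0,1\}$ uniformly across the spectrum of $\Laplace$ — to be the only genuinely delicate point; everything downstream of it is standard.
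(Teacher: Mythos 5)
Your overall strategy (reduce to a closed manifold of dimension $\dim M + 1$ via an extension operator in $t$, then invoke Theorem~\ref{thm:sobolev-emb}) is the natural one, and you correctly isolated the one nontrivial step: the equivalence, \emph{up to the boundary} $\{0,1\}\times M$, of the defining norm $\sum_{l\le k}\norm{\Laplace_t^l u}_{L^2}^2$ with the full $H^{2k}$-norm. Unfortunately that step is not merely delicate --- it is false, and with the literal definition of $W^{2k,2}([0,1]\times M)$ used here (no boundary conditions are imposed in the $t$-variable) the corollary itself fails. Consider the single-mode functions
\begin{equation*}
    v_n(t,x) \coloneqq \lambda_n^{1/4}\, e^{-\sqrt{\lambda_n}\,t}\, e_n(x), \qquad n \in \Nats_{>0},
\end{equation*}
which are smooth on $[0,1]\times M$. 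Since $\partial_t^2 e^{-\sqrt{\lambda_n}t} = \lambda_n e^{-\sqrt{\lambda_n}t}$, we have $\Laplace_t v_n \equiv 0$, hence for \emph{every} $k$
\begin{equation*}
    \norm{v_n}_{W^{2k,2}}^2 = \norm{v_n}_{L^2}^2 = \lambda_n^{1/2}\cdot\frac{1-e^{-2\sqrt{\lambda_n}}}{2\sqrt{\lambda_n}} \le \tfrac{1}{2},
\end{equation*}
while $\norm{v_n}_{C^0} \ge \lambda_n^{1/4}\norm{e_n}_{L^2(M)}\vol_g(M)^{-1/2} = \lambda_n^{1/4}\vol_g(M)^{-1/2} \to \infty$. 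So no continuous embedding into $C^m$, for any $m \ge 0$, can exist. In your own bookkeeping the failure occurs exactly where you assert that the defining norm controls the individual quantities $\mu_j^{a}\norm{\partial_t^{2b}u_j}_{L^2([0,1])}$ for $a+b\le k$: expanding $\norm{(\mu_j-\partial_t^2)^l u_j}_{L^2}^2$, the cross terms have a sign only after integration by parts, and on an interval this produces boundary terms. Concretely, $-\langle u_j,\partial_t^2 u_j\rangle_{L^2([0,1])} = \norm{\partial_t u_j}_{L^2}^2 - \bigl[u_j\,\partial_t u_j\bigr]_0^1$, and for $u_j(t) = e^{-\sqrt{\mu_j}t}$ this equals $-\mu_j\norm{u_j}_{L^2}^2 < 0$; for these $\Laplace_t$-harmonic modes the cancellation is total. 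Hence no interpolation inequality with constants uniform in $\mu_j$ can recover the individual terms, and the Seeley extension never gets off the ground, because the hypothesis gives no control of $H^{2k}$ near $t \in \{0,1\}$.

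For comparison, the paper offers no proof at all --- the corollary is stated as an immediate consequence of Theorem~\ref{thm:sobolev-emb} --- and your attempt to supply an honest proof has uncovered that the statement needs a qualification the paper omits. The corollary becomes true, and your outline then essentially collapses to the standard argument, once boundary conditions are built into the definition: take $W^{2k,2}([0,1]\times M)$ to be the domain of the $k$-th power of a self-adjoint realization of $\Laplace_t$ with, say, periodic or Neumann conditions in $t$. Then the boundary terms above vanish, your cross terms are signed, elliptic regularity up to the boundary gives $D(\Laplace_t^k)\subset H^{2k}$ with equivalent norms, and in the periodic case $[0,1]\times M$ is simply the closed manifold $S^1\times M$, so Theorem~\ref{thm:sobolev-emb} applies directly in dimension $\dim M + 1$ with no extension operator needed. (Be aware that this repair has consequences downstream: in Lemma~\ref{lem:GP_H-Sobolev} the sample paths of $H$ satisfy no boundary conditions in $t$, so the application of this corollary there inherits the same boundary issue, and smoothness of the samples up to $t\in\{0,1\}$ should instead be argued directly from the decay of the weights $w_n$.)
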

\subsection{Gaussian processes}
\label{subsec:gaussian-processes}
In this section we will review the necessary background on Gaussian processes.
A reader well-versed in the subject is advised again to skip this section.
We will only discuss those parts of the theory necessary for the present text.
We refer to~\cite[Part I]{adler-taylor-2007} and~\cite[Chapter 2]{gine-nickl-2016} for more details with a focus on Gaussian processes
and to~\cite{bogachev-1996} and~\cite{kuo-1975} for a more general treatment of Gaussian measures.
\subsubsection{Gaussian measures}
We will first recall the definition of the standard normal distribution on $\Reals$.
On $\Reals$ a Borel measure $\gamma_{\mu,\sigma^2}$ is called a \textit{Gaussian measure} with mean $\mu \in \Reals$ and variance $\sigma^2 > 0$ if it has the density
\[\frac{1}{\sqrt{2\pi \sigma^2}} \exp\left(-\frac{(x - \mu)^2}{2\sigma^2}\right)\]
with respect to the Lebesgue measure on $\Reals$.
Additionally, the Dirac measure $\delta_x$ at a point $x \in \Reals$ is also considered a Gaussian measure with mean $\mu = x$ and variance $\sigma^2 = 0$.
If $\gamma_{\mu,\sigma^2}$ is not a Dirac measure, i.e.\ when $\sigma^2 > 0$, then it is called a \textit{non-degenerate} Gaussian measure.
Let $(\Omega, \mathcal{F}, \Prob)$ be a probability space.
A random variable $X: \Omega \to \Reals$ is called a Gaussian random variable with mean $\mu \in \Reals$ and variance $\sigma^2 > 0$ if the law of $X$ (i.e. the push-forward measure $X_\sharp \Prob$) is the Gaussian measure $\gamma_{\mu,\sigma^2}$.
We also say that $X$ is normally distributed.
In this case, we write $X \sim N(\mu,\sigma^2)$.
If $X$ is a Gaussian random variable, then the expectation $\E[X]$ is equal to $\mu$ and the variance $\Var[X]$ is equal to $\sigma^2$.

We can extend this definition to a Gaussian measure on $\Reals^n$.
A Borel measure $\gamma_{\mu,\Sigma}$ on $\Reals^n$ is called a \textit{Gaussian measure} with mean $\mu \in \Reals^n$ and covariance matrix $\Sigma \in \GL_n(\Reals)$ such that $\Sigma$ is symmetric and positive-semi-definite, if for any linear map $A: \Reals^n \to \Reals$ the push-forward measure $A_\sharp \gamma_{\mu,\Sigma}$ is a Gaussian measure on $\Reals$ with mean $A\mu$ and variance $A\Sigma A^T$, where we identify $A$ with its matrix representation with respect to the standard basis of $\Reals^n$.
A random vector $X: \Omega \to \Reals^n$ is called a Gaussian random vector with mean $\mu \in \Reals^n$ and covariance matrix $\Sigma \in \GL_n(\Reals)$ if the law of $X$ (i.e. the push-forward measure $X_\sharp \Prob$) is the Gaussian measure $\gamma_{\mu,\Sigma}$.
We also say that $X$ has a multivariate normal distribution.
It is easy to see that $\E[X] = \mu$ and 
$Cov[X_i,X_j] = \Sigma_{ij}$ for $i,j = 1,\ldots,n$.
If $\Sigma$ is the identity matrix and $\mu = 0$, then we say that $X$ is a standard Gaussian random vector.

We can now give the general definition of a Gaussian measure on a locally convex topological vector space.
In particular, this extends the concept of a Gaussian measure to spaces such as $L^p(M), W^{k,p}(M), C^k(M)$, and $C^\infty(M)$ where $M$ is a smooth manifold.
\begin{definition}\label{def:gaussian-measure}
Let $X$ be a locally convex topological vector space. Then a Borel measure $\gamma$ on $X$ is called a \textit{Gaussian measure} if for any continuous linear functional $L \in X^*$ the push-forward measure $L_\sharp \gamma$ is a Gaussian measure on $\Reals$.
Additionally, we say that $\gamma$ is centered if for any continuous linear functional $L \in X^*$ the push-forward measure $L_\sharp \gamma$ has mean zero.
\end{definition}
\subsubsection{Elementary properties of Gaussian processes}
We will focus on Gaussian measures that are arise from so-called Gaussian processes.
Intuitively, a Gaussian process is a random function on some index set.
For us this index set will always be a manifold.
\begin{remark}
    The term Gaussian process originates from the study of processes indexed by time.
    Some authors, e.g.~\cite{adler-taylor-2007}, use the term Gaussian field for processes indexed by a higher-dimensional space.
    While this is more linguistically sensible than \textit{Gaussian process}, we will stick to the term for two reasons: (1) it is more common in the literature, and (2) there might be some confusion given the ample existing meanings of \textit{field} in geometry.
\end{remark}
To specify a Gaussian process on a (Riemannian) manifold $M$, it suffices to specify a map $\mu: M \to \Reals$ and a covariance function $\kappa: M \times M \to \Reals$, such that for any finite collection of points $x_1,\ldots,x_n \in M$ the matrix $\Sigma \in \GL_n(\Reals)$ defined by $\Sigma_{ij} = \kappa(x_i,x_j)$ is symmetric and positive-semidefinite.
Then by an elementary application of the Kolmogorov extension theorem (see e.g.~\cite[Theorem 12.1.2]{dudley-2002}), there exists a family $(f_p)_{p \in M}$ of random variables $f_p: \Omega \to \Reals$ defined on some probability space $(\Omega, \mathcal{F}, \Prob)$ such that for any finite collection of points $x_1,\ldots,x_n \in M$ the random vector $(f_{x_1},\ldots,f_{x_n})$ has a multivariate normal distribution with mean $(\mu(x_1),\ldots,\mu(x_n))$ and covariance matrix $\Sigma$ defined by $\Sigma_{ij} = \kappa(x_i,x_j)$. We call this family of random variables a \textit{Gaussian (random) process} on $M$ with mean $\mu$ and covariance function $\kappa$.
If $\mu$ is identically zero (which is the case we are most interested in), then we say that the Gaussian process is \textit{centered}.
\begin{example}\label{ex:white-noise}
Let $M$ be any manifold and let $\mu: M \to \Reals$ be identically zero and $\kappa: M \times M \to \Reals$ be the indicator function of the diagonal, i.e. $\kappa(x,y) = 1$ if $x = y$ and $\kappa(x,y) = 0$ otherwise.
Then the Gaussian process $(f_p)_{p \in \Reals}$ is simply a collection of independent standard Gaussian random variables.
Thus, it encodes absolutely no information about $M$. Such a process is called \textit{white noise} on $M$.
\end{example}
\begin{example}\label{ex:gaussian-process-ex1}
    Let $M = \Reals$ and let $\mu: \Reals \to \Reals$ be identically zero and $\kappa: \Reals \times \Reals \to \Reals$ be the covariance function given by
    \[\kappa(x,y) = \exp\left(-\alpha{(x - y)^2}\right),\]
    where $\alpha > 0$ is a fixed constant.
    Then the Gaussian process $(f_p)_{p \in \Reals}$ obtained from $\mu$ and $\kappa$ is a much nicer process than the white noise considered in the previous example.
    Indeed, one can show that its sample paths, i.e.\ the functions $p \mapsto f_p(\omega)$ for $\omega \in \Omega$, are almost surely continuous (indeed, almost surely smooth).
    We should note that $\kappa(x,y)$ only depends on the distance between $x$ and $y$.
    Such a process is called \textit{stationary}.
    The parameter $\alpha$ controls (intuitively speaking) how quickly the values of $f$ at points become uncorrelated as the distance between them increases.
    If $\alpha$ is large, then even points where $\abs{x-y}$ is small will be almost uncorrelated.
    If $\alpha$ is small, then sample paths of $f$ will only change very slowly as we move along $\Reals$.
    See Figure~\ref{fig:gaussian-process-ex1} for the typical sample paths of this process for different values of $\alpha$.
    \begin{figure}[htbp]
        \centering
        \includegraphics[width=0.95\textwidth]{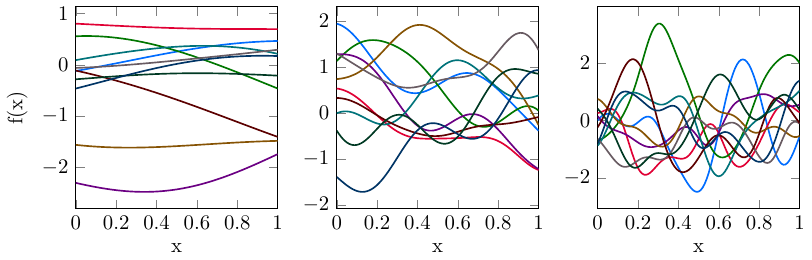}
        \caption{Sample paths of a Gaussian process described in Example~\ref{ex:gaussian-process-ex1}. The values of $\alpha$ are (from left to right): $1/2, 10$ and $50$.}\label{fig:gaussian-process-ex1} 
    \end{figure}
\end{example}
\begin{definition}\label{def:gaussian-process}
    Let $M$ be any index set\footnote{We use the letter $M$ here to emphasize that we are mostly interested in the manifold case. In particular, the $T$ most commonly found in the literature is too closely associated with \textit{time}.}.
    A family of random variables $(f_p)_{p \in M}: \Omega \to \Reals$ defined on a common probability space $(\Omega, \mathcal{F}, \Prob)$ is called a \textit{Gaussian process} if for any $n \in \Nats$, any $a_1,\ldots,a_n \in \Reals$, and any $p_1,\ldots,p_n \in M$ the random variable
    $\sum_{i=1}^n a_i f_{p_i}$
    is a (possibly degenerate) Gaussian random variable.
    The Gaussian process is called \textit{centered} if for any $p \in M$ the random variable $f_p$ has mean zero.
\end{definition}
Henceforth, we will usually use the notation $f(p)$ instead of $f_p$ to denote the random variable associated to the point $p \in M$.
This is since we seldom need to emphasize the probability space on which the process is defined
and are usually more interested in treating $f$ as a random function on $M$.
\define{
    Let $f_1,f_2$ be two Gaussian processes on an index set $M$.
    Then we say that $f_1$ and $f_2$ are \textit{versions of each other} if for any $p_1,\ldots,p_n \in M$ the random variables $(f_1(p_1),\ldots,f_1(p_n))$ and $(f_2(p_1),\ldots,f_2(p_n))$ have the same law.
    If $f_1$ and $f_2$ are defined on the same probability space $(\Omega, \mathcal{F}, \Prob)$, then we say that $f_1$ and $f_2$ are \textit{strict versions of each other}
    if $\Prob[f_1 = f_2] = 1$.
}
\rmk{
    Note that being a strict version of a Gaussian process is a much stronger condition than being a version of it.
    In particular, two Gaussian processes can be versions of each other while being independent.
}
It is easy to see that a Gaussian process constructed from a mean function $\mu: M \to \Reals$ and a covariance function $\kappa: M \times M \to \Reals$ as described above fits Definition~\ref{def:gaussian-process}.
Further, given a Gaussian process $f$ on $M$, we can construct the mean function $\mu: M \to \Reals$ and the covariance function $\kappa: M \times M \to \Reals$ by
\[\mu(p) = \E[f(p)]\]
and
\[\kappa(p,q) = \Cov[f(p),f(q)] = \E[(f(p) - \mu(p))(f(q) - \mu(q))].\]
We can then reconstruct the Gaussian process $f$ from $\mu$ and $\kappa$ as described above.
Note that when doing so, we obtain an independent version of $f$ since the covariance function 
only encodes the covariance structure within $f$ and not any further dependencies that might exist.
However, the law of $f$ is completely determined by $\mu$ and $\kappa$.
Note that for a centered Gaussian process, i.e.\ $\mu \equiv 0$, the covariance function $\kappa$ completely determines the behavior of the process.

In practice, we are most often interested in Gaussian processes with samples that lie almost-surely in some Banach space 
of functions on $M$.
Note that if $B$ is a Banach space of functions on $M$, and $f$ is a Gaussian process on $M$ with samples 
that lie almost-surely in $B$, then there exists a version of $f$ such that its samples lie in $B$ outright.
This gives rise to the following lemma:
\begin{lemma}\label{lem:gaussian-measure-from-GP}
    Let $B$ be a Banach space of functions on $M$, and let $f$ be a Gaussian process on $M$ with samples that lie almost-surely in $B$. 
    Then, there exists a version $\tilde{f}$ of $f$ such that $\tilde{f}$ is a Gaussian measure on $B$.
\end{lemma}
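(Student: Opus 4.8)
The plan is to exhibit the version $\tilde f$ directly and then verify that its law is a Gaussian measure on $B$ in the sense of Definition~\ref{def:gaussian-measure}. Since $f$ has samples lying almost-surely in $B$, I first pass to the version $\tilde f$ alluded to in the text: let $\Omega_0 \coloneqq \{\omega \in \Omega \mid f(\cdot)(\omega) \in B\}$, which by hypothesis satisfies $\Prob[\Omega_0] = 1$, and redefine $\tilde f(\cdot)(\omega) \coloneqq f(\cdot)(\omega)$ for $\omega \in \Omega_0$ and $\tilde f(\cdot)(\omega) \coloneqq 0$ (the zero function, which lies in $B$) otherwise. Then $\tilde f$ is a strict version of $f$ on a set of full measure, so in particular it is a version of $f$, and every sample path of $\tilde f$ lies in $B$ outright. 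Thus $\tilde f: \Omega \to B$ is a well-defined map.

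Next I would check that $\tilde f$ is Borel measurable as a map into $B$, so that the push-forward $\gamma \coloneqq \tilde f_\sharp \Prob$ is genuinely a Borel measure on $B$. The natural route is to note that the Borel $\sigma$-algebra on the separable Banach space $B$ is generated by the continuous linear functionals $L \in B^*$ (the weak and strong Borel structures coincide for separable Banach spaces), so it suffices to show that $\omega \mapsto L(\tilde f(\omega))$ is measurable for each $L \in B^*$. For functionals of the form $L = \mathrm{ev}_p$, evaluation at a point $p \in M$, this is immediate since $\mathrm{ev}_p(\tilde f) = \tilde f(p)$ is the random variable $f(p)$ (almost surely). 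A general $L \in B^*$ can be approximated, and one argues that $L \circ \tilde f$ is a pointwise almost-sure limit of measurable functions built from finitely many evaluations, hence measurable.

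With measurability secured, the core of the proof is to verify the defining property of Definition~\ref{def:gaussian-measure}: for every continuous linear functional $L \in B^*$, the push-forward $L_\sharp \gamma$ is a Gaussian measure on $\Reals$. Equivalently, the real-valued random variable $L(\tilde f)$ must be Gaussian. The key step is to show that $L(\tilde f)$ is an almost-sure limit of finite linear combinations $\sum_{i=1}^n a_i^{(n)} f(p_i^{(n)})$ of the point evaluations: using that finite linear combinations of evaluation functionals are weak-$*$ dense in $B^*$ (or that $\mathrm{span}\{\mathrm{ev}_p\}$ separates points of $B$ and approximating functionals of this form), and that each $\sum_i a_i f(p_i)$ is Gaussian by Definition~\ref{def:gaussian-process}, the conclusion follows because a pointwise (in particular, in-probability) limit of Gaussian random variables is Gaussian. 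This last fact is standard: the class of (possibly degenerate) real Gaussian random variables is closed under convergence in distribution, which one sees via convergence of characteristic functions $\varphi_n(t) = \exp(i\mu_n t - \tfrac12 \sigma_n^2 t^2)$, forcing the limiting characteristic function to be of the same form.

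The step I expect to be the main obstacle is the passage from the finite-dimensional Gaussianity guaranteed by Definition~\ref{def:gaussian-process} to the Gaussianity of an arbitrary continuous functional $L \in B^*$ — that is, establishing that $L(\tilde f)$ is a limit of the finite linear combinations for which Gaussianity is known. This requires care about in which sense the approximating functionals converge (pointwise on $B$, controlled on the relevant sample paths) and that this convergence transfers to almost-sure or in-distribution convergence of the induced random variables; the uniform-integrability or tightness bookkeeping needed to rule out mass escaping to infinity in the limit is where the technical work concentrates. Everything else — constructing the version, checking measurability via the separability of $B$, and invoking closure of the Gaussian class under weak limits — is routine once this approximation is in place.
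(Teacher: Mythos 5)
The paper itself gives no proof of this lemma: it is stated as background, with the only idea supplied by the preceding sentence (pass to a version whose samples all lie in $B$), and the section defers to the cited literature (Gin\'e--Nickl, Adler--Taylor, Bogachev). So your proposal has to stand on its own, and there it has a genuine gap --- one you flag yourself: the step on which the whole lemma rests, namely that $L(\tilde f)$ is a limit of finite linear combinations $\sum_i a_i f(p_i)$ for \emph{every} $L \in B^*$, is deferred rather than proved, and it is not mere ``bookkeeping.''

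Two concrete problems. First, your scheme presupposes that the point evaluations $\mathrm{ev}_p$ are continuous linear functionals on $B$ and that their span is weak-$*$ dense in $B^*$. This is fine for $B = C^0(M)$, but fails outright for $B = L^2(M)$: elements of $L^2$ are equivalence classes, and $\mathrm{ev}_p$ is neither well defined nor bounded there. Yet $L^2([0,1]\times M)$ is precisely a space to which the paper applies this lemma (Lemma~\ref{lem:GP_H-L2}), so a proof in the generality in which the lemma is used cannot route everything through evaluation functionals; for $L^p$-type spaces the standard argument instead assumes joint measurability of $(p,\omega) \mapsto f_p(\omega)$ and realizes $\langle g, \tilde f \rangle_{L^2}$ as an $L^2(\Prob)$-limit of Riemann-type sums of evaluations. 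Second, even in the good case $B = C^0(M)$, weak-$*$ density of the span of evaluations (which does follow from Hahn--Banach, since evaluations separate points of a space of genuine functions) only produces approximating \emph{nets}; pointwise a.s.\ convergence along a net does not imply convergence in distribution, because dominated convergence is a sequential theorem. You therefore need an actual sequence, e.g.\ the explicit construction $\nu_n = \sum_i \nu(A_i)\,\delta_{x_i}$ along refining partitions of the compact metric space $M$ --- exactly the argument the paper sketches later in its remark identifying $\bar F = \bar G$ for $B = C^0(M)$ --- or a boundedness-plus-metrizability argument for balls in $B^*$. Without one of these ingredients, no amount of uniform-integrability or tightness control fixes the limit passage, since the issue is the absence of a sequence, not escaping mass. (Two minor points: the lemma as stated does not assume $B$ separable, which your measurability argument requires; and measurability of your set $\Omega_0$ is a standing convention about what ``samples lie almost-surely in $B$'' means, not something that follows from it.)
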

Later it will be necessary to compute the probability of the supremum of the absolute value of a Gaussian process being larger than some value $a > 0$.
The following example illustrates how this can be done in a simple case.
\begin{example}\label{ex:gaussian-process-ex2}
    Let $M = S^1$ be the standard circle identified with the interval $[0,1]$ with the endpoints identified.
    Then let 
    \begin{equation*}
        f(t) = \sum_{n=1}^\infty \frac{1}{n^2} X_n \sin(2\pi \cdot n \cdot t),
    \end{equation*}
    where $(X_n)_{n \in \Nats}$ is a sequence of independent standard Gaussian random variables.
    Then the process $f$ is a centered Gaussian process on $S^1$.
    We can now try to estimate the probability that the supremum of $\abs{f}$ (i.e.\ the $\infty$-norm of $f$) is larger or smaller than some value $a > 0$.
    First note that by the triangle inequality we have
    \begin{align*}
        \norm{f}_{\infty} &= \sup_{t \in S^1} \abs{f(t)} \leq \sum_{n=1}^\infty \frac{1}{n^2} \abs{X_n},
    \end{align*}
    since $\norm{\sin}_\infty = 1$.
    Let $\ell = \frac{a}{\zeta(\frac{3}{2})}$, where $\zeta$ is the Riemann zeta function.
    If we assume that $X_n \in (-\ell \sqrt{n}, \ell \sqrt{n})$ for all $n \in \Nats$, then we have
    \begin{align*}
        \norm{f}_{\infty} &\leq \sum_{n=1}^\infty \frac{1}{n^2} \abs{X_n} \leq \frac{a}{\zeta(\frac{3}{2})}\sum_{n=1}^\infty {n^{-\frac{3}{2}}}  = a.
    \end{align*}
    Thus, 
    \begin{align*}
    \Prob\left[\norm{f}_{\infty} \leq a\right] &\geq \Prob\left[\abs{X_n} \leq \ell \sqrt{n} \text{ for all } n \in \Nats\right]
    = \prod_{n=1}^\infty \Prob\left[\abs{X_n} \leq \ell \sqrt{n}\right] \\
    &= \prod_{n=1}^\infty \frac{1}{\sqrt{2\pi}} \int_{-\ell \sqrt{n}}^{\ell \sqrt{n}} \exp\left(-\frac{x^2}{2}\right) dx,
    \end{align*}
    where we use the independence of the $X_n$.
    Since the integrals in the product rapidly converge to $1$ as $n \to \infty$, 
    it is easy to see that the right-hand side is strictly positive and mainly depends on the first few $X_n$.
    Thus, $\norm{f}_{\infty}$ almost behaves like the absolute value of a Gaussian random variable itself.
    This is true more generally for Gaussian processes on compact manifolds, see e.g.~\cite[Chapter 2]{adler-taylor-2007} for more details.
    The following lemma, known as the \textit{Borell-TIS} inequality, is a more general tool for such estimates.
\end{example}
\begin{lemma}[Borell-TIS inequality]\label{lem:Borell-TIS}
    Let $f$ be a centered Gaussian process on a compact manifold $M$ that is bounded almost-surely.
    Then 
    \begin{equation*}
        \E\left[ \sup_{p \in M} f(p) \right] < \infty
    \end{equation*}
    and for any $u > 0$,
    \begin{equation*}
        \Prob\left[\sup_{p \in M} f(p) - \E\left[\sup_{p \in M} f(p)\right] > u\right] \leq \exp\left(-\frac{u^2}{2\sup_{p \in M} \Var\left[f(p)\right]}\right).
    \end{equation*}
\end{lemma}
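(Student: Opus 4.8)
The plan is to reduce the statement to a finite-dimensional Gaussian concentration estimate and then pass to a limit, the key analytic input being the classical concentration inequality for Lipschitz functions of a standard Gaussian vector. Before anything else I would record that $\sigma^2 \coloneqq \sup_{p \in M} \Var[f(p)]$ is finite: since $f$ is almost surely bounded, there is a constant $K$ with $\Prob[\sup_{p} \abs{f(p)} \le K] > 1/2$, and for each fixed $p$ this forces $\Prob[\abs{f(p)} \le K] > 1/2$; a centered Gaussian variable obeying such a bound has variance at most a constant depending only on $K$, so $\sigma^2 < \infty$. Because the relevant processes have almost surely continuous sample paths on the compact manifold $M$, the supremum over $M$ coincides almost surely with the supremum over a fixed countable dense set $D = \{p_1, p_2, \dots\} \subset M$; this both makes $\sup_M f$ measurable and lets me approximate it by finite maxima.

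Next I would set $M_n \coloneqq \max_{1 \le i \le n} f(p_i)$ and prove a concentration bound for each $M_n$. The vector $(f(p_1), \dots, f(p_n))$ is centered Gaussian with some covariance $\Sigma$; factoring $\Sigma = A A^{T}$ I realize it as $A Z$ for a standard Gaussian vector $Z$ in $\Reals^n$, so $M_n = G(Z)$ with $G(z) = \max_i \langle a_i, z\rangle$ and $a_i$ the $i$-th row of $A$. As a maximum of linear functionals, $G$ is Lipschitz with constant $\max_i \norm{a_i} = \max_i \sqrt{\Sigma_{ii}} = \max_i \sqrt{\Var[f(p_i)]} \le \sigma$. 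Feeding $G$ into the Gaussian concentration inequality for Lipschitz functions then yields, with $\mu_n \coloneqq \E[M_n]$,
\[
\Prob[M_n - \mu_n > u] \le \exp\!\left(-\frac{u^2}{2\sigma^2}\right), \qquad \Prob[M_n - \mu_n < -u] \le \exp\!\left(-\frac{u^2}{2\sigma^2}\right),
\]
for all $u > 0$, uniformly in $n$.

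The sequence $\mu_n$ is nondecreasing, and I would first show it is bounded, which gives finiteness of the expected supremum. Arguing by contradiction: if $\mu_n \to \infty$, then the lower tail with $u = \mu_n - t$ gives $\Prob[M_n \le t] \le \exp(-(\mu_n - t)^2/2\sigma^2) \to 0$ for each fixed $t$; since $M_n \le \sup_M f$ we have $\{\sup_M f \le t\} \subseteq \{M_n \le t\}$, whence $\Prob[\sup_M f \le t] = 0$ for every $t$, contradicting $\sup_M f < \infty$ almost surely. Hence $\mu \coloneqq \lim_n \mu_n < \infty$; since $M_n \uparrow \sup_M f$ with $M_1$ integrable and $\mu_n$ bounded, monotone convergence yields $\E[\sup_M f] = \mu < \infty$, the first assertion. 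For the tail bound I use $\mu_n \le \mu$ together with the increasing union $\{\sup_M f > \mu + u\} = \bigcup_n \{M_n > \mu + u\}$ and the inclusion $\{M_n > \mu + u\} \subseteq \{M_n - \mu_n > u\}$, giving
\[
\Prob\!\left[\sup_M f - \E[\sup_M f] > u\right] = \lim_n \Prob[M_n > \mu + u] \le \exp\!\left(-\frac{u^2}{2\sigma^2}\right),
\]
which is the second assertion.

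The main obstacle is the Gaussian concentration inequality for Lipschitz functions of a standard Gaussian vector, which is the genuinely nontrivial ingredient: it is usually deduced from the Gaussian isoperimetric inequality of Borell and Sudakov--Tsirelson, or via a semigroup interpolation argument, and I would either cite it from the references already in the text or insert its proof. Everything else --- the bound on $\sigma^2$, the factorization computing the Lipschitz constant, and the bootstrapping and limiting arguments --- is routine once that inequality is available, though one must be attentive to the separability of the process so that $\sup_M f$ is measurable and genuinely equals the supremum over the countable dense set.
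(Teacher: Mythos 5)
Your proof is correct, but it is worth noting that the paper does not prove this lemma at all: it is stated as background and disposed of with a citation to \cite[Theorem 2.1.1]{adler-taylor-2007}. Your argument is a genuine proof, and it follows what is essentially the standard derivation found in that literature: reduce to the finite maxima $M_n = \max_{i \le n} f(p_i)$ over a countable dense set, realize $(f(p_1),\dots,f(p_n))$ as $AZ$ with $Z$ standard Gaussian so that $M_n$ is a Lipschitz function of $Z$ with constant $\max_i \sqrt{\Var[f(p_i)]} \le \sigma$, apply dimension-free Gaussian concentration for Lipschitz functions, and then run the bootstrap (lower tail forces $\E[M_n]$ to stay bounded, else $\Prob[\sup f \le t] = 0$ for all $t$) and the limiting argument. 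The chain of reductions is sound, including the two-sided use of the concentration inequality, the monotone-convergence step, and the inclusion $\{M_n > \mu + u\} \subseteq \{M_n - \mu_n > u\}$. What your route buys is self-containedness up to one cited input --- the Sudakov--Tsirelson/Borell concentration inequality for Lipschitz functions of a Gaussian vector --- which is of the same depth as what the paper cites outright, so the irreducible external ingredient merely moves one level down (to Gaussian isoperimetry or a semigroup argument). Two small caveats you correctly flag yourself but that deserve emphasis: the lemma as stated assumes only almost-sure boundedness, not continuity, so your reduction of $\sup_M f$ to a supremum over a countable dense set implicitly requires separability of the process (in the paper's applications this is harmless, since the processes there have almost-surely smooth sample paths); and your preliminary observation that $\sigma^2 < \infty$, via the median-type bound $\Prob[\abs{f(p)} \le K] > 1/2$ forcing $\Var[f(p)] \lesssim K^2$, is a necessary step that is easy to overlook and is handled correctly here.
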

We refer the reader to~\cite[Theorem 2.1.1]{adler-taylor-2007} for a proof. 
We note the following useful variant of the above inequality for the $\infty$-norm:
\begin{lemma}\label{lem:Borell-TIS-infinity}
    Let $f$ be a (possibly non-centered) Gaussian process on a compact manifold $M$ that is bounded almost-surely.
    Let $\sigma^2 = \sup_{p \in M} \Var\left[f(p)\right]$.
    Then for any $u > 0$, we have
    \begin{align*}
        \Prob\left[ \norm{f - \E[f]}_\infty \ge \E\left[\norm{f- \E[f]}_\infty\right] + u\right] &\leq \exp\left(-\frac{u^2}{2\sigma^2}\right) \\
        \Prob\left[ \norm{f- \E[f]}_\infty \le \E\left[\norm{f- \E[f]}_\infty\right] - u\right] &\leq \exp\left(-\frac{u^2}{2\sigma^2}\right) \\
        \Prob\left[ \abs{\norm{f- \E[f]}_\infty- \E\left[\norm{f- \E[f]}_\infty\right]} \ge u\right] &\leq 2\exp\left(-\frac{u^2}{2\sigma^2}\right).
    \end{align*}
\end{lemma}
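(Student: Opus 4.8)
The plan is to deduce all three estimates from the one-sided Borell--TIS inequality of Lemma~\ref{lem:Borell-TIS} (together with its symmetric counterpart) by two standard reductions: centering, and doubling the index set. First I would pass to the centered process $g \coloneqq f - \E[f]$, for which $\Var[g(p)] = \Var[f(p)]$ and hence $\sup_{p} \Var[g(p)] = \sigma^2$; the three displayed inequalities are then exactly the corresponding statements for $\norm{g}_\infty$. A small technical point is that this reduction requires the mean function $\mu \coloneqq \E[f]$ to be bounded, for otherwise $\norm{f - \E[f]}_\infty$ is almost surely infinite. I would obtain this by comparing $f$ with an independent copy $f'$, so that $f - f' = g - g'$ is a centered, almost-surely bounded Gaussian process on the compact manifold $M$. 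Applying Lemma~\ref{lem:Borell-TIS} to $f - f'$, together with Jensen's inequality and $\E[g'] = 0$, yields $\E[\norm{g}_\infty] < \infty$; hence $\norm{g}_\infty < \infty$ almost surely, and therefore $\mu = f - g$ is bounded.

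The heart of the argument is a doubling trick that realizes the sup-norm as the supremum of a single centered Gaussian process. Let $N \coloneqq M \sqcup M \cong M \times \{-1,+1\}$, which is again a compact manifold, and define $\tilde{g}(p,\epsilon) \coloneqq \epsilon\, g(p)$. Any finite linear combination $\sum_i a_i \tilde{g}(p_i,\epsilon_i) = \sum_i (a_i \epsilon_i) g(p_i)$ is Gaussian, so $\tilde{g}$ is a centered Gaussian process on $N$; it is almost-surely bounded, satisfies $\sup_{(p,\epsilon)} \Var[\tilde{g}(p,\epsilon)] = \sigma^2$, and crucially
\[
    \sup_{(p,\epsilon) \in N} \tilde{g}(p,\epsilon) = \sup_{p \in M} \max\{g(p),-g(p)\} = \sup_{p \in M} \abs{g(p)} = \norm{g}_\infty .
\]
Applying Lemma~\ref{lem:Borell-TIS} to $\tilde{g}$ gives $\E[\norm{g}_\infty] < \infty$ together with the first (upper-tail) inequality immediately.

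The remaining work is the lower tail, and this is where the one-sided form of Lemma~\ref{lem:Borell-TIS} is genuinely insufficient: controlling the event $\{\norm{g}_\infty \le \E[\norm{g}_\infty] - u\}$ amounts to a minoration of the supremum, which upper-tail bounds cannot provide, and the crude pointwise estimate $\norm{g}_\infty \ge \abs{g(p_0)}$ is far too weak. Here I would invoke the two-sided Borell--TIS inequality, whose proof via the Gaussian isoperimetric inequality is symmetric in the two tails (see~\cite[Theorem 2.1.1]{adler-taylor-2007}); applied to $\tilde{g}$ it yields the second inequality with the same constant. The third inequality is then simply the union bound over the two one-sided events, contributing the factor $2$. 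I expect this symmetric lower-tail estimate to be the only step beyond routine bookkeeping, precisely because it rests on the concentration mechanism underlying Borell--TIS rather than on the stated one-sided consequence.
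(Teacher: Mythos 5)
Your proposal is correct, and it is genuinely a different route from the paper's, because the paper does not prove this lemma at all: it simply points to \cite[Theorem 2.5.8]{gine-nickl-2016}, which states precisely these three inequalities for norms of Gaussian variables. You instead reduce the statement to the one-sided Borell--TIS inequality already recorded as Lemma~\ref{lem:Borell-TIS}. Your two reductions are both sound: the centering step (including the verification, via comparison with an independent copy, that $\E[f]$ is bounded, so that $g = f - \E[f]$ is again an almost-surely bounded Gaussian process to which Lemma~\ref{lem:Borell-TIS} can be applied) and the doubling trick $\tilde g(p,\epsilon) = \epsilon\, g(p)$ on $M \sqcup M$, which realizes $\norm{g}_\infty$ as $\sup \tilde g$ without changing the variance proxy $\sigma^2$. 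This delivers $\E[\norm{g}_\infty] < \infty$ and the upper-tail inequality exactly as you say, and you are right to isolate the lower tail as the step that cannot be extracted from the one-sided statement: upper-tail bounds cannot prevent a supremum from being too small, and the symmetric concentration mechanism (Gaussian isoperimetry, equivalently concentration of Lipschitz functionals of Gaussian vectors) is an irreducible external input there. What each approach buys: the paper's citation obtains all three inequalities at once but leaves the lemma a black box, while your argument makes the upper tail and the integrability claim self-contained relative to Lemma~\ref{lem:Borell-TIS}, and explains why passing from the supremum to the sup-norm costs nothing in the constant.

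One citation correction: \cite[Theorem 2.1.1]{adler-taylor-2007}, which you invoke for the two-sided inequality, is stated there --- and reproduced in Lemma~\ref{lem:Borell-TIS} --- in one-sided form; the lower-tail and two-sided statements around the \emph{mean} are the content of \cite[Theorem 2.5.8]{gine-nickl-2016}, i.e.\ exactly the reference the paper uses, or can be derived from Borell's isoperimetric inequality. This is a matter of pointing at the right theorem, not a gap in the mathematics. A last cosmetic remark: Borell--TIS needs no manifold structure (it holds over arbitrary index sets), so applying Lemma~\ref{lem:Borell-TIS} to the disconnected ``manifold'' $M \sqcup M$ is harmless even though the paper elsewhere assumes its manifolds connected.
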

See e.g.~\cite[Theorem 2.5.8]{gine-nickl-2016} for a proof.
Intuitively, the supremum or $\infty$-norm of a Gaussian process on a compact manifold depends on infinitely many independent random variables.
This can be seen explicitly using an orthogonal expansion of the process, see~\cite[Chapter 3]{adler-taylor-2007}.
Thus, the ``concentration of measure'' phenomenon articulated by Talagrand in~\cite{talagrand-1996} as ``\textit{[a] random variable that depends (in a smooth) way on many independent variables (but not too much on any of them) is essentially constant}''
applies and is expressed in the above inequalities.

We conclude these remarks with a brief discussion of the \textit{reproducing kernel Hilbert spaces} (RKHS) associated to Gaussian processes. 
Any Gaussian process defined on any index set gives rise to a Hilbert space called the \textit{reproducing kernel Hilbert space} (RKHS) of the Gaussian process.
Sometimes this space is also called the \textit{Cameron-Martin space}.
We will only give a very brief introduction and
refer the reader to~\cite[Chapter 2]{gine-nickl-2016} and~\cite[Chapter 3]{adler-taylor-2007} for a more detailed discussion of reproducing kernel Hilbert spaces.
In the following, let $f$ be a centered Gaussian process on a compact manifold $M$.
Let $(\Omega, \mathcal{F}, \Prob)$ be the underlying probability space of the Gaussian process.
Then consider the vector space
\begin{equation*}
    F = \left\{ \sum_{k=1}^m a_k f(p_k) \Big\vert m \in \Nats, a_1, \dots, a_m \in \Reals, p_1, \dots, p_m \in M \right\} \subset L^2(\Omega, \mathcal{F}, \Prob).
\end{equation*}
This space consists of all finite linear combinations of evaluations of the Gaussian process $f$ at points in $M$.
We denote the closure of $F$ in $L^2(\Omega, \mathcal{F}, \Prob)$ by $\bar{F}$.
We can now define the RKHS $\Hil$ of the Gaussian process $f$ as follows:
\begin{definition}\label{def:RKHS}
    Let $f$ be a centered Gaussian process on a compact manifold $M$. The Hilbert space 
    \begin{equation*}
        \Hil = \left\{ p \mapsto \E\left[h \cdot f(p)\right] \mid h \in \bar{F}\right\}
    \end{equation*}
    endowed with the inner product
    \begin{equation*}
        \langle p \mapsto \E\left[h_1 \cdot f(p)\right], p \mapsto \E\left[h_2 \cdot f(p)\right] \rangle_\Hil = \E\left[h_1h_2\right]
    \end{equation*}
    is called the \textit{reconstructing kernel Hilbert space} or \textit{RKHS} of the Gaussian process $f$.
\end{definition}
\rmk{
    It is clear from the definition that $\Hil$ consists of functions on $M$.
    Roughly speaking, the RKHS $\Hil$ consists of all functions on $M$ that have the same smoothness properties as the covariance function $\kappa(p,q) = \Cov[f(p),f(q)]$.
}
\rmk{
    The name \textit{reconstructing} kernel Hilbert space was not chosen randomly. 
    Let $p \in M$ be any point on $M$.
    Note that the random variable $f(p)$ is an element of $F$.
    It follows that $\kappa(p,\cdot) = \Cov[f(p),f(\cdot)] = \E[f(p)f(\cdot)]$, 
    where we use that $f$ is centered.
    Thus, $\kappa(p, \cdot) \in \Hil$.
    Now let $u \in \Hil$ be arbitrary.
    By definition of $\Hil$, there exists $h_u \in \bar{F}$ such that $u(p) = \E[h_u \cdot f(p)]$.
    Thus, 
    \begin{align*}
        \langle u, \kappa(p,\cdot) \rangle_\Hil &= \E\left[h_u f(p) \right] = u(p).
    \end{align*}
    This property is called the \textit{reconstructing property} of the RKHS.
}
We are often interested in the case where the samples of $f$ lie in some separable Banach space $B$ of functions on $M$, e.g. $B = L^2(M), C^0(M), W^{k,p}(M)$.
Note that by Lemma~\ref{lem:gaussian-measure-from-GP}, the law of $f$ induces a Gaussian measure on $B$.
We will  now give an alternative description of the RKHS in this setting.
We define 
\begin{equation*}
    G = \left\{ g(f) \mid g \in B^*\right\} \subset L^2(\Omega, \mathcal{F}, \Prob)
\end{equation*}
and denote by $\bar{G}$ the closure of $G$ in $L^2(\Omega, \mathcal{F}, \Prob)$.
Using this set, we can define the RKHS $\Hil$ as follows:
\begin{equation*}
    \Hil = \left\{ \E\left[h \cdot f\right] \mid h \in \bar{G}\right\} \subset B
\end{equation*}
endowed with the inner product
\begin{equation*}
    \langle \E[h_1 f], \E[h_2 f] \rangle_\Hil = \E\left[h_1h_2\right].
\end{equation*}
\rmk{
    Showing that these two definitions of the RKHS are equivalent is beyond the scope of this introduction. 
    However, it can be seen relatively easily if $B = C^0(M)$ (which is the most important case for us).
    Recall that the topological dual $C^0(M)^*$ is given by the space of $\Reals$-valued Radon measures on $M$.
    The elements of $F$ can be seen as linear combinations of Dirac measures in that language and thus are also included 
    in $C^0(M)^*$.
    However, the span of Dirac measures at arbitrary points is dense in $C^0(M)^*$ in the weak-* topology.
    Now let $\nu \in C^0(M)^*$ be arbitrary and let $(\nu_n)_{n \in \Nats}$ be a sequence of linear combinations of Dirac measures that converges to $\nu$ in the weak-* topology.
    Clearly, the random variables $\nu_n(f)$ converge to $\nu(f)$ in $L^2(\Omega, \mathcal{F}, \Prob)$ by the definition of weak-* convergence.
    Note that $\nu_n(f)$ lies in $F$ for all $n \in \Nats$.
    Since any linear combination of Dirac measures trivially lies in $G$, 
    we obtain that $\bar{F} = \bar{G}$. 
}
The relevance of the RKHS for us lies primarily in the next two results.
\begin{lemma}\label{lem:RKHS-and-support}
    Let $B$ be a separable Banach space of functions on $M$ and $f$ a centered Gaussian process on $M$ with samples that lie almost-surely in $B$.
    Then the closure of the RKHS $\Hil$ in $B$ (with respect to the norm of $B$) is the support of the law of $f$.
\end{lemma}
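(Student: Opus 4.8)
The plan is to identify the law of $f$ with a centered Gaussian measure on $B$ and then prove the two inclusions $\overline{\Hil}\subseteq\supp\gamma$ and $\supp\gamma\subseteq\overline{\Hil}$ separately. By Lemma~\ref{lem:gaussian-measure-from-GP} there is a version of $f$ whose law $\gamma$ is a centered Gaussian measure on the separable Banach space $B$, and $\Hil$ is precisely its Cameron--Martin space; since passing to a version alters neither the finite-dimensional laws nor $\gamma$ nor $\Hil$, it suffices to argue for $\gamma$.

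For the inclusion $\overline{\Hil}\subseteq\supp\gamma$ I would invoke the Cameron--Martin theorem (see e.g.\ \cite{bogachev-1996}). First note that $0\in\supp\gamma$, because $0$ is the mean of the centered measure $\gamma$ and every neighborhood of the mean of a Gaussian measure carries positive mass. Next, for any $h\in\Hil$ the translate $\tau_h\gamma$, defined by $\tau_h\gamma(A)=\gamma(A-h)$, is mutually absolutely continuous with $\gamma$. Mutually equivalent measures share their null sets, hence their supports, so $h+\supp\gamma=\supp(\tau_h\gamma)=\supp\gamma$. Applying this to the point $0\in\supp\gamma$ gives $h\in\supp\gamma$ for every $h\in\Hil$, and since the support is closed we conclude $\overline{\Hil}\subseteq\supp\gamma$.

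For the reverse inclusion I would argue by contraposition. Let $x\in B$ with $x\notin\overline{\Hil}$. Since $\overline{\Hil}$ is a closed linear subspace of $B$, the Hahn--Banach theorem provides a functional $g\in B^*$ with $g\equiv 0$ on $\overline{\Hil}$ and $g(x)=1$. Using the description of $\Hil$ via the closure $\bar G$ in $L^2(\Omega,\mathcal F,\Prob)$ of $\{L(f)\mid L\in B^*\}$, set $u_g\coloneqq\E[g(f)\,f]\in\Hil$. For any $u=\E[h f]\in\Hil$ with $h\in\bar G$ one has the reproducing identity $g(u)=\E[h\,g(f)]=\langle u,u_g\rangle_{\Hil}$, where the first equality is the interchange of the continuous functional $g$ with the Bochner integral defining $\E[hf]$. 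As $g$ vanishes on $\Hil$, choosing $u=u_g$ yields $\norm{u_g}_{\Hil}^2=\E[g(f)^2]=0$, so $g(f)=0$ almost surely. Hence $\gamma$ is concentrated on the closed hyperplane $\ker g$. Since $g(x)=1$, the open set $\{y\in B\mid\abs{g(y)}>\tfrac12\}$ is a neighborhood of $x$ of $\gamma$-measure zero, whence $x\notin\supp\gamma$. This gives $\supp\gamma\subseteq\overline{\Hil}$.

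The main obstacle is the first inclusion: everything there rests on the Cameron--Martin theorem, the single genuinely nontrivial analytic input, whereas the reverse inclusion reduces to Hahn--Banach separation together with the reproducing property already recorded in the text. Two minor technical points deserve care: justifying that the mean $0$ lies in the support (neighborhoods of the mean of a Gaussian measure have positive measure), and the interchange $g(\E[hf])=\E[h\,g(f)]$, which is legitimate because $\E[hf]$ is a $B$-valued Bochner integral and $g$ is a bounded linear functional.
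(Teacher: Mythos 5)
Your proof is correct, but there is nothing internal to compare it against: the paper does not prove this lemma, it simply defers to~\cite[Corollary 2.6.17]{gine-nickl-2016}. What you have written is essentially the standard argument that this reference supplies --- the Cameron--Martin theorem for the inclusion $\overline{\Hil}\subseteq\supp\gamma$ (translates by Cameron--Martin vectors are equivalent measures, hence have the same support), and Hahn--Banach separation combined with the reproducing identity for the reverse inclusion --- so your proposal is a self-contained substitute for the citation. Two points you flagged do deserve slightly more than assertion. First, $0\in\supp\gamma$: the support of a Borel probability measure on a separable Banach space is non-empty and closed; if $X,Y$ are i.i.d.\ with law $\gamma$, then $(X+Y)/\sqrt{2}\sim\gamma$, and a centered Gaussian measure satisfies $\gamma(-A)=\gamma(A)$, so any $x\in\supp\gamma$ gives $-x\in\supp\gamma$ and hence $0=(x+(-x))/\sqrt{2}\in\supp\gamma$. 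Second, the Bochner integral $\E[hf]$ exists because $\E[h^2]<\infty$ and $\E\left[\norm{f}_B^2\right]<\infty$, the latter by Fernique's theorem; this integrability is what licenses the interchange $g(\E[hf])=\E[h\,g(f)]$ for $g\in B^*$. With those two remarks filled in, your argument is complete, and it makes the lemma self-contained rather than imported.
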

We refer the reader to~\cite[Corollary 2.6.17]{gine-nickl-2016} for a proof.
So far, we have only needed the RKHS as a set.
However, the next theorem shows the usefulness of the RKHS norm.
The following result, known as the \textit{Cameron-Martin theorem}, is a fundamental result in the theory of Gaussian processes.
It describes how the law of a Gaussian process behaves under translation. We refer the reader to~\cite[Theorem 2.6.13]{gine-nickl-2016} for a proof.
\begin{theorem}[Cameron-Martin theorem]\label{thm:cameron-martin}
    Let $B$ be a separable Banach space of functions on $M$ and $f$ a centered Gaussian process on $M$ with samples that lie almost-surely in $B$.
    Let $\mu$ be the law of $f$ on $B$ and let $\Hil$ be the RKHS.
    Then for any $h \in B$, the push-forward measure $\mu_h$ of $\mu$ under translation by $h$ is absolutely continuous with respect to $\mu$ if and only if $h \in \Hil$.
    Note that $\mu_h(A) = \mu(\{x \in B \mid x + h \in A\})$ for any measurable set $A \subset B$.
    Furthermore, the Radon-Nikodym derivative $\frac{d\mu_h}{d\mu}$ can be given explicitly. Let $h \in \Hil$ and let $g \in \bar{G}$ be such that $h = \E[g f]$, 
    then
    \begin{equation*}
        \frac{d\mu_h}{d\mu}(\cdot) = \exp\left(g(\cdot) - \frac{1}{2} \|h\|_\Hil^2\right).
    \end{equation*}
\end{theorem}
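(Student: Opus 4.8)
The plan is to prove the two implications by different means: the sufficiency of $h \in \Hil$ via an explicit computation with characteristic functionals, and the necessity via a reduction to Kakutani's dichotomy for infinite products of Gaussians. Throughout I would exploit the isometric isomorphism $I\colon \bar{G} \to \Hil$, $g \mapsto \E[g f]$, coming directly from Definition~\ref{def:RKHS}; for $h \in \Hil$ write $h = I(g)$ with $g \in \bar{G}$, so that $\norm{h}_{\Hil}^2 = \E[g^2]$. Since $g$ is the $L^2(\Prob)$-limit of a sequence $L_n(f)$ with $L_n \in B^*$, it is realized as a $\mu$-measurable linear functional on $B$ (a Paley--Wiener integral), which is exactly the object denoted $g(\cdot)$ in the statement. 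I would also record at the outset the standard Gaussian dichotomy: $\mu_h$ is always either equivalent to $\mu$ or mutually singular with it, so that here ``$\mu_h \ll \mu$'' and ``$\mu_h \sim \mu$'' coincide.

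For the sufficiency direction I would define the candidate density $\rho(x) = \exp\!\left(g(x) - \tfrac12 \norm{h}_{\Hil}^2\right)$ and show $d\mu_h = \rho\, d\mu$ by comparing characteristic functionals, which determine a centered Gaussian measure together with all of its translates. Fix $L \in B^*$ and write $\hat{\mu}(L) = \int_B e^{iL(x)}\,d\mu(x)$. Under $\mu$ the pair $(L(f), g)$ is jointly centered Gaussian, so the key computation is a two-dimensional Gaussian integral:
\[
\int_B e^{iL(x)} \rho(x)\, d\mu(x) = e^{-\frac12 \norm{h}_{\Hil}^2}\, \E\!\left[e^{iL(f) + g}\right] = e^{-\frac12 \norm{h}_{\Hil}^2}\exp\!\left(-\tfrac12\Var[L(f)] + i\,\Cov[L(f),g] + \tfrac12 \Var[g]\right).
\]
Here $\Var[g] = \E[g^2] = \norm{h}_{\Hil}^2$ and, by pulling the continuous functional $L$ inside the expectation, $\Cov[L(f),g] = \E[L(f)\,g] = L(\E[g f]) = L(h)$, which is just the reconstructing property of $\Hil$. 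The two $\norm{h}_{\Hil}^2$ terms cancel and the right-hand side becomes $e^{iL(h)} \exp\!\left(-\tfrac12 \Var[L(f)]\right) = e^{iL(h)}\hat{\mu}(L)$, which is precisely the characteristic functional of $\mu_h$. Taking $L = 0$ confirms $\int_B \rho\, d\mu = 1$, so $\rho \geq 0$ is a genuine density and $\mu_h \ll \mu$ with the asserted Radon--Nikodym derivative.

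For the necessity direction I would argue by contraposition, showing $h \notin \Hil \Rightarrow \mu_h \perp \mu$. Choose functionals $\ell_k \in B^*$ whose images $e_k \coloneqq I(\ell_k(f))$ form a complete orthonormal basis of $\Hil$ and which generate the Borel $\sigma$-algebra of the separable space $B$. Under $\mu$ the coordinates $\ell_k(f)$ are i.i.d.\ $N(0,1)$, while under $\mu_h$ they are independent $N(a_k,1)$ with shift $a_k = \ell_k(h)$; the reconstructing property gives $a_k = \langle h, e_k\rangle_{\Hil}$ whenever $h \in \Hil$, and $\sum_k a_k^2 = \norm{h}_{\Hil}^2$ in that case. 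Pushing everything onto the sequence space $\Reals^{\Nats}$, the two laws become the infinite products $\bigotimes_k N(0,1)$ and $\bigotimes_k N(a_k,1)$, to which Kakutani's theorem applies: these products are equivalent when $\sum_k a_k^2 < \infty$ and mutually singular when $\sum_k a_k^2 = \infty$. Identifying $\Hil$ with exactly those $h$ for which $\sum_k \ell_k(h)^2 < \infty$ (and for which the partial sums $\sum_k \ell_k(h)\, e_k$ reconstruct $h$) then yields singularity for $h \notin \Hil$.

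I expect the necessity direction to be the main obstacle. The computation for sufficiency is a clean finite-dimensional Gaussian integral once the measurable linear functional $g$ is in hand, but the necessity argument rests on two more delicate points: realizing the abstract elements of $\bar{G}$ as honest $\mu$-measurable functions on $B$ and verifying that the chosen coordinate functionals $\ell_k$ generate the full Borel $\sigma$-algebra, so that singularity on the coordinate space transfers back to $B$; and pinning down the precise characterization $h \in \Hil \iff \sum_k \ell_k(h)^2 < \infty$, which is where the geometry of the RKHS genuinely enters. Both rely on the theory of measurable linear functionals for Gaussian measures; with these in place, the Kakutani dichotomy finishes the argument.
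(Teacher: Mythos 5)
First, a point of comparison: the paper does not prove this statement at all --- it is quoted as background and delegated to~\cite[Theorem 2.6.13]{gine-nickl-2016} --- so your proposal can only be measured against the standard literature proof. Your sufficiency direction is correct and is essentially that standard argument: realize $g \in \bar{G}$ as a $\mu$-measurable linear functional via the $L^2$-limit of the $L_n \in B^*$, note that $(L(f),g)$ is jointly centered Gaussian, and verify that the candidate density produces the characteristic functional $e^{iL(h)}\hat{\mu}(L)$; since characteristic functionals determine Borel probability measures on a separable Banach space, this identifies $\rho\, d\mu$ with $\mu_h$. The bivariate Gaussian computation and the interchange $\Cov[L(f),g] = L(\E[gf]) = L(h)$ are both sound (the latter needs Pettis integrability of $gf$, which follows from Fernique plus Cauchy--Schwarz).

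The necessity direction, however, has a genuine gap, and it sits exactly where you suspected. Your construction demands a sequence $\ell_k \in B^*$ whose images $I(\ell_k(f))$ form a complete orthonormal basis of $\Hil$ \emph{and} which generate the Borel $\sigma$-algebra of $B$ (equivalently, separate points). These two requirements are incompatible whenever $\supp \mu \subsetneq B$: any $0 \neq L \in B^*$ with $L(f)=0$ almost-surely has image $0$ in $\bar{G}$, so it can never be recovered from functionals tied to an orthonormal basis of $\Hil$. Concretely, let $M$ be two points, $B = \Reals^2$ the functions on $M$, $f = (X,0)$ with $X \sim N(0,1)$, and $h = (0,1)$. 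Then $\Hil = \Reals \times \{0\}$, the only admissible coordinate is $\ell_1 = $ first coordinate, $\ell_1(h) = 0$, and the pushforwards of $\mu$ and $\mu_h$ under $(\ell_k)_k$ \emph{coincide}; Kakutani declares them equivalent, yet $\mu \perp \mu_h$ (they live on the disjoint affine lines $\{y=0\}$ and $\{y=1\}$) and $h \notin \Hil$. So in the case ``$\sum_k \ell_k(h)^2 < \infty$ but the partial sums fail to reconstruct $h$'' the dichotomy on coordinates detects nothing, and singularity must be proved by a separate mechanism that your sketch does not supply.

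The gap is repairable within your framework. First dispose of the degenerate directions: if some $L \in B^*$ has $L(f)=0$ a.s.\ but $L(h) \neq 0$, then $\mu$ and $\mu_h$ are supported on the disjoint affine subspaces $\{L=0\}$ and $\{L = L(h)\}$, hence singular, and $h \notin \Hil$. In the remaining case $h$ lies in $\supp\mu$, which by Lemma~\ref{lem:RKHS-and-support} is the closure of $\Hil$ in $B$; replacing $B$ by this separable Banach subspace, $\mu$ has full support, the kernel of $L \mapsto L(f)$ is trivial, and both of your requirements on $\{\ell_k\}$ can then be met simultaneously (Gram--Schmidt applied to the union of a countable separating family and a countable family with $\{L(f)\}$ dense in $G$); separation of points makes reconstruction automatic, so $h \notin \Hil$ really does force $\sum_k \ell_k(h)^2 = \infty$ and your Kakutani argument closes. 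Alternatively, the route taken in the cited reference avoids coordinates altogether: if $\mu_h \not\perp \mu$, one extracts the bound $\abs{L(h)} \leq C \norm{L(f)}_{L^2}$ for all $L \in B^*$, and Riesz representation in $\bar{G}$ together with Hahn--Banach yields $h \in \Hil$.
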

\rmk{
    It is useful to consider an example to fully grasp the relevance of the RKHS in this context.
    We might think of a Gaussian process with samples in $L^2([0,1])$ such that the samples are almost-surely smooth.
    In this case, the RKHS will consist only of smooth functions.
    Then, if we take a non-smooth $L^2$-function $v$, the samples of the push-forward Gaussian process under translation by $v$ will
    almost-surely not be smooth.
    Thus, the two measures have to be mutually singular.
    If $v$ is smooth, then the samples of the push-forward Gaussian process under translation by $v$ will still be smooth, 
    and the two measures might be absolutely continuous with respect to each other.
    However, if the ``typical'' smoothness of a sample path of $f$ and that of $v$ do not agree, 
    the measures will still be mutually singular. The next example illustrates this.}
\begin{example}\label{ex:RKHS-example}
    Fix $\eps > 0$.
    Let $M = S^1$ and let $f$ be a Gaussian process on $M$ defined by 
    \begin{align*}
        f(t) = X_0 + {\sqrt{2}}\sum_{k=1}^\infty \exp\left(-\eps\pi^2k^2\right) \cdot \left( X_k^{(1)} \cos(2\pi k t) + X_k^{(2)} \sin(2\pi k t)\right),
    \end{align*}
    where $X_0,X_1^{(1)},X_1^{(2)},X_2^{(1)},X_2^{(2)},\ldots$ are i.i.d.\ standard normal random variables.
    Due to the almost-surely exponential decay of the Fourier coefficients, we immediately see that $f$ lies in $L^2(S^1)$ almost surely.
    Using the above with $B = L^2(S^1)$, we find 
    \begin{equation*}
        G = \left\{ \langle g,f \rangle_{L^2} \mid g \in L^2(S^1)\right\} \subset L^2(\Omega, \mathcal{F}, \Prob)
    \end{equation*}
    where we use the fact that $L^2(S^1)$ is its own topological dual.
    This greatly simplifies the following computations.
    Note that $\{1\} \cup \{\sqrt{2}\cos(2\pi k \cdot), \sqrt{2}\sin(2\pi k \cdot) \}_{k \in \Nats}$
    is an orthonormal basis of $L^2(S^1)$.
    Thus, 
    \begin{align*}
        G &= \left\{ \left\langle a_0 + \sum_{k=1}^\infty a_k \sqrt{2}\cos(2\pi k \cdot) + b_k \sqrt{2}\sin(2\pi k \cdot),f \right\rangle_{L^2} \Big\vert \{a_k\}_{k \in \Nats}, \{b_k\}_{k \in \Nats_{>0}}, \sum_{k \geq 1}{a_k^2 + b_k^2} < \infty \right\} \\
        &= \left\{ a_0 X_0 + \sum_{k=1}^\infty a_k X_k^{(1)} + b_k X_k^{(2)} \Big\vert \{a_k\}_{k \in \Nats}, \{b_k\}_{k \in \Nats_{>0}}, \sum_{k \geq 1}{a_k^2 + b_k^2} < \infty \right\}.
    \end{align*}
    Notice that this set is already closed in $L^2(\Omega, \mathcal{F}, \Prob)$, thus $\bar{G} = G$.
    Further note that for any $\{a_k\},\{b_k\} \in \ell^2$, 
    we have
    \begin{align*}\E&\Bigg[ \left(a_0 X_0 + \sum_{k=1}^\infty a_k X_k^{(1)} + b_k X_k^{(2)} \right) \left(X_0 + {\sqrt{2}}\sum_{k=1}^\infty e^{-\eps\pi^2k^2} \cdot \left( X_k^{(1)} \cos(2\pi k \cdot) + X_k^{(2)} \sin(2\pi k \cdot)\right)\right) \Bigg]\\
        &=a_0 + {\sqrt{2}}\sum_{k=1}^\infty \exp\left(-\eps\pi^2k^2\right) \cdot \left( a_k \cos(2\pi k \cdot) + b_k \sin(2\pi k \cdot)\right),
    \end{align*}
    where we use the fact that $X_k^{(1)}$ and $X_k^{(2)}$ are independent and have variance $1$.
    With this it becomes easy to compute the RKHS $\Hil$ of $f$:
    \begin{align*}
        \Hil &= \left\{ \E[h f] \mid h \in \bar{G} \right\}\\ 
        &= \left\{
            a_0 + \sum_{k=1}^\infty  a_k \cos(2\pi k \cdot) + b_k \sin(2\pi k \cdot)
            \Big\vert \{a_k\},\{b_k\} \in \ell^2,\sum_{k=1}^\infty {\sqrt{2}}e^{2\eps\pi^2k^2} (a_k^2 + b_k^2) < \infty\right\}.
    \end{align*}
    It is easy to see that the set $\Hil \subset L^2(S^1)$ is dense.
    However, $\Hil$ only contains \textit{very} smooth functions, namely function with Fourier coefficients that decay faster than $e^{-\eps \pi^2 k^2}$ as $k \to \infty$.
    Intuitively, the RKHS norm penalizes higher modes exponentially. 
    Thus, even though the reproducing kernel Hilbert space $\Hil$ is dense in $L^2(S^1)$ and consists purely of smooth functions,
    it does not contain all smooth functions.
    For example the function $g(s) = \sum_{k=1}^\infty k^{-\log(k)} \sin(2\pi k s)$ is not contained in the RKHS $\Hil$,
    since its Fourier coefficients decay slower than $e^{-\eps \pi^2 k^2}$ for any $\eps > 0$.
    Since these coefficients decay faster than any polynomial, the function $g$ however is smooth.
\end{example} %
\section{Construction of the Measure}
\label{sec:construction}
We will begin with the construction of a Gaussian measure on $C^\infty_0$.
As a first step, we carefully lay out all the choices that constitute a \ldd .

\subsection{Law-defining data}
Henceforth, we will assume that we have a symplectic manifold $(M,\omega)$, which is closed and connected.
We will now fix a universal probability space $(\Omega, \Sigma, \Prob)$
and henceforth all Gaussian processes will be assumed (unless explicitly stated otherwise) to be defined on this space.
\begin{definition}\label{def:ldd}
    A \textit{\ldd} $\mathcal{D}$ is a tuple $(\reg, J, \{e_n\}_{n \in \Nats_{>0}},  \{Z_n\}_{n \in \Nats_{>0}})$ such that 
    \begin{enumerate}
        \item $\reg > 0$, called the \textit{regularity parameter}, is a positive real number;
        \item $J$ is an $\omega$-compatible almost-complex structure on $M$;
        \item $\{\frac{1}{\vol_g(M)}, e_1, e_2, \ldots\}$ is an orthonormal basis of $L^2(M,g)$ and any $e_n$ is an eigenfunction of the Laplace-Beltrami operator $\Laplace_g: W^{2,2}(M, g) \to L^2(M, g)$ associated to the metric $g(\cdot,\cdot)=\omega(\cdot, J \cdot)$.
        Further, let $0 = \lambda_0 < \lambda_1 \leq \lambda_2 \leq \ldots < \infty$ be the eigenvalues of $\Laplace_g$ counted with multiplicities. Then $\{e_n\}_{n \in \Nats_{>0}}$ is ordered by increasing eigenvalue, i.e.\ $e_n$ is an eigenfunction with eigenvalue $\lambda_n$;
        \item $\{Z_n\}_{n \in \Nats_{>0}}$ is a family of Gaussian processes on $[0,1]$
        such that there exist constants $C,D \geq 0$ such that for all $n \in \Nats_{>0}$ and $t \in [0,1]$
        we have $\Cov[Z_n(t),Z_n(t)] < C$ and 
        $\E[\norm{Z_n}_\infty] < D$.
        We further require that $t_1, t_2 \mapsto \Cov[Z_n(t_1),Z_n(t_2)]$ and $t \mapsto \E[Z_n(t)]$ are smooth functions for any $n \in \Nats_{>0}$, which implies that the samples paths of $Z_n$ are almost-surely smooth.
        We say that the process $Z_n$ is \textit{associated to the eigenfunction $e_n$} in $\mathcal{D}$.
    \end{enumerate}
\end{definition}
\define{\label{def:ldd-types}
    A \ldd\ $\mathcal{D}$ is said to be
    \begin{enumerate}
        \item \textit{time-symmetric}, if for any $n \in \Nats_{>0}$ we have that the Gaussian processes $Z_n(\cdot)$ and $Z_n(1-\cdot)$
        are versions of each other; 
        \item \textit{centered}, if for any $n \in \Nats_{>0}$ we have that $Z_n$ is a centered Gaussian process.
    \end{enumerate}
    These are fairly general properties.
    We can also impose conditions to restrict what kind of Hamiltonian function in modelled by $\mathcal{D}$.
    We say that $\mathcal{D}$ is
    \begin{enumerate}\setcounter{enumi}{2}
        \item \textit{autonomous}, if for any $n \in \Nats_{>0}$ we have that the Gaussian process $Z_n$ satisfies for any $t_1, t_2 \in [0,1]$ that
        \[\Cov[Z_n(t_1), Z_n(t_2)] = \Var[Z_n(t_1)] = \Var[Z_n(t_2)],\]
        which, in particular, implies that the sample paths of $Z_n$ are almost-surely constant on $[0,1]$;
        \item \textit{periodic}, if for any $n \in \Nats_{>0}$ we have that $\Prob[D^k Z_n(0) = D^k Z_n(1)] = 1$ for all $k \in \Nats_0$, i.e., the Gaussian process $Z_n$ descends to a smooth function on $S^1$ almost-surely.
    \end{enumerate}
    The following conditions are related to the support of the resulting measure on $\Ham(M,\omega)$. We say that $\mathcal{D}$ is
    \begin{enumerate}\setcounter{enumi}{4}
        \item \textit{exhaustive}, if for any $n \in \Nats_{>0}$ we have that the RKHS of $Z_n$ is dense\footnote{Here we implicitly use the fact that functions in the RKHS are smooth (in particular continuous) since the covariance functions of all $Z_n$ are smooth.} in $(C^0([0,1],\R),\norm{\cdot}_{\infty})$;
        \item \textit{autonomously exhaustive}, if it is autonomous and for any $n \in \Nats_{>0}$ we have $\Var[Z_n] > 0$;
        \item \textit{periodically exhaustive}, if for any $n \in \Nats_{>0}$ we have that the closure of the RKHS of $Z_n$ in $(C^0([0,1],\R),\norm{\cdot}_{\infty})$ contains all smooth periodic functions.
    \end{enumerate}
    For technical reasons, we sometimes want to impose conditions on the relationship between the different Gaussian processes $Z_n$.
    The most useful restrictions are listed below.
    We say that $\mathcal{D}$ is
    \begin{enumerate}\setcounter{enumi}{7}
        \item \textit{frequency independent} if for any $n,m \in \Nats_{>0}$ we have that the Gaussian processes $Z_n$ and $Z_m$ are independent;
        \item \textit{frequency unbiased} if for any $n,m \in \Nats_{>0}$ we have that the Gaussian processes $Z_n$ and $Z_m$ are independent versions of each other;
        \item \textit{weakly frequency unbiased} if for any $n,m \in \Nats_{>0}$ there exists some $\alpha \in \Reals$ such that $Z_n$ and $\alpha \cdot Z_m$ are independent versions of each other.
    \end{enumerate}
    Finally, we need to impose conditions on $\mathcal{D}$ to ensure that the resulting measure has a well-defined limit as the regularity parameter $\reg$ tends to $0$.
    For this purpose we say that $\mathcal{D}$ is
    \begin{enumerate}\setcounter{enumi}{10}
        \item \textit{of $C^0$-limiting type} if it is centered, weakly frequency unbiased, and if $\sum_{n=1}^\infty \abs{\alpha_n}
        \cdot \norm{e_n}_\infty < \infty$, where $\alpha_n \in \Reals$ is such that $Z_n$ and $\alpha_n Z_1$ are versions of each other;
        \item \textit{of $C^1$-limiting type} if it is centered, weakly frequency unbiased, and if $\sum_{n=1}^\infty \abs{\alpha_n}
        \cdot \lambda_n \cdot \norm{e_n}_\infty < \infty$, where $\alpha_n \in \Reals$ is such that $Z_n$ and $\alpha_n Z_1$ are versions of each other.
    \end{enumerate}
}
\rmk{
    We should note that the definition of \ldd\ above is purposely broad.
    There is a canonical way to choose all of this data, see Section~\ref{sec:existence-of-ldd}.
    However, we do not want to artificially constrain ourselves.
    The reward for this is that our construction will encompass Dirac measures, measures supported on $\Aut$, and measures with full support, all within the same framework.
    For this generality, we have to make a sacrifice by introducing the avalanche of choices encoded in a \ldd.
}
\rmk{
    We have chosen to make the regularity parameter $\reg > 0$ a part of the \ldd\ and crucially non-random 
    since we are most interested in the existence and properties of probability measures on $\Ham(M,\omega)$.
    Since we also wish to study limits as $\reg \to \{0,\infty\}$, it seems unnatural to make $\reg$ a random variable.
    However, from a statistical point of view, this is the wrong decision.
    Thus, if one wants to adapt the construction towards doing 
    actual statistical inference on $\Ham(M,\omega)$, one should make $\reg$ a random variable.
    The reasons for this are outlined in~\cite{castillo-et-al-2014}.
} %
\subsection{Gaussian measure on $C^\infty_0$}\label{subsec:gaussian-measure-on-Hamiltonian-functions}
We will now construct a Gaussian process on the space of Hamiltonian functions.
This construction is based on the construction in~\cite{castillo-et-al-2014} and~\cite{nicolaescu-2014}.

In the following let $\mathcal{D}=(\reg, J, \{e_n\}_{n \in \Nats_{>0}},  \{Z_n\}_{n \in \Nats_{>0}})$ be a fixed \ldd.
Let 
\[\Laplace_g: W^{2,2}(M, g) \to L^2(M, g)\]
be the Laplace-Beltrami operator associated to the metric $g(\cdot, \cdot) = \omega(\cdot,J\cdot)$.
We denote the eigenvalues of $\Laplace_g$ by $0 = \lambda_0 < \lambda_1 \le \lambda_2 \le \cdots < \infty$,
where we count the eigenvalues with multiplicities.
Our \ldd\ gives us a preferred orthonormal basis of $L^2(M, g)$, namely ${\{e_n\}}_{n \in \Nats_{>0}}$.

We can then define our Gaussian process as a random combination of eigenfunctions of the Laplace-Beltrami operator with appropriate decay of the (time-dependent) coefficients. 
To encode the appropriate decay, we define the following series of weights:
\[w_n = \exp\left(-\frac{1}{2}\lambda_n \cdot \reg\right).\] 
Note that as $\reg \to \infty$ all $w_n$ converge to $0$ and as $\reg \to 0$ all $w_n$ converge to $1$.
Note that a faster decay might be encoded in the \ldd, see Definition~\ref{def:ldd-types}.
With this decay condition in place, we can define the Gaussian process that will serve as our model for a random Hamiltonian function, ultimately leading to a notion of random Hamiltonian diffeomorphisms:
\begin{equation}\label{eq:GP_H}
    H(t,x) = \sum _{n \ge 1} w_n \cdot Z_n(t) \cdot e_n(x).
\end{equation} 
\begin{lemma}\label{lem:GP_H}
    The random function $H: [0,1] \times M \to \Reals$ defined in~\eqref{eq:GP_H} is a Gaussian process.
    If $\mathcal{D}$ is centered, then $H$ is a centered Gaussian process.
\end{lemma}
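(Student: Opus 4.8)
The plan is to verify the two defining requirements of Definition~\ref{def:gaussian-process} for the index set $[0,1] \times M$: first that the series~\eqref{eq:GP_H} converges, so that $H(t,x)$ is a genuine random variable for each $(t,x)$, and second that every finite linear combination of the $H(t_i,x_i)$ is a (possibly degenerate) Gaussian random variable. The centered case will then follow by checking that $\E[H(t,x)] = 0$ whenever every $Z_n$ is centered.

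First I would establish almost-sure absolute and uniform convergence of~\eqref{eq:GP_H}. Using the uniform bound $\E[\norm{Z_n}_\infty] < D$ from Definition~\ref{def:ldd}(4),
\[
\E\left[\sum_{n \ge 1} w_n \norm{Z_n}_\infty \norm{e_n}_\infty\right] = \sum_{n \ge 1} w_n \norm{e_n}_\infty \E[\norm{Z_n}_\infty] \le D \sum_{n \ge 1} w_n \norm{e_n}_\infty.
\]
The point is that the right-hand side is finite: since $w_n = \exp(-\tfrac12 \lambda_n \reg)$ decays exponentially in $\lambda_n$, while Weyl's law gives $\lambda_n \to \infty$ (at polynomial speed in $n$) and Hörmander's sup-norm estimate gives $\norm{e_n}_\infty \lesssim (1+\lambda_n)^{(\dim M - 1)/4}$ for the $L^2$-normalized eigenfunctions $e_n$, the summand $w_n \norm{e_n}_\infty$ decays faster than any power of $n$. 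Hence $\sum_n w_n \norm{Z_n}_\infty \norm{e_n}_\infty < \infty$ almost surely, so the series~\eqref{eq:GP_H} converges absolutely and uniformly on $[0,1]\times M$ for almost every sample, and $H(t,x)$ is well defined (indeed almost-surely continuous). This bound also supplies the integrable dominating function that justifies the interchanges of expectation and summation used below.

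Next I would prove Gaussianity of linear combinations. Fix $(t_1,x_1),\dots,(t_k,x_k) \in [0,1]\times M$ and $a_1,\dots,a_k \in \Reals$, and set $Y = \sum_{i=1}^k a_i H(t_i,x_i)$ together with the partial sums $Y_N = \sum_{n=1}^N \sum_{i=1}^k a_i w_n e_n(x_i) Z_n(t_i)$. Each $Y_N$ is a finite linear combination of the variables $\{Z_n(t_i) : 1 \le n \le N,\ 1 \le i \le k\}$; since the family $\{Z_n\}$ is jointly Gaussian (the standard meaning of a family of Gaussian processes on a common probability space, as is presupposed by the independence and ``version'' conditions of Definition~\ref{def:ldd-types}), $Y_N$ is Gaussian for every $N$. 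By the convergence established above, $Y_N \to Y$ almost surely and in $L^2(\Omega)$, and an almost-sure (equivalently $L^2$) limit of Gaussian random variables is again Gaussian, with mean and variance the limits of those of $Y_N$. Hence $Y$ is Gaussian, which is exactly the condition of Definition~\ref{def:gaussian-process}. Finally, if $\mathcal{D}$ is centered then $\E[Z_n(t)] = 0$ for all $n$ and $t$, so each partial sum $\sum_{n \le N} w_n \E[Z_n(t)] e_n(x)$ vanishes; passing to the limit (justified by the dominating function above) gives $\E[H(t,x)] = 0$ for every $(t,x)$, so $H$ is centered.

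The main obstacle is the convergence step: one must genuinely invoke the geometry, combining the exponential decay of the weights $w_n$ with the growth of the Laplace eigenvalues and the sup-norm control of the eigenfunctions, to ensure that the random series converges; the uniform moment bound $\E[\norm{Z_n}_\infty] < D$ is what lets us bypass any control of the cross-covariances $\Cov[Z_n(t),Z_m(t)]$. The remaining ingredient --- that Gaussianity is preserved under $L^2$ and almost-sure limits --- is standard, but it must be paired with the joint Gaussianity of $\{Z_n\}$ so that the finite partial sums $Y_N$ are Gaussian to begin with.
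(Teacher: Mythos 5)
Your proof is correct and follows essentially the same route as the paper: almost-sure uniform convergence of the series~\eqref{eq:GP_H} obtained by playing the exponential decay of $w_n$ against a polynomial sup-norm bound on the eigenfunctions (the paper uses the Sobolev embedding bound $\norm{e_n}_\infty \lesssim \lambda_n^{\dim M/2}$ where you cite H\"ormander's estimate), followed by the observation that limits of Gaussian linear combinations are Gaussian and a term-by-term computation of the mean. Your explicit handling of the joint Gaussianity of the family $\{Z_n\}$ and of the dominating function justifying the interchange of expectation and summation fills in details that the paper's proof leaves implicit.
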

\begin{proof}
    Since $H$ is by definition a series with Gaussian coefficients, it is a Gaussian process provided that the series converges almost-surely.
    Recall that we can estimate the supremum norm of eigenfunctions of the Laplacian as follows:
    \begin{equation}\label{eq:C0-estimate-eigenfunctions}
    \norm{e_n}_{\infty} \leq C' \cdot \norm{e_n}_{W^{\dim M,2}} \leq C'' \lambda_n^{\frac{\dim M}{2}} \cdot \norm{e_n}_{L^2}  = C'' \lambda_n^{\frac{\dim M}{2}},
    \end{equation}
    where $C',C'' > 0$ are constants that only depend on $(M,\omega)$ and $J$.
    Here we use the Sobolev embedding theorem (Theorem~\ref{thm:sobolev-emb}) and the definition of the Sobolev norm.
    When we combine~\eqref{eq:C0-estimate-eigenfunctions} 
    with the decay of the coefficients $w_n$ and the fact that the variances and means of $Z_n(t)$ are uniformly bounded in $n$ and $t$ by the assumptions on the \ldd\ $\mathcal{D}$,
    we obtain that the series converges almost-surely uniformly on $[0,1] \times M$.

    To obtain that $H$ is centered, if $\mathcal{D}$ is centered, we compute the mean of $H$ at any point $(t,x) \in [0,1] \times M$:
    \begin{align*}
        \E[H(t,x)] &= \E\left[\sum _{n \ge 1} w_n \cdot Z_n(t) e_n(x) \right] \\
                   &= \sum_{n \ge 1} w_n \cdot \E[Z_n(t)] \cdot e_n(x),
    \end{align*}
    where we again use the almost-sure convergence of the series to interchange the expectation and the sum.
    Clearly this quantity is $0$ if $\mathcal{D}$ is centered, since $\E[Z_n(t)] = 0$ for all $n \in \Nats_{>0}$ and $t \in [0,1]$  by assumption in that case.
\end{proof}
\begin{remark}
    Assume that $\mathcal{D}$ is not centered and let 
    \[\bar{H}(t,x) = \E[H(t,x)] = \sum _{n \ge 1} w_n \E[Z_n(t)] e_n(x).\]
    Then the process $H - \bar{H}$ is centered and can be obtained from~\eqref{eq:GP_H} for the centered $\ldd$
    \[\mathcal{D}' = (\reg, J, \{e_n\}_{n \in \Nats_{>0}},  \{Z_n - \E[Z_n]\}_{n \in \Nats_{>0}}).\]
    Thus, in the remainder of this section we will assume that $\mathcal{D}$ is centered.
\end{remark}

Next we wish to establish some requisite regularity properties of the Gaussian process $H$.
To define a Hamiltonian vector field, we need that $H$ is at least $C^1$-regular in the space variable $x \in M$.
However, we will actually show that $H$ is almost-surely smooth in both time and space.
First we will show that $H$ is almost-surely an $L^2$-function on $[0,1] \times M$.
\begin{lemma}\label{lem:GP_H-L2}
    The sample paths of the Gaussian process $H$ defined in~\eqref{eq:GP_H} are almost-surely in $L^2([0,1] \times M, \Reals)$.
    In particular, the law of $H$ induces a well-defined Gaussian measure on $L^2([0,1] \times M, \Reals)$.
\end{lemma}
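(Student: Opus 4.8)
The plan is to show that $\E\bigl[\norm{H}_{L^2([0,1]\times M)}^2\bigr] < \infty$; since a non-negative random variable with finite expectation is finite almost surely, this immediately yields $\norm{H}_{L^2} < \infty$ on a set of full measure, i.e.\ membership of the sample paths in $L^2([0,1]\times M)$. One could in fact argue more cheaply: the proof of Lemma~\ref{lem:GP_H} already establishes that the series~\eqref{eq:GP_H} converges almost-surely uniformly on the compact set $[0,1]\times M$, so the limit is almost-surely continuous, hence bounded, hence in $L^2$. However, I prefer the quantitative route through the expected squared norm, both because it is cleaner (it uses only the $L^2(M)$-orthonormality of the $e_n$ rather than the sup-norm estimate~\eqref{eq:C0-estimate-eigenfunctions}) and because the resulting bound is exactly of the kind needed downstream.

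Concretely, I would work with the partial sums $H_N = \sum_{n=1}^N w_n Z_n e_n$, each a finite combination of smooth functions and so manifestly in $L^2([0,1]\times M)$. Expanding the square and integrating over $M$, the orthonormality of the eigenfunctions kills all cross terms, leaving
\[
\norm{H_N}_{L^2([0,1]\times M)}^2 = \sum_{n=1}^N w_n^2 \int_0^1 Z_n(t)^2 \, dt,
\]
a sum of non-negative random variables, hence monotone increasing in $N$. Taking expectations (we are in the centered case, as reduced to in the remark following Lemma~\ref{lem:GP_H}, so that $\E[Z_n(t)^2] = \Var[Z_n(t)] < C$ by the uniform variance bound in Definition~\ref{def:ldd}) gives
\[
\E\left[\norm{H_N}_{L^2}^2\right] = \sum_{n=1}^N w_n^2 \int_0^1 \Var[Z_n(t)] \, dt \le C \sum_{n=1}^N w_n^2,
\]
where $w_n^2 = e^{-\lambda_n \reg}$.

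The crux, and the step I expect to carry the real content, is the summability of $\sum_{n \ge 1} w_n^2 = \sum_{n\ge 1} e^{-\lambda_n \reg}$. This is (up to the $\lambda_0=0$ term) the heat trace $\operatorname{Tr}\bigl(e^{-\reg \Laplace_g}\bigr)$ of the Laplace--Beltrami operator on the closed manifold $M$, which is finite for every $\reg > 0$; equivalently, Weyl's law gives $\lambda_n \gtrsim n^{2/\dim M}$, so $e^{-\lambda_n \reg}$ decays faster than any polynomial and the series converges. This is the only point where positivity of the regularity parameter $\reg$ is genuinely used. Granting this, the monotone convergence theorem applied to the increasing sequence $\norm{H_N}_{L^2}^2$ yields
\[
\E\left[\lim_{N\to\infty}\norm{H_N}_{L^2}^2\right] = \lim_{N\to\infty}\E\left[\norm{H_N}_{L^2}^2\right] \le C\sum_{n\ge 1} w_n^2 < \infty,
\]
so $\lim_N \norm{H_N}_{L^2}^2 < \infty$ almost surely. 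Since $H_N \to H$ uniformly almost surely on the compact domain $[0,1]\times M$ by Lemma~\ref{lem:GP_H}, this convergence is also in $L^2$, whence $\lim_N \norm{H_N}_{L^2}^2 = \norm{H}_{L^2}^2$ and therefore $\norm{H}_{L^2} < \infty$ almost surely. The sample paths of $H$ thus lie almost-surely in $L^2([0,1]\times M)$, and Lemma~\ref{lem:gaussian-measure-from-GP} then produces the asserted Gaussian measure on $L^2([0,1]\times M)$.
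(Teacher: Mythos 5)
Your proof is correct and takes essentially the same route as the paper: both reduce to bounding $\E\bigl[\norm{H}_{L^2}^2\bigr]$ using the $L^2(M)$-orthonormality of the eigenfunctions, the uniform variance bound from Definition~\ref{def:ldd}, and the summability of $\sum_{n\ge 1} e^{-\lambda_n \reg}$ via Weyl's law, then conclude almost-sure finiteness of the norm. Your partial-sum/monotone-convergence bookkeeping is just a more explicit version of the interchange the paper carries out via Fubini--Tonelli and Parseval.
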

\begin{proof}
    We want to show that $\norm{H}_{L^2}^2 < \infty$ almost-surely.
    Note that for any measurable $f:[0,1] \times M \to \Reals \cup \{\infty\}$,
    \[\norm{f}_{L^2}^2 = \int_{M \times [0,1]} f^2 d\vol_g dt = \int_0^1 \norm{f(t,\cdot)}_{L^2}^2 dt\]
    by the Fubini-Tonelli theorem.
    Now by combining the above with Parseval's identity, we obtain
    \begin{align*}
        \norm{H}_{L^2}^2 &= \int_0^1 \norm{H(t,\cdot)}_{L^2}^2 dt 
        = \int_0^1 \sum_{n \ge 1} \left(w_n \cdot Z_n(t) \right)^2 dt.
    \end{align*}
    We can compute the expectation of this quantity as follows:
    \begin{align*}
        \E\left[\norm{H}_{L^2}^2\right] = \E\left[\int_0^1 \sum_{n \ge 1} \left(w_n \cdot Z_n(t) \right)^2 dt\right]
        &= \int_0^1 \sum_{n \ge 1} \E\left[\left(w_n \cdot Z_n(t) \right)^2\right] dt,
    \end{align*}
    where we use the Fubini-Tonelli theorem again to interchange the expectation, the integral, and the sum.
    Furthermore, we have
    \[\E\left[\left(w_n \cdot Z_n(t) \right)^2\right] = w_n^2 \cdot \E[Z_n(t)^2] = w_n^2 \cdot \left( \Var[Z_n(t)] - \E[Z_n(t)]^2 \right).\]
    By the definition of a \ldd, we have that there exists a constant $C > 0$ such that $\Var[Z_n(t)] - \E[Z_n(t)]^2 < C$ for all $n \in \Nats_{>0}$ and $t \in [0,1]$.
    Combining this with our previous computations, we obtain
    \begin{align*}
        \E\left[\norm{H}_{L^2}^2\right]
        &= \int_0^1 \sum_{n \ge 1} w_n^2 \cdot \left( \Var[Z_n(t)] - \E[Z_n(t)]^2 \right) dt \\
        &\leq C \cdot \sum_{n \ge 1} w_n^2 
        = C \cdot \sum_{n \ge 1} \exp(-\lambda_n \cdot \reg) < \infty,
    \end{align*}
    where we use that $\lambda_n \to \infty$ sufficiently quickly as $n \to \infty$ by Weyl's law~\cite{weyl-1911}.
    Since this implies $\Prob[\norm{H}_{L^2} < \infty] = \Prob[\norm{H}^2_{L^2} < \infty] = 1$, we obtain that sought-after result.
\end{proof}
We have now established that $H$ gives rise to a well-defined Gaussian measure on $L^2([0,1] \times M, \Reals)$.
To use $H$ as a Hamiltonian function, we need to show two more things: (1) that $H$ is almost-surely smooth and (2) that $H$ is normalized, i.e.\ $\int_M H(t,x) d\vol_g(x) = 0$ for all $t \in [0,1]$.
\begin{lemma}\label{lem:GP_H-Sobolev}
    The sample paths of the Gaussian process $H$ defined in~\eqref{eq:GP_H} are almost-surely in $W^{2k,2}([0,1] \times M)$ for any $k \in \Nats$.
\end{lemma}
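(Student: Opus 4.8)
The plan is to follow the template of Lemma~\ref{lem:GP_H-L2} and prove the stronger integrability statement $\E\big[\norm{H}_{W^{2k,2}}^2\big] < \infty$; since a nonnegative random variable with finite expectation is almost-surely finite, this forces $\norm{H}_{W^{2k,2}} < \infty$ almost-surely and hence $H \in W^{2k,2}([0,1]\times M)$ almost-surely. By the definition of the Sobolev norm it suffices to treat each summand $\E\big[\norm{\Laplace_t^l H}_{L^2}^2\big]$ separately for $l = 0,\ldots,k$ and intersect the resulting full-measure events.

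First I would compute the formal action of $\Laplace_t^l = (\Laplace - \partial_t^2)^l$ on the series~\eqref{eq:GP_H}. Since $\Laplace$ acts only in $x$ with $\Laplace e_n = \lambda_n e_n$, while $\partial_t^2$ acts only in $t$ on $Z_n$, the binomial theorem gives, termwise,
\[
\Laplace_t^l\big(w_n Z_n(t) e_n(x)\big) = w_n \Big(\sum_{j=0}^l \binom{l}{j}(-1)^j \lambda_n^{\,l-j} Z_n^{(2j)}(t)\Big) e_n(x),
\]
where $Z_n^{(2j)}$ is the $2j$-th time-derivative, itself a centered Gaussian process with covariance $\partial_{t_1}^{2j}\partial_{t_2}^{2j}\Cov[Z_n(t_1),Z_n(t_2)]$. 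One then defines $H^{(l)}$ as the series with these coefficients, shows it converges in $L^2$ via the bound below, and identifies $H^{(l)}$ with $\Laplace_t^l H$ by a routine density argument using that $\Laplace_t^l$ is closed. Applying Parseval in the space variable (using that $\{e_n\}$ is orthonormal in $L^2(M)$) and Fubini--Tonelli exactly as in Lemma~\ref{lem:GP_H-L2} reduces everything to
\[
\E\big[\norm{\Laplace_t^l H}_{L^2}^2\big] = \int_0^1 \sum_{n \ge 1} w_n^2\, \E\Big[\Big(\sum_{j=0}^l \binom{l}{j}(-1)^j \lambda_n^{\,l-j} Z_n^{(2j)}(t)\Big)^2\Big]\, dt.
\]

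To estimate the inner expectation I would apply the triangle inequality in $L^2(\Omega)$ and bound it by $C_l\, \lambda_n^{2l} \max_{0 \le j \le l} \sup_{t} \Var[Z_n^{(2j)}(t)]$. The spatial factor $\lambda_n^{2l}$ is harmless: since $w_n^2 = \exp(-\lambda_n \reg)$ and $\lambda_n \to \infty$ by Weyl's law, the series $\sum_n w_n^2 \lambda_n^{2l}$ converges with room to spare, exactly as before. The main obstacle is the time-derivative variances $\sup_t \Var[Z_n^{(2j)}(t)]$: the \ldd\ axioms bound $\Var[Z_n(t)]$ uniformly in $n$ but say nothing directly about its derivatives, and a high-frequency process can keep its variance bounded while its derivative variances grow arbitrarily fast in $n$. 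The crux is therefore to control these quantities uniformly (or at least sub-exponentially) in $n$, so that the exponential decay of $w_n^2$ still dominates.

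This control is automatic in the cases of genuine interest. When the $Z_n$ are independent versions (frequency unbiased) or scalar multiples of versions (weakly frequency unbiased) of a single fixed process $Z_1$, equality of laws forces $\Cov[Z_n(s),Z_n(t)] = \alpha_n^2\,\Cov[Z_1(s),Z_1(t)]$ and hence $\Var[Z_n^{(2j)}(t)] = \alpha_n^2 \Var[Z_1^{(2j)}(t)]$, with $\sup_t\Var[Z_1^{(2j)}(t)]$ a finite constant depending only on $j$; the summability then reduces to the convergence of $\sum_n w_n^2 \lambda_n^{2l} \alpha_n^2$, which the decay of $w_n$ secures. I would therefore phrase the general argument around recording the requisite uniform bound on the derivatives of the covariance functions of the $Z_n$ and feeding it into the estimate above; the remaining bookkeeping (the finite $j$-sum and the constants $C_l$) is routine.
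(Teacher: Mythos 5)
Your proposal follows the paper's proof essentially step for step: both arguments bound $\E\big[\norm{H}_{W^{2k,2}}^2\big]$, act with $\Laplace_t^l = (\Laplace - \partial_t^2)^l$ termwise (the paper keeps this as $(\lambda_n - \partial_t^2)^l Z_n$, you expand it binomially --- the same thing), reduce via Parseval and Fubini--Tonelli to the time-derivative variances of the $Z_n$, control those by derivatives of the covariance functions $\kappa_n$, and let the decay $w_n^2 = e^{-\lambda_n\reg}$ absorb the polynomial factor $\lambda_n^{2l}$ via Weyl's law. So in structure and in the estimates themselves your proof and the paper's coincide.

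The one place you diverge is exactly the step you singled out as the crux, and there your treatment is more careful than the paper's. The paper's proof asserts that, by the definition of a \ldd, there is a bound on $\norm{\kappa_n}_{C^{2j}}$ and $\norm{\E[Z_n]}_{C^{2j}}$ uniform in $n$; but Definition~\ref{def:ldd} only postulates uniform bounds on $\Cov[Z_n(t),Z_n(t)]$ and on $\E[\norm{Z_n}_\infty]$, together with smoothness of $\kappa_n$ and $\E[Z_n]$ for each fixed $n$ --- it says nothing uniform about derivatives. Your worry is not pedantic: a family such as $Z_n(t) = X_n\sin(a_n t)$ with $X_n$ i.i.d.\ standard Gaussian and $a_n$ growing super-exponentially in $\lambda_n$ satisfies every clause of Definition~\ref{def:ldd}, yet $\sup_t\Var[Z_n''(t)]$ grows like $a_n^4$, the series for $\Laplace_t H$ fails to define an $L^2$ function almost surely, and the conclusion of the lemma fails. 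So the statement in its literal generality requires either the uniform derivative-covariance bounds you propose to record as a hypothesis (plus the analogous bounds on the derivatives of $\E[Z_n]$ in the non-centered case, which the paper's own proof also invokes), or the restriction to (weakly) frequency unbiased data that you carry out, where all $\kappa_n$ are multiples $\alpha_n^2\kappa_1$ of a single smooth covariance and the $\alpha_n$ are bounded by the uniform variance bound, so the needed control is automatic. Either patch completes the argument; the paper implicitly assumes the former reading of its definition. Your further remark that identifying the termwise series with $\Laplace_t^l H$ needs a closedness/density argument is likewise a (routine) point the paper glosses over.
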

\begin{proof} 
    We wish to use the same proof strategy as in Lemma~\ref{lem:GP_H-L2}, i.e. 
    we wish to show that 
    \[ \E\left[\norm{H}_{W^{2k,2}}^2\right] < \infty,\]
    thereby obtaining that this quantity is finite almost-surely, which implies that the sample paths of $H$ are almost-surely in $W^{2k,2}([0,1] \times M)$.
    Recall that the Sobolev norm is defined as $\norm{H}_{W^{2k,2}}^2 = \sum_{j=0}^{k} \norm{\Laplace_t^j H}_{L^2}^2$.
    We can compute these expressions individually.
    First we note that in the $j = 0$ case we have already shown in Lemma~\ref{lem:GP_H-L2} that $\norm{H}_{L^2}^2 < \infty$ almost-surely.
    Thus, we can focus solely on $j > 0$. First we note that when $j = 1$, we have
    \begin{align*}
        \Laplace_t H(t,x) = (\Laplace_g - \partial_t^2) H(t,x) =  \sum_{n\ge 1} \exp\left(-{\frac{1}{2}\lambda_n}\right) (\lambda_n Z_n(t) - \partial_t^2 Z_n(t)) e_k(x),
    \end{align*}
    since $e_k$ is an eigenfunction of $\Laplace_g$ with eigenvalue $\lambda_n$.
    It is an elementary computation to see that this implies 
    \begin{align*}
        \Laplace_t^j H(t,x) = \sum_{n\ge 1} \exp\left(-{\frac{1}{2}\lambda_n}\right) \left[(\lambda_n - \partial_t^2)^j Z_n(t) \right] e_k(x),
    \end{align*}
    which in particular implies that $\norm{H}_{W^{2k,2}}^2 = \sum_{j=0}^{k} \norm{\Laplace_t^j H}_{L^2}^2$ is measurable.
    Here we use that the derivatives of a Gaussian process are again Gaussian processes (if they exist), see e.g.~\cite[Section 1.4.2]{adler-taylor-2007}. 
    Furthermore, this implies that 
    \begin{align*}
        \norm{\Laplace_t^j H}_{L^2}^2 &= \int_0^1 \norm{\Laplace_t^j H(t,\cdot)}_{L^2}^2 dt = \int_0^1 \sum_{n\ge 1} \exp\left(-\lambda_n\right) \left[(\lambda_n - \partial_t^2)^j Z_n(t) \right]^2 dt \\
        &= \sum_{n\ge 1}  \exp\left(-\lambda_n\right) \left( \int_{0}^{1} \left[(\lambda_n - \partial_t^2)^j Z_n(t) \right]^2 dt \right),
    \end{align*}  
    where we use Parseval's identity and the Fubini-Tonelli theorem.
    Now for any $t \in [0,1]$, we have 
    \begin{align*}
        \E\left[ ((\lambda_n - \partial_t^2)^j Z_n(t))^2\right] = \Var\left[ (\lambda_n - \partial_t^2)^j Z_n(t) \right] + \left( \E\left[ (\lambda_n - \partial_t^2)^j Z_n(t)\right]\right)^2.
    \end{align*}
    We can now estimate these two terms separately.
    First, we have that 
    \begin{align*}
        \Var\left[ (\lambda_n - \partial_t^2)^j Z_n(t) \right] &= \Var\left[ \sum_{i=0}^j {{j}\choose{i}} \lambda_n^i (-\partial_t^2)^{j-i} Z_n(t) \right] \\
        &= \sum_{i=0}^j\sum_{l=0}^j \Cov\left[ {{j}\choose{i}} \lambda_n^i (-\partial_t^2)^{j-i} Z_n(t), {{j}\choose{l}} \lambda_n^l (-\partial_t^2)^{j-l} Z_n(t)\right] \\
        &= \sum_{i=0}^j\sum_{l=0}^j {{j}\choose{i}} {{j}\choose{l}} \lambda_n^{i+l} {(-1)}^{2j-i-l} \Cov\left[(\partial_t^2)^{j-i} Z_n(t),  (\partial_t^2)^{j-l} Z_n(t)\right] \\
        &= \sum_{i=0}^j\sum_{l=0}^j {{j}\choose{i}} {{j}\choose{l}} \lambda_n^{i+l} {(-1)}^{2j-i-l} \partial_{t_1}^{2j-2i} \partial_{t_2}^{2j-2l} \kappa_n(t_1,t_2) \vert_{t_1=t,t_2=t}\\ 
        &\leq \sum_{i=0}^j\sum_{l=0}^j {{j}\choose{i}} {{j}\choose{l}} \lambda_n^{i+l} \norm{\kappa_n}_{C^{2j}} \\
        &\leq \sum_{i=0}^j\sum_{l=0}^j (j!)^2 \lambda_n^{2j} \norm{\kappa_n}_{C^{2j}} \leq j^2 \cdot (j!)^2 \cdot \lambda_n^{2j} \cdot \norm{\kappa_n}_{C^{2j}},
    \end{align*}
    where we use the fact that the covariance of derivatives of a Gaussian process is given by the derivatives of the covariance function, see e.g.~\cite[Section 5.5]{adler-taylor-2007}.
    We know that these derivatives exist (almost-surely) because the samples paths of $Z_n$ are almost-surely smooth (see Definition~\ref{def:ldd}).
    The second term can be estimated as follows:
    \begin{align*}
        \left( \E\left[ (\lambda_n - \partial_t^2)^j Z_n(t)\right]\right)^2 &= \left( (\lambda_n - \partial_t^2)^j\E\left[  Z_n(t)\right]\right)^2 \leq C_j \cdot \lambda_n^{2j} \cdot \norm{\E[Z_n]}_{C^{2j}},
    \end{align*}
    where we use the linearity of the expectation and the fact that $\E[Z_n(t)]$ is a smooth function by assumption on the \ldd\ $\mathcal{D}$. 
    Here $C_j > 0$ is some constant depending on only on $j$.
    Combining these two estimates, we obtain that
    \[ \E\left[ ((\lambda_n - \partial_t^2)^j Z_n(t))^2\right] \leq  \lambda_n^{2j} \cdot \left( j^2 (j!)^2 \norm{\kappa_n}_{C^{2j}} + C_j \norm{\E[Z_n]}_{C^{2j}} \right) \]
    Then by the definition a \ldd, there is a uniform bound (uniform meaning uniform in $n$) for $\norm{\kappa_n}_{C^{2j}}$ and $\norm{\E[Z_n]}_{C^{2j}}$.
    Thus, the above estimate simplifies to $\E\left[ ((\lambda_n - \partial_t^2)^j Z_n(t))^2\right] \leq C'_j \cdot \lambda_n^{2j}$,
    where $C'_j > 0$ is a constant that depends only on $j$.
    We can now combine this with our previous computations to obtain
    \begin{align*}
        \E\left[\norm{\Laplace_t^j H}_{L^2}^2 \right] &= \sum_{n\ge 1}  \exp\left(-\lambda_n\right) C'_j \cdot \lambda_n^{2j} < \infty,
    \end{align*}
    where we again use that $\lambda_n \to \infty$ as $n \to \infty$ by Weyl's law~\cite{weyl-1911}.
    It now suffices to note that 
    \begin{align*}
        \E\left[\norm{H}_{W^{2k,2}}^2\right] &= \sum_{j=0}^{k} \E\left[\norm{\Laplace_t^j H}_{L^2}^2\right] < \infty,
    \end{align*}
    to conclude the proof.
\end{proof} 

We can now utilize the Sobolev embedding theorem to show that $H$ is almost-surely smooth.
To formalize this, denote by $H_\theta$ the sample path of the Gaussian process $H$ associated to the element $\theta \in \Omega$ in the underlying probability space $(\Omega, \mathcal{F}, \Prob)$.
Thus, we can define the event 
\[\mathcal{S} \coloneqq \bigcap_{k = 1}^\infty \{\theta \in \Omega \mid H_{\theta} \in W^{2k,2}([0,1] \times M)\}.\]
If $\theta \in \mathcal{S}$ this means that $H_{\theta} \in W^{2k,2}$ for all $k \in \Nats$.
This implies $H_{\theta} \in C^\infty([0,1] \times M)$ by standard Sobolev embedding arguments, see Theorem~\ref{thm:sobolev-emb}.
Note that $\mathcal{S}$ is a countable intersection of full-measure sets, and thus has full measure itself.
Thus, we obtain the following corollary of Lemma~\ref{lem:GP_H-Sobolev}:
\begin{corollary}\label{cor:GP_H-smooth}
    With the notation from above, $\Prob[\mathcal{S}] =1$. Thus, a sample path of $H$ is smooth almost-surely.
\end{corollary}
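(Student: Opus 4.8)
The plan is to deduce the corollary directly from Lemma~\ref{lem:GP_H-Sobolev} by a routine countable-intersection argument, followed by an application of the Sobolev embedding theorem to promote membership in every Sobolev space to honest smoothness. No new analytic input is required: all the substantive uniform-in-$n$ estimates have already been carried out in Lemma~\ref{lem:GP_H-Sobolev}, and what remains is purely formal measure theory.

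First I would fix the events $A_k \coloneqq \{\theta \in \Omega \mid H_\theta \in W^{2k,2}([0,1]\times M)\}$ for each $k \in \Nats$. Lemma~\ref{lem:GP_H-Sobolev} asserts precisely that $\Prob[A_k] = 1$, so each complement $A_k^c$ is a $\Prob$-null set. Here I would also record that each $A_k$ genuinely lies in $\mathcal{F}$: this is the measurability observation already made inside the proof of Lemma~\ref{lem:GP_H-Sobolev}, where $\norm{H}_{W^{2k,2}}^2 = \sum_{j=0}^k \norm{\Laplace_t^j H}_{L^2}^2$ was exhibited as a measurable function of $\theta$ (a countable sum of measurable quantities built from the process and its almost-surely existing derivatives), so that $A_k = \{\theta \mid \norm{H_\theta}_{W^{2k,2}} < \infty\}$ is an event. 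Since $\mathcal{S} = \bigcap_{k=1}^\infty A_k$, its complement is $\mathcal{S}^c = \bigcup_{k=1}^\infty A_k^c$, a countable union of null sets, and countable subadditivity of $\Prob$ gives
\[ \Prob[\mathcal{S}^c] \leq \sum_{k=1}^\infty \Prob[A_k^c] = 0, \]
whence $\Prob[\mathcal{S}] = 1$.

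It then remains to invoke the Sobolev embedding theorem pointwise on the full-measure set $\mathcal{S}$. For any $\theta \in \mathcal{S}$ we have $H_\theta \in W^{2k,2}([0,1]\times M)$ for every $k$. Given a target regularity $m \geq 0$, I would choose $k > \frac{m}{2} + \frac{\dim M + 1}{4}$ and apply the time-dependent form of the embedding (the corollary to Theorem~\ref{thm:sobolev-emb}) to conclude $H_\theta \in C^m([0,1]\times M)$. As $m \geq 0$ was arbitrary, this yields $H_\theta \in C^\infty([0,1]\times M)$, and since this holds for every $\theta$ in the full-measure set $\mathcal{S}$, the sample paths of $H$ are smooth almost surely.

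The main obstacle, if one can call it that, is entirely absent at this stage: the genuine difficulty — controlling $\E[\norm{\Laplace_t^j H}_{L^2}^2]$ uniformly and summing against the Weyl-law decay of $\exp(-\lambda_n \reg)$ — was resolved in Lemma~\ref{lem:GP_H-Sobolev}. The only point deserving a moment's care is conceptual rather than technical: one cannot prove smoothness ``in one shot'', since no single $W^{2k,2}$ embeds into $C^\infty$; instead one must intersect over all $k$, and it is precisely the countable union bound that keeps the accumulating exceptional null sets from conspiring to produce positive measure. This is why the statement is phrased through the event $\mathcal{S}$ rather than asserting smoothness of a fixed sample path outright.
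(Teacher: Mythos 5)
Your proposal is correct and follows essentially the same route as the paper: the full-measure property of $\mathcal{S}$ comes from Lemma~\ref{lem:GP_H-Sobolev} plus the fact that a countable intersection of full-measure events has full measure, and smoothness on $\mathcal{S}$ follows by applying the Sobolev embedding for each target regularity $m$. Your added care in citing the time-dependent embedding (the corollary to Theorem~\ref{thm:sobolev-emb}, since $H$ lives on $[0,1]\times M$) and in noting measurability of the events $A_k$ is a minor refinement of the same argument, not a different one.
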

\begin{remark}
    Indeed, we can get more. 
    By restricting to $\mathcal{S}$ we can obtain a random variable \textit{with} smooth samples, i.e.\ one valued in $C^\infty([0,1]\times M)$.
    Indeed, on the probability space $(\mathcal{S}, \mathcal{F}\vert_{\mathcal{S}}, \Prob\vert_{\mathcal{S}})$
    there is the random variable
    \[H\vert_{\mathcal{S}}:\mathcal{S} \to C^\infty([0,1] \times M, \Reals).\]
    In some ways whether we look at $H\vert_{\mathcal{S}}$ or $H$ 
    is simply a matter of convention or notational preference.
    Since $\Prob[\mathcal{S}] = 1$ the probability of any event\footnote{Or to be precise, the probability of the event and the probability of the intersection of the event and $\mathcal{S}$.} is unchanged by the choice we make.
\end{remark}
We now still have to show that $H$ is normalized, i.e.\ that $\int_M H(t,x) d\vol_g(x) = 0$ for all $t \in [0,1]$.
\begin{lemma}\label{lem:GP_H-normalized}
    The Gaussian process $H$ defined in~\eqref{eq:GP_H} is normalized in the sense that 
    \[\int_M H(t,x) d\vol_g(x) = 0\]
    for all $t \in [0,1]$ almost-surely. In particular, the law of $H$ induces a well-defined Gaussian measure on $C_0^\infty$.
\end{lemma}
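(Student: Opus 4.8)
The plan is to show that the normalization condition holds almost-surely by a direct computation term-by-term, exploiting the fact that all the eigenfunctions $e_n$ appearing in~\eqref{eq:GP_H} are orthogonal to the constant function $\frac{1}{\vol_g(M)}$ in $L^2(M,g)$. First I would fix $t \in [0,1]$ and integrate the series defining $H$ over $M$. Since we have already established in Corollary~\ref{cor:GP_H-smooth} (and in the uniform convergence established in the proof of Lemma~\ref{lem:GP_H}) that the series converges almost-surely uniformly on $[0,1]\times M$, I may interchange the integral over $M$ with the summation over $n$. This yields
\[
\int_M H(t,x)\, d\vol_g(x) = \sum_{n \ge 1} w_n \cdot Z_n(t) \cdot \int_M e_n(x)\, d\vol_g(x).
\]
By item (3) of Definition~\ref{def:ldd}, the set $\{\frac{1}{\vol_g(M)}, e_1, e_2, \ldots\}$ is an orthonormal basis of $L^2(M,g)$, so each $e_n$ with $n \ge 1$ is orthogonal to the constant function $\frac{1}{\vol_g(M)}$. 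This orthogonality means precisely that $\int_M e_n \, d\vol_g = \vol_g(M)\cdot\langle e_n, \frac{1}{\vol_g(M)}\rangle_{L^2} = 0$ for every $n \ge 1$. Hence every term in the sum vanishes identically, giving $\int_M H(t,x)\,d\vol_g(x) = 0$ for the fixed $t$.

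The only subtlety is upgrading from ``for each fixed $t$, almost-surely'' to ``almost-surely, for all $t$ simultaneously,'' since the naive union over $t \in [0,1]$ is uncountable. I would handle this by appealing to the almost-sure smoothness already proved: on the full-measure event $\mathcal{S}$ the sample path $H_\theta$ is a genuine smooth function, and the map $t \mapsto \int_M H_\theta(t,x)\,d\vol_g(x)$ is continuous in $t$. Since the term-by-term computation shows this integral is zero for each rational $t$ (a countable set), continuity forces it to be zero for all $t \in [0,1]$ on the event $\mathcal{S}$. Therefore the normalization holds for all $t$ simultaneously, almost-surely. Combined with Corollary~\ref{cor:GP_H-smooth}, this shows that almost-every sample path lands in $C^\infty_0$, so the law of $H$ descends to a well-defined Gaussian measure on $C^\infty_0$, as claimed.

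The main obstacle, though a mild one, is the justification of interchanging the integral and the infinite sum; this is already secured by the almost-sure uniform convergence established in Lemma~\ref{lem:GP_H}, since uniform convergence on the compact set $M$ permits integration term-by-term. The measurability and the Gaussian-measure structure on $C^\infty_0$ then follow formally, as $C^\infty_0$ is a closed linear subspace of $C^\infty([0,1]\times M)$ cut out by the continuous normalization functionals $H \mapsto \int_M H(t,\cdot)\,d\vol_g$.
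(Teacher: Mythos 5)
Your proposal is correct and follows essentially the same route as the paper: integrate the series over $M$ term-by-term (justified by the almost-sure uniform convergence) and use that each $e_n$ with $n \ge 1$ integrates to zero, so every term vanishes. The only cosmetic differences are that the paper derives $\langle 1, e_n \rangle_{L^2} = 0$ from self-adjointness of $\Laplace_g$ and the nonzero eigenvalue rather than directly from the orthonormal-basis assumption in Definition~\ref{def:ldd}, and your rational-$t$/continuity patch is redundant, since the interchange of sum and integral is valid pathwise on the single full-measure event where the series converges, hence for all $t \in [0,1]$ simultaneously.
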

\begin{remark}
    We should note that in the context of this section (and this section only!) we think of $C_0^\infty$ simply as $C_0^\infty \subset L^2([0,1] \times M, \Reals)$.
    In particular, we use the induced topology from $L^2([0,1] \times M, \Reals)$ on $C_0^\infty$.
    Of course, this is quite heretical, and we will deal with issues of topology in the next section.
\end{remark}
Before we prove this lemma, we will quickly comment on different modes of normalization.
Recall that a (time independent) Hamiltonian vector field determines an autonomous Hamiltonian function up to a constant.
It follows that a time-dependent Hamiltonian vector field fixes
the time-dependent Hamiltonian up to a time dependent constant.
To be precise, if $H,G: [0,1] \times M \to \Reals$ 
are two Hamiltonian functions with $X_H(t,\cdot) = X_G(t,\cdot)$ for all $t \in [0,1]$
Then $H(t,x) = G(t,x) + C(t)$ for all $x \in M, t\in [0,1]$ and some $C \in C^\infty([0,1])$.
We resolve this ambiguity by specifying a normalization condition.
The general convention in the field is to require that 
\[\int_{M} H_t \omega^n = 0,\]
for all $t \in [0,1]$.
Since we work in an $L^2$-setting, it is a natural choice to normalize using 
\begin{equation*}
    \langle 1, H_t \rangle_{L^2} =   \int_M H_t d\vol_g = 0. 
\end{equation*}
However, this is actually the same normalization.
Indeed, we have
   \[\frac{1}{n!}\int_{M} H_t \omega^n = \int_M H_t d\vol_g,\]
   since $n! \cdot d\vol_g = \omega^n$. This can be seen via a local computation in Darboux coordinates.
   Here we use that $g(\cdot, \cdot) = \omega(\cdot, J\cdot)$ for some almost-complex structure $J: TM \to TM$ compatible with $\omega$.
Note that for any eigenfunction $e \in L^2(M)$ of $\Laplace_g$ with a non-zero eigenvalue $\lambda$, we have 
\begin{align*}
    \langle 1, e \rangle_{L^2} = \lambda^{-1} \langle 1, \Laplace_g e \rangle_{L^2} = \lambda^{-1} \langle \Laplace_g 1, e \rangle_{L^2} = 0.
\end{align*}
Thus, by building our GP out of eigenfunction with non-zero eigenvalues, we have obtained the above normalization by default.
In particular, 
\begin{align*}
    \langle 1, H_t \rangle_{L^2} &= \left\langle 1, \sum_{n \geq 1} w_n Z_n(t) e_n \right\rangle_{L^2} 
    = \sum_{n \geq 1} w_n Z_n(t) \langle 1, e_n \rangle_{L^2} 
    = 0,
\end{align*}
whenever the series $H_t = H(t,\cdot)$ defined in~\ref{eq:GP_H} converges. So in particular, this holds on $\mathcal{S}$ and almost-surely in general.
In the last step we use that the series only involves eigenfunctions with non-zero eigenvalues by construction and thus $\langle 1, e_n \rangle_{L^2} = 0$ for all $n \geq 1$.
Thus, we have shown Lemma~\ref{lem:GP_H-normalized}.

\define{\label{def:equivalence-of-ldds}
    Let $\mathcal{D}_1$ and $\mathcal{D}_2$ be two \ldds .
    We say that $\mathcal{D}_1$ and $\mathcal{D}_2$ are \textbf{equivalent} if the Gaussian processes $H^{(1)}$ and $H^{(2)}$ defined in~\eqref{eq:GP_H} using $\mathcal{D}_1$ and $\mathcal{D}_2$ respectively are versions of each other.
}
After these setup steps,
we are now ready to finally define the Gaussian measure on normalized Hamiltonian functions.
\define{\label{def:muGPH}
    Let $\mathcal{D}$ be a \ldd.
    We define the \textit{Gaussian measure on Hamiltonian functions} $\muGPH^{\mathcal{D}}$ as the restriction of the law of the Gaussian process $H$ defined in~\eqref{eq:GP_H} with respect to the \ldd\ $\mathcal{D}$ to the space $C_0^\infty$
    of normalized Hamiltonian functions.
}
Note that this measure is well-defined, since by Corollary~\ref{cor:GP_H-smooth} the sample paths of $H$ are almost-surely smooth and by Lemma~\ref{lem:GP_H-normalized} the sample paths are normalized.
\subsection{Measure on the Hamiltonian diffeomorphism group}\label{subsec:ham-measure}
In the last section we defined a Gaussian measure on the space of normalized time-dependent Hamiltonian functions.
We now use this measure to define a measure on the group of Hamiltonian diffeomorphisms.
This idea is very simple, we will push-forward the measure $\muGPH$ defined in Definition~\ref{def:muGPH} under the (path-space) fibration
\begin{center}
    \begin{tikzcd}
        C^\infty_{0}  \cong \ham(M,\omega) \arrow[rr, two heads] &  & {\Ham(M,\omega)}.
    \end{tikzcd}
\end{center}
Since we wish to construct a Borel measure on $\Ham(M,\omega)$, we need to specify how we topologize these spaces.
In the following, we wish to topologize $\Ham(M,\omega)$ with the topology induced by the Hofer metric, see Definition~\ref{def:hofer-norm}.
This is a natural choice on $\Ham(M,\omega)$.

Note that we can introduce the intermediate space $\widetilde{\Ham}(M,\omega)$, i.e.\ the universal cover of $\Ham(M,\omega)$.
Elements of $\widetilde{\Ham}(M,\omega)$ are paths in $\Ham(M,\omega)$ that start at the identity and which we consider to be equivalent up to homotopy relative to the endpoints.
The space $\widetilde{\Ham}(M,\omega)$ is also endowed with a Hofer (pseudo-)norm which is obtained similar to the usual Hofer norm (see Definition~\ref{def:hofer-norm}) 
but instead of taking an infimum over all Hamiltonian functions, we take an infimum over all that generate a flow in the specified homotopy class.
The space $\widetilde{\Ham}(M,\omega)$ is a topological group and the Hofer (pseudo-)norm gives rise to a Hofer (pseudo-)metric $\tilde{d}_{\Hof}$ on $\widetilde{\Ham}(M,\omega)$.
Clearly, the covering map $(\widetilde{\Ham}(M,\omega), \tilde{d}_{\Hof}) \to (\Ham(M,\omega), d_{\Hof})$ is 1-Lipschitz.
We can now extend the fibration above to include the universal cover as follows:
\begin{center}
    \begin{tikzcd}
        C^\infty_{0} \arrow[rr, "\cong"] &  & {\ham(M,\omega)} \arrow[rr, two heads]         &  & {\widetilde{\Ham} (M,\omega)} \arrow[rr, two heads] &  & {\Ham(M,\omega)} \\
        H \arrow[rr, maps to]            &  & {(\phi_H^t)_{t \in [0,1]}} \arrow[rr, maps to] &  & {[\phi_H^t]_{/\sim}} \arrow[rr, maps to]           &  & \phi_H^1,
    \end{tikzcd}
\end{center}
where $[\cdot]_{/\sim}$ denotes the equivalence class of paths up to homotopy relative to the endpoints.
Since Lipschitz maps are particularly compatible with Gaussian measures (see~\cite{gine-nickl-2016} for many examples of this general philosophy),
we wish to metrize things in such a way that the maps above are all Lipschitz (and therefore so is their composition).
The following lemma (well-known in the field) gives us a clear indication as to which metric to use on $C^\infty_0$.
\begin{lemma}\label{lem:H_to_phi_H_Lipschitz}
    The map 
    \begin{align*}
        C^\infty_0 &\to \widetilde{\Ham}(M,\omega) \\
                        H &\mapsto [\phi_H^1]_{/\sim}
    \end{align*}
    is Lipschitz with Lipschitz constant $2$ with respect to the $C^0$-norm $\norm{\cdot}_\infty$ on $C^\infty_0$ and the Hofer norm $\tilde{d}_{\Hof}$ on $\widetilde{\Ham}(M,\omega)$.
\end{lemma}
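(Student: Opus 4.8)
The plan is to exploit the bi-invariance of the Hofer pseudo-metric together with the composition and inversion formulae of Lemma~\ref{lem:flows_composition_and_inversion}. Interpreting the map as $H \mapsto [\phi_H^t]_{/\sim}$, valued in the topological group $\widetilde{\Ham}(M,\omega)$, the goal is the estimate $\tilde{d}_{\Hof}([\phi_H^t],[\phi_G^t]) \le 2\norm{H-G}_\infty$ for all $H,G \in C^\infty_0$. By bi-invariance of $\tilde{d}_{\Hof}$ we have $\tilde{d}_{\Hof}([\phi_H^t],[\phi_G^t]) = \tilde{d}_{\Hof}(\widetilde{\id}, [\phi_H^t]^{-1}\cdot[\phi_G^t])$, which is by definition the Hofer pseudo-norm of the single element $[\phi_H^t]^{-1}\cdot[\phi_G^t]$, i.e.\ the infimum of $\osc(F)$ over all $F$ generating a path in that homotopy class. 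Thus everything reduces to exhibiting \emph{one} Hamiltonian generating this element whose oscillation is at most $2\norm{H-G}_\infty$.

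First I would identify that Hamiltonian. By Lemma~\ref{lem:flows_composition_and_inversion} we have $\phi_{\overline{H}}^t = (\phi_H^t)^{-1}$ and $\phi_{\overline{H}\sharp G}^t = (\phi_H^t)^{-1}\circ \phi_G^t$, so the path $t \mapsto (\phi_H^t)^{-1}\circ \phi_G^t$ is generated by $\overline{H}\sharp G$ and represents exactly $[\phi_H^t]^{-1}\cdot[\phi_G^t]$ in $\widetilde{\Ham}(M,\omega)$. A direct substitution, using $(\phi_{\overline{H}}^t)^{-1} = \phi_H^t$, yields the clean formula
\[
\overline{H}\sharp G(t,x) = -H(t,\phi_H^t(x)) + G(t,\phi_H^t(x)) = (G-H)(t,\phi_H^t(x)).
\]

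The crucial observation is then that the oscillation is invariant under precomposition with the time-dependent diffeomorphism $\phi_H^t$: for each fixed $t$ the map $x \mapsto \phi_H^t(x)$ is a bijection of $M$, so $\max_x (G-H)(t,\phi_H^t(x)) = \max_y (G-H)(t,y)$ and likewise for the minimum. Hence $\osc(\overline{H}\sharp G) = \osc(G-H)$. Bounding the integrand pointwise in time by $\max_x (G-H)(t,\cdot) - \min_x (G-H)(t,\cdot) \le 2\sup_x \abs{(G-H)(t,x)} \le 2\norm{H-G}_\infty$ and integrating over $t \in [0,1]$ gives $\osc(\overline{H}\sharp G) \le 2\norm{H-G}_\infty$. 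Combining this with the reduction above, $\tilde{d}_{\Hof}([\phi_H^t],[\phi_G^t]) \le \osc(\overline{H}\sharp G) \le 2\norm{H-G}_\infty$, which is the claim.

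I expect no deep obstacle here, as the argument is essentially algebraic. The one point demanding care is the bookkeeping in the universal cover: one must verify that $\overline{H}\sharp G$ generates precisely the homotopy class $[\phi_H^t]^{-1}\cdot[\phi_G^t]$, and not merely some path with the correct endpoint. This is exactly where routing through $\widetilde{\Ham}(M,\omega)$ rather than $\Ham(M,\omega)$ directly pays off, since the oscillation estimate then controls the pseudo-norm of the correct group element. The reparametrization-invariance of $\osc$ under $\phi_H^t$ is the only genuinely analytic input, and it follows immediately from $\phi_H^t$ being a diffeomorphism for each $t$.
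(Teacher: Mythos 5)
Your proposal is correct and follows essentially the same route as the paper's proof: bi-invariance of $\tilde{d}_{\Hof}$ to reduce to the Hofer pseudo-norm of $[\phi_H^t]^{-1}\cdot[\phi_G^t]$, Lemma~\ref{lem:flows_composition_and_inversion} to identify $\overline{H}\sharp G = (G-H)(t,\phi_H^t(\cdot))$ as a generating Hamiltonian, invariance of $\osc$ under the diffeomorphism $\phi_H^t$, and the elementary bound $\osc(G-H) \le 2\norm{H-G}_\infty$. Your explicit attention to the homotopy-class bookkeeping in the universal cover is a point the paper passes over more quickly, but the substance is identical.
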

\begin{proof}
    Let $H_1,H_2 \in C^\infty_0$ be arbitrary.
    We want to show 
    \begin{equation}\label{eq:Lipschitz}
        \tilde{d}_{\Hof}([\phi_{H_1}^t]_{/\sim}, [\phi_{H_2}^t]_{/\sim}) \leq 2 \cdot \norm{H_2 - H_1}_\infty.
    \end{equation}
    For this, we first note that 
    \begin{align*}
        \tilde{d}_{\Hof}([\phi_{H_1}^t]_{/\sim}, [\phi_{H_2}^t]_{/\sim}) =
        \tilde{d}_{\Hof}\left(\left[\left(\phi_{H_1}^t\right)^{-1}\phi_{H_2}^t\right]_{/\sim}, \id\right)
        = \norm*{\left[\left(\phi_{H_1}^t\right)^{-1}\phi_{H_2}^t\right]_{/\sim}}_{\Hof},
    \end{align*}
    by the bi-invariance of the Hofer metric.
    Now by Lemma~\ref{lem:flows_composition_and_inversion}, we obtain 
    \[\left(\phi_{H_1}^t\right)^{-1}\phi_{H_2}^t = \phi_{\overline{H}_1\sharp H_2}^t\]
    for any $t \in [0,1]$.
    Thus, 
    \begin{equation}\label{eq:Lipschitz_upper_bound}
        \tilde{d}_{\Hof}([\phi_{H_1}^t]_{/\sim}, [\phi_{H_2}^t]_{/\sim}) \leq
        \int_0^1 \osc \left[ (\overline{H}_1\sharp H_2)(t,\cdot)\right] dt.
    \end{equation}
    Now notice that 
    \[(\overline{H}_1\sharp H_2)(t,x) = \overline{H}_1(t,x) + H_2(t,(\phi_{\overline{H}_1}^t)^{-1}(x))
      = H_2(t,\phi_{H_1}^t(x)) -H_1(t,\phi_{H_1}^t(x)),\]
    where we again use Lemma~\ref{lem:flows_composition_and_inversion}.
    Thus, 
    \begin{align*}
        \int_0^1 \osc \left[ (\overline{H}_1\sharp H_2)(t, \cdot)\right] dt
        &= 
            \int_0^1 \Big[ \max_{x \in M}\left( H_2(t,\phi_{H_1}^t(x)) -H_1(t,\phi_{H_1}^t(x))\right) \\
            & \qquad - \min_{x \in M}\left( H_2(t,\phi_{H_1}^t(x)) -H_1(t,\phi_{H_1}^t(x))\right) \Big] dt \\
        &= \int_0^1 \left[ \max_{x \in M}\left( H_2(t,x) -H_1(t,x)\right)
        -  \min_{x \in M}\left( H_2(t,x) -H_1(t,x)\right) \right] dt \\
        &\leq \int_0^1 \left[ 2 \max_{x \in M}\abs*{ H_2(t,x) -H_1(t,x)}
       \right] dt \\
        &\leq 2 \max_{t \in [0,1]} \max_{x\in M}\abs{H_2(t,x) -H_1(t,x) }
        \\ &= 2 \norm{H_2 - H_1}_\infty,
    \end{align*}
    where we use that $\phi^t_{H_1}$ is a diffeomorphism (and thus in particular bijective) for the second equality.
    By plugging this inequality into~\eqref{eq:Lipschitz_upper_bound} we obtain~\eqref{eq:Lipschitz}, thus completing the proof.
\end{proof}
Thus, we wish to show that the measure $\muGPH$ defined in Definition~\ref{def:muGPH} is a Borel measure on $C_0^\infty$ with respect to the $C^0$-topology.
\begin{lemma}\label{lem:muGPH_Borel}
    Let $\mathcal{D}$ be a \ldd.
    Then the probability measure $\muGPH^{\mathcal{D}}$ defined in Definition~\ref{def:muGPH} is a Borel measure on $C_0^\infty$ with respect to the $C^0$-topology.
\end{lemma}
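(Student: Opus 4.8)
The plan is to compare the two natural $\sigma$-algebras that live on $C_0^\infty$. By Definition~\ref{def:muGPH} together with Lemma~\ref{lem:GP_H-L2}, the measure $\muGPH^{\mathcal{D}}$ is the law of $H$ restricted to $C_0^\infty$, so a priori it is a Borel probability measure for the trace on $C_0^\infty$ of the $L^2$-Borel $\sigma$-algebra; denote that trace by $\mathcal{A}_{L^2}$. To prove the lemma I must show that every set which is Borel for the subspace $C^0$-topology already belongs to $\mathcal{A}_{L^2}$. First I would record the easy half: since $[0,1]\times M$ is compact we have $\norm{u}_{L^2}\le C\norm{u}_\infty$ for all $u\in C_0^\infty$, so the identity map $(C_0^\infty,\norm{\cdot}_\infty)\to(C_0^\infty,\norm{\cdot}_{L^2})$ is continuous and the $C^0$-topology is finer; hence $\mathcal{A}_{L^2}$ is contained in the $C^0$-Borel $\sigma$-algebra and only the reverse inclusion $\mathcal{B}_{C^0}(C_0^\infty)\subseteq\mathcal{A}_{L^2}$ is at issue. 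Because $(C_0^\infty,\norm{\cdot}_\infty)$ is separable, its Borel $\sigma$-algebra is generated by the countable family of closed balls $\overline{B}_\infty(f_j,q)$, with $f_j$ running over a countable $C^0$-dense subset and $q\in\Q_{>0}$, so it suffices to show each such ball lies in $\mathcal{A}_{L^2}$.

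Next I would reduce a single ball to point evaluations. Fix $f\in C_0^\infty$, $r>0$ and a fixed countable dense set $D\subset[0,1]\times M$. Since every $g\in C_0^\infty$ is continuous, $\norm{g-f}_\infty=\sup_{p\in D}\abs{g(p)-f(p)}$, and therefore
\[ \overline{B}_\infty(f,r)=\bigcap_{p\in D}\{\,g\in C_0^\infty : \abs{g(p)-f(p)}\le r\,\}. \]
This is a countable intersection, so the ball lies in $\mathcal{A}_{L^2}$ as soon as every point-evaluation map $\mathrm{ev}_p\colon g\mapsto g(p)$ is measurable from $(C_0^\infty,\mathcal{A}_{L^2})$ to $\R$. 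This measurability of point evaluation is the crux and the only genuine obstacle: evaluation at a point is emphatically \emph{not} $L^2$-continuous, so it cannot be controlled by any direct continuity estimate, and one cannot simply invoke that continuous functionals are Borel.

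To overcome this I would exhibit $\mathrm{ev}_p$ as a pointwise limit of honestly $L^2$-continuous functionals. For $p=(t_0,x_0)$ choose an approximate identity $(\rho_s)_{s>0}\subset L^2([0,1]\times M)$ concentrating at $p$, e.g.\ nonnegative smooth bumps supported in shrinking neighbourhoods of $p$ with $\int\rho_s\,d\vol_g\,dt=1$. Each map $g\mapsto\langle g,\rho_s\rangle_{L^2}$ is a continuous linear functional on $L^2$, so its restriction to $C_0^\infty$ is $L^2$-continuous and in particular $\mathcal{A}_{L^2}$-measurable; and for every continuous $g$ one has $\langle g,\rho_s\rangle_{L^2}\to g(p)$ as $s\to0$. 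Hence along a sequence $s_m\downarrow0$ the map $\mathrm{ev}_p$ is the everywhere-defined pointwise limit on $C_0^\infty$ of the measurable maps $g\mapsto\langle g,\rho_{s_m}\rangle_{L^2}$, and a pointwise limit of real-valued measurable functions is measurable, so $\mathrm{ev}_p$ is $\mathcal{A}_{L^2}$-measurable. Combining the three steps gives $\mathcal{B}_{C^0}(C_0^\infty)\subseteq\mathcal{A}_{L^2}$; since $\muGPH^{\mathcal{D}}$ is already a probability measure on $\mathcal{A}_{L^2}$ (its sample paths are smooth and normalized by Corollary~\ref{cor:GP_H-smooth} and Lemma~\ref{lem:GP_H-normalized}), it is in particular a Borel probability measure for the $C^0$-topology, which is the assertion. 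I note that an alternative route is to check directly, via the almost-sure uniform convergence of the partial sums of~\eqref{eq:GP_H} established in Lemma~\ref{lem:GP_H}, that $H$ is a Borel-measurable $C^0$-valued random variable and then transfer its law to $C_0^\infty$; the evaluation-functional argument above is preferable because it matches the $L^2$-based definition of $\muGPH^{\mathcal{D}}$ verbatim.
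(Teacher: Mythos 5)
Your proposal is correct, but it takes a genuinely different route from the paper. The paper works on the probability-space side: it shows directly that the relevant sup-functionals of $H$ are measurable on $\mathcal{S}$, by using the estimate~\eqref{eq:C0-estimate-eigenfunctions} to see that for each fixed $t$ the series~\eqref{eq:GP_H} converges uniformly on $M$ (so that $H(t,\cdot)$ is a $C^0(M)$-valued random variable), and then writing the $C^0$-norm as a supremum over the countable set $t\in[0,1]\cap\Q$ of the slice norms $\norm{H_\theta(t,\cdot)}_{C^0}$; this makes $H\vert_{\mathcal{S}}$ measurable as a map into $(C_0^\infty,\mathcal{B}_{C^0})$, whence its law is $C^0$-Borel. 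You instead work entirely on the function-space side and prove the abstract inclusion $\mathcal{B}_{C^0}(C_0^\infty)\subseteq\mathcal{A}_{L^2}$: separability reduces to closed $\infty$-balls, balls reduce to countably many point evaluations, and point evaluations are $\mathcal{A}_{L^2}$-measurable as pointwise limits of mollified $L^2$-functionals $g\mapsto\langle g,\rho_s\rangle_{L^2}$ — correctly identifying this last step as the crux, since evaluation is not $L^2$-continuous. Both proofs share the same germ (reducing the sup-norm to countably many evaluations over a countable dense set), but yours has the advantage of using nothing about the Gaussian process beyond the fact that $\muGPH^{\mathcal{D}}$ is defined on the trace of the $L^2$-Borel $\sigma$-algebra: it is a once-and-for-all statement about the two $\sigma$-algebras on $C_0^\infty$ (which your two inclusions in fact show are \emph{equal}), applicable to any measure built through the $L^2$-structure. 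The paper's argument is shorter given the convergence machinery already established, and it yields measurability of the random variable $H$ itself as a $C^0$-valued map, which is the form in which the fact gets reused later (push-forwards, probabilities of $\norm{\cdot}_\infty$-events). The alternative you mention in your closing sentence is, in essence, exactly the paper's proof.
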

\begin{proof}
    Recall that $\muGPH^{\mathcal{D}}$ is defined as the law of the Gaussian process $H$ defined in~\eqref{eq:GP_H} with respect to the \ldd\ $\mathcal{D}$.
    To be completely precise, it is the law of the random variable $H: \mathcal{S} \to C^\infty_0$ defined on the restriction $\mathcal{S} \subset \Omega$ of our universal probability space $(\Omega, \mathcal{F}, \Prob)$.
    This is due to the fact that $H$ is actually valued in $C^\infty_0$ when we restrict to $\mathcal{S}$, instead of samples being in $C_0^\infty$ almost-surely when working with $\Omega$.

    To now prove our claim, it suffices to show that the function 
    \begin{align*}
    c_0:\mathcal{S}&\to \Reals \\
    \theta &\mapsto \sup_{\substack{t \in [0,1] \\ x \in M}} H_{\theta}(t,x)
    \end{align*}
    is measurable.
    It follows from the estimate~\eqref{eq:C0-estimate-eigenfunctions} that for any fixed $t \in [0,1]$, the series~\eqref{eq:GP_H} converges uniformly on $M$ almost-surely,
    and thus we obtain that $H(t,\cdot)$ is a $C^0$-valued random variable.
    In particular, the function $\theta \mapsto \norm{H_{\theta}(t,\cdot)}_{C^0}$ is measurable.
    Since we already know that all sample paths of $H$ are smooth after restricting to $\mathcal{S}$, we can use the fact that the supremum of a countable family of measurable functions is measurable.
    Indeed, we can complete the proof by writing the function from above as 
    \begin{align*}
    c_0(\theta) = \sup_{t \in [0,1] \cap \Q}\norm{H_{\theta}(t,\cdot)}_{C^0},
    \end{align*}
    thus showing that $c_0$ is measurable with respect to the Borel $\sigma$-algebra on $\Reals$ and $\mathcal{F}\vert_{\mathcal{S}}$.
    Thus, the law of $H$ is a Borel measure on $C_0^\infty$ with respect to the $C^0$-topology.
\end{proof}
We now come to the heart of this paper, the construction of a probability measure on $\widetilde{\Ham}(M,\omega)$ and $\Ham(M,\omega)$ from a \ldd.
The following pair of definition and corollary is the main result of this section and proves Theorem~\ref{thm:hofer-borel}.
\begin{definition}\label{def:measure-on-Ham}
    Let $\mathcal{D}$ be a \ldd.
    We define the \textit{associated probability measure} on $\widetilde{\Ham}(M,\omega)$, denoted $\widetilde{\muGP}^{\mathcal{D}}$, as the push-forward of the measure $\muGPH^{\mathcal{D}}$ (as given in Definition~\ref{def:muGPH}) under the map
    \begin{align*}
        C^\infty_0 &\to \widetilde{\Ham}(M,\omega) \\
        H &\mapsto [\phi_H^t]_{/\sim}.
    \end{align*}
    Furthermore, we define the \textit{associated probability measure} on $\Ham(M,\omega)$, denoted $\muGP^{\mathcal{D}}$, as the push-forward of the measure $\widetilde{\muGP}^{\mathcal{D}}$ under the covering map $\widetilde{\Ham}(M,\omega) \to \Ham(M,\omega)$.
\end{definition}
\begin{corollary}
    Let $\mathcal{D}$ be a \ldd.
    Then the following holds:
    \begin{enumerate}
        \item The measure $\widetilde{\muGP}^{\mathcal{D}}$ is a Borel measure on $\widetilde{\Ham}(M,\omega)$ with respect to the Hofer metric $\tilde{d}_{\Hof}$.
        Furthermore, the measure $\muGP^{\mathcal{D}}$ is a Borel measure on $\Ham(M,\omega)$ with respect to the Hofer metric $d_{\Hof}$.
        \item These measures are probability measures, i.e.\ $\widetilde{\muGP}^{\mathcal{D}}(\widetilde{\Ham}(M,\omega)) = 1$ and $\muGP^{\mathcal{D}}(\Ham(M,\omega)) = 1$.
        \item The measure $\muGP^{\mathcal{D}}$ can alternatively be obtained as the push-forward of the measure $\muGPH^{\mathcal{D}}$ under the map $C^\infty_0 \to \Ham(M,\omega)$ given by $H \mapsto \phi_H^1$.
    \end{enumerate}
\end{corollary}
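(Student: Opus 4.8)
The plan is to deduce all three statements from the two preceding lemmas together with the elementary fact that the push-forward of a Borel probability measure under a continuous (hence Borel-measurable) map is again a Borel probability measure, and that push-forward is functorial. Almost all the substantive work is already contained in Lemma~\ref{lem:muGPH_Borel} and Lemma~\ref{lem:H_to_phi_H_Lipschitz}; the corollary only chains these together.

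For part (1), I would first invoke Lemma~\ref{lem:muGPH_Borel}, which guarantees that $\muGPH^{\mathcal{D}}$ is a Borel measure on $C^\infty_0$ for the $C^0$-topology. Writing $\Phi$ for the map $H \mapsto [\phi_H^t]_{/\sim}$, Lemma~\ref{lem:H_to_phi_H_Lipschitz} shows that $\Phi$ is Lipschitz from $(C^\infty_0, \norm{\cdot}_\infty)$ to $(\widetilde{\Ham}(M,\omega), \tilde{d}_{\Hof})$, hence continuous and in particular Borel-measurable with respect to exactly the $\sigma$-algebras appearing in Lemma~\ref{lem:muGPH_Borel}. Thus for every $\tilde{d}_{\Hof}$-Borel set $A \subset \widetilde{\Ham}(M,\omega)$ the preimage $\Phi^{-1}(A)$ is $C^0$-Borel, so $\widetilde{\muGP}^{\mathcal{D}}(A) = \muGPH^{\mathcal{D}}(\Phi^{-1}(A))$ is well-defined and $\widetilde{\muGP}^{\mathcal{D}} = \Phi_\sharp \muGPH^{\mathcal{D}}$ is a Borel measure on $\widetilde{\Ham}(M,\omega)$. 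Since the covering map $\pi: (\widetilde{\Ham}(M,\omega), \tilde{d}_{\Hof}) \to (\Ham(M,\omega), d_{\Hof})$ is $1$-Lipschitz, hence continuous and Borel-measurable, the same argument shows that $\muGP^{\mathcal{D}} = \pi_\sharp \widetilde{\muGP}^{\mathcal{D}}$ is Borel on $\Ham(M,\omega)$ for $d_{\Hof}$.

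Part (2) follows because push-forward preserves total mass: for any measurable $T: X \to Y$ one has $(T_\sharp \mu)(Y) = \mu(T^{-1}(Y)) = \mu(X)$. As $\muGPH^{\mathcal{D}}$ is a probability measure — its mass equals $\Prob[\mathcal{S}] = 1$ by Corollary~\ref{cor:GP_H-smooth} — both $\widetilde{\muGP}^{\mathcal{D}}$ and $\muGP^{\mathcal{D}}$ have total mass $1$. For part (3), the map $C^\infty_0 \to \Ham(M,\omega)$, $H \mapsto \phi_H^1$, is exactly the composite $\pi \circ \Phi$ of the extended fibration diagram; functoriality of push-forward, $(\pi \circ \Phi)_\sharp = \pi_\sharp \circ \Phi_\sharp$, then gives $\muGP^{\mathcal{D}} = \pi_\sharp \widetilde{\muGP}^{\mathcal{D}} = (\pi \circ \Phi)_\sharp \muGPH^{\mathcal{D}}$, which is precisely the direct push-forward of $\muGPH^{\mathcal{D}}$ under $H \mapsto \phi_H^1$.

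There is no deep obstacle here; the one point I would be careful to spell out is the bookkeeping of topologies. Lemma~\ref{lem:muGPH_Borel} certifies measurability for the $C^0$-topology on $C^\infty_0$, while Lemma~\ref{lem:H_to_phi_H_Lipschitz} certifies the Lipschitz property for that same $C^0$-norm, so the two dovetail and the composite push-forward is legitimate. The only thing to confirm is that \emph{Borel measure} is consistently read as \emph{measure on the Borel $\sigma$-algebra of the relevant Hofer-metric topology}, after which each continuity statement upgrades automatically to the required measurability.
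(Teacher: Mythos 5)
Your proposal is correct and follows exactly the route the paper intends: the corollary is stated without an explicit proof precisely because it is the immediate chain of Lemma~\ref{lem:muGPH_Borel}, the Lipschitz (hence continuous, hence Borel-measurable) maps from Lemma~\ref{lem:H_to_phi_H_Lipschitz} and the $1$-Lipschitz covering map, and the standard facts that push-forward preserves Borel-ness, total mass, and composes functorially. Your care about matching the $C^0$-topology in Lemma~\ref{lem:muGPH_Borel} with the $C^0$-norm in Lemma~\ref{lem:H_to_phi_H_Lipschitz} is exactly the right bookkeeping point.
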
 %
\subsection{Measure on the autonomous Hamiltonian diffeomorphisms}\label{subsec:aut-measure}
We conclude this section by proving Theorem~\ref{thm:aut-measure}.
This theorem allows us to construct measures on the space of autonomous Hamiltonian diffeomorphisms.
In particular, this is necessary to construct the random walk on $\Ham(M,\omega)$ in Section~\ref{sec:random-walk}.
\begin{proof}[Proof of Theorem~\ref{thm:aut-measure}]
    Assume that $\phi \in \supp \muGP^{\mathcal{D}}$.
    Thus, for all $\eps > 0$ we have $\muGP^{\mathcal{D}}(\{\psi \in \Ham(M,\omega) \mid d_{\Hof}(\phi,\psi) < \eps\}) > 0$.
    Since $\mathcal{D}$ is autonomous, the Gaussian processes $Z_n$ are constant on $[0,1]$ almost-surely and thus 
    the random Hamiltonian function $H$ given by~\eqref{eq:GP_H} is time-independent almost-surely.
    Assume that 
    \[\{\psi \in \Ham(M,\omega) \mid d_{\Hof}(\phi,\psi) < \eps\} \cap \Aut(M,\omega) = \emptyset.\]
    Thus, the open set $\mathcal{F}_\eps(\phi) = \{F \in C^\infty_0 \mid d_{\Hof}(\phi_F^1, \phi) < \eps\}$ does not contain any time-independent 
    Hamiltonian functions.
    Now by definition of $\muGP^{\mathcal{D}}$, we have that
    \begin{align*}
        0 < \muGP^{\mathcal{D}}(\{\psi \in \Ham(M,\omega) \mid d_{\Hof}(\phi,\psi) < \eps\})
        = \muGPH^{\mathcal{D}}(\mathcal{F}_\eps(\phi)).
    \end{align*}
    Since $\muGPH^{\mathcal{D}}$ is the law of $H$, this is a contradiction to the fact that $H$ is time-independent almost-surely.
    (This fact implies that $\muGPH^{\mathcal{D}}(\mathcal{F}_\eps(\phi)) = 0$ since the set $\mathcal{F}_\eps(\phi)$ does not contain any time-independent Hamiltonian functions.)
    Since this holds for all $\eps > 0$, we have that $\phi \in \overline{\Aut}(M,\omega)$.
    To show the second part of the theorem, 
    note that the event $\mathcal{A} = \{H(\cdot,x) \text{ is constant on } [0,1] \text{ for all } x \in M\}$ is measurable and has probability $1$.
    Thus, by restricting the probability space to $\mathcal{A}$, we obtain that $\phi_H^1$ is an $\Aut(M,\omega)$-valued random variable.
    Let $\muAut^{\mathcal{D}}$ be its law.
    It follows from the construction that $\muGP^{\mathcal{D}}$ is the push-forward of $\muAut^{\mathcal{D}}$ under the inclusion $\Aut(M,\omega) \hookrightarrow \Ham(M,\omega)$.
\end{proof} %
\section{Existence of \ldds}\label{sec:existence-of-ldd}
We will now prove that \ldds\ --- with all combinations of properties that we would like to have --- exist.
This is crucial, since most of the results in this paper would otherwise be vacuous.
\begin{theorem}\label{thm:existence-of-ldd}
    Let $(M,\omega)$ be a closed symplectic manifold.
    Further, let $J$ be an $\omega$-compatible almost complex structure
    and $\reg > 0$.
    Then there exists
    \begin{enumerate}
        \item a \ldd\ $\mathcal{D}_1$ that is centered, time-symmetric, exhaustive, and frequency unbiased; and
        \item a \ldd\ $\mathcal{D}_2$ that is centered, time-symmetric, periodic, periodically exhaustive, and frequency unbiased; and
        \item a \ldd\ $\mathcal{D}_3$ that is centered, autonomous, autonomously exhaustive, and frequency unbiased.
    \end{enumerate}
    Furthermore, for both limiting types there exists
    \begin{enumerate}
        \item a \ldd\ $\mathcal{D}_4$ that is centered, time-symmetric, exhaustive, and weakly frequency unbiased; 
        \item a \ldd\ $\mathcal{D}_5$ that is centered, time-symmetric periodic, periodically exhaustive, and weakly frequency unbiased; and
        \item a \ldd\ $\mathcal{D}_6$ that is centered, autonomous, autonomously exhaustive, and frequency unbiased,
    \end{enumerate}
    such that $\mathcal{D}_4,\mathcal{D}_5,\mathcal{D}_6$ are of the specified limiting type.
    Furthermore, $\mathcal{D}_1,\ldots,\mathcal{D}_6$ all have the same regularity parameter $\reg$ and almost-complex structure $J$.
\end{theorem}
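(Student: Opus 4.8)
The plan is to observe that the first two components of a \ldd\ are essentially handed to us: the regularity parameter $\reg$ and the almost-complex structure $J$ are fixed in the hypotheses, and an orthonormal eigenbasis $\{e_n\}_{n\in\Nats_{>0}}$ of $\Laplace_g$ ordered by increasing eigenvalue exists by the spectral theorem for the Laplace--Beltrami operator on the closed manifold $(M,g_{\omega,J})$. Thus the entire problem reduces to producing, for each prescribed combination of properties, a family $\{Z_n\}_{n\in\Nats_{>0}}$ of Gaussian processes on $[0,1]$ satisfying the uniform bounds and smoothness demanded by Definition~\ref{def:ldd}. My strategy is uniform across all six cases: fix a single \emph{template} process $W$ carrying the relevant time/periodicity structure, and build $\{Z_n\}$ from independent copies of $W$, rescaled by constants when a limiting type is required.

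For the three non-limiting data I would take i.i.d.\ copies. For $\mathcal{D}_1$, let $W$ be the centered process on $[0,1]$ with the squared-exponential covariance $\kappa(s,t)=\exp(-\alpha(s-t)^2)$ of Example~\ref{ex:gaussian-process-ex1}; its smooth covariance forces a.s.\ smooth samples and gives $\Var[W(t)]\equiv 1$ and (via Lemma~\ref{lem:Borell-TIS} applied to $\pm W$) a finite expected sup-norm, the reflection symmetry $\kappa(1-s,1-t)=\kappa(s,t)$ gives time-symmetry, taking $Z_n$ to be independent copies of $W$ gives frequency unbiasedness, and exhaustiveness follows from the universality of the Gaussian kernel (its RKHS is dense in $(C^0([0,1]),\norm{\cdot}_\infty)$). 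For $\mathcal{D}_2$, replace $W$ by a random trigonometric series $W(t)=\sum_{k\ge1}a_k\big(X_k\cos(2\pi k t)+Y_k\sin(2\pi k t)\big)$ with all $a_k>0$ decaying rapidly enough for a.s.\ smoothness; this descends to $S^1$ (periodic), is time-symmetric because $t\mapsto 1-t$ merely flips the signs of the sine modes and the $Y_k$ are symmetric, and its RKHS contains every trigonometric polynomial, so its $C^0$-closure contains all smooth periodic functions (periodic exhaustiveness). For $\mathcal{D}_3$, take $Z_n(t)\equiv\xi_n$ with $\xi_n$ i.i.d.\ $N(0,1)$: constant paths give autonomy, $\Var[\xi_n]=1>0$ gives autonomous exhaustiveness, and independence gives frequency unbiasedness.

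For the limiting-type data I would keep the same templates but \emph{rescale}: set $Z_n=\beta_n W_n$ where the $W_n$ are i.i.d.\ copies of the appropriate template and $\beta_1=1$. Then $Z_n$ and $\beta_n Z_1$ are versions of each other, so the family is weakly frequency unbiased with $\alpha_n=\beta_n$; since multiplying a kernel by a nonzero constant leaves its RKHS unchanged as a set (only the RKHS norm rescales), exhaustiveness, periodic exhaustiveness and the condition $\Var>0$ all survive the scaling, as do the uniform bounds once $|\beta_n|\le 1$. By the eigenfunction estimate~\eqref{eq:C0-estimate-eigenfunctions} one has $\norm{e_n}_\infty\le C''\lambda_n^{\dim M/2}$, and by Weyl's law $\lambda_n$ grows only polynomially in $n$; hence any exponential decay such as $\beta_n=2^{-n}$ makes both $\sum_n|\beta_n|\norm{e_n}_\infty$ and $\sum_n|\beta_n|\lambda_n\norm{e_n}_\infty$ converge, so a single rescaled construction is simultaneously of $C^0$- and $C^1$-limiting type. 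This yields $\mathcal{D}_4$ from the exhaustive template and $\mathcal{D}_5$ from the periodic one; for $\mathcal{D}_6$ I rescale the constant template, $Z_n(t)\equiv\beta_n\eta_n$ with $\eta_n$ i.i.d.\ $N(0,1)$, which is autonomous, autonomously exhaustive ($\Var=\beta_n^2>0$), weakly frequency unbiased, and of both limiting types. I note here that an autonomous family that is \emph{frequency} unbiased would force $\alpha_n\equiv 1$ and hence $\sum_n\norm{e_n}_\infty=\infty$; so to be of limiting type $\mathcal{D}_6$ must be only \emph{weakly} frequency unbiased, which the rescaled construction supplies.

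The only genuinely non-routine input is the exhaustiveness of the template, namely the density in $C^0([0,1])$ of the RKHS of the squared-exponential process (universality of the Gaussian kernel) and, in the periodic case, the density of trigonometric polynomials; these are the steps I expect to demand the most care. Everything else --- reflection symmetry, smoothness from smooth covariances, the uniform variance and sup-norm bounds, invariance of the RKHS as a set under rescaling, and convergence of the two limiting-type series via Weyl's law --- is a direct verification against Definitions~\ref{def:ldd} and~\ref{def:ldd-types}.
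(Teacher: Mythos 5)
Your proposal is correct and takes essentially the same route as the paper: identical templates (the squared-exponential process of Example~\ref{ex:gaussian-process-ex1} for $\mathcal{D}_1$, the random Fourier series of Example~\ref{ex:RKHS-example} for $\mathcal{D}_2$, constant i.i.d.\ Gaussians for $\mathcal{D}_3$), followed by rescaling the coefficient processes to obtain the limiting types --- the paper compresses that last step into a single sentence, whereas you make the decay $\beta_n = 2^{-n}$ and the Weyl-law verification of $\sum_n \abs{\beta_n}\lambda_n\norm{e_n}_\infty < \infty$ explicit. Your side observation about $\mathcal{D}_6$ is also correct: since $\norm{e_n}_\infty \geq \vol_g(M)^{-1/2}$, an autonomously exhaustive \ldd\ that is genuinely frequency unbiased forces $\abs{\alpha_n} \equiv 1$ and hence $\sum_n \abs{\alpha_n}\norm{e_n}_\infty = \infty$, so ``frequency unbiased'' in the statement of $\mathcal{D}_6$ must be read as ``weakly frequency unbiased'' --- which is exactly what the paper's own rescaling construction produces anyway.
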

\begin{proof}
    Clearly, 
    we can use $J$ as the almost-complex structure
    and $\reg$ as the regularity parameter for all six \ldds . 
    We then choose --- once and for all --- an eigenbasis $\{e_n\}_{n=0}^\infty$ of $\Laplace_g$, 
    where $g$ is the metric defined by $\omega$ and $J$.
    Thus, it only remains to specify the Gaussian processes associated with each eigenfunction.
    We do this on a case-by-case basis.
    \begin{enumerate}
        \item To obtain a centered, time-symmetric, exhaustive, and frequency unbiased \ldd\ $\mathcal{D}_1$,
                we let each $Z_n$ for $n \geq 1$ be an independent version of the process 
                from Example~\ref{ex:gaussian-process-ex1} with $\alpha = \reg$,
                i.e.\ the Gaussian process $Z_n$ has covariance function
                $\kappa(t,s) = \exp(-\reg(s-t)^2)$ and mean zero.
                It follows directly that $\mathcal{D}_1$ is centered, time-symmetric, and frequency unbiased.
                To show that it is exhaustive, we need to show that the of this Gaussian process
                RKHS is dense in $C^0([0,1])$.
                This is well-known in the literature, we refer the reader to~\cite[Chapter 10]{wendland-2004}
                for a complete proof.
        \item To obtain a centered, time-symmetric, periodic, periodically exhaustive, and frequency unbiased \ldd\ $\mathcal{D}_2$,
                we let each $Z_n$ for $n \geq 1$ be an independent version of the process
                from Example~\ref{ex:RKHS-example} with $\eps = 2\reg$,
                i.e.\ we set
                \begin{equation*}
                    Z_n(t) = X^{(n,1)}_0 + {\sqrt{2}}\sum_{k=1}^\infty \exp\left(-2\reg\pi^2k^2\right) \cdot \left( X_k^{(n,1)} \cos(2\pi k t) + X_k^{(n,2)} \sin(2\pi k t)\right),
                \end{equation*}
                where $\{X_k^{(n,1)}, X_k^{(n,2)}\}_{n,k \in \Nats_0}$
                are independent standard Gaussian random variables.
                We have already seen in Example~\ref{ex:RKHS-example} that the RKHS of $Z_n$ is dense in $C^0(S^1)$, 
                where $S^1 = [0,1]/_{0 \sim 1}$.
                Thus, $\mathcal{D}_2$ is periodically exhaustive.
                It is also clear that $\E[Z_n] \equiv 0$, thus $\mathcal{D}_2$ is centered.
                Finally, one can compute that 
                \begin{align*}
                    \Cov[Z_n(t_1),Z_n(t_2)] 
        &= 1 + \sum_{k=1}^\infty  2\cdot\exp\left(-4\reg\pi^2k^2\right) \cdot \cos(2\pi k (t_1 - t_2)),
                \end{align*}
                thus $Z_n$ and $Z_n(1-\cdot)$ have the same law (since they have the same mean and covariance function).
                Thus, $\mathcal{D}_2$ is time-symmetric.
        \item To obtain a \ldd\ $\mathcal{D}_3$ that is centered, autonomous, autonomously exhaustive, and frequency unbiased,
                let $\{X_n\}_{n \in \Nats}$ be independent standard Gaussian random variables.
                Then let $Z_n(t) = X_n$ for all $t \in [0,1]$.
                Clearly, these processes are centered and almost-surely constant.
                Thus, $\mathcal{D}_3$ is centered and autonomous.
                Furthermore, all $Z_n$ have positive variance.
                Therefore, we obtain that $\mathcal{D}_3$ is autonomously exhaustive.
                The i.i.d.\ assumption on the $X_n$ implies that $\mathcal{D}_3$ is frequency unbiased.
    \end{enumerate}
    The remainder of the theorem follows by rescaling the processes $Z_n$ in the above constructions
    to obtain the correct limiting type.
\end{proof}
\rmk{\label{rmk:D2-RKHS}
    The \ldd\ $\mathcal{D_2}$ in the above construction is especially useful.
    See Figure~\ref{fig:simulation-1} for some simulations of Hamiltonian vector fields
    obtained from $\mathcal{D}_2$ on the two-dimensional torus with $\reg = 0.08$.
    Note that if we consider the manifold (with boundary) $[0,1] \times M$,
    with the metric given by the product of the Euclidean metric on $[0,1]$ and the metric $g$ on $M$,
    then the eigenfunctions of the Laplace-Beltrami operator on $[0,1] \times M$ are given by
    products of $\{1,\sqrt{2}\cos(2\pi k t), \sqrt{2}\sin(2\pi k t)\}_{k \in \Nats}$
    and $\{e_n\}_{n=0}^\infty$, where the latter is some eigenbasis of $\Laplace_g$.
    Here $t$ is the coordinate on $[0,1]$.
    In particular, 
    we obtain the following formula for the Gaussian process $H$ defined in~\eqref{eq:GP_H}:
    \begin{align*}
        H(t,x) &= \sum _{n \ge 1} w_n \cdot Z_n(t) \cdot e_n(x)\\
        &= \sum_{n \geq 1} w_n \cdot \left(
        X^{(n,1)}_0 + {\sqrt{2}}\sum_{k=1}^\infty \exp\left(-2\reg\pi^2k^2\right) \cdot \left( X_k^{(n,1)} \cos(2\pi k t) + X_k^{(n,2)} \sin(2\pi k t)\right)
        \right) e_n(x) \\
        &= \sum_{n \geq 1} w_n X^{(n,1)} e_n(x)\\ &\qquad  + \sum_{\substack{n \ge 1 \\ k \ge 1}} \exp\left( - \frac{\reg}{2} {\left( 4\pi^2k^2 + \lambda_n \right)} \right) 
\cdot \Big( X_k^{(n,1)} \cdot \sqrt{2} \cos(2\pi k t) \cdot e_n(x) + 
            X_k^{(n,2)} \cdot \sqrt{2} \sin(2\pi k t) \cdot e_n(x) \Big).
    \end{align*}
    Thus, we obtain the law of $H(t,x)$ as an orthogonal expansion with respect to the preferred eigenbasis of the Laplace-Beltrami operator on $[0,1] \times M$.
    Note that all coefficients are i.i.d.\ standard Gaussian random variables, 
    scaled by an exponential decay factor based on the eigenvalue.
    Note that $4\pi^2k^2 + \lambda_n$ is the eigenvalue corresponding to the eigenfunction $\sqrt{2}\cos(2\pi k t) e_n(x)$ or $\sqrt{2}\sin(2\pi k t) e_n(x)$.
    Thus, the law of $H$ is very similar to that of the process considered in Example~\ref{ex:RKHS-example}.
    This also means that much of the analysis carried out in~\cite{castillo-et-al-2014} and~\cite{nicolaescu-2014}
    can be applied for $H$ (in the case of $\mathcal{D}_2$) as well.
    In particular, 
    we obtain the following description of the RKHS $\Hil$ of $H$:
    \begin{align*}
        \Hil = \Bigg\{ & \sum_{\substack{n \ge 1 \\ k \ge 0}}
    \left( a_{k,n} \cdot \sqrt{2} \cos(2\pi k t) \cdot e_n(x) + 
            b_{k,n} \cdot \sqrt{2} \sin(2\pi k t) \cdot e_n(x) \right)
            \Bigg\vert \\ &\qquad a,b \in \ell^2, \sum_{\substack{n \ge 1 \\ k \ge 0}} \exp\left(\reg (4\pi^2k^2 + \lambda_n) \right) \cdot (a_{k,n}^2 + b_{k,n}^2) < \infty
            \Bigg\}.
    \end{align*}
    Furthermore, for two elements of the RKHS as above, their inner product is given by
    \begin{align*}
        \Bigg\langle&\sum_{\substack{n \ge 1 \\ k \ge 0}}
    \left( a_{k,n} \cdot \sqrt{2} \cos(2\pi k t) \cdot e_n(x) + 
            b_{k,n} \cdot \sqrt{2} \sin(2\pi k t) \cdot e_n(x) \right),\\ &\qquad \sum_{\substack{n \ge 1 \\ k \ge 0}}
    \left( c_{k,n} \cdot \sqrt{2} \cos(2\pi k t) \cdot e_n(x) + 
            d_{k,n} \cdot \sqrt{2} \sin(2\pi k t) \cdot e_n(x) \right)
        \Bigg\rangle_{\Hil}
        \\ &= \sum_{\substack{n \ge 1 \\ k \ge 0}}
        \exp\left(\reg (4\pi^2k^2 + \lambda_n) \right) \cdot (a_{k,n}c_{k,n} + b_{k,n}d_{k,n}).
    \end{align*}
}
\begin{figure}[h]
    \begin{center}
        \includegraphics[width=0.9\textwidth]{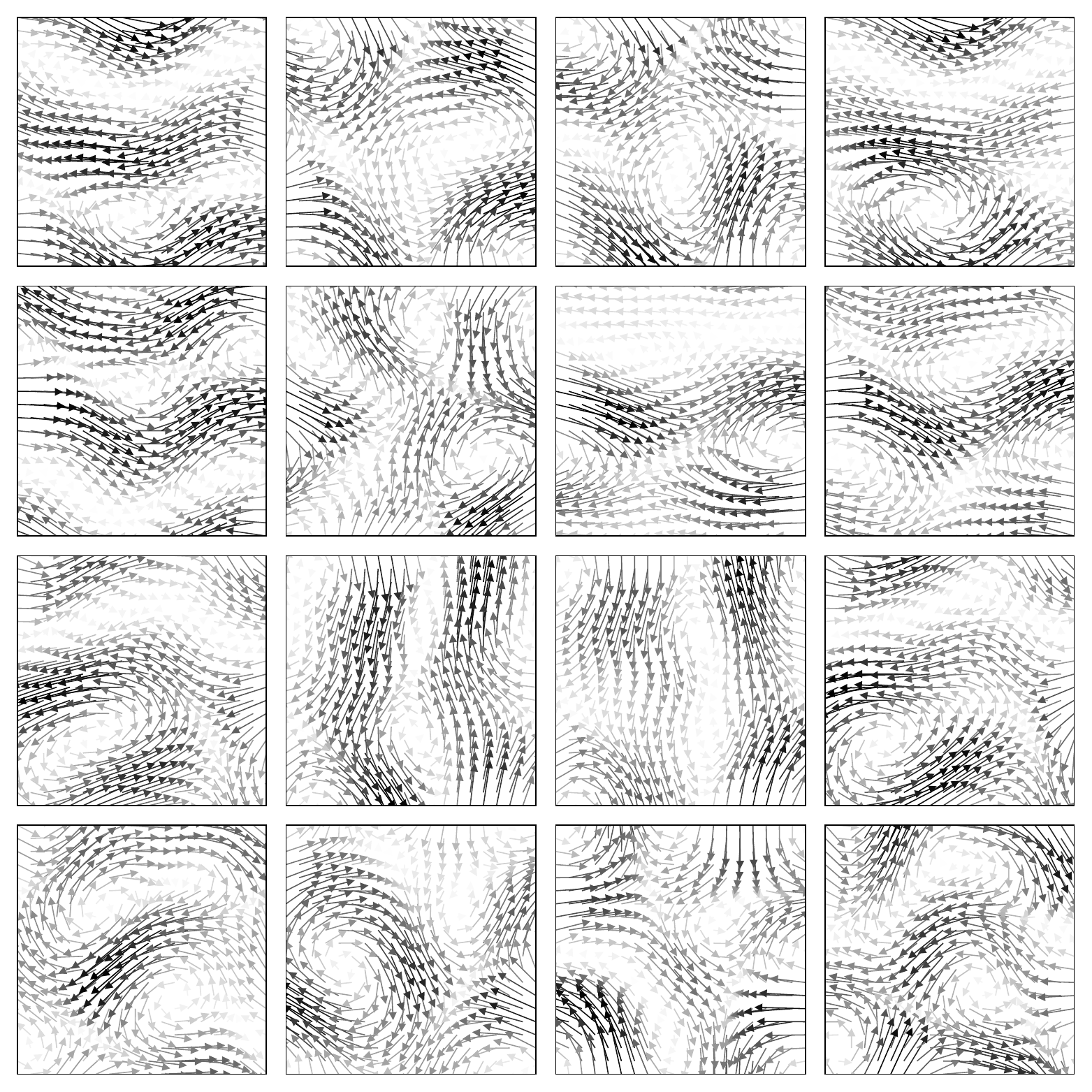}
        \caption{Visualization of $X_H(t,\cdot)$ for $\mathcal{D}_2$ on $\T^2$. The rows represent independent draws. The different columns represent evaluations of the Hamiltonian vector fields
    at $t=0,0.25,0.5,0.75$ from left to right. An animated version of this figure is available online at \href{https://www.dpmms.cam.ac.uk/~apd55/random-hamiltonians/}{dpmms.cam.ac.uk/\~{}apd55/random-hamiltonians}.}\label{fig:simulation-1}
    \end{center}
\end{figure} %
\section{Properties}
Now that we have completed the construction of $\muGPH^{\mathcal{D}}$ and $\muGP^{\mathcal{D}}$, we can study some elementary properties of these measures.
We will first focus on those properties that do not depend on the \ldd\ $\mathcal{D}$.
\begin{lemma}\label{lem:expected-osc-norm}
    Let $\mathcal{D}$ be a \ldd.
    Then the expected oscillation norm of the Hamiltonian function is finite, i.e.,
    \[
    \int_{C^\infty_0} \osc(H) d\muGPH^{\mathcal{D}}(H) = \int_{C^\infty_0} \int_0^1 \max_{x \in M} H(t,x) - \min_{x\in M}
    H(t,x) dt  d\muGPH^{\mathcal{D}}(H) < \infty.
    \]
\end{lemma}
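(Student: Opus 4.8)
The plan is to reduce the statement to a finiteness estimate for the expected global supremum and infimum of the Gaussian process $H$ and then invoke the Borell--TIS inequality. First I would record that $\muGPH^{\mathcal{D}}$ is by definition the law of the random variable $H\vert_{\mathcal{S}}$ valued in $C^\infty_0$, so that
\[
\int_{C^\infty_0} \osc(H)\, d\muGPH^{\mathcal{D}}(H) = \E\left[\osc(H)\right],
\]
where $\osc(H) = \int_0^1 (\max_x H(t,x) - \min_x H(t,x))\, dt$. Measurability of the map $H \mapsto \osc(H)$ follows exactly as in the proof of Lemma~\ref{lem:muGPH_Borel}, by writing the relevant suprema and infima as countable suprema over a dense set of times and using the almost-sure smoothness from Corollary~\ref{cor:GP_H-smooth}.

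Next I would reduce to the centered case. If $\mathcal{D}$ is not centered, write $H = \bar H + H_0$ with $\bar H(t,x) = \E[H(t,x)]$ the deterministic mean and $H_0 = H - \bar H$ the associated centered process from the remark following Lemma~\ref{lem:GP_H}. The mean $\bar H$ is a fixed smooth, hence bounded, function on the compact set $[0,1]\times M$, so $\osc(\bar H) < \infty$ is a finite constant. Since oscillation is subadditive --- $\max_x(F+G) \le \max_x F + \max_x G$ and $\min_x(F+G) \ge \min_x F + \min_x G$ together give $\osc(F+G) \le \osc(F) + \osc(G)$ --- it suffices to bound $\E[\osc(H_0)]$ for the centered process $H_0$. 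Hence I may assume $\mathcal{D}$ is centered.

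The key observation is that the oscillation of a sample is controlled by its global supremum and infimum over the whole compact domain $[0,1]\times M$: for every $t$ one has $\max_x H(t,x) \le \sup_p H(p)$ and $\min_x H(t,x) \ge \inf_p H(p)$, where $p$ ranges over $[0,1]\times M$, so that
\[
\osc(H) = \int_0^1 \left(\max_x H(t,x) - \min_x H(t,x)\right) dt \le \sup_p H(p) - \inf_p H(p).
\]
Now $H$ is a centered Gaussian process on the compact index set $[0,1]\times M$ which, by Corollary~\ref{cor:GP_H-smooth}, is almost-surely smooth and therefore almost-surely bounded; the same holds for $-H$, again a centered Gaussian process. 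Applying the Borell--TIS inequality (Lemma~\ref{lem:Borell-TIS}) to $H$ and to $-H$ yields $\E[\sup_p H(p)] < \infty$ and $\E[\sup_p(-H(p))] = -\E[\inf_p H(p)] < \infty$. By linearity of expectation,
\[
\E[\osc(H)] \le \E\left[\sup_p H(p)\right] - \E\left[\inf_p H(p)\right] = \E\left[\sup_p H(p)\right] + \E\left[\sup_p(-H(p))\right] < \infty,
\]
which is the claim.

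The main obstacle is purely in checking that the hypotheses of Borell--TIS are genuinely met: that $[0,1]\times M$ may be treated as a compact index set and that $H$ is almost-surely bounded there. Both are supplied by the almost-sure smoothness of Corollary~\ref{cor:GP_H-smooth} together with compactness of $[0,1]\times M$ (the manifold-with-boundary structure is immaterial, as Borell--TIS only needs a bounded centered process on a compact, separable index set). The remaining interchange of expectation with the nonnegative difference $\sup_p H - \inf_p H$ is justified once both one-sided expectations are known to be finite, and everything else is the routine bookkeeping indicated above.
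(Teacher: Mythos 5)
Your proof is correct, but it takes a genuinely different route from the paper. The paper argues slice-by-slice in time: after Fubini--Tonelli and reduction to the centered case, it fixes $t$, bounds $\E[\osc \widetilde{H}_t] \le 2\E[\|\widetilde{H}(t,\cdot)\|_\infty]$, and then estimates the series term by term, using $\E[|Z_n(t)|] = \sqrt{2/\pi}\,\sqrt{\Var[Z_n(t)]}$, the uniform variance bound from the definition of a \ldd, and the eigenfunction estimate~\eqref{eq:C0-estimate-eigenfunctions}, arriving at an explicit bound of the form $C'\sum_{n\ge 1} w_n \lambda_n^{\dim M} < \infty$. You instead dominate the whole oscillation by $\sup_p H - \inf_p H$ over the compact index set $[0,1]\times M$ and invoke Borell--TIS (Lemma~\ref{lem:Borell-TIS}) twice, for $H$ and $-H$; your side remarks are also sound --- a.s.\ boundedness is supplied by Corollary~\ref{cor:GP_H-smooth}, the manifold-with-boundary index set is harmless since Borell--TIS needs only a bounded centered process on a separable index set, and the centering reduction via subadditivity of $\osc$ works because $\bar{H}$ is a bounded (uniform limit of continuous) function. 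What each approach buys: yours is shorter and more conceptual, and it front-loads exactly the ingredient ($\E[\|H\|_\infty] < \infty$) that the paper needs anyway in the proof of Theorem~\ref{thm:hofer-norm-subgaussian}; the paper's hands-on computation, on the other hand, produces a quantitative bound with explicit dependence on the weights $w_n$, and this is not incidental --- that explicit bound is reused later (in the proof of the $\reg \to \infty$ limit) to show $\E[\|H^{(\reg)}\|_\infty] \to 0$, which your softer argument would not yield without redoing the series estimate.
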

\begin{proof}
    We first note that $\osc(H(t,\cdot)) \geq 0$ and thus, by the Fubini-Tonelli theorem, we can interchange the order of integration:
    \begin{align*}
    \int_{C^\infty_0} \int_0^1 \left[ \max_{x \in M} H(t,x) - \min_{x\in M} H(t,x)\right] dt  d\muGPH^{\mathcal{D}}(H) =
    \int_0^1 \int_{C^\infty_0} \left[ \max_{x \in M} H(t,x) - \min_{x\in M} H(t,x)\right] d\muGPH^{\mathcal{D}}(H) dt.
    \end{align*}       
    Fix now some $t \in [0,1]$.
    Then by definition, we have that
    \begin{align*}
    \int_{C^\infty_0} \left[ \max_{x \in M} H(t,x) - \min_{x\in M} H(t,x)\right] d\muGPH^{\mathcal{D}}(H) &= \E\left[\max_{x \in M} H(t,x) - \min_{x\in M} H(t,x)\right] ,
    \end{align*}
    where $H$ is the Gaussian process defined in~\eqref{eq:GP_H}.
    Now by linearity of expectation, we can write
    \begin{align*}
    \E\left[\max_{x \in M} H(t,x) - \min_{x\in M} H(t,x)\right] &= \E\left[\max_{x \in M} H(t,x)\right] - \E\left[\min_{x\in M} H(t,x)\right].
    \end{align*}
    It is easier in this case to work with a centered process, thus we set $\widetilde{H}(t,x) = H(t,x) - \E[H(t,x)]$.
    This is a centered Gaussian process and thus $\E[\max_{x \in M}\widetilde{H}(t,x)] = -\E[\min_{x \in M}\widetilde{H}(t,x)]$.
    It follows that 
    \begin{align*}
        \E\left[\max_{x \in M} H(t,x) - \min_{x\in M} H(t,x)\right] &= 
        \E\left[\max_{x \in M} \left(\widetilde{H}(t,x) + \E[H(t,x)]\right) - \min_{x\in M}\left( \widetilde{H}(t,x) - \E[H(t,x)]\right)\right] \\
        &\leq \E\left[\max_{x \in M} \widetilde{H}(t,x) + \max_{x \in M}\E[H(t,x)] - \min_{x\in M}\widetilde{H}(t,x) - \min_{x\in M}\E[H(t,x)]\right] \\
        &= \E\left[\osc \widetilde{H}_t \right] + \osc(\E[H_t]).
    \end{align*}
    Since $\osc(\E[H_t])$ is finite and also bounded in $t$ (since $\E[H(t,\cdot)]$ is smooth in $t$), it suffices to focus on the term $\E[\osc \widetilde{H}_t]$.
    A simple application of the triangle inequality gives us 
    \begin{align*}
    \E\left[ \osc \widetilde{H}_t\right] &\leq 2\E\left[\norm{\widetilde{H}(t,\cdot)}_{\infty}\right] = 2\E\left[ \norm*{\sum _{n \ge 1} w_n \cdot \abs{Z_n(t)} \cdot e_n}_\infty \right] \\
    &\leq \sum_{n \ge 1} w_n \cdot 2\E\left[\abs{Z_n(t)}\right] \cdot \norm{e_n}_\infty
    =\sum_{n \ge 1} w_n \cdot {\frac{2\sqrt{2}}{\sqrt{\pi}}} \sqrt{\Var[Z_n(t)]} \cdot \norm{e_n}_\infty,
    \end{align*}
    where we also use the standard fact that the mean of the absolute value of a centered Gaussian random variable is $\sqrt{\frac{2}{\pi}}$ times its standard deviation.
    By the definition of a \ldd, there exists a constant $C > 0$ such that for all $n \ge 1$ and $t \in [0,1]$ we have $\Var[Z_n(t)] \leq C^2$.
    Thus, 
    \begin{align*}
    \E\left[\osc \widetilde{H}_t\right] &\leq {\frac{2\sqrt{2}}{\sqrt{\pi}}} \cdot C \cdot \sum_{n \ge 1} w_n \norm{e_n}_\infty 
    \leq C' \cdot \sum_{n \ge 1} w_n \lambda^{\dim M}_n,
    \end{align*}
    where $C'$ is some constant that does only depend on $(M,\omega,J)$ and $\mathcal{D}$.
    Here we use the bound on the $C^0$-norm of the eigenfunctions from~\eqref{eq:C0-estimate-eigenfunctions}.
    Due to the rapid decay of the weights $w_n$, this sum converges.
    Thus, there is some constant $C'' > 0$ such that $\E\left[\max_{x \in M} \widetilde{H}(t,x)\right] < C''$. Combining all these estimates, we obtain 
    \begin{align*}
        \int_{C^\infty_0} \osc(H) d\muGPH^{\mathcal{D}}(H)
        &\leq \int_0^1 \left(\E\left[\osc \widetilde{H}_t\right] + \osc(\E[H_t])\right) dt \\
        &\leq \int_0^1 C'' dt + \int_0^1 \osc(\E[H(t,\cdot)]) dt \\
        &\leq C'' + \osc(\E[H]) < \infty.
    \end{align*}
\end{proof}
We directly obtain the following corollary, which also proves Theorem~\ref{thm:expected-hofer-norm-finite} and Corollary~\ref{cor:hofer-lipschitz-finite-expectation}.
\begin{corollary}\label{cor:expected-hofer-norm-finite}
    Let $\mathcal{D}$ be a \ldd.
    Then the expected Hofer norm is finite, i.e.,
    \begin{align*}
    \int_{\Ham(M,\omega)} \norm{\phi}_{\Hof} d{\muGP}^{\mathcal{D}}(\phi) &\leq  \int_{\widetilde{\Ham}(M,\omega)} \norm{[\phi_t]_{/\sim}}_{\Hof} d\widetilde{\muGP}^{\mathcal{D}}([\phi_t]_{/\sim}) \\
    &\leq \int_{C^\infty_0} \osc(H) d\muGPH^{\mathcal{D}}(H) \\ &< \infty.
    \end{align*}
\end{corollary}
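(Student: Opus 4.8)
The plan is to read the asserted chain of inequalities from right to left, using the two push-forward relations of Definition~\ref{def:measure-on-Ham} together with the change-of-variables formula for push-forward measures. The rightmost inequality is precisely Lemma~\ref{lem:expected-osc-norm}, which we may invoke directly. The two remaining inequalities are then purely formal consequences of how $\widetilde{\muGP}^{\mathcal{D}}$ and $\muGP^{\mathcal{D}}$ are built.

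First I would dispatch measurability, so that the three integrals are well-defined. Since the Hofer norm $\norm{\cdot}_{\Hof}$ is $1$-Lipschitz with respect to $d_{\Hof}$ (being a norm inducing that very metric), it is continuous and hence Borel measurable; the same holds for $\norm{\cdot}_{\Hof}$ on $\widetilde{\Ham}(M,\omega)$ with respect to $\tilde{d}_{\Hof}$. Combined with the fact (Theorem~\ref{thm:hofer-borel} and the corollary following Definition~\ref{def:measure-on-Ham}) that $\muGP^{\mathcal{D}}$ and $\widetilde{\muGP}^{\mathcal{D}}$ are Borel probability measures, all three integrals make sense.

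For the middle inequality, recall that $\widetilde{\muGP}^{\mathcal{D}}$ is the push-forward of $\muGPH^{\mathcal{D}}$ under $H \mapsto [\phi_H^t]_{/\sim}$. The change-of-variables formula then gives
\[
\int_{\widetilde{\Ham}(M,\omega)} \norm{[\phi_t]_{/\sim}}_{\Hof} \, d\widetilde{\muGP}^{\mathcal{D}} = \int_{C^\infty_0} \norm{[\phi_H^t]_{/\sim}}_{\Hof} \, d\muGPH^{\mathcal{D}}(H).
\]
Since $H$ itself is a Hamiltonian function generating the path $(\phi_H^t)$ in the prescribed homotopy class, the defining infimum for $\norm{[\phi_H^t]_{/\sim}}_{\Hof}$ is bounded above by $\osc(H)$, yielding the middle inequality pointwise in $H$ and hence after integration.

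For the first inequality, $\muGP^{\mathcal{D}}$ is the push-forward of $\widetilde{\muGP}^{\mathcal{D}}$ under the covering map $\pi \colon \widetilde{\Ham}(M,\omega) \to \Ham(M,\omega)$. Again by change of variables,
\[
\int_{\Ham(M,\omega)} \norm{\phi}_{\Hof} \, d\muGP^{\mathcal{D}} = \int_{\widetilde{\Ham}(M,\omega)} \norm{\pi([\phi_t]_{/\sim})}_{\Hof} \, d\widetilde{\muGP}^{\mathcal{D}}.
\]
Because the covering map is $1$-Lipschitz (equivalently, the infimum defining $\norm{\phi}_{\Hof}$ ranges over all Hamiltonians $H$ with $\phi_H^1 = \phi$, a larger set than those generating one fixed homotopy class), one has $\norm{\pi([\phi_t]_{/\sim})}_{\Hof} \leq \norm{[\phi_t]_{/\sim}}_{\Hof}$ pointwise, which gives the first inequality after integration. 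I do not expect any genuine obstacle here: the entire analytic content of the corollary is carried by Lemma~\ref{lem:expected-osc-norm}, and the remaining steps are bookkeeping with push-forwards. The only point requiring minor care is the measurability remark above, which is what licenses the change-of-variables formula in each step.
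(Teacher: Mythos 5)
Your proposal is correct and follows essentially the same route as the paper: the paper's proof likewise reduces everything to the pointwise chain $\norm{\phi_H^1}_{\Hof} \leq \norm{[\phi_H^t]_{/\sim}}_{\Hof} \leq \osc(H)$, which holds by the definitions of the two Hofer norms, and then invokes Lemma~\ref{lem:expected-osc-norm} through the push-forward structure of Definition~\ref{def:measure-on-Ham}. You merely make explicit the change-of-variables and measurability bookkeeping that the paper leaves implicit.
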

\begin{proof}
    This follows directly from the fact that for any $H \in C^\infty_0$, we have 
    \begin{equation*}
        \norm{\phi^1_H}_{\Hof} \leq \norm*{{[\phi_H^t]}_{/\sim}}_{\Hof} \leq \osc H
    \end{equation*}
    by the definition of the Hofer norm.
\end{proof}
We have shown that the expected Hofer norm (or indeed any Hofer-Lipschitz measurement) is finite.
However, we wish to further characterize its distribution.
Theorem~\ref{thm:hofer-norm-subgaussian} states that the tail of the Hofer norm is sub-Gaussian, placing it in a well-studied and well-understood class 
of random variables.
\begin{proof}[Proof of Theorem~\ref{thm:hofer-norm-subgaussian}]
    Let $\mathcal{D}$ be a centered \ldd.
    Note that for any $H \in C_0^\infty$, 
    we have $\norm{\phi_H^1} \leq \osc H$, and 
    thus
    \begin{align*}
        \muGP^{\mathcal{D}}(\{\phi \in \Ham(M,\omega) \mid \norm{\phi}_{\Hof} > u\})
        &\leq 
        \muGPH^{\mathcal{D}}(\{H \in C_0^\infty \mid \osc H > u\}) \\
        &= \Prob[\osc H > u] \leq \Prob[\norm{H}_\infty > u],
    \end{align*}
    where $H$ is the Gaussian process defined in~\eqref{eq:GP_H} and $(\Omega, \mathcal{F}, \Prob)$ is the probability space on which it is defined.
    Let $\sigma^2 = \max_{t \in [0,1], x \in M} \Var[H(t,x)]$ be the maximum of the variances of the process $H$.
    Now, by the $\norm{\cdot}_\infty$-version of the Borell-TIS inequality (Lemma~\ref{lem:Borell-TIS-infinity}), we have
    \begin{align*}
        \Prob[\norm{H}_\infty - \E[\norm{H}_\infty] > u] \leq \exp\left(-\frac{u^2}{2\sigma^2}\right).
    \end{align*}
    Thus, if we set $C = 2\sigma^2$ and $R = \E[\norm{H}_\infty]$
    we obtain 
    \begin{align*}
         \muGP^{\mathcal{D}}(\{\phi \in \Ham(M,\omega) \mid \norm{\phi}_{\Hof} > R + u\})&\leq \Prob[\norm{H}_\infty > R + u] \\
         &=
        \Prob[\norm{H}_\infty - \E[\norm{H}_\infty] > u] \leq \exp\left(-C^{-1}u^2\right),
    \end{align*}
    which completes the proof.
\end{proof}

We have now established that the Hofer norm is a sub-Gaussian type random variable.
However, this is only an interesting statement if it is non-trivial.
Of course, currently our construction allows for the very simple \ldd\ $\mathcal{D}$ with $Z_n \equiv 0$ for all $n \in \Nats_{>0}$.
In this case, the expected Hofer norm is trivially $0$ since $\muGPH^{\mathcal{D}}$ is the Dirac measure at the zero Hamiltonian function.
However, under some assumptions on the \ldd, we can show that the expected Hofer norm is strictly positive.
We will obtain this as a corollary of Theorem~\ref{thm:full-support}, which we will prove using the following lemma.
\begin{lemma}\label{lem:GP_H_full_support}
    Let $\mathcal{D}$ be an exhaustive, frequency independent \ldd.
    Then the support of the measure $\muGPH^{\mathcal{D}}$ is the whole space $C^\infty_0$ topologized with the $C^0$-topology.
\end{lemma}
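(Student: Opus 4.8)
The plan is to realize the support of $\muGPH^{\mathcal{D}}$ as the $C^0$-closure of the reproducing kernel Hilbert space (RKHS) $\Hil$ of the process $H$ from~\eqref{eq:GP_H}, and then to show this closure swallows all of $C^\infty_0$. Since adding the smooth normalized mean $\bar H = \E[H]$ only translates the measure and its support, and leaves each covariance — hence each RKHS and the exhaustiveness hypothesis — untouched, I may assume $\mathcal{D}$ is centered. Then $H$ is a centered Gaussian process whose samples lie almost-surely in the separable Banach space $B = C^0([0,1]\times M)$, so Lemma~\ref{lem:RKHS-and-support} gives $\supp(\text{law of }H) = \overline{\Hil}^{\,C^0}$. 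As $\muGPH^{\mathcal{D}}$ is the restriction of this law to $C^\infty_0$, on which it is concentrated, a point $F \in C^\infty_0$ lies in $\supp \muGPH^{\mathcal{D}}$ precisely when $F \in \overline{\Hil}^{\,C^0}$; thus it suffices to prove $C^\infty_0 \subseteq \overline{\Hil}^{\,C^0}$.

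First I would use frequency independence to exhibit an explicit supply of elements of $\Hil$. Write $\Hil_n$ for the RKHS of $Z_n$, and recall from Definition~\ref{def:RKHS} that every $g_n \in \Hil_n$ is of the form $g_n(t) = \E[\xi_n Z_n(t)]$ for some $\xi_n$ in the $L^2(\Omega)$-closed span of $\{Z_n(s) : s \in [0,1]\}$. Since the $Z_m$ are mutually independent and centered, $\E[\xi_n Z_m(t)] = 0$ for $m \neq n$, and the almost-sure uniform convergence of~\eqref{eq:GP_H} lets me interchange expectation and sum, giving
\[
    \E[\xi_n H(t,x)] = \sum_{m \ge 1} w_m\, e_m(x)\, \E[\xi_n Z_m(t)] = w_n\, e_n(x)\, g_n(t).
\]
Hence each function $(t,x) \mapsto w_n e_n(x) g_n(t)$ belongs to $\Hil$; as $\Hil$ is a vector space and $w_n \neq 0$, every finite sum $\sum_{n=1}^N e_n(x) h_n(t)$ with $h_n \in \Hil_n$ belongs to $\Hil$.

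Next I would prove density. Given $F \in C^\infty_0$ and $\eps > 0$, expand $F(t,x) = \sum_{n\ge1} c_n(t) e_n(x)$ with $c_n(t) = \langle F(t,\cdot), e_n \rangle_{L^2}$ (no constant term, as $F$ is normalized). Because $F$ is smooth on the compact $[0,1]\times M$, the elliptic identity $\lambda_n^k\, c_n(t) = \langle \Laplace_g^k F(t,\cdot), e_n\rangle_{L^2}$ yields $\sup_t \abs{c_n(t)} \le \lambda_n^{-k}\sup_t \norm{\Laplace_g^k F(t,\cdot)}_{L^2}$, which together with the eigenfunction bound~\eqref{eq:C0-estimate-eigenfunctions} and Weyl's law gives $\sum_n \sup_t \abs{c_n(t)}\, \norm{e_n}_\infty < \infty$ for $k$ large enough. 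Thus the truncations $F_N = \sum_{n \le N} c_n(t) e_n(x)$ converge to $F$ uniformly, and I fix $N$ with $\norm{F - F_N}_\infty < \eps/2$. Exhaustiveness supplies $h_n \in \Hil_n$ with $\norm{h_n - c_n}_\infty < \eps/(2N\norm{e_n}_\infty)$ for $n \le N$, so $G \coloneqq \sum_{n \le N} e_n h_n \in \Hil$ satisfies $\norm{F - G}_\infty < \eps$. Therefore $F \in \overline{\Hil}^{\,C^0}$, and since $F$ was arbitrary, $\supp \muGPH^{\mathcal{D}} = C^\infty_0$.

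The step most in need of care is the passage from $L^2$-convergence to \emph{uniform} convergence $F_N \to F$ with bounds uniform in the parameter $t \in [0,1]$: this is where joint smoothness of $F$, the polynomial $C^0$-growth of the eigenfunctions, and Weyl's law must be combined. The RKHS-membership identity is the conceptual crux, but it follows cleanly once frequency independence is used to decouple the spatial modes, and the remaining approximation is a routine triangle-inequality estimate.
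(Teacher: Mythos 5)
Your proposal is correct, but it takes a genuinely different route from the paper's. The paper argues directly and quantitatively: writing $F = \sum_n w_n f_n e_n$, it bounds $\muGPH^{\mathcal{D}}(\{G \mid \norm{F-G}_\infty < \eps\})$ from below by an infinite product $\prod_n (1-p_n)$, where frequency independence factors the probability over the modes, exhaustiveness (via Lemma~\ref{lem:RKHS-and-support} applied to each one-dimensional process $Z_n$) makes each factor positive, and the Borell--TIS inequality combined with Weyl's law forces $\sum_n p_n < \infty$. You instead invoke Lemma~\ref{lem:RKHS-and-support} once, for the full process $H$ on $B = C^0([0,1]\times M)$, thereby converting the support statement into the purely deterministic density claim $C^\infty_0 \subseteq \overline{\Hil}^{\,C^0}$, which you verify through the tensor structure $e_n \cdot \Hil_n \subset \Hil$ (this is where frequency independence enters, killing the cross-terms $\E[\xi_n Z_m(t)]$) together with uniform convergence of eigenfunction expansions of smooth functions. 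Your route is cleaner and dispenses with the product-convergence bookkeeping, but it yields no quantitative information on ball probabilities, which the paper's computation essentially provides and which is of the same flavor as its later small-ball estimates (cf.\ Lemma~\ref{lem:ball_estimate}); conversely, your argument isolates more transparently where each hypothesis is used.

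Two details deserve more care than you give them, though neither threatens the argument. First, the interchange $\E[\xi_n H(t,x)] = \sum_m w_m e_m(x)\,\E[\xi_n Z_m(t)]$ is not justified by almost-sure convergence of the series alone; you need convergence of the partial sums in $L^2(\Omega)$ (or an integrable dominating function), which does hold here because the variances of the $Z_m(t)$ are uniformly bounded and the weights $w_m$ decay exponentially. Second, Definition~\ref{def:RKHS} represents elements of $\Hil$ by random variables in the closed span $\bar{F}$ of the evaluations of $H$, whereas your $\xi_n$ lives a priori only in the closed span of the evaluations of $Z_n$. This is repaired either by checking that $Z_n(s) = w_n^{-1}\int_M H(s,y)\,e_n(y)\,d\vol_g(y)$ lies in $\bar{F}$ (the integral is an $L^2(\Omega)$-limit of Riemann sums of evaluations of $H$), or more simply by noting that replacing $\xi_n$ with its orthogonal projection onto $\bar{F}$ does not change the function $(t,x) \mapsto \E[\xi_n H(t,x)]$, since each $H(t,x)$ itself lies in $\bar{F}$.
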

\begin{proof} 
    We start by noting that we can actually assume that $\mathcal{D}$ is additionally centered, and thus, by Lemma~\ref{lem:GP_H}, we have $\E[H] = 0$.
    If this is not the case, we can consider the centered process $\tilde{H} = H - \E[H]$ instead.
    If we assume that the support of $\tilde{H}$ is the whole space $C^\infty_0$, 
    it follows that this was already true for $H$.
    Thus, it suffices to consider the case $\E[H] = 0$.

    Let $F \in C^\infty_0$ be arbitrary. We need to show that for some $\eps > 0$, we have $\muGPH^{\mathcal{D}}(\{G \mid \norm{F-G}_\infty < \eps\}) > 0$, i.e.\ the $\eps$-ball around $F$ (with respect to the $\infty$-norm) has positive measure.
    We can define the time-dependent coefficients 
    \[f_n(t) = \frac{1}{w_n}\left\langle F(t,\cdot), e_n \right\rangle,\]
    for any $n \in \Nats_{>0}$.
    Since $F$ is normalized, and $\{1,e_1,e_2,\ldots\}$ is an orthonormal basis of $L^2(M,g)$, we have that $\sum_{n \ge 1}w_n f_n(t)e_n(x) = F(t,x)$
    for all $t \in [0,1]$ and $x \in M$.
    Now notice that for any $g_1,\ldots \in C^\infty([0,1])$, we have that 
    \begin{align*}
        \norm*{F - \sum_{n \ge 1} w_n g_n(\cdot_t)e_n(\cdot_x)}_\infty &=
        \norm*{\sum_{n \ge 1} w_n \left( f_n(\cdot_t) - g_n(\cdot_t)\right)e_n(\cdot_x)}_\infty 
        = \max_{t \in [0,1]} \norm*{\sum_{n \ge 1} w_n \left( f_n(t) - g_n(t)\right)e_n(\cdot_x)}_\infty \\
        &\leq \max_{t \in [0,1]} \sum_{n \ge 1} w_n \abs{f_n(t) - g_n(t)} \cdot \norm{e_n}_\infty \\
        &\leq  \sum_{n \ge 1} w_n \left(\max_{t \in [0,1]} \abs{f_n(t) - g_n(t)}\right) \cdot \norm{e_n}_\infty \\
        &\leq \sum_{n \ge 1} w_n\cdot \norm{f_n - g_n}_{\infty} \cdot \norm{e_n}_\infty,
    \end{align*}
    whenever $\sum_{n \ge 1} w_n g_n(t)e_n(x)$ converges uniformly.
    We now wish to process similarly in spirit to Example~\ref{ex:gaussian-process-ex2}.
    Let us assume $\dim M = 2d$ for $d \in \Nats_{>0}$.
    Note that by the Weyl law, there exists a constant $C > 0$ such that for all $n \in \Nats_{>0}$, we have $\lambda_n \leq C \cdot n^{\frac{1}{d}}$.
    Furthermore, by the estimate of $\infty$-norms of the eigenfunctions in~\eqref{eq:C0-estimate-eigenfunctions}, we have that
    $\norm{e_n}_\infty \leq C' \cdot \lambda_n^{d}$ for some constant $C' > 0$.
    Thus, $\norm{e_n}_\infty \leq C' \cdot (C \cdot n^{\frac{1}{d}})^{d} \leq C'' \cdot n$ for some constant $C''> 0$
    which only depends on $(M,\omega,J)$.
    Now assume that for all $n \in \Nats_{>0}$, we have $\norm{f_n - g_n}_{\infty} < \exp(\frac{1}{2}\lambda_n \reg) \cdot \frac{1}{n^3} = w_n^{-1} n^{-3}$.
    Then we have that 
    \begin{align*}
        \norm*{F - \sum_{n \ge 1} w_n g_n(\cdot_t)e_n(\cdot_x)}_\infty 
        &\leq \sum_{n \ge 1} w_n\cdot \norm{f_n - g_n}_{\infty} \cdot \norm{e_n}_\infty \\
        &< \sum_{n \ge 1} \frac{w_n}{n^3} \cdot w_n^{-1} \cdot \norm{e_n}_\infty 
        = \sum_{n \ge 1} \frac{1}{n^3} \cdot \norm{e_n}_\infty \\
        &\leq C'' \cdot \sum_{n \ge 1} \frac{1}{n^2} = C'' \cdot \frac{\pi^2}{6}.
    \end{align*}
    It follows directly that whenever $\norm{f_n - g_n}_{\infty} < \frac{6}{C''\pi^2 w_n n^3} \eps $,
    we have that \[\norm*{F - \sum_{n \ge 1} w_n g_n(t)e_n(x)}_\infty < \eps\]
    for any $\eps 
    > 0$.
    By construction of $\muGPH^{\mathcal{D}}$, as the law of the Gaussian process $H$ defined in~\eqref{eq:GP_H}, this implies that 
    \begin{align*}
        \muGPH^{\mathcal{D}}(\{G \mid \norm{F-G}_\infty < \eps\})
        &= \Prob\left[ \norm{H - F}_\infty < \eps \right] \\
        &\geq 
        \Prob\left[\bigcap_{n \ge 1}               \left\{ \norm{f_n - Z_n}_{\infty} < \frac{6}{C'' \pi^2 w_n n^3} \eps  \right\} \right]
        \\ &= \prod_{n \ge 1}                  \Prob\left[ \norm{f_n - Z_n}_{\infty} < \frac{6}{C'' \pi^2 w_n n^3} \eps  \right],
        \\ &= \prod_{n \ge 1} 1- \underbrace{\Prob\left[\norm{f_n - Z_n}_{\infty} \geq \frac{6}{C'' \pi^2 w_n n^3} \eps  \right]}_{\eqqcolon p_n},
    \end{align*}
    where we use the assumption that $\mathcal{D}$ is frequency independent in the third line.
    Thus, our problem of showing that $\muGPH^{\mathcal{D}}(\{G \mid \norm{F-G}_\infty < \eps\}) > 0$ reduces to showing that the product $\prod_{n \ge 1} (1 - p_n)$ is positive.
    This is equivalent to showing that $\sum_{n \ge 1} p_n < \infty$ and $p_n < 1$ for all $n \in \Nats_{>0}$.
    To show the latter, we note that since $\mathcal{D}$ is exhaustive, 
    the support of the law of $Z_n$ in $(C^0([0,1],\R),\norm{\cdot}_{\infty})$ is the whole space $C^0([0,1],\R)$.
    However, if $p_n = 1$ this would imply that a ball of radius ${3\eps}{(C'' \pi^2 w_n n^3)^{-1}}$ around $f_n$ lies outside the support of
    $Z_n$, which is a contradiction.
    Thus, for any $n \in \Nats_{>0}$, we have $p_n < 1$.
    It now remains to show that $\sum_{n \ge 1} p_n < \infty$.
    Notice the following inequality, which holds for any radius $R > 2\norm{f_n}_\infty$:
    \begin{align*}
            \Prob[\norm{f_n - Z_n} \geq R] \leq \Prob\left[\norm{Z_n} \geq \frac{R}{2} + \norm{f_n}_\infty\right]
    \end{align*}
    This is a direct consequence of the triangle inequality and $\E[Z_n] = 0$.
    Now, since $F$ is a smooth function $\norm{\langle F(t,\cdot), e_n \rangle}_{\infty}$ decays faster than any polynomial in $n$.
    In particular, faster than $\frac{3\eps}{C''\pi^2} n^{-3}$.
    It follows that there is an $N \in \Nats$ such that for all $n \ge N$, 
    we have 
    \begin{equation*}
        \frac{3\eps}{C''\pi^2w_n n^3} >\frac{1}{w_n} \norm{\langle F(t,\cdot), e_n \rangle}_{\infty} = \norm{f_n}_\infty.
    \end{equation*}
    This allows us to make the following estimate:
    \begin{align*}
        p_n &= \Prob\left[\norm{f_n - Z_n}_{\infty} \geq \frac{6\eps}{\pi^2C''w_n n^3} \right] \\
        &\leq \Prob\left[\norm{Z_n} \geq \frac{3\eps}{C''\pi^2w_n n^3} + \norm{f_n}_\infty\right] 
        \leq \Prob\left[\norm{Z_n} \geq \frac{3\eps}{C''\pi^2w_n n^3} \right] \\
        &= \Prob\left[\norm{Z_n} - \E[\norm{Z_n}_\infty]\geq \frac{3\eps}{C''\pi^2w_n n^3} - \E[\norm{Z_n}_\infty]\right]
    \end{align*}
    Without loss of generality, we can assume that $N$ was chosen large enough that $\frac{3\eps}{C''\pi^2w_n n^3} > \E[\norm{Z_n}_\infty]$.
    This follows since $w_n$ decays exponentially in $n$ and $\{\E[\norm{Z_n}_\infty]\}_{n \in \Nats_{>0}}$ is a bounded sequence by the definition of a \ldd .
    To complete the proof, we can use the Borell-TIS inequality (see Lemma~\ref{lem:Borell-TIS-infinity}) to estimate $p_n$ as follows:
    \begin{align*}
        p_n &\le \Prob\left[\norm{Z_n} - \E[\norm{Z_n}_\infty]\geq \frac{3\eps}{C''\pi^2w_n n^3} - \E[\norm{Z_n}_\infty]\right] \\
        &\le \exp\left(-\frac{\left(\frac{3\eps}{C''\pi^2 w_n n^3} - \E[\norm{Z_n}_\infty]\right)^2}{2\sigma_n^2}\right) \\
        &\le \exp\left(-\frac{1}{2\sigma_n^2}\left(\frac{3\eps}{C''\pi^2 n^3} \exp\left(\frac{1}{2}\lambda_n \reg\right) - \E[\norm{Z_n}_\infty]\right)^2\right),
    \end{align*}
    where $\sigma_n^2 = \max_{t \in [0,1]} \Var[Z_n(t)]$.
    Since $\kappa_n(t,t) = \Var[Z_n(t)]$ is bounded uniformly in $n$ and $t \in [0,1]$ by the definition of a \ldd, this implies that $\sum_{n \ge 1} p_n < \sum_{n \ge N} p_n < \infty$
    converges since the argument of the exponential rapidly diverges to $-\infty$ above as $n \to \infty$.
    The latter follows immediately from Weyl's law for the eigenvalues $\lambda_n$.
    This completes the proof.
    \color{black}
\end{proof}
\begin{corollary}\label{cor:full_support_periodically}
    Let $\mathcal{D}$ be a periodically exhaustive, frequency independent \ldd.
    Then the support of the measure $\muGP^{\mathcal{D}}$ is the whole group $\Ham(M,\omega)$.
\end{corollary}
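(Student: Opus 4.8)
The plan is to reduce the statement to the periodic analogue of Lemma~\ref{lem:GP_H_full_support}, combined with the elementary fact that \emph{time-periodic} Hamiltonians already generate all of $\Ham(M,\omega)$.

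First, I would establish a periodic version of Lemma~\ref{lem:GP_H_full_support}: if $\mathcal{D}$ is periodically exhaustive and frequency independent, then the support of $\muGPH^{\mathcal{D}}$ contains every $F \in C^\infty_0$ that descends to a smooth function on $S^1 \times M$, i.e.\ every $F$ whose time-derivatives of all orders agree at $t=0$ and $t=1$. The proof is the verbatim argument of Lemma~\ref{lem:GP_H_full_support}: for such an $F$ the coefficient functions $f_n(t) = w_n^{-1}\langle F(t,\cdot), e_n\rangle_{L^2}$ are now smooth \emph{periodic} functions of $t$, and the only place exhaustiveness was used was to conclude $p_n < 1$ via the fact that $f_n$ lies in the support of $Z_n$. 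Since $\mathcal{D}$ is periodically exhaustive, Lemma~\ref{lem:RKHS-and-support} together with Definition~\ref{def:ldd-types} shows that the (closed) support of $Z_n$ contains all smooth periodic functions, hence contains each $f_n$; the summability estimate $\sum_n p_n < \infty$ with $p_n < 1$ is untouched, so $\muGPH^{\mathcal{D}}(\{G : \norm{F-G}_\infty < \eps\}) > 0$ for all $\eps > 0$.

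Second, I would show that every $\phi \in \Ham(M,\omega)$ is $\phi_F^1$ for some time-periodic $F \in C^\infty_0$. Starting from any $H \in C^\infty_0$ with $\phi_H^1 = \phi$, pick a smooth nondecreasing reparametrization $\tau:[0,1]\to[0,1]$ with $\tau \equiv 0$ near $0$ and $\tau \equiv 1$ near $1$, and set $F(t,x) = \tau'(t)\,H(\tau(t),x)$. A direct computation gives $\phi_F^t = \phi_H^{\tau(t)}$, so $\phi_F^1 = \phi$; normalization is preserved since $\int_M F(t,\cdot)\,\omega^n = \tau'(t)\int_M H(\tau(t),\cdot)\,\omega^n = 0$; and since $\tau'$ vanishes to all orders near $t=0,1$, the function $F$ and all its time-derivatives vanish there, so $F$ descends to a smooth function on $S^1 \times M$.

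Finally, I would combine these with the Lipschitz bound and the push-forward description of $\muGP^{\mathcal{D}}$. Fix $\phi$ and $\eps > 0$, and let $F$ be a periodic Hamiltonian with $\phi_F^1 = \phi$ as above. By Lemma~\ref{lem:H_to_phi_H_Lipschitz} and the fact that the covering projection $\widetilde{\Ham}(M,\omega) \to \Ham(M,\omega)$ is $1$-Lipschitz, the map $H \mapsto \phi_H^1$ is $2$-Lipschitz from $(C^\infty_0, \norm{\cdot}_\infty)$ to $(\Ham(M,\omega), d_{\Hof})$, so it carries the ball $\{G : \norm{F-G}_\infty < \eps/2\}$ into $B_{\Hof}(\phi,\eps)$. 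Since $\muGP^{\mathcal{D}}$ is the push-forward of $\muGPH^{\mathcal{D}}$ under $H \mapsto \phi_H^1$, and the first step gives the $C^0$-ball positive $\muGPH^{\mathcal{D}}$-measure, we obtain
\[
\muGP^{\mathcal{D}}(B_{\Hof}(\phi,\eps)) \ge \muGPH^{\mathcal{D}}(\{G : \norm{F-G}_\infty < \eps/2\}) > 0.
\]
As $\phi$ and $\eps$ were arbitrary, $\supp \muGP^{\mathcal{D}} = \Ham(M,\omega)$. The main obstacle is purely organizational, namely confirming in the first step that restricting the approximants to periodic profiles does not disturb the estimate $\sum_n p_n < \infty$; it does not, since that bound rested only on the uniform control of $\Var[Z_n]$ and $\E[\norm{Z_n}_\infty]$ and the exponential decay of $w_n$, with periodicity entering solely through the (still valid) membership $f_n \in \supp Z_n$.
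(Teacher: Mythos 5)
Your proof is correct and follows essentially the same route as the paper's: reduce to the fact that every Hamiltonian diffeomorphism is generated by a time-periodic Hamiltonian in $C^\infty_0$, then rerun the argument of Lemma~\ref{lem:GP_H_full_support} with the coefficients $f_n$ now periodic, so that periodic exhaustiveness (via Lemma~\ref{lem:RKHS-and-support}) still gives $p_n < 1$ while the summability $\sum_n p_n < \infty$ is untouched. The paper states both steps tersely --- asserting the existence of a periodic generating Hamiltonian and invoking the lemma ``with necessary modifications'' --- and your explicit reparametrization $F(t,x) = \tau'(t)\,H(\tau(t),x)$ is precisely the ``one can also check this directly'' route the paper alludes to, so the only difference is that you supply details the paper omits.
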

\begin{proof}
    Note that any Hamiltonian diffeomorphism $\phi \in \Ham(M,\omega)$ admits a periodic Hamiltonian function $H \in C^\infty_0$ such that $\phi = \phi^1_H$. 
    This follows in particular from the fact that $\Ham(M,\omega)$ is generated by $\Aut(M,\omega)$, but one can also check this directly.
    Thus, we need to show that any periodic Hamiltonian function $H \in C^\infty_0$ is in the support of $\muGPH^{\mathcal{D}}$.
    This follows immediately by making the necessary modifications to the proof of Lemma~\ref{lem:GP_H_full_support},
    since the coefficients as defined in the proof of that lemma are periodic functions for periodic Hamiltonian functions.
    Thus, the same argument applies.
\end{proof}
Together, these show Theorem~\ref{thm:full-support}.
It still remains to show the equivalent statement for autonomous Hamiltonian functions:
\begin{theorem}
    Let $\mathcal{D}$ be an autonomously exhaustive, frequency independent \ldd.
    Then the support of the measure $\muGP^{\mathcal{D}}$ is exactly the Hofer-closure $\overline{\Aut}(M,\omega)$ of $\Aut(M,\omega) \subset \Ham(M,\omega)$.
\end{theorem}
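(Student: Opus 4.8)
The plan is to establish the two inclusions separately. The inclusion $\supp \muGP^{\mathcal{D}} \subseteq \overline{\Aut}(M,\omega)$ comes for free: an autonomously exhaustive \ldd\ is in particular autonomous, so the first part of Theorem~\ref{thm:aut-measure} applies verbatim. For the reverse inclusion $\overline{\Aut}(M,\omega) \subseteq \supp \muGP^{\mathcal{D}}$, I would use that the support is closed, so it suffices to show that every $\phi \in \Aut(M,\omega)$ lies in $\supp \muGP^{\mathcal{D}}$, i.e.\ that $\muGP^{\mathcal{D}}(B_{\Hof}(\phi,\eps)) > 0$ for all $\eps > 0$.

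First I would set up the autonomous picture and reduce to the centered case exactly as in the proof of Lemma~\ref{lem:GP_H_full_support}: since $\mathcal{D}$ is autonomous each $\E[Z_n]$ is constant in $t$, so $\E[H]=\sum_{n\ge1}w_n\E[Z_n]e_n$ is itself a time-independent normalized function, and replacing $F$ by $F-\E[H]$ (again autonomous and normalized) shows it is enough to treat the centered process $\tilde H = H - \E[H]$. So assume $\E[H]=0$. Because the sample paths of each $Z_n$ are a.s.\ constant, write $X_n \coloneqq Z_n$; by frequency independence these are independent centered Gaussians, and by autonomous exhaustiveness $\sigma_n^2 \coloneqq \Var[X_n] > 0$. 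The random Hamiltonian is then the autonomous function $H(x) = \sum_{n\ge1} w_n X_n e_n(x)$.

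Now fix $\phi = \phi_F^1$ with $F \in C^\infty_0$ time-independent and fix $\eps>0$. By Lemma~\ref{lem:H_to_phi_H_Lipschitz} together with the $1$-Lipschitz covering map, $\norm{H-F}_\infty < \eps/2$ forces $d_{\Hof}(\phi_H^1,\phi) < \eps$, so it suffices to prove $\muGPH^{\mathcal{D}}(\{\norm{H-F}_\infty < \eps/2\}) > 0$. Writing $f_n \coloneqq w_n^{-1}\langle F,e_n\rangle_{L^2}$ (now scalars) and repeating the chain of estimates in Lemma~\ref{lem:GP_H_full_support} with the scalars $f_n,X_n$ in place of the functions $f_n(\cdot),Z_n(\cdot)$, one checks that $\abs{f_n - X_n} < r_n$ for all $n$ implies $\norm{H-F}_\infty < \eps/2$, where
\[
r_n \coloneqq \frac{3\eps}{C''\pi^2 w_n n^3} = \frac{3\eps}{C''\pi^2 n^3}\exp\!\left(\tfrac12\lambda_n\reg\right)
\]
and $C''$ is the constant with $\norm{e_n}_\infty \le C'' n$ furnished by Weyl's law and~\eqref{eq:C0-estimate-eigenfunctions}. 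By frequency independence this yields the lower bound $\muGPH^{\mathcal{D}}(\{\norm{H-F}_\infty < \eps/2\}) \ge \prod_{n\ge1}(1-p_n)$ with $p_n \coloneqq \Prob[\abs{f_n-X_n}\ge r_n]$.

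The crux is then to show this product is positive, which amounts to $p_n < 1$ for every $n$ and $\sum_n p_n < \infty$. The condition $p_n<1$ is exactly where autonomous exhaustiveness enters: since $\sigma_n^2>0$, the variable $X_n$ is a non-degenerate Gaussian with full support on $\Reals$, so the interval $(f_n-r_n,f_n+r_n)$ has positive probability; this is the autonomous analogue of the RKHS-density used in Lemma~\ref{lem:GP_H_full_support}. For summability, smoothness of $F$ makes $\langle F,e_n\rangle$ decay faster than any polynomial in $n$, so for $n$ large $\abs{f_n} < r_n/2$, whence $p_n \le \Prob[\abs{X_n}\ge r_n/2] \le 2\exp(-r_n^2/(8\sigma_n^2))$; as $\sigma_n^2$ is uniformly bounded above (definition of a \ldd) while $r_n$ blows up super-polynomially by Weyl's law, the exponents diverge and $\sum_n p_n<\infty$. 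Hence the product is positive, so $\phi\in\supp\muGP^{\mathcal{D}}$; taking closures gives $\overline{\Aut}(M,\omega)\subseteq\supp\muGP^{\mathcal{D}}$ and completes the proof. I expect the only genuinely delicate point to be the same one as in Lemma~\ref{lem:GP_H_full_support}, namely balancing the polynomial decay of $\langle F,e_n\rangle$ against the exponential growth of $w_n^{-1}$ inside $r_n$; everything else is a routine transcription of that argument to the scalar (autonomous) setting.
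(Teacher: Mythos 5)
Your proposal is correct and matches the paper's argument essentially step for step: the easy inclusion via Theorem~\ref{thm:aut-measure}, then reduction to showing each $\phi \in \Aut(M,\omega)$ has positive-measure Hofer balls by transcribing Lemma~\ref{lem:GP_H_full_support} to scalar coefficients, with positive variance giving $p_n < 1$ and Gaussian tails plus super-polynomial decay of $\langle F, e_n\rangle$ giving $\sum_n p_n < \infty$. The only cosmetic difference is that you center the process up front by replacing $F$ with $F - \E[H]$, whereas the paper keeps the non-centered $Z_n$ and absorbs $\E[Z_n]$ into the tail estimates; both are fine.
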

\begin{proof}
    This is essentially the same proof as the proof of Lemma~\ref{lem:GP_H_full_support}.
    First note that it suffices to show that any $\phi \in \Aut(M,\omega)$ is surrounded by an arbitrarily small ball of positive measure.
    Let $\phi \in \Aut(M,\omega)$ and $\eps > 0$ be arbitrary and let $F: M \to \Reals$ be an autonomous Hamiltonian function generating $\phi$.
    Then we can define the coefficients 
    \[f_n = \langle e_n, F \rangle_{L^2(M,\omega)}\]
    for all $n \in \Nats_{>0}$.
    Since $\mathcal{D}$ is autonomous, we have that the Gaussian processes $Z_n$ are constant on $[0,1]$ almost-surely.
    We therefore treat them as real-valued random variables in the following.
    Thus, by the same reasoning as in the proof of Lemma~\ref{lem:GP_H_full_support}, 
    \begin{align*}
        \muGPH^{\mathcal{D}}(\{G \mid \norm{F-G}_\infty < \eps\})
        &= \Prob\left[ \norm{H - F}_\infty < \eps \right] \\
        &\geq 
        \Prob\left[\bigcap_{n \ge 1} \left\{ \abs{f_n - Z_n} <              \frac{6\eps}{C''\pi^2w_n n^3} \right\} \right]
        \\ &= \prod_{n \ge 1} 1- \underbrace{\Prob\left[\abs{f_n - Z_n} \geq\frac{6\eps}{C''\pi^2w_n n^3} \right]}_{\eqqcolon p_n},
    \end{align*}
    where $C'' > 0$ is the constant from Lemma~\ref{lem:GP_H_full_support} and we use the fact that the $Z_n$ are independent by assumption.
    To show that this lower bound is positive, we again need to show that $\sum_{n \ge 1} p_n < \infty$ and $p_n < 1$ for all $n \in \Nats_{>0}$.
    Showing $p_n < 1$ is easy since by assumption, all $Z_n$ have positive variance, i.e.\ $\Var[Z_n] > 0$. 
    We need to estimate the probabilities $p_n$ from above to show that $\sum_{n \ge 1} p_n < \infty$.
    Note that for any $s > 0$ we have
    \begin{equation*}
        \Prob[\abs{f_n - Z_n} \geq s] \leq \Prob[\abs{Z_n - \E[Z_n]} \geq s + \abs{f_n - \E[Z_n]}],
    \end{equation*}
    since $\abs{Z_n - \E[Z_n]} \geq s + \abs{f_n - \E[Z_n]}$ implies $\abs{f_n - Z_n} \geq s$.
    Further, since 
    $Z_n - \E[Z_n]$ is a centered Gaussian variable with variance $\Var[Z_n]$, we obtain 
    \begin{align*}
        \Prob[\abs{Z_n - \E[Z_n]} \geq s + \abs{f_n - \E[Z_n]}]
        \leq 2\exp\left(-\frac{1}{2\Var[Z_n]}\left( s + \abs{f_n - \E[Z_n]}\right)^2 \right)
        \leq 2\exp\left(-\frac{1}{2D}s^2\right),
    \end{align*}
    where we use the fact that there exists some constant $D > 0$ such that $\sup_{n \in \Nats_{>0}} \Var[Z_n] < D$.
    This holds by the definition of a \ldd.
    Now, by our previous estimate of $p_n$, we have
    \begin{align*}
        p_n \leq 2\exp\left(-\frac{1}{2D}\left( \frac{6\eps}{C''\pi^2w_n n^3} - \norm{f_n - \E[Z_n]}\right)^2\right)
        ,
    \end{align*}
    for $n$ large enough so that $\frac{6\eps}{C''\pi^2w_n n^3} > \norm{f_n - \E[Z_n]}$.
    Note that $\{\E[Z_n]\}_{n \in \Nats_{>0}}$ is a bounded sequence and $\lim_{n \to \infty} f_n = 0$ since $F$ is smooth.
    Thus, the argument in the exponential above rapidly diverges to $-\infty$ as $n \to \infty$ since $w_n$ decays exponentially in $n$.
    Thus, $\sum_{n\ge q}p_n < \infty$, which completes the proof.
    \color{black}
\end{proof}
After studying the support of $\muGP^{\mathcal{D}}$, we now wish to give some lower bounds 
on the measure of balls.
In the theory of Gaussian processes, such ``small ball estimates'' are a well-studied topic.
The focus is on understanding how the measure of a small ball around $0$ behaves.
The following lemma is simply an adaptation of these bounds to our setting.
\begin{lemma}\label{lem:centered_ball_estimate}
    Let $\mathcal{D}$ be a centered \ldd .
    Then there exist constants $C,D \geq 0$ depending only on $\mathcal{D}$ such that for every $r > 0$, we have
    \begin{equation*}
        \muGP^{\mathcal{D}}\left(B_{\Hof}(\id_M,r + C)\right) \geq 1 - \exp\left(-\frac{r^2}{D}\right). 
    \end{equation*}
\end{lemma}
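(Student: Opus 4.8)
The plan is to reduce the statement to a one-sided concentration estimate for the $C^0$-norm of the underlying Gaussian process $H$ from~\eqref{eq:GP_H} and then invoke the Borell--TIS inequality. Since $\mathcal{D}$ is centered, Lemma~\ref{lem:GP_H} gives $\E[H] = 0$, so I would fix the two constants
\[
    R_0 := \E[\norm{H}_\infty], \qquad \sigma^2 := \max_{(t,x)\in[0,1]\times M}\Var[H(t,x)].
\]
Both are finite: $R_0$ by the Borell--TIS inequality (Lemma~\ref{lem:Borell-TIS}, exactly as used inside Lemma~\ref{lem:expected-osc-norm}), and $\sigma^2$ because $(t,x)\mapsto\Var[H(t,x)]$ is bounded on the compact set $[0,1]\times M$ (cf.\ the estimates in Lemma~\ref{lem:GP_H-L2}). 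If $\sigma^2 = 0$ then $H\equiv 0$ almost surely, $\muGP^{\mathcal{D}}=\delta_{\id_M}$, and the claim is trivial, so I would assume $\sigma^2>0$.

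First I would set up a containment of events. For any $H\in C^\infty_0$ the definition of the Hofer norm together with the elementary bound $\osc(H_t)\leq 2\norm{H_t}_\infty$ gives
\[
    \norm{\phi_H^1}_{\Hof} \leq \osc(H) = \int_0^1 \osc(H_t)\,dt \leq 2\norm{H}_\infty .
\]
Hence for any $\rho>0$ the set $\{H\in C^\infty_0 : \norm{H}_\infty<\rho/2\}$ is contained in $\{H : \norm{\phi_H^1}_{\Hof}<\rho\}$, and since $\muGP^{\mathcal{D}}$ is the push-forward of $\muGPH^{\mathcal{D}}$ under $H\mapsto\phi_H^1$ this yields
\[
    \muGP^{\mathcal{D}}\big(B_{\Hof}(\id_M,\rho)\big) \geq \muGPH^{\mathcal{D}}\big(\{\norm{H}_\infty<\rho/2\}\big) = \Prob[\norm{H}_\infty<\rho/2].
\]

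Next I would apply the upper-tail (first) inequality of Lemma~\ref{lem:Borell-TIS-infinity}, which for the centered process $H$ reads $\Prob[\norm{H}_\infty \geq R_0 + u] \leq \exp(-u^2/(2\sigma^2))$ for every $u>0$. Taking $\rho = r+C$ with the choice $C := 2R_0$ makes $\rho/2 - R_0 = r/2 > 0$ for all $r>0$, so with $u=r/2$ one obtains
\[
    \Prob\big[\norm{H}_\infty \geq \tfrac{r+C}{2}\big] \leq \exp\!\left(-\frac{r^2}{8\sigma^2}\right).
\]
Combining this with the containment above and setting $D := 8\sigma^2$ gives $\muGP^{\mathcal{D}}(B_{\Hof}(\id_M,r+C)) \geq 1 - \exp(-r^2/D)$ for all $r>0$, as required.

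There is no serious analytic obstacle; the content is inherited entirely from the sub-Gaussian concentration of $\norm{H}_\infty$. The one point requiring care is to use the genuinely \emph{one-sided} tail bound rather than the symmetric two-sided version underlying Theorem~\ref{thm:hofer-norm-subgaussian}: this avoids a spurious factor of $2$ in front of the exponential, which is precisely what keeps the lower bound $1-\exp(-r^2/D)$ strictly positive, and hence nonvacuous, for \emph{every} $r>0$. The only remaining verifications are the finiteness of $R_0$ and $\sigma^2$, which have effectively already been established in the proofs of Lemmas~\ref{lem:expected-osc-norm} and~\ref{lem:GP_H-L2}.
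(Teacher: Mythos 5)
Your proof is correct and takes essentially the same route as the paper's: both reduce the claim to the containment $\muGP^{\mathcal{D}}(B_{\Hof}(\id_M,\rho)) \geq \Prob[\norm{H}_\infty < \rho/2]$ (the paper gets this from Lemma~\ref{lem:H_to_phi_H_Lipschitz}, you from the equivalent bound $\norm{\phi_H^1}_{\Hof}\leq \osc(H)\leq 2\norm{H}_\infty$) and then apply the one-sided Borell--TIS inequality of Lemma~\ref{lem:Borell-TIS-infinity}, ending with the same constants $C = 2\E[\norm{H}_\infty]$ and $D = 8\sigma^2$. Your explicit treatment of the degenerate case $\sigma^2=0$ is a small point the paper glosses over, but otherwise the arguments coincide.
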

\begin{proof}
    This is direct consequence of the Borell-TIS inequality for the $\infty$-norm (see Lemma~\ref{lem:Borell-TIS-infinity})
    and 
    Lemma~\ref{lem:H_to_phi_H_Lipschitz}.
    The latter implies that 
    \begin{equation*}
        \muGP^{\mathcal{D}}\left(B_{\Hof}(\id_M,u)\right) \geq
        \muGPH^{\mathcal{D}}\left(\left\{G \in C^\infty_0 \Big\vert \norm{G}_\infty < \frac{u}{2}\right\}\right).
    \end{equation*}
    Here $u > 0$ is arbitrary.
    Now, Borell-TIS gives us the following estimate:
    \begin{align*}
        \muGPH^{\mathcal{D}}\left(\left\{G \in C^\infty_0 \Big\vert \norm{G}_\infty < \frac{u}{2}\right\}\right)
        &= \Prob\left[\norm{H}_\infty < \frac{u}{2}\right] 
        = 1- \Prob\left[\norm{H}_\infty \geq \frac{u}{2}\right] \\
        &\geq 1 - \exp\left( - \frac{1}{2 \sup_{t,x} \Var[H(t,x)]} \cdot \left( u - \E\left[\norm{H}_\infty\right]\right)^2 \right),
    \end{align*}
    when $u > \E\left[\sup_{t,x}{H(t,x)}\right]$.
    Here we use Borell-TIS in the last line and the fact that $\mathcal{D}$ is centered in the second line.
    Setting $C = {2} \E\left[\sup_{(t,x) \in [0,1] \times M}{H(t,x)}\right]$
    and $D = 8 \sup_{(t,x) \in [0,1] \times M} \Var[H(t,x)]$ gives us the desired estimate.
    Clearly, $C$ and $D$ only depend on the \ldd\ $\mathcal{D}$.
\end{proof}
It is also interesting to give estimates for balls centered at arbitrary elements $\phi \in \Ham(M,\omega)$.
For this, we first need to introduce a measure of the complexity of such a $\phi$ that roughly encodes how likely it is to occur.
Recall that the Gaussian process $H$ defined in~\eqref{eq:GP_H} has an associated reproducing kernel Hilbert space (from now on referred to as RKHS) $\Hil$.
As sets $\Hil \subset C^\infty_0$, 
but the Hilbert space structure is \textit{not} inherited from $L^2([0,1] \times M)$.
We denote the RKHS norm by $\norm{\cdot}_{\Hil}$.
With these ingredients, we make the following definition:
\begin{definition}
    Let $\mathcal{D}$ be a centered \ldd .
    Then we call the quantity 
    \begin{equation*}
        \norm{\phi}_{\mathcal{D}} \coloneqq \inf \left\{ \norm{H}_{\Hil} \Big\vert H \in C^\infty_0 \cap \Hil, \phi_H^1 = \phi \right\} \in (0,+\infty],
    \end{equation*}
    the \textit{$\mathcal{D}$-complexity} of $\phi$. We set $\inf \emptyset = \infty$.
\end{definition}
It is now possible to give a lower bound on the measure of balls centered at $\phi$ in terms of the $\mathcal{D}$-complexity of $\phi$
and the estimate for balls centered at the identity.
\begin{lemma}\label{lem:ball_estimate}
    Let $\mathcal{D}$ be a centered \ldd\ and let $\phi \in \Ham(M,\omega)$.
    For every $r > 0$,
    \begin{equation*}
        \muGP^{\mathcal{D}}(B_{\Hof}(\phi,r)) \ge \exp\left(-\frac{1}{2} \norm{\phi}_{\mathcal{D}}^{2}\right) \cdot p_r
    \end{equation*}
    where $p_r = \muGPH^{\mathcal{D}}(\{G \vert G \in C^\infty_0, \norm{G}_\infty < \frac{r}{2}\}) > 0$ is the $\frac{r}{2}$-ball probability of the Gaussian process $H$ defined in~\eqref{eq:GP_H} with respect to $\mathcal{D}$.
\end{lemma}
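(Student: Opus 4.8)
The plan is to transport the estimate to the space $C^\infty_0$ through the $2$-Lipschitz map $H \mapsto \phi_H^1$ and then invoke the Cameron-Martin theorem. Fix $\eps > 0$. If $\norm{\phi}_{\mathcal{D}} = \infty$ the asserted bound is vacuous, so assume there is a generator $H_\phi \in C^\infty_0 \cap \Hil$ with $\phi^1_{H_\phi} = \phi$ and $\norm{H_\phi}_{\Hil} \le \norm{\phi}_{\mathcal{D}} + \eps$. By Lemma~\ref{lem:H_to_phi_H_Lipschitz}, composed with the $1$-Lipschitz covering map $\widetilde{\Ham}(M,\omega) \to \Ham(M,\omega)$, the map $H \mapsto \phi_H^1$ is $2$-Lipschitz from $(C^\infty_0, \norm{\cdot}_\infty)$ to $(\Ham(M,\omega), d_{\Hof})$. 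Hence $\norm{G - H_\phi}_\infty < r/2$ forces $d_{\Hof}(\phi_G^1, \phi) < r$, so that
\[
\muGP^{\mathcal{D}}(B_{\Hof}(\phi,r)) \ge \muGPH^{\mathcal{D}}(H_\phi + B_0), \qquad B_0 \coloneqq \left\{ G \in C^\infty_0 \mid \norm{G}_\infty < \tfrac{r}{2} \right\}.
\]

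Write $\mu \coloneqq \muGPH^{\mathcal{D}}$, which by Lemma~\ref{lem:muGPH_Borel} is a centered Gaussian measure on the separable Banach space of normalized continuous functions with the $\norm{\cdot}_\infty$-topology. Since $-H_\phi \in \Hil$, the Cameron-Martin theorem (Theorem~\ref{thm:cameron-martin}) gives that the translate $\mu_{-H_\phi}$ is absolutely continuous with respect to $\mu$, with density $\exp(g - \tfrac12\norm{H_\phi}_{\Hil}^2)$, where $g \in \bar{G}$ satisfies $-H_\phi = \E[gf]$. As $\mu_{-H_\phi}(B_0) = \mu(H_\phi + B_0)$, integrating the density over $B_0$ yields
\[
\mu(H_\phi + B_0) = \exp\left(-\tfrac12\norm{H_\phi}_{\Hil}^2\right) \int_{B_0} e^{g(x)} \, d\mu(x).
\]
The crucial point is to bound the last integral from below by $\mu(B_0)$. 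Here $g$ is an odd measurable linear functional, $\mu$ is invariant under the involution $x \mapsto -x$, and $B_0$ is symmetric; averaging over this measure-preserving involution gives $\int_{B_0} e^{g}\,d\mu = \int_{B_0} \cosh(g)\,d\mu \ge \mu(B_0)$, using $\cosh \ge 1$.

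Combining the two displays gives $\muGP^{\mathcal{D}}(B_{\Hof}(\phi,r)) \ge \exp(-\tfrac12\norm{H_\phi}_{\Hil}^2)\,\mu(B_0)$, and since $\mu(B_0) = p_r$ and $\norm{H_\phi}_{\Hil} \le \norm{\phi}_{\mathcal{D}} + \eps$, letting $\eps \to 0$ produces the claimed inequality. That $p_r > 0$ is automatic: $0 \in \Hil$ lies in the support of the centered Gaussian measure $\mu$, so every centered $\norm{\cdot}_\infty$-ball has positive measure. I expect the main obstacle to be the bookkeeping in the symmetrization step — one must confirm that the shifted ball $H_\phi + B_0$ corresponds exactly to the Cameron-Martin translate $\mu_{-H_\phi}(B_0)$, so that the correct exponent $-\tfrac12\norm{H_\phi}_{\Hil}^2$ appears, and that the Paley-Wiener functional $g$ is genuinely odd $\mu$-almost everywhere so that replacing $e^g$ by $\cosh g$ is legitimate; the remaining estimates are routine.
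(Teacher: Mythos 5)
Your proof is correct and shares the paper's overall architecture --- choose a near-optimal generator $H_\phi$ with $\norm{H_\phi}_{\Hil} \le \norm{\phi}_{\mathcal{D}} + \eps$, convert the Hofer ball into a translated $\norm{\cdot}_\infty$-ball in $C^\infty_0$, and apply Cameron--Martin --- but it executes two steps differently, and both differences are improvements. For the reduction step, the paper works through the group structure: it writes candidate Hamiltonians as $F_\delta \sharp G$, uses $\norm{G}_\infty < \frac{r}{2} \Rightarrow \norm{\phi^1_G}_{\Hof} < r$, and then needs the observation that $G \mapsto G(t,(\phi^t_{F_\delta})^{-1}(x))$ preserves the sup-norm ball in order to replace the set $\{F_\delta \sharp G\}$ by $\{F_\delta + G\}$. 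You instead use Lemma~\ref{lem:H_to_phi_H_Lipschitz} as a genuine Lipschitz estimate applied to the pair $(G,H_\phi)$, which gives the containment $H_\phi + B_0 \subset \{G \mid \phi_G^1 \in B_{\Hof}(\phi,r)\}$ in one line; this skips the $\sharp$-manipulation entirely and lets you keep the radius $r$ fixed throughout, thereby also avoiding the paper's closing step $\muGP^{\mathcal{D}}(B_{\Hof}(\phi,r)) = \lim_{\delta \to 0}\muGP^{\mathcal{D}}(B_{\Hof}(\phi,r+\delta))$, which as written computes the measure of the closed ball rather than the open one. For the Gaussian step, the paper cites Theorem~\ref{thm:cameron-martin} as if it directly yielded $\mu(F_\delta + B_0) \ge \exp(-\frac{1}{2}\norm{F_\delta}_{\Hil}^2)\,\mu(B_0)$, but that theorem as stated only supplies the Radon--Nikodym density; your symmetrization --- oddness of the Paley--Wiener functional $\mu$-a.e., invariance of $\mu$ and of $B_0$ under $x \mapsto -x$, and $\cosh \ge 1$ --- is exactly the missing argument, and your sign bookkeeping ($\mu_{-H_\phi}(B_0) = \mu(H_\phi + B_0)$ under the paper's convention $\mu_h(A) = \mu(\{x \mid x+h \in A\})$) is consistent. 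Your justification of $p_r > 0$ via $0 \in \supp \muGPH^{\mathcal{D}}$ and Lemma~\ref{lem:RKHS-and-support} is also sound, a point the lemma asserts but the paper's proof does not address.
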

\begin{proof}
    Assume that $\norm{\phi}_{\mathcal{D}} < \infty$, since the statement is trivial otherwise.
    Let $\delta > 0$ be small and $F_\delta \in C^\infty_0$ such that $\phi = \phi_{F_\delta}^1$ and $\norm{F_\delta}_{\Hil} < \norm{\phi}_{\mathcal{D}} + \delta$.
    Note that 
    \begin{align*}
        \muGP^{\mathcal{D}}(B_{\Hof}(\phi,r+\delta)) &= \muGPH^{\mathcal{D}}\left(\left\{F \in C^\infty_0 \vert \phi_F^1 \in B_{\Hof}(\phi,r+\delta)\right\}\right) 
        \\ &\ge  \muGPH^{\mathcal{D}}\left( \left\{{F}_{\delta} \sharp G \vert G \in C^\infty_0, \phi_G^1 \in B_{\Hof}(\id,r)\right\}\right).
    \end{align*}
    Recall that Lemma~\ref{lem:H_to_phi_H_Lipschitz} implies that for any $G \in C_0^\infty$ with $\norm{G}_\infty < \frac{r}{2}$, we have $\norm{\phi_G^1}_{\Hof} < r$.
    Thus,
    \begin{align*}
        \left\{{F}_{\delta} \sharp G \vert G \in C^\infty_0, \phi_G^1 \in B_{\Hof}(\id,r)\right\}
        \supset 
        \left\{{F}_{\delta} \sharp G \vert G \in C^\infty_0, \norm{G}_\infty < \frac{r}{2}\right\}.
    \end{align*}
    Now notice that 
    $F_\delta \sharp G(t,x) \coloneqq F_\delta(t,x) + G(t,(\phi^t_{F_\delta})^{-1}(x))$,
    and that $\norm{t,x \mapsto G(t,(\phi^t_{F_\delta})^{-1}(x))}_\infty = \norm{G}_\infty$ since $\phi^t_{F_\delta}$ is a diffeomorphism for all $t \in [0,1]$.
    Thus,
    \begin{equation*}
        \left\{{F}_{\delta} \sharp G \vert G \in C^\infty_0, \norm{G}_\infty < \frac{r}{2}\right\} = \left\{{F}_{\delta} + G \vert G \in C^\infty_0, \norm{G}_\infty < \frac{r}{2}\right\}
    \end{equation*}
    To estimate this quantity, we notice that 
    \begin{align*}
        \norm{G}_\infty &= \max_{t,x} \left|G(t,x)\right| = \max_{t,x} \left|G(t,(\phi_{{F}_{\delta}}^t)^{-1}(x))\right| = \norm{{F}_{\delta}\sharp G}_\infty
    \end{align*}
    since $\phi_{{F}_{\delta}}^t$ is a diffeomorphism.
    This allows us to apply the Cameron-Martin theorem (Theorem~\ref{thm:cameron-martin}), to estimate 
    \begin{align*}
        \muGP^{\mathcal{D}}(B_{\Hof}(\phi,r+2\delta)) 
        &\ge \muGPH^{\mathcal{D}}\left(\left\{\tilde{F}_{\delta} + G \Big\vert G \in C^\infty_0, \norm{G}_\infty < \frac{r}{2}\right\}\right) \\
        &\ge \exp\left(-\frac{1}{2}\norm{{F}_{\delta}}_{\Hil}^2\right) \muGPH^{\mathcal{D}}\left(\left\{G \Big\vert G \in C^\infty_0, \norm{G}_\infty < \frac{r}{2}\right\}\right) \\
        &= \exp\left(-\frac{1}{2}\norm{{F}_{\delta}}_{\Hil}^2\right) \cdot p_{r}.
    \end{align*}
    Then, the claim follows as $\delta \to 0$, since 
    \begin{align*}
        \muGP^{\mathcal{D}}(B_{\Hof}(\phi,r)) &= \lim_{\delta \to 0} \muGP^{\mathcal{D}}(B_{\Hof}(\phi,r+\delta)) \\ 
        &\ge \lim_{\delta \to 0} \left( \exp\left(-\frac{1}{2}\norm{{F}_{\delta}}_{\Hil}^2\right) \cdot p_{r}\right) \\
        &= \exp\left(-\frac{1}{2} \norm{\phi}_{\mathcal{D}}^{2}\right) \cdot p_r,
    \end{align*}
    which completes the proof.
\end{proof}
By combining the proofs of Lemma~\ref{lem:centered_ball_estimate} and Lemma~\ref{lem:ball_estimate}, 
we obtain the following corollary, which gives a useful lower bound on the measure of balls centered at arbitrary elements of $\Ham(M,\omega)$.
\begin{corollary}\label{cor:ball_estimates}
    Let $\mathcal{D}$ be a centered \ldd .
    Then there exist constants $C,D \geq 0$ depending only on $\mathcal{D}$ such that for every $r > 0$ and $\phi \in \Ham(M,\omega)$, we have
    \begin{equation*}
        \muGP^{\mathcal{D}}\left(B_{\Hof}(\phi,C+r)\right) \geq \exp\left(-\frac{1}{2} \norm{\phi}_{\mathcal{D}}^{2}\right) \left( 1- \exp\left(-\frac{r^2}{D}\right)\right).
    \end{equation*}
\end{corollary}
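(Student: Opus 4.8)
The plan is to feed the shifted radius $C+r$ into Lemma~\ref{lem:ball_estimate} and then to bound the resulting small-ball probability $p_{C+r}$ by the Borell--TIS computation already carried out in the proof of Lemma~\ref{lem:centered_ball_estimate}. Concretely, with $H$ the Gaussian process from~\eqref{eq:GP_H} associated to the centered \ldd\ $\mathcal{D}$, I would set
\[
\sigma^2 \coloneqq \sup_{(t,x) \in [0,1]\times M} \Var[H(t,x)], \qquad C \coloneqq 2\,\E\!\left[\norm{H}_\infty\right], \qquad D \coloneqq 8\sigma^2.
\]
Both quantities are finite (the former by the uniform variance bound in Definition~\ref{def:ldd} together with~\eqref{eq:C0-estimate-eigenfunctions}, the latter because the centered process $H$ is almost-surely bounded on the compact domain $[0,1]\times M$, so Lemma~\ref{lem:Borell-TIS} applies), and both depend only on $\mathcal{D}$.

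First I would apply Lemma~\ref{lem:ball_estimate} with radius $C+r$ in place of $r$, obtaining
\[
\muGP^{\mathcal{D}}\!\left(B_{\Hof}(\phi, C+r)\right) \ge \exp\!\left(-\tfrac{1}{2}\norm{\phi}_{\mathcal{D}}^2\right) \cdot p_{C+r},
\]
where $p_{C+r} = \muGPH^{\mathcal{D}}\!\left(\{G \in C^\infty_0 \mid \norm{G}_\infty < \tfrac{C+r}{2}\}\right)$. It then remains only to show $p_{C+r} \ge 1 - \exp(-r^2/D)$.

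For this I would reuse the estimate from Lemma~\ref{lem:centered_ball_estimate}: since $\mathcal{D}$ is centered, the $\norm{\cdot}_\infty$-version of Borell--TIS (Lemma~\ref{lem:Borell-TIS-infinity}) gives, for any $s > \E[\norm{H}_\infty]$,
\[
\Prob\!\left[\norm{H}_\infty \ge s\right] = \Prob\!\left[\norm{H}_\infty - \E[\norm{H}_\infty] \ge s - \E[\norm{H}_\infty]\right] \le \exp\!\left(-\frac{(s - \E[\norm{H}_\infty])^2}{2\sigma^2}\right).
\]
With the choice $C = 2\E[\norm{H}_\infty]$ one has $s \coloneqq \tfrac{C+r}{2} = \E[\norm{H}_\infty] + \tfrac{r}{2}$, which exceeds $\E[\norm{H}_\infty]$ for every $r > 0$; hence $s - \E[\norm{H}_\infty] = r/2$ and
\[
p_{C+r} = 1 - \Prob\!\left[\norm{H}_\infty \ge \tfrac{C+r}{2}\right] \ge 1 - \exp\!\left(-\frac{(r/2)^2}{2\sigma^2}\right) = 1 - \exp\!\left(-\frac{r^2}{D}\right).
\]
Combining the two displayed inequalities yields the claim.

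I do not expect a genuine obstacle: the argument is purely a bookkeeping merger of the two preceding lemmas. The only point requiring care is the choice of $C$, which must simultaneously absorb the additive shift $\E[\norm{H}_\infty]$ arising from the non-centered location of the random variable $\norm{H}_\infty$ and guarantee that the Borell--TIS hypothesis $s > \E[\norm{H}_\infty]$ holds for \emph{all} $r > 0$. Taking $C = 2\E[\norm{H}_\infty]$ achieves both at once and cleanly produces the factor $r^2/D$ in the exponent.
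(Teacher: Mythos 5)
Your proposal is correct and is essentially identical to the paper's own argument: the paper proves the corollary precisely by combining Lemma~\ref{lem:ball_estimate} (applied at radius $C+r$) with the Borell--TIS small-ball estimate from the proof of Lemma~\ref{lem:centered_ball_estimate}, using the same constants $C = 2\E\left[\norm{H}_\infty\right]$ and $D = 8\sup_{(t,x)}\Var[H(t,x)]$. Your bookkeeping of the shift (so that $\tfrac{C+r}{2} - \E[\norm{H}_\infty] = r/2$, giving the exponent $r^2/D$) matches the paper's computation exactly.
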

\rmk{\label{rmk:symplectic_cameron_martin}
    Note that in the proof of Lemma~\ref{lem:ball_estimate}, we use the Cameron-Martin theorem for Gaussian measures on Banach spaces.
    At first glance, one might hope that there exists a similar theorem --- a kind of \textit{symplectic Cameron-Martin theorem} --- for $\muGP^{\mathcal{D}}$.
    This would be a statement as follows:
    There exists a constant $C > 0$ such that for any $\eps > 0$, we have that
    \[\muGP^{\mathcal{D}}(B_{\Hof}(\phi,\eps)) \ge \exp(-C \norm{\phi}_{?}^{2}) \muGP^{\mathcal{D}}(B_{\Hof}(\id,\eps)),\]
    where $\norm{\cdot}_{?}$ is either the Hofer norm or $d_{C^0}(\cdot,\id_M)$.
    However, not only can we not prove such a theorem, the statement is in fact false.
 
    First consider the $d_{C^0}$-case.
    Recall that $d_{C^0}(\phi, \id) = \sup_{x \in M} d(\phi(x), x)$ is the $C^0$-distance to the identity.
    We can consider for example the torus $\T^2$ with the standard symplectic form $\omega = dx \wedge dy$.
    Then there is a Hamiltonian diffeomorphism $\phi$ supported in an arbitrarily small neighborhood of the non-displacable curve $S^1 \times \{0\} \subset \T^2$
    that has arbitrarily large Hofer norm, see~\cite{polterovich-1998}.
    However, the $C^0$-distance to the identity is bounded by the diameter of $S^1 \times \{0\}$ (which is $1$ up to rescaling of the torus) plus a small error term 
    related to the size of the support of $\phi$.
    Let $\phi_a$ be such a Hamiltonian diffeomorphism with $\norm{\phi_a}_{\Hof} > a$.
    We also have $d_{C^0}(\phi_a, \id) \leq 1 + \eps$ for some $\eps > 0$ independent of $a$.
    Now, thus, if we had
    \[\muGP(B_\delta(\phi_a)) \geq \exp(C \norm{\phi_a}_{C^0} )\muGP(B_\delta(\id)) \geq C' \muGP(B_\delta(\id)),\]
    we would have a universal lower bound on the measure of the Hofer ball around $\phi_a$.
    This is not only a contradiction to Theorem~\ref{thm:expected-hofer-norm-finite}, but also to the fact that $\muGP$ is a probability measure since these Hofer balls are all disjoint 
    for small $\delta > 0$.

    One might expect that this is due to the fact that the $C^0$-distance does not see the Hamiltonian nature of $\phi_a$.
    However, even the Hofer norm does not admit such an estimate to be made.
    In fact, one might assume that $M$ is such a manifold that admits a bi-Lipschitz embedding of $\Reals^\infty$ into $\Ham(M,\omega)$.
    Such an embedding is for example given in~\cite{usher-2013} for some closed symplectic manifolds.
    Then, pick $\phi_1,\dots \in \Ham(M,\omega)$
    such that $\norm{\phi_i}_{\Hof} = r$ for some $r > 0$ and $d_{\Hof}(\phi_i, \phi_j) \geq \eps$ for all $i \not= j$ and some fixed $0 < \eps < \frac{r}{4}$.
    Now assume that we had an estimate of the form 
    \[ \muGP^{\mathcal{D}}(\phi U)\geq \exp(C \norm{\phi}_{\Hof}^2) \muGP^{\mathcal{D}}(U). \]
    Then, take the ball of radius $\frac{\eps}{2}$ around the identity.
    Clearly, the measure of this ball under $\muGP^{\mathcal{D}}$ is positive, since it has full support.
    An estimate of the form above would now imply that the measure of all $B_{\Hof}(\phi_i, \frac{\eps}{2})$
    under $\muGP^{\mathcal{D}}$
    is bound from below by $\exp(-Cr^2) > 0$.
    Since all of these balls are disjoint and there are infinitely many of them, this is clearly a contradiction.
    Thus, no such estimate (in terms of the Hofer norm) is possible.
    This also illustrates why the RKHS norm has to show up in the estimate of Lemma~\ref{lem:ball_estimate}:
    This norm rules out counterexamples like the one above, since it behaves qualitatively different from the $\infty$- or $\osc$-norms.
    For example, the unit ball with respect to the RKHS norm is compact in $C^\infty_0$ equipped with the $\infty$-norm (!), see~\cite[Proposition 2.6.9]{gine-nickl-2016}.
}
There is an interesting follow-up to Remark~\ref{rmk:symplectic_cameron_martin}:
It tells us something about those Hamiltonian functions which generate infinite-dimensional
quasi-flats (i.e.\ quasi-isometric embeddings of $(\Reals^{\infty},d_{\infty})$ into $(\Ham(M,\omega),d_{\Hof})$).
Namely, it follows
that the RKHS norm of such Hamiltonians must be unbounded.
Notice that this is true not only for some family 
of Hamiltonian functions 
that generate an infinite-dimensional quasi-flat,
but for any family of Hamiltonian functions that generate Hamiltonian diffeomorphisms
forming an infinite-dimensional quasi-flat in $\Ham(M,\omega)$.
The following theorem makes this precise:
\begin{theorem}\label{thm:quasi-flats-RKHS-norm}
    Let $\mathcal{D}$ be a centered \ldd .
    Further, let $\{\phi_n\}_{n \in \Nats} \subset \Ham(M,\omega)$ be a sequence of Hamiltonian diffeomorphisms such that
    there exist $\eps,\delta > 0$ such that $\norm{\phi_n}_{\Hof} \leq \eps$ for all $n \in \Nats$
    and $d_{\Hof}(\phi_n,\phi_m) \geq 2\delta$ for all $n \neq m$.
    Then,
    \begin{equation*}
        \lim_{n \to \infty} \norm{\phi_n}_{\mathcal{D}} = \infty.
    \end{equation*}
    In particular, for any $\{H_n\}_{n \in \Nats} \subset C^\infty_0$ such that $\phi_{H_n}^1 = \phi_n$ for all $n \in \Nats$, we have
    \begin{equation*}
        \lim_{n \to \infty} \norm{H_n}_{\Hil} = \infty.
    \end{equation*}
\end{theorem}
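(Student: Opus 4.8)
The plan is to exploit the ball estimate of Lemma~\ref{lem:ball_estimate} together with the fact that $\muGP^{\mathcal{D}}$ is a probability measure, which forces the measures of a pairwise-disjoint family of Hofer balls to be summable. First I would observe that the separation hypothesis $d_{\Hof}(\phi_n,\phi_m)\ge 2\delta$ makes the balls $B_{\Hof}(\phi_n,\delta)$ pairwise disjoint: if some $\psi$ lay in both $B_{\Hof}(\phi_n,\delta)$ and $B_{\Hof}(\phi_m,\delta)$, the triangle inequality would give $d_{\Hof}(\phi_n,\phi_m)<2\delta$, a contradiction. These balls are Hofer-open, hence measurable by Theorem~\ref{thm:hofer-borel}. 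Since $\muGP^{\mathcal{D}}$ is a probability measure, disjointness yields $\sum_{n}\muGP^{\mathcal{D}}(B_{\Hof}(\phi_n,\delta))\le 1$, and in particular $\muGP^{\mathcal{D}}(B_{\Hof}(\phi_n,\delta))\to 0$ as $n\to\infty$.

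Next I would apply Lemma~\ref{lem:ball_estimate} with $r=\delta$ to each $\phi_n$, giving
\[
\muGP^{\mathcal{D}}(B_{\Hof}(\phi_n,\delta)) \ge \exp\!\left(-\tfrac12\norm{\phi_n}_{\mathcal{D}}^2\right)\cdot p_\delta,
\]
where $p_\delta=\muGPH^{\mathcal{D}}(\{G\in C^\infty_0 \mid \norm{G}_\infty<\delta/2\})>0$ is a constant independent of $n$. Combining this with the previous step gives $\exp(-\tfrac12\norm{\phi_n}_{\mathcal{D}}^2)\le p_\delta^{-1}\muGP^{\mathcal{D}}(B_{\Hof}(\phi_n,\delta))\to 0$, so $\norm{\phi_n}_{\mathcal{D}}\to\infty$. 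For the final assertion I would use that the $\mathcal{D}$-complexity is defined as an infimum: for any $H_n\in C^\infty_0$ with $\phi_{H_n}^1=\phi_n$ one has $\norm{\phi_n}_{\mathcal{D}}\le\norm{H_n}_{\Hil}$ (interpreting $\norm{H_n}_{\Hil}=\infty$ when $H_n\notin\Hil$), whence $\norm{H_n}_{\Hil}\to\infty$.

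The argument is short because Lemma~\ref{lem:ball_estimate} already packages the Cameron--Martin input; the only thing to watch is that $p_\delta$ be a genuinely positive constant not depending on $n$ --- which it is, being the small-ball probability of a fixed centered Gaussian measure --- and that it is disjointness, rather than the bound $\norm{\phi_n}_{\Hof}\le\eps$, that drives the conclusion, so the hypothesis $\norm{\phi_n}_{\Hof}\le\eps$ is in fact not needed here. The main conceptual point, rather than any technical obstacle, is recognizing that an infinite separated family produces infinitely many disjoint balls whose measures must decay, and that the only $n$-dependent quantity in the lower bound is the complexity $\norm{\phi_n}_{\mathcal{D}}$.
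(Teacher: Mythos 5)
Your proof is correct and follows essentially the same route as the paper: both arguments combine the disjointness of the balls $B_{\Hof}(\phi_n,\delta)$, the lower bound of Lemma~\ref{lem:ball_estimate} with the $n$-independent constant $p_\delta>0$, and the fact that $\muGP^{\mathcal{D}}$ is a probability measure (the paper phrases it as a contradiction for a subsequence with bounded complexity, you phrase it directly via summability of the ball measures, which is the same argument). Your side remark is also accurate: the paper's proof likewise never uses the hypothesis $\norm{\phi_n}_{\Hof}\le\eps$, only the $2\delta$-separation.
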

\begin{proof}
    Assume the contrary, i.e.\
    there is a subsequence of $\{\phi_{n_k}\}_{k \in \Nats}$, for 
    which $\norm{\phi_{n_k}}_{\mathcal{D}} \leq C$ for some $C > 0$ and all $k \in \Nats$.
    Then, by Lemma~\ref{lem:ball_estimate}, we have that
    \begin{align*}
        \muGP^{\mathcal{D}}\left(\bigcup_{k \in \Nats} B_{\Hof}\left(\phi_{n_k}, \delta\right)\right) 
        &= \sum_{k \in \Nats} \muGP^{\mathcal{D}}\left(B_{\Hof}\left(\phi_{n_k}, \delta\right)\right) \\
        &\ge \sum_{k \in \Nats}
        \exp\left(-\frac{1}{2} \norm{\phi_{n_k}}_{\mathcal{D}}^{2}\right) \underbrace{p_\delta}_{>0} 
        \ge \sum_{k \in \Nats}
        \exp\left(-\frac{1}{2}C^{2}\right) p_\delta = \infty,
    \end{align*}
    where we use the fact that the balls $B_{\Hof}(\phi_{n_k},\delta)$ are disjoint by assumption in the first line.
    Clearly, this is a contradiction to the fact that $\muGP^{\mathcal{D}}$ is a probability measure.
    The second part of the theorem follows directly from the definition of $\norm{\cdot}_{\mathcal{D}}$.
\end{proof}
Note that for some \ldds\ the explicit computation of the RKHS norm is possible. 
Indeed, Remark~\ref{rmk:D2-RKHS} outlines a particularly simple case.
Using this, we can obtain Theorem~\ref{thm:smoothness-quasi-flats}, which notably makes no mention of anything probabilistic.
\begin{proof}[Proof of Theorem~\ref{thm:smoothness-quasi-flats}.]
    This theorem really is a corollary of a combination of previous results.   
    First note that by Theorem~\ref{thm:existence-of-ldd},
    there is a \ldd\ $\mathcal{D}_2$ that is centered, time-symmetric, periodic, periodically exhaustive, and frequency unbiased.
    Furthermore, we have computed the RKHS norm of the Gaussian process $H$ defined in~\eqref{eq:GP_H} with respect to $\mathcal{D}_2$ 
    explicitly in Remark~\ref{rmk:D2-RKHS}.
    Combining this explicit formula for $\norm{\cdot}_{\Hil}$ with Theorem~\ref{thm:quasi-flats-RKHS-norm} 
    yields the desired result.
\end{proof}
\rmk{
    With a little more effort, one could prove Theorem~\ref{thm:smoothness-quasi-flats}
    with any super-polynomial (in $k$) weights instead of $\exp\left(\eps \lambda_k \right)$.
    However, since this is not allowed in the current framework of \ldds\ (see Definition~\ref{def:ldd}), we stick to the exponential weights.
}

To conclude this section, we prove Theorem~\ref{thm:inversion-invariance}, which states that the measure $\muGP^{\mathcal{D}}$ is invariant under inversion if $\mathcal{D}$ 
is time-symmetric and centered.
\begin{proof}[Proof of Theorem~\ref{thm:inversion-invariance}.]
    Let $\mathcal{D}$ be a time-symmetric and centered \ldd\ and let us consider an arbitrary $U \subset \Ham(M,\omega)$ in the Borel $\sigma$-field generated by the Hofer topology.
    We need to show that $\muGP^{\mathcal{D}}(U) =\muGP^{\mathcal{D}}(U^{-1})$ where $ U^{-1} \coloneqq \left\{\phi^{-1}\mid \phi \in U\right\}$.
    First, we define the set $\mathcal{F}_U \coloneqq \left\{F \in C^\infty_0 \vert \phi_F^1 \in U\right\}$, of 
    all Hamiltonian functions $F$ such that the time-1 flow $\phi_F^1$ is in $U$.
    Let $F \in \mathcal{F}_U$, then by Lemma~\ref{lem:flows_composition_and_inversion}, we have that $\hat{F}(t,x) = -F(1-t,x)$ has time-$1$ flow $\phi_{\hat{F}}^1 = \phi_F^{-1}$.
    Thus, $F \in \mathcal{F}_{U}$ implies $\hat{F} \in \mathcal{F}_{U^{-1}}$.
    Conversely, if $F \in \mathcal{F}_{U^{-1}}$, then by the same reasoning, we have that $\hat{F} \in \mathcal{F}_{U}$.
    Since $-\hat{F}(1-t,x) = F(t,x)$, we have that $\hat{F} \in \mathcal{F}_{U}$ if and only if $F \in \mathcal{F}_{U^{-1}}$.
    Thus, it suffices to show that $\muGPH^{\mathcal{D}}(\mathcal{F}_U) = \muGPH^{\mathcal{D}}(\mathcal{F}_{U^{-1}})$ to obtain the desired result.
    By construction, we have that $\muGPH^{\mathcal{D}}$ is the law of the Gaussian process $H$ defined in~\eqref{eq:GP_H}.
    We make the following observation, which completes the proof: 
    \begin{align*}
        \muGPH^{\mathcal{D}}(\mathcal{F}_U) &= \Prob[H \in \mathcal{F}_U] = \Prob[\hat{H} \in \mathcal{F}_{U^{-1}}] 
        = \Prob\left[\left(t,x \mapsto -\sum_{n \ge 1} w_n \cdot Z_n(1-t) \cdot e_n(x)\right) \in \mathcal{F}_{U^{-1}}\right] \\
        &= \Prob\left[\left(t,x \mapsto -\sum_{n \ge 1} w_n \cdot Z_n(t) \cdot e_n(x)\right) \in \mathcal{F}_{U^{-1}}\right] \\
        &= \Prob\left[\left(t,x \mapsto \sum_{n \ge 1} w_n \cdot Z_n(t) \cdot e_n(x)\right) \in \mathcal{F}_{U^{-1}}\right] = \muGPH^{\mathcal{D}}(\mathcal{F}_{U^{-1}}),
    \end{align*}
    where we use the $Z_n(\cdot)$ and $Z_n(1-\cdot)$ have the same law because $\mathcal{D}$ is time-symmetric in the second line and 
    that $Z_n$ and $-Z_n$ have the same law since $\mathcal{D}$ is centered in the third line.
\end{proof}
\section{Random walks on the Hamiltonian diffeomorphism group}\label{sec:random-walk}
We now wish to prove Theorem~\ref{thm:random-walk-law} and Theorem~\ref{thm:random-walk-filling}.
Recall that if $\mathcal{D}$ is an autonomous \ldd, 
then the associated random walk on the Hamiltonian diffeomorphism group is constructed
as follows:
Let $\phi_0,\phi_1,\ldots$ be independent and identically distributed $\Aut(M,\omega)$-valued random variables with law $\muAut^{\mathcal{D}}$.
Then the random walk $\Phi_n \coloneqq \phi_n \circ \cdots \circ \phi_1$ is a $\Ham(M,\omega)$-valued random variable for any $n \in \Nats$.
\begin{proof}[Proof of Theorem~\ref{thm:random-walk-law}]
    Let $n \in \Nats$ be fixed.
    Now let $\mathcal{D}^{(1)},\dots,\mathcal{D}^{(n)}$ be independent copies of $\mathcal{D}$,
    by this we mean that the coefficients $Z_k^{(i)}$ of $\mathcal{D}^{(i)}$ and $Z_k^{(j)}$ of $\mathcal{D}^{(j)}$ are independent and identically distributed 
    for all $k \ge 1$ if $i \neq j$.
    In particular, 
    we assume that all $Z_k^{(i)}$ are defined on the same probability space $(\Omega,\mathcal{F},\Prob)$.
    Recall that since $\mathcal{D}$ is autonomous, $Z_k^{(i)}$ is almost-surely constant for all $k \ge 1$ and $i \in \{1,\ldots,n\}$.
    Thus, we treat them as real-valued random variables without loss of generality.
    Now, 
    we notice that 
    if we 
    set 
    \begin{equation*}
        H^{(i)}(t,x) = \sum _{n \ge 1} w_n \cdot Z_n^{(i)} \cdot e_n(x).
    \end{equation*}
    based on~\eqref{eq:GP_H},
    then $\Phi_n$ and $\phi_{H^{(n)}}^1 \circ \cdots \circ \phi_{H^{(1)}}^1$ have the same law.
    Now by Lemma~\ref{lem:composition_of_autonomous_flows}, 
    we can also write down a Hamiltonian function that generates $\phi_{H^{(n)}}^1 \circ \cdots \circ \phi_{H^{(1)}}^1$
    as its time-$1$ flow.
    Let $\beta: \Reals \to \Reals_{\geq 0}$
    be a smooth bump function such that $\supp \beta \Subset (0,1)$ and $\int_0^1 \beta(t) dt = 1$.
    Then,
    the Hamiltonian function
    \begin{equation*}
        H^{\text{all}}(t,x) = 
        \sum_{i=1}^n n\cdot\beta(nt-i+1) \cdot H^{(i)}(x),
    \end{equation*}
    satisfies $\phi_{H^{(n)}}^1 \circ \cdots \circ \phi_{H^{(1)}}^1 = \phi_{H^{\text{all}}}^1$.
    We are now almost done.
    Note that 
    \begin{align*}
        H^{\text{all}}(t,x) &= 
        \sum_{i=1}^n n\cdot\beta(nt-i+1) \cdot \left(\sum _{k \ge 1} w_k \cdot Z_k^{(i)} \cdot e_k(x)\right) \\
        &= \sum _{k \ge 1} w_k \cdot \left( \sum_{i=1}^n n\cdot\beta(nt-i+1) \cdot Z_k^{(i)} \right)\cdot e_k(x),
    \end{align*}
    where we use the rapid decay of the weights $w_n$ to exchange the order of summation.
    Thus, if we set
    \begin{equation*}
        \tilde{Z}_k(t) = \sum_{i=1}^n n\cdot\beta(nt-i+1) \cdot Z_k^{(i)},
    \end{equation*}
    for $k \ge 1$ we obtain a sequence of Gaussian processes $\tilde{Z}_k$ on $[0,1]$.
    Let $\mathcal{D}_n$ be the \ldd\ defined by replacing the coefficient processes of $\mathcal{D}$ 
    with $\tilde{Z}_k$.
    Note that we have 
    \begin{equation*}
        H^{\text{all}}(t,x) = \sum _{k \ge 1} w_k \cdot \tilde{Z}_k \cdot e_k(x) = H(t,x),
    \end{equation*}
    where $H$ is the Gaussian process~\eqref{eq:GP_H} defined with respect to $\mathcal{D}_n$.
    Thus, $\phi_H^1$ and $\Phi_n$ have the same law.
    By construction, this implies that the law of $\Phi_n$ is given by $\muGP^{\mathcal{D}_n}$.
\end{proof}
Under some light assumptions, namely that the initial \ldd\ $\mathcal{D}$ is centered,
we can obtain a symmetry condition for the random walk $\Phi_n$.
This is formalized in Theorem~\ref{thm:random-walk-symmetric},
which we will prove next.
\begin{proof}[Proof of Theorem~\ref{thm:random-walk-symmetric}]
    We reuse the notation of the previous proof.
    Note that if $\mathcal{D}$ is centered,
    then $Z_k^{(i)}$ is a centered Gaussian random variable for all $k \ge 1$ and $i \in \{1,\ldots,n\}$.
    Thus, 
    \begin{equation*}
        \E\left[\tilde{Z}_k(t)\right] = \sum_{i=1}^n n\cdot\beta(nt-i+1) \cdot \E\left[Z_k^{(i)}\right] = 0,
    \end{equation*}
    for any $k \ge 1$, which implies that $\mathcal{D}_n$ is centered.
    To show symmetry, we must assume 
    that the bump function $\beta$ is symmetric around $\frac{1}{2}$, i.e.\ $\beta(\frac{1}{2}+t) = \beta(\frac{1}{2}-t)$ for all $t \in \Reals$.
    Then, we have
    \begin{align*}
        \tilde{Z}_k(1-t) &= \sum_{i=1}^{n} n\cdot\beta(n(1-t)-i+1) \cdot Z_k^{(i)} \\
        &= \sum_{i=1}^{n} n\cdot\beta\left(\frac{1}{2}+\frac{1}{2}-i-n (t-1)\right) \cdot Z_k^{(i)} 
        = \sum_{i=1}^{n} n\cdot\beta(i+n (t-1)) \cdot Z_k^{(i)}  \\
        &= \sum_{j=1}^{n} n\cdot\beta((n-j+1)+n (t-1)) \cdot Z_k^{(n-j+1)} \\
        &= \sum_{j=1}^{n} n\cdot\beta(n t-j+1) \cdot Z_k^{(n-j+1)},
    \end{align*}
    since all $Z_k^{(i)}$ are independent and identically distributed, 
    this implies that $\tilde{Z}_k(t)$ and $\tilde{Z}_k(1-t)$ have the same law. 
    Thus, we have that $\mathcal{D}_n$ is time-symmetric.
    It follows by Theorem~\ref{thm:inversion-invariance}, 
    that the law of $\Phi_n$ is invariant under inversion.
\end{proof}
We will conclude this section by proving Theorem~\ref{thm:random-walk-filling},
which states that any Hofer ball has a positive probability of being hit by the random walk 
if $\mathcal{D}$ is autonomously exhaustive.
\begin{proof}[Proof of Theorem~\ref{thm:random-walk-filling}]
    Let $\phi \in \Ham(M,\omega)$ and $\eps > 0$.
    Furthermore, let $n \geq \norm{\phi}_{\Aut}$ be fixed.
    Then there are $\psi_1,\dots,\psi_n \in \Aut(M,\omega)$ such that $\phi = \psi_n\circ \cdots \circ \psi_1$.
    Let $\tilde{\psi}_1, \dots, \tilde{\psi}_n$ be any collection of Hamiltonian diffeomorphisms
    with $d_{\Hof}(\psi_i,\tilde{\psi}_i) < \frac{\eps}{n+1}$ for all $i \in \{1,\ldots,n\}$.
    We can now make the following observation:
    \begin{align*}
        d_{\Hof}(\psi_n\cdots\psi_1,\tilde{\psi}_n\cdots\tilde{\psi}_1) &= 
        d_{\Hof}(\tilde{\psi}_n^{-1}\psi_n\cdots\psi_1,\tilde{\psi}_{n-1}\cdots\tilde{\psi}_1) \\
        &= \norm{\tilde{\psi}_n^{-1}\psi_n\psi_{n-1}\cdots\psi_1\tilde{\psi}^{-1}_{1}\cdots\tilde{\psi}_{n-1}^{-1}}_{\Hof}
        \\ &\leq \norm{\tilde{\psi}_n^{-1}\psi_n}_{\Hof} + \norm{\psi_{n-1}\cdots\psi_1\tilde{\psi}^{-1}_{1}\cdots\tilde{\psi}_{n-1}^{-1}}_{\Hof} \\
        &= d_{\Hof}(\psi_n,\tilde{\psi}_n) + d_{\Hof}(\psi_{n-1}\cdots\psi_1,\tilde{\psi}_{n-1}\cdots\tilde{\psi}_1).
    \end{align*}
    Thus, $d_{\Hof}(\psi_n\cdots\psi_1,\tilde{\psi}_n\cdots\tilde{\psi}_1) < \eps$.
    It follows that 
    \begin{align*}
        \Prob\left[ \Phi_n \in B_{\Hof}(\phi,\eps)\right] &\geq
        \prod_{k=1}^n \Prob\left[ \phi_k \in B_{\Hof}\left(\phi_k,\frac{\eps}{n+1}\right)\right], 
    \end{align*}
    where we use the independence of $\phi_1,\dots,\phi_n$.
    Since $\mathcal{D}$ is autonomously exhaustive, 
    we have that $\Prob\left[ \phi_k \in B_{\Hof}\left(\phi_k,\frac{\eps}{n+1}\right)\right] > 0$
    for any $k \in \{1,\ldots,n\}$.
    Thus, $\Prob\left[ \Phi_n \in B_{\Hof}(\phi,\eps)\right] > 0$.
\end{proof}
\rmk{Note that for any $x \in M$, the random walk $(\Phi_n(x))_{n \in \Nats}$ is a point process or random walk on the manifold $M$.
Thus, the random walk on $\Ham(M,\omega)$ induces a random walk on $M$, by evaluation at the point $x$.
It would be interesting to study the properties of this induced random walk on $M$, and how they relate to the properties of the random walk on $\Ham(M,\omega)$.} %
\section{Limiting behavior}
In this section, we will prove several results on the limiting behavior of the measures $\muGP^{\mathcal{D}}$ as the regularity $\reg$ of the \ldd\ $\mathcal{D}$
approaches $0$ or $\infty$.
We will first study the limit $\reg \to \infty$,
as this case is vastly simpler.
The following lemma directly implies Theorem~\ref{thm:limit-reg-infty}, by the construction of $\muGP^{\mathcal{D}}$.
\begin{lemma}
    Let $\mathcal{D}$ be a centered \ldd. 
    As $\reg \to \infty$, we have $\muGPH^{\mathcal{D}_\reg} \weakto \delta_{0}$.
\end{lemma}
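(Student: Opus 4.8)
The plan is to derive the weak convergence from convergence in probability of the random Hamiltonian to the zero function. Write $H^{(\reg)}$ for the Gaussian process~\eqref{eq:GP_H} associated to the rescaled datum $\mathcal{D}_\reg$, so that $\muGPH^{\mathcal{D}_\reg}$ is its law on $C^\infty_0$. Since the $C^0$-topology on $C^\infty_0$ is metrized by $\norm{\cdot}_\infty$, the measure $\muGPH^{\mathcal{D}_\reg}$ converges weakly to $\delta_0$ as soon as $H^{(\reg)} \to 0$ in probability, that is, $\Prob[\norm{H^{(\reg)}}_\infty \ge \eps] \to 0$ as $\reg \to \infty$ for every $\eps > 0$. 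This is the standard fact that convergence in probability to a constant implies weak convergence to the Dirac mass at that constant; concretely, for a bounded continuous $f$ one uses continuity of $f$ at $0$ to bound $\abs{\E[f(H^{(\reg)})] - f(0)}$ by a small quantity plus $2\norm{f}_\infty \Prob[\norm{H^{(\reg)}}_\infty \ge \delta]$, and then lets $\reg \to \infty$. So it suffices to prove the convergence in probability.

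By Markov's inequality $\Prob[\norm{H^{(\reg)}}_\infty \ge \eps] \le \eps^{-1}\E[\norm{H^{(\reg)}}_\infty]$, so the task reduces to showing $\E[\norm{H^{(\reg)}}_\infty] \to 0$. For this I would reuse the termwise estimate already employed in the proof of Lemma~\ref{lem:expected-osc-norm}. Bounding the series~\eqref{eq:GP_H} termwise in absolute value and taking expectations gives
\[
\E\bigl[\norm{H^{(\reg)}}_\infty\bigr] \le \sum_{n \ge 1} w_n\, \E[\norm{Z_n}_\infty]\, \norm{e_n}_\infty \le D\,C'' \sum_{n \ge 1} \exp\Bigl(-\tfrac{1}{2}\lambda_n \reg\Bigr)\, \lambda_n^{\dim M/2},
\]
where $w_n = \exp(-\tfrac{1}{2}\lambda_n \reg)$, the constant $D$ is the uniform bound on $\E[\norm{Z_n}_\infty]$ from Definition~\ref{def:ldd}, and $C''$ is the constant in the eigenfunction estimate~\eqref{eq:C0-estimate-eigenfunctions}.

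The heart of the argument is then to show that the right-hand series tends to $0$ as $\reg \to \infty$, which is the one step I expect to require genuine care: it is an interchange of the limit $\reg \to \infty$ with the infinite sum over $n$, justified by dominated convergence on the summation index. Fixing any $\reg_0 > 0$, for every $\reg \ge \reg_0$ the $n$-th summand is dominated by $\exp(-\tfrac{1}{2}\lambda_n \reg_0)\lambda_n^{\dim M/2}$, and this dominating series converges by Weyl's law~\cite{weyl-1911}, since the $\lambda_n$ grow polynomially in $n$ while the exponential factor decays. As $\lambda_n \ge \lambda_1 > 0$ for all $n \ge 1$, each summand tends to $0$ as $\reg \to \infty$, so dominated convergence yields $\sum_{n \ge 1}\exp(-\tfrac{1}{2}\lambda_n \reg)\lambda_n^{\dim M/2} \to 0$. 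Therefore $\E[\norm{H^{(\reg)}}_\infty] \to 0$, the convergence in probability follows, and hence $\muGPH^{\mathcal{D}_\reg} \weakto \delta_0$. I note that centeredness of $\mathcal{D}$ is not actually needed for the displayed bound — the uniform control on $\E[\norm{Z_n}_\infty]$ suffices — but it is retained here to align with the hypotheses of Theorem~\ref{thm:limit-reg-infty}.
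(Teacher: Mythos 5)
Your proof is correct, and it reaches the conclusion by a genuinely more elementary route than the paper. Both arguments hinge on the same core computation: bounding $\E[\norm{H^{(\reg)}}_\infty]$ termwise by $\sum_{n\ge 1} w_n^{(\reg)}\,\E[\norm{Z_n}_\infty]\,\norm{e_n}_\infty$, invoking the eigenfunction estimate~\eqref{eq:C0-estimate-eigenfunctions} and Weyl's law, and concluding that this expectation tends to $0$ as $\reg \to \infty$ (the paper cites the estimate from Lemma~\ref{lem:expected-osc-norm} and asserts the limit; you spell out the dominated-convergence justification, which is a welcome addition). Where you diverge is in converting this into weak convergence: the paper fixes an open set $U \ni 0$, applies the $\infty$-norm Borell--TIS inequality (Lemma~\ref{lem:Borell-TIS-infinity}) to get $\muGPH^{\mathcal{D}_\reg}(U) \ge 1 - \exp(-\eps^2/(8\sigma_\reg^2))$ with $\sigma_\reg^2 \to 0$, and then invokes the Portmanteau lemma; you instead use Markov's inequality to get convergence in probability of $H^{(\reg)}$ to $0$ and the standard fact that convergence in probability to a constant implies weak convergence of the laws to the Dirac mass. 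The paper's route buys a quantitative, Gaussian-type rate of concentration around the identity, consistent with the concentration-of-measure theme elsewhere in the text (e.g.\ Theorem~\ref{thm:hofer-norm-subgaussian}), but it leans on the centeredness hypothesis through Borell--TIS; your route only yields an $O(\eps^{-1}\E[\norm{H^{(\reg)}}_\infty])$ tail bound, which is all the statement needs, avoids the Gaussian concentration machinery entirely, and --- as you correctly observe --- shows that the centeredness assumption is not actually required, since the uniform bound $\E[\norm{Z_n}_\infty] < D$ from Definition~\ref{def:ldd} already controls the mean.
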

\begin{proof}
    Recall that 
    for any Borel probability measures $\{\mu_n\}_{n\in\Nats}, \mu$ on a metric space $X$, we have that $\mu_n \weakto \mu$ if and only if
    $\liminf_{n\to\infty} \mu_n(A) \geq \mu(A)$ for all open sets $A \subset X$.
    This is part of the Portmanteau lemma or Portmanteau theorem, see e.g.~\cite[Thorem 11.1.1]{dudley-2002}.
    We will now apply this to the situation at hand.
    Let $U \subset C^\infty_0$ be an open set in the $C^0$-topology.
    If $0 \notin U$, then $\delta_{0}(U) = 0$.
    Thus, we automatically get 
    \begin{equation*}
        \liminf_{\reg \to \infty} \muGPH^{\mathcal{D}_\reg}(U) \geq 0 = \delta_{0}(U).
    \end{equation*}
    Thus, let us assume that $0 \in U$.
    Since $U$ is open, there is an $\eps > 0$ such that $B_{\infty}(0,\eps) \subset U$.
    Denote by $H^{(\reg)}$ the Gaussian process associated to $\mathcal{D}_\reg$
    via~\eqref{eq:GP_H}.
    Without loss of generality, we can assume that all the $H^{(\reg)}$ are defined on the same probability space.
    Furthermore, we write $w^{(\reg)}_n = \exp(-\frac{1}{2} \reg \lambda_n)$
    for the respective weights.
    In the proof of Lemma~\ref{lem:expected-osc-norm},
    we showed that $\E[\norm{H^{(\reg)}}_{\infty}] < C' \cdot {\sum_{n=1}^\infty w^{(\reg)}_n \lambda_n^{\dim M}}$
    for some constant $C' > 0$ independent of $\reg$.
    Thus, as $\reg \to \infty$, we have $\E[\norm{H^{(\reg)}}_{\infty}] \to 0$.
    Let us assume that $\reg \gg 0$ is large enough such that $\E[\norm{H^{(\reg)}}_{\infty}] < \frac{\eps}{2}$.
    Then, by the Borell-TIS inequality for the $\infty$-norm (Lemma~\ref{lem:Borell-TIS-infinity}),
    we have 
    \begin{align*}
        \muGPH^{\mathcal{D}_\reg}(U) &\geq \muGPH^{\mathcal{D}_\reg}(B_{\infty}(0,\eps))
        = \Prob\left[\norm{H^{(\reg)}}_{\infty} < \eps\right] 
        \geq \Prob\left[\norm{H^{(\reg)}}_{\infty} < \frac{\eps}{2} + \E[\norm{H^{(\reg)}}_{\infty}] \right] 
        \\ &=1 - \Prob\left[\norm{H^{(\reg)}}_{\infty} \geq \frac{\eps}{2} + \E[\norm{H^{(\reg)}}_{\infty}] \right] 
        \\ &\geq1 - \exp\left(-\frac{\eps^2}{8\sigma_\reg^2}\right),
    \end{align*}
    where $\sigma_\reg^2 = \sup \{\Var[H^{(\reg)}(t,x)] \vert t \in [0,1], x \in M\}$.
    Recall that
    \begin{equation*}
    \Var[H^{(\reg)}(t,x)] = \sum_{\substack{n \geq 1 \\ m \geq 1}} w^{(\reg)}_n w^{(\reg)}_m \cdot \Cov\left[Z_n(t),  Z_m(t)\right] \cdot e_n(x_1) e_m(x_2).
    \end{equation*}
    It follows that $\sigma_\reg^2 \to 0$ as $\reg \to \infty$, due to the exponential decay of the weights $w^{(\reg)}_n$ as $\reg \to \infty$.
    Thus, we have $\liminf_{\reg \to \infty} \muGPH^{\mathcal{D}_\reg}(U) \geq 1 = \delta_{0}(U)$
    by the above estimate.
    Since $U$ was arbitrary, we conclude (by the Portmanteau lemma) that $\muGPH^{\mathcal{D}_\reg} \weakto \delta_{0}$ as $\reg \to \infty$.
\end{proof}

Next, we will study the limit $\reg \to 0$.
As stated in the introduction, this limit is vastly more complicated.
In particular, the limit depends heavily on the choice of \ldd.
We will start by studying the easier case, namely that of \ldds\ of $C^0$-limiting type.
\begin{lemma}\label{lem:GPH-limit-C0}
    Let $\mathcal{D}$ be a \ldd\ of $C^0$-limiting type.
    Then the measures $\muGPH^{\mathcal{D}_\reg}$ converge weakly to a measure 
    on $C^0([0,1] \times M)$ as $\reg \to 0$.
\end{lemma}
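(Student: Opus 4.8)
The plan is to realise all the measures $\muGPH^{\mathcal{D}_\reg}$ through a single coupling and to exhibit the limit explicitly. Since the family $\{Z_n\}_{n \ge 1}$ is fixed and only the weights $w_n^{(\reg)} = \exp(-\tfrac12 \lambda_n \reg)$ depend on $\reg$, I would define, on the common probability space, the processes
\[
H^{(\reg)}(t,x) = \sum_{n \ge 1} w_n^{(\reg)} \, Z_n(t) \, e_n(x), \qquad \reg \ge 0,
\]
where $w_n^{(0)} = 1$, so that $\muGPH^{\mathcal{D}_\reg}$ is the law of $H^{(\reg)}$ for $\reg > 0$ and $H^{(0)}$ is the candidate limit. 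Because $\mathcal{D}$ is weakly frequency unbiased, $Z_n$ and $\alpha_n Z_1$ are versions of each other, so $\E[\norm{Z_n}_\infty] = \abs{\alpha_n}\,\E[\norm{Z_1}_\infty]$ (the pathwise identity $\norm{\alpha_n Z_1}_\infty = \abs{\alpha_n}\norm{Z_1}_\infty$ combined with equality of laws).

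First I would check that $H^{(0)}$ is almost-surely continuous, so that its law $\muGPH^{\mathcal{D}_0}$ is a genuine Borel probability measure on $C^0([0,1]\times M)$. Using Tonelli this reduces to
\[
\E\Big[\sum_{n \ge 1} \norm{Z_n}_\infty \norm{e_n}_\infty\Big] = \E[\norm{Z_1}_\infty] \sum_{n \ge 1} \abs{\alpha_n}\,\norm{e_n}_\infty < \infty,
\]
which is precisely the $C^0$-limiting hypothesis. Hence the series defining $H^{(0)}$ converges absolutely and uniformly almost-surely, its sum is continuous, and (by the same orthogonality argument as in Lemma~\ref{lem:GP_H-normalized}) it is normalised. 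Borel measurability in the $C^0$-topology follows as in Lemma~\ref{lem:muGPH_Borel}.

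The heart of the argument is to show $\E[\norm{H^{(\reg)} - H^{(0)}}_\infty] \to 0$ as $\reg \to 0$. Since $0 \le 1 - w_n^{(\reg)} \le 1$, the triangle inequality and Tonelli give
\[
\E\big[\norm{H^{(\reg)} - H^{(0)}}_\infty\big] \le \sum_{n \ge 1} \big(1 - w_n^{(\reg)}\big)\, \E[\norm{Z_n}_\infty]\, \norm{e_n}_\infty = \E[\norm{Z_1}_\infty] \sum_{n \ge 1}\big(1 - w_n^{(\reg)}\big)\, \abs{\alpha_n}\, \norm{e_n}_\infty .
\]
For each fixed $n$ one has $1 - w_n^{(\reg)} \to 0$ as $\reg \to 0$, while the summand is dominated by the summable sequence $\abs{\alpha_n}\,\norm{e_n}_\infty$; the dominated convergence theorem (for series with respect to counting measure) then forces the right-hand side to $0$.

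Finally, this $L^1$-convergence of the sup-distance yields convergence in distribution: for any bounded Lipschitz $f : C^0([0,1]\times M) \to \Reals$ with Lipschitz constant $L$, the pointwise bound $\abs{f(H^{(\reg)}) - f(H^{(0)})} \le L\,\norm{H^{(\reg)} - H^{(0)}}_\infty$ gives $\abs{\E[f(H^{(\reg)})] - \E[f(H^{(0)})]} \le L\,\E[\norm{H^{(\reg)} - H^{(0)}}_\infty] \to 0$. By the Portmanteau theorem (see~\cite{dudley-2002}) this is exactly $\muGPH^{\mathcal{D}_\reg} \weakto \muGPH^{\mathcal{D}_0}$. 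I expect the only delicate points to be the two interchanges of expectation and infinite sum — justified, as in Lemmas~\ref{lem:GP_H-L2} and~\ref{lem:expected-osc-norm}, by Tonelli together with the almost-sure uniform convergence of the series — and the observation that the $C^0$-limiting summability is precisely what supplies a summable dominating sequence; no genuinely new obstacle arises beyond this bookkeeping.
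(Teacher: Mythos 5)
Your proposal is correct and follows essentially the same route as the paper: the same coupling of all $H^{(\reg)}$ on one probability space, the same candidate limit $\sum_{n\ge 1} Z_n(t)e_n(x)$, and the same use of the $C^0$-limiting summability $\sum_n \abs{\alpha_n}\norm{e_n}_\infty < \infty$ to control the tail. The only (cosmetic) difference is the final step: you deduce weak convergence from $\E[\norm{H^{(\reg)}-H^{(0)}}_\infty] \to 0$ via bounded Lipschitz test functions, whereas the paper deduces it from pathwise almost-sure uniform convergence; both are standard and your quantitative variant is equally valid.
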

\begin{proof}
    Let us again   
    denote by $H^{(\reg)}$ the Gaussian process associated to $\mathcal{D}_\reg$
    via~\eqref{eq:GP_H}.
    Then we have
    \begin{equation*}
        H^{(\reg)}(t,x) = \sum_{n=1}^\infty w^{(\reg)}_n Z_n(t) e_n(x).
    \end{equation*}
    We want to show that 
    \begin{equation}\label{eq:convergence-to-C0}
        \lim_{\reg \to 0} H^{(\reg)}(t,x) = \sum_{n=1}^\infty Z_n(t) e_n(x) 
    \end{equation}
    holds almost-surely.
    To show this, we notice that 
    \begin{align*}
       \E\left[\norm{t,x \mapsto Z_n(t)e_n(x)}_\infty \right]
       &\leq \E\left[\norm{Z_n}_\infty \cdot \norm{e_n}_\infty\right] = \E[\norm{Z_n}] \cdot \norm{e_n}_\infty \\
       &= \E[\norm{\alpha_n Z_1}_\infty] \cdot \norm{e_n}_\infty
       \leq \abs{\alpha_n} \cdot \E[\norm{Z_1}_\infty]\cdot \norm{e_n}_\infty.
    \end{align*}
    where $\alpha_n$ is set such that $Z_n$ and $\alpha_n Z_1$ have the same law.
    By the $C^0$-limiting type assumption, we have $\sum_{n=1}^\infty \abs{\alpha_n} \cdot \norm{e_n}_\infty < \infty$.
    Thus, we obtain that 
    \begin{align*}
        \E\left[\norm*{\sum_{n=1}^\infty Z_n(t) e_n(x)}_\infty\right] < \infty,
    \end{align*}
    and thus the series converges almost-surely (and uniformly).
    This implies, due to 
    the fact that $w_n^{(\reg)} \to 1$ as $\reg \to 0$ for all $n \geq 1$,
    that the convergence in~\eqref{eq:convergence-to-C0} holds almost-surely.
    With this step, we are almost done.
    It is a standard result, that almost-sure convergence of random variables implies convergence in distribution.
    Thus, we define the $C^0([0,1] \times M)$-valued random variable 
    \begin{align*}
        \hat{H}(t,x) = \sum_{n=1}^\infty Z_n(t) e_n(x),
    \end{align*}
    and let $\hat{\mu}$ be the law of $\hat{H}$.
    Then, if by abuse of notation we identify $\muGPH^{\mathcal{D}_\reg}$ with its push-forward to $C^0([0,1] \times M)$,
    we have $\muGPH^{\mathcal{D}_\reg} \weakto \hat{\mu}$ as $\reg \to 0$.
    This completes the proof.
\end{proof}
With this lemma in hand, the proof of Theorem~\ref{thm:limit-reg-0-C0} follows easily.
\begin{proof}[Proof of Thorem~\ref{thm:limit-reg-0-C0}]
    This is essentially an extension of the proof of Lemma~\ref{lem:GPH-limit-C0}.
    We reuse all notation.
    Then we can consider the 
    $\widehat{\Ham}{}^{\Hof}(M,\omega)$-valued random variables
    $\phi_{H^{(\reg)}}^1$.
    Note that these are of course actually valued in $\Ham(M,\omega)$,
    but we will need the completion for the limit.
    Recall that, by Lemma~\ref{lem:H_to_phi_H_Lipschitz},
    we have that
    \begin{equation*}
        d_{\Hof}(\phi_{H}^1,\phi_F^1) \leq 2 \norm{H-F}_\infty.
    \end{equation*}
    Thus, since we have shown that $H^{(\reg)} \to \hat{H}$ almost surely as $\reg \to 0$,
    we conclude that the random variables
    $\{\phi_{H^{(\reg)}}^1\}_{\reg > 0}$ form an almost-surely convergent sequence (in the $d_{\Hof}$-completion).
    Thus, we can define the $\widehat{\Ham}{}^{\Hof}(M,\omega)$-valued random variable
    \begin{equation*}
        \hat{\phi} = \lim_{\reg \to 0} \phi_{H^{(\reg)}}^1.
    \end{equation*}
    We again have almost-sure convergence, and thus convergence in distribution of the laws.
    Thus, if $\widehat{\muGP}^{\mathcal{D}_0}$ is the law of $\hat{\phi}$,
    then $\muGP^{\mathcal{D}_\reg} \weakto \widehat{\muGP}^{\mathcal{D}_0}$ as $\reg \to 0$.
\end{proof}
We will now study the $C^1$-limiting type case in more detail.
Note that while in the previous proof, 
we only obtained a measure on an abstract metric completion of $\Ham(M,\omega)$ in the limit, 
the limiting measure will now live on $\Hameo(M,\omega)$, which is a subspace of $\Homeo(M,\omega)$.
\begin{proof}[Proof of Theorem~\ref{thm:limit-reg-0-C1}]
    We will use the following comparison between the Hofer and $C^0$-norms for a Hamiltonian diffeomorphism $\phi$:
    \begin{equation*}
        d_{C^0}(\phi,\id_M) \leq C \cdot \sqrt{d_{\Hof}(\phi,\id_M)} \cdot \norm{D\phi}_{\infty},
    \end{equation*}
    where $C$ only depends on $(M,\omega)$ and the choice of Riemannian metric $g$.
    This result is due to Joksimović and Seyfaddini in~\cite{joksimovic-seyfaddini-2024}.
    Of course, we will choose $g$ as specified by the almost-complex structure $J$ 
    that is part of the \ldd\ $\mathcal{D}$.

    Let us again reuse all notation from the previous proofs.
    Now note that for any $\phi,\psi \in \Ham(M,\omega)$, we have
    \begin{equation*}
        d_{C^0}(\phi,\psi) = \max_{x \in M} d(\phi(x),\psi(x)) = \max_{y \in M}d(\phi(\phi^{-1}(y)),\psi(\phi^{-1}(y))) = d_{C^0}(\id_M, \phi\psi^{-1}).
    \end{equation*}
    Thus, we have that 
    \begin{equation*}
        d_{C^0}(\phi,\psi) \leq C \cdot \sqrt{d_{\Hof}(\phi,\psi)} \cdot \norm{D(\phi\psi^{-1})}_\infty.
    \end{equation*}
    We have already established that $\{\phi_{H^{(\reg)}}^1\}_{\reg > 0}$ 
    converges with respect to the Hofer metric as $\reg \to 0$.
    Since the $C^0$-metric is complete on $\Homeo(M,\omega)$,
    we simply need to show that $\norm{D(\phi_{H^{(\reg)}}^1 (\phi_{H^{(\reg')}}^1)^{-1})}_\infty$ 
    does not blow up as $\reg,\reg' \to 0$.

    It suffices to control $\norm{D(\phi_{H^{(\reg)}}^1 (\phi_{H^{(\reg')}}^1)^{-1})}_\infty$
    for a fixed $\reg > 0$ and $\reg' < \reg$ varying.
    We first 
    note that
    \begin{equation*}
        \norm{D(\phi_{H^{(\reg)}}^1 (\phi_{H^{(\reg')}}^1)^{-1})}_\infty \leq \norm{D\phi_{H^{(\reg)}}^1}_\infty \cdot \norm{D\phi_{H^{(\reg')}}^1}_\infty,
    \end{equation*}
    where we use the chain rule and that the operator norm is submultiplicative and invariant under inversion.

    We will now derive a general bound for $\norm{D\phi_F^1}_\infty$ for a Hamiltonian function $F \in C_0^\infty$.
    For any $ x\in M$, we have that $\phi^t_H(x)$ satisfies the ODE $\frac{d}{dt}\phi^t_H(x) = X_{F_t}(\phi^t_H(x))$
    with initial condition $\phi^0_H(x) = x$.
    Thus, if we denote by $D\phi^t_H(x)$ the differential of $\phi^t_H$, 
    we obtain that is satisfies the ODE $\frac{d}{dt}D\phi^t_H(x) = D X_{F_t}(\phi^t_H(x)) \circ D\phi^t_H(x)$
    with initial condition $D\phi^0_H(x) = \id_{T_x M}$.
    Now let $u(t) = \norm{D\phi^t_H(x)}_{op}$, where $\norm{\cdot}_{op}$ is the operator norm on $T_x M$ induced by the Riemannian metric $g(\cdot,\cdot) = \omega(\cdot,J\cdot)$.
    Then, we have
    $\frac{d}{dt}u(t) \leq \norm{D X_{F_t}(\phi^t_H(x))}_{op} \cdot u(t)$.
    Thus, by Grönwall's inequality, we have
    \begin{equation*}  
        u(t) \leq \exp\left(\int_0^t \norm{D X_{F_s}(\phi^s_H(x))}_{op} ds\right).
    \end{equation*}
    Note that $\norm{D\phi^t_H(x)}_{op} \leq u(t)$ for all $t \in [0,1]$.
    It follows, since $x \in M$ was arbitrary, that
    \begin{equation*}
        \norm{D\phi_H^t}_\infty \leq \exp\left( \int_0^t \norm{D X_{F_s}}_\infty ds \right).
    \end{equation*}
    Now, we should further note that $X_{F_t}(x) = J(x) \nabla F_t(x)$,
    where $\nabla$ is the Riemannian gradient with respect to the metric $g$.
    It follows that $D X_{F_s} = DJ \nabla F_s + J \nabla^2 F_s$,
    where $\nabla^2$ is the Riemannian Hessian with respect to the metric $g$.
    This implies that 
    \begin{equation*}
        \norm{D\phi_H^t}_\infty \leq \exp\left( \norm{DJ}_\infty \int_0^t \norm{\nabla F_s}_\infty ds + \norm{J}_\infty \int_0^t \norm{\nabla^2 F_s}_\infty ds \right).
    \end{equation*}
    While in general this is not a very useful bound,
    the fact that our Gaussian process Hamiltonian functions $H^{(\reg)}$ are built from 
    eigenfunctions of the Laplacian allows us to control these terms.

    In particular, note that 
    \begin{align*}
        \nabla H^{(\reg)}(t,x)&= \sum_{n=1}^\infty w^{(\reg)}_n Z_n(t) \nabla e_n (x)
    \end{align*}
    and 
    \begin{align*}
        \nabla^2 H^{(\reg)}(t,x)&= \sum_{n=1}^\infty w^{(\reg)}_n Z_n(t) \nabla^2 e_n (x).
    \end{align*}
    We will now appeal to two useful facts about eigenfunctions of the Laplacian:
    In~\cite{shi-xu-2010} the authors show that there is a constant $C > 0$ depending only on $(M,g)$ such that
    $\norm{\nabla e_n}_\infty \leq C \lambda_n \norm{e_n}_\infty$ for all $n \geq 1$.
    In~\cite{cheng-thalmaier-wang-2024} the authors extend this result to the Hessian, i.e.\ there is a constant $C' > 0$ depending only on $(M,g)$ such that $\norm{\nabla^2 e_n}_\infty \leq C' \lambda_n \norm{e_n}_\infty$ for all $n \geq 1$.
    Now, since $0 < w^{(\reg)}_n < 1$,
    we have that 
    \begin{align*}
        \norm{\nabla H^{(\reg)}(t,x)}_\infty &\leq \sum_{n=1}^\infty \abs{Z_n(t)} \cdot \norm{\nabla e_n}_\infty 
        \leq C \cdot \abs{Z_1(t)} \cdot \sum_{n=1}^\infty \abs{\alpha_n} \lambda_n \norm{e_n}_\infty < \infty,
    \end{align*}
    where we use the assumption that $\mathcal{D}$ is of $C^1$-limiting type at the very end.
    In the same way, we obtain
    \begin{align*}
        \norm{\nabla^2 H^{(\reg)}(t,x)}_\infty &\leq \sum_{n=1}^\infty \abs{Z_n(t)} \cdot \norm{\nabla^2 e_n}_\infty 
        \leq C' \cdot \abs{Z_1(t)} \cdot \sum_{n=1}^\infty \abs{\alpha_n} \lambda_n \norm{e_n}_\infty < \infty,
    \end{align*}
    where we again use the assumption that $\mathcal{D}$ is of $C^1$-limiting type at the very end.
    These inequalities imply, in particular, that $\norm{\nabla H^{(\reg)}(t,x)}_\infty < \infty$ and $\norm{\nabla^2 H^{(\reg)}(t,x)}_\infty < \infty$ almost-surely.
    This follows directly from the fact $Z_1$ is smooth almost surely and thus $\abs{Z_1(t)} < \infty$ almost-surely for all $t \in [0,1]$.
    Set $T \coloneqq \sum_{n=1}^\infty \abs{\alpha_n} \lambda_n \norm{e_n}_\infty < \infty$.
    Then, by combining the above estimates, we obtain that 
    \begin{align*}
        \norm{D \phi_{H^{(\reg)}}^1}_\infty &\leq \exp\left( \norm{DJ}_\infty \int_0^t  \norm{\nabla H^{(\reg)}(s,x)}_\infty ds + \norm{J}_\infty \int_0^t \norm{\nabla^2 H^{(\reg)}(s,x)}_\infty ds\right) \\
        &\leq \exp\left( \norm{DJ}_\infty \cdot C \cdot T \cdot \int_0^t \abs{Z_1(s)} ds + \norm{J}_\infty \cdot C' \cdot T \cdot \int_0^t \abs{Z_1(s)} ds\right) \\
        &<\infty
    \end{align*}
    holds almost-surely.
    In the last step we use the fact that $\int_0^t \abs{Z_1(s)} ds < \infty$ almost-surely,
    since the samples of $Z_1$ are smooth (and thus integrable) almost-surely.
    Since this bound does not depend on $\reg$,
    we conclude that $\norm{D(\phi_{H^{(\reg)}}^1 (\phi_{H^{(\reg')}}^1)^{-1})}_\infty$ 
    is bounded from above almost-surely.
    Thus, 
    the random variables $\{\phi_{H^{(\reg)}}^1\}_{\reg > 0}$ converge almost-surely with respect to $d_{C^0}$.
    In particular, they converge in law.
    Note that since the Hamiltonian functions $H^{(\reg)}$ also converge almost-surely, 
    this limit lies in $\Hameo(M,\omega)$.
    We can define $\overline{\muGP}^{\mathcal{D}_0}$ as the law of the $C^0$-limit.
    Then $\overline{\muGP}^{\mathcal{D}_0}$ is a measure on $\Hameo(M,\omega)$
    and $\muGP^{\mathcal{D}_\reg} \weakto \overline{\muGP}^{\mathcal{D}_0}$ as $\reg \to 0$.
\end{proof} %
\section{Simulations}\label{sec:simulations}
We conclude this paper with an informal note on computer simulations and what we can learn from them in this setting.
It is noteworthy that the properties of the measure $\muGPH^{\mathcal{D}}$ are amenable to study through computer simulation,
which is not often the case in symplectic geometry.
In the following, 
we will present the results of some numerical simulations 
of the behavior of Hamiltonian diffeomorphisms sampled from $\muGP^{\mathcal{D}}$.
The code for these simulations is available in the ancillary files of this preprint on arXiv.
\subsection{Setup}
For these simulations we always work on $\T^2$
endowed with the standard symplectic form and the standard complex structure.
We identify $\T^2$ with $[0,1]_{/\sim}^2$ with $0 \sim 1$ on both coordinates.
We choose the following eigenbasis 
of $L^2(\T^2)$:
Let $k,j \in \mathbb{N}_0$ such that $k+j \ge 1$.
Then $4\pi^2(k^2+j^2)$ is an eigenvalue of the Laplace-Beltrami operator on $\T^2$ with multiplicity $4$ (respectively $2$ if $j = 0$ or $k =0$). 
We choose the following orthonormal basis of eigenfunctions associated to this eigenvalue:
\begin{align*}
\{
&x,y \mapsto 2\cos(2\pi k x)\cdot\cos(2\pi j y),\\
&x,y \mapsto 2\cos(2\pi k x)\cdot\sin(2\pi j y),\\
&x,y \mapsto 2\sin(2\pi k x)\cdot\cos(2\pi j y),\\
&x,y \mapsto 2\sin(2\pi k x)\cdot\sin(2\pi j y)
\}.
\end{align*}
The ordering is in this case not important, since the Gaussian processes associated
to each eigenfunction will have the same law.
Given any $\reg > 0$,
we now consider the \ldd\ $\mathcal{D}_2$ from the proof of Theorem~\ref{thm:existence-of-ldd}.
To actually run computations
we need to truncate the series expansion in~\eqref{eq:GP_H}
at some point.
We choose to only include those eigenfunctions of the above form with $k,j \leq 25$.
Recall that the coefficient processes $Z_n$ are of the form 
\begin{equation*}
    Z_n(t) = X^{(n,1)}_0 + {\sqrt{2}}\sum_{k=1}^\infty \exp\left(-2\reg\pi^2k^2\right) \cdot \left( X_k^{(n,1)} \cos(2\pi k t) + X_k^{(n,2)} \sin(2\pi k t)\right),
\end{equation*}
where $X_0,X_1^{(1)},X_1^{(2)},X_2^{(1)},X_2^{(2)},\ldots$ are i.i.d.\ standard normal random variables.
Here we truncate the series at $k = 10$.
With this computational model,
drawing samples from the distribution $\muGP^{\mathcal{D}}$ or $\muGPH^{\mathcal{D}}$ 
corresponds to drawing a $52,500$-dimensional standard Gaussian vector.
Clearly, this is a very feasible computation.
Indeed, Figure~\ref{fig:simulation-1} was obtained using this method.
We will now disucss two phenomena that can be observed in the simulations.
\subsection{Diffusion}
The first phenomenon that we observe is a form of diffusion.
For this we start with some number of points $p_1, \ldots, p_m \in \T^2$
that are concentrated in a small region of the torus.
Then we apply $\phi_H^t$ for $H$ sampled from $\muGPH^{\mathcal{D}}$ 
to all of these points.
We observe in Figure~\ref{fig:simulation-2}
that these points quickly spread evenly throughout the torus.
This is exemplary of the behavior we observe in general in the simulations.
This can be interpreted as a form of diffusion, where the initial concentration of points is rapidly smoothed out over time.
We thus formulate the following conjecture:
\begin{conjecture}
    Let $p \in M$ be arbitrary and let $ev_p: \Ham(M,\omega) \to M$ be the evaluation map at $p$.
    Then for any $\eps > 0$,
    there exists a \ldd\ $\mathcal{D}$ such that the push-forward measure $(ev_p)_* \muGP^{\mathcal{D}}$ and the uniform measure on $M$
    are $\eps$-close in a suitable metric, e.g.\ the total variation or Wasserstein distance.
\end{conjecture}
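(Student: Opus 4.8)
The plan is to realize the evaluation measure $(ev_p)_*\muGP^{\mathcal{D}}$ as the law of a genuine Markov chain on $M$ and to invoke a convergence-to-equilibrium theorem, exploiting the random-walk machinery of Section~\ref{sec:random-walk}. First I would fix an autonomous, autonomously exhaustive, centered \ldd\ $\mathcal{D}$, which exists by Theorem~\ref{thm:existence-of-ldd}, together with the associated i.i.d.\ $\Aut(M,\omega)$-valued steps $\phi_1,\phi_2,\ldots$ of law $\muAut^{\mathcal{D}}$ and the random walk $\Phi_n = \phi_n \circ \cdots \circ \phi_1$. Setting $Y_0 = p$ and $Y_k = \phi_k(Y_{k-1})$, the induced process $(Y_n)_n$ on $M$ is a time-homogeneous Markov chain with transition kernel $P(q,A) = \Prob[\phi(q) \in A]$ for $\phi \sim \muAut^{\mathcal{D}}$. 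By Theorem~\ref{thm:random-walk-law} the law of $\Phi_n$ is $\muGP^{\mathcal{D}_n}$ for some \ldd\ $\mathcal{D}_n$, and pushing forward by $ev_p$ identifies $(ev_p)_*\muGP^{\mathcal{D}_n}$ with the law of $Y_n = \Phi_n(p)$. Thus it suffices to show that $Y_n$ converges to the uniform (normalized volume) measure $u$ on $M$ and then to take $\mathcal{D} = \mathcal{D}_n$ for $n$ large.

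The second step is to verify that $u$ is stationary for $P$, and this is where the symplectic structure enters cleanly: every $\phi \in \Aut(M,\omega) \subset \Ham(M,\omega)$ is a symplectomorphism and hence preserves $\omega^n$, so $u(\phi^{-1}(A)) = u(A)$ for each fixed $\phi$. Taking expectations gives $\int_M P(q,A)\,du(q) = \E[u(\phi^{-1}(A))] = u(A)$, so $u$ is stationary. I would also record the Feller property of $P$ (continuity of $q \mapsto P(q,\cdot)$ in the weak topology), which follows from continuity of the evaluation map together with the fact that $\muGP^{\mathcal{D}}$ is a Hofer-Borel probability measure (Theorem~\ref{thm:hofer-borel}).

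The decisive step is irreducibility together with a quantitative spreading estimate. Since $\mathcal{D}$ is autonomously exhaustive, the reasoning behind Theorem~\ref{thm:random-walk-filling} and the autonomous full-support theorem shows that for any $q, q' \in M$ and any $\delta > 0$ one has $P(q, B(q',\delta)) > 0$: there is an autonomous Hamiltonian $F$ with $\phi_F^1(q) \in B(q',\delta)$, and Hamiltonians $C^0$-close to $F$ lie in the support of $\muAut^{\mathcal{D}}$ and generate flows carrying $q$ into $B(q',\delta)$. This pointwise positivity gives irreducibility, hence uniqueness of the stationary measure $u$, and combined with aperiodicity (which follows from the positive probability of small displacements of $q$) yields $P^n(p,\cdot) \weakto u$; on the compact manifold $M$ this is convergence in the Wasserstein metric, giving the conjecture for that choice of metric. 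To upgrade to total variation one would instead aim for a Doeblin minorization $P(q,\cdot) \ge \alpha\, u$ with $\alpha > 0$ uniform in $q$.

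I expect the main obstacle to be precisely this uniform control of the single-step kernel. The full-support results supply only pointwise positivity $P(q,\cdot) > 0$, not a lower bound on a density that is uniform in the starting point $q$; obtaining the latter amounts to understanding the regularity and uniform spread of the one-step evaluation law $(ev_q)_*\muAut^{\mathcal{D}}$, which is the push-forward of a Gaussian measure under the genuinely nonlinear time-$1$ flow map $H \mapsto \phi_H^1(q)$. Even the weaker weak-convergence route requires promoting pointwise irreducibility to an honest mixing statement (via a Harris-type recurrence or a minorization on a small set), and it is this analytic control of the evaluation kernel — rather than the Markov-chain formalism, which is routine once the kernel is understood — that constitutes the heart of the matter and the reason the statement is recorded here only as a conjecture. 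A complementary sanity check, useful for guiding the quantitative estimates, is the symmetry observation that on $\T^2$ a translation-invariant \ldd\ forces $(ev_{p+a})_*\muGP^{\mathcal{D}}$ to be the translate by $a$ of $(ev_p)_*\muGP^{\mathcal{D}}$, so that closeness to $u$ at a single base point already propagates to all of $M$.
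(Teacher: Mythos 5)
This statement is one of the two conjectures of Section~\ref{sec:simulations}: the paper offers \emph{no proof} of it, only the numerical evidence of Figure~\ref{fig:simulation-2}, and explicitly defers its rigorous study to a follow-up paper. So there is no proof of the paper to compare yours against; the only question is whether your argument settles the conjecture on its own. It does not --- it is a plausible program with two gaps, one of which you name yourself and one of which you do not.

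What is genuinely good, and worth recording: the reduction via Theorem~\ref{thm:random-walk-law}, identifying $(ev_p)_*\muGP^{\mathcal{D}_n}$ with the law of $Y_n = \Phi_n(p)$, correctly turns the conjecture into a convergence-to-equilibrium problem for an honest Markov chain on the compact manifold $M$; and your stationarity computation is right, since every $\phi \in \Aut(M,\omega)$ preserves $\omega^n$, so Fubini gives $\int_M P(q,A)\,du(q) = u(A)$. The gap you do not name is in the irreducibility step. The results you cite (Theorem~\ref{thm:full-support}, Theorem~\ref{thm:random-walk-filling}, and the autonomous support theorem) are all statements about \emph{Hofer} balls, proved through Lemma~\ref{lem:H_to_phi_H_Lipschitz} from $C^0$-closeness of Hamiltonians; but neither $C^0$-closeness of Hamiltonians nor Hofer-closeness of diffeomorphisms controls where a flow sends a given point. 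On $\T^2$ the normalized autonomous Hamiltonians $H_N(x,y) = N^{-1}\sin(2\pi N y)$ satisfy $\norm{H_N}_\infty \to 0$, yet $\phi^1_{H_N}$ translates the circle $\{y=0\}$ by $2\pi$ in the $x$-direction, i.e.\ moves every point of it a distance bounded away from zero independently of $N$. Hence ``Hamiltonians $C^0$-close to $F$ generate flows carrying $q$ into $B(q',\delta)$'' is false, the one-step positivity $P(q,B(q',\delta))>0$ does not follow from anything proved in the paper, and (for the same reason) $ev_p$ is not even Hofer-continuous, so the push-forward in the conjecture must be read as the law of $\phi_H^1(p)$ for $H \sim \muGPH^{\mathcal{D}}$. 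The step is repairable --- the Gaussian measure on autonomous Hamiltonians has full support also in the $C^2$-topology, since its RKHS contains all finite eigenfunction combinations and these are $C^2$-dense among smooth normalized functions, and $C^2$-control of Hamiltonians does control flows by Gr\"onwall --- but that is a new argument you must supply, not a citation.

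The second gap is the one you flag, and it is the decisive one: the Feller property together with one-step positivity on open sets does not give uniqueness of the invariant measure, let alone convergence of $P^n(p,\cdot)$ to it; one needs the strong Feller property, a Doeblin minorization $P(q,\cdot) \ge \alpha\, u$, or Harris recurrence --- precisely the uniform regularity of the evaluation kernel $(ev_q)_*\muAut^{\mathcal{D}}$ that is the open analytic content of the conjecture. Note moreover that the cheap route is structurally insufficient here: even granting uniqueness, Krylov--Bogolyubov only yields Ces\`aro convergence $\frac{1}{N}\sum_{n \le N} P^n(p,\cdot) \weakto u$, and this does not prove the conjecture, because the conjecture requires an approximant of the exact form $(ev_p)_*\muGP^{\mathcal{D}}$ for a \emph{single} \ldd\ $\mathcal{D}$; a Ces\`aro average of the laws of the $Y_n$ is a mixture of such measures, not one of them, and Ces\`aro convergence of a sequence does not produce a subsequence of the $P^n(p,\cdot)$ converging to $u$. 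So your proposal is a sensible blueprint --- essentially the diffusion program the paper defers to future work --- but it is a blueprint, not a proof.
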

As mentioned in the introduction, this phenomenon will be studied in detail in a follow-up paper.
\begin{figure}[h]
    \begin{center}
        \includegraphics[width=0.95\textwidth]{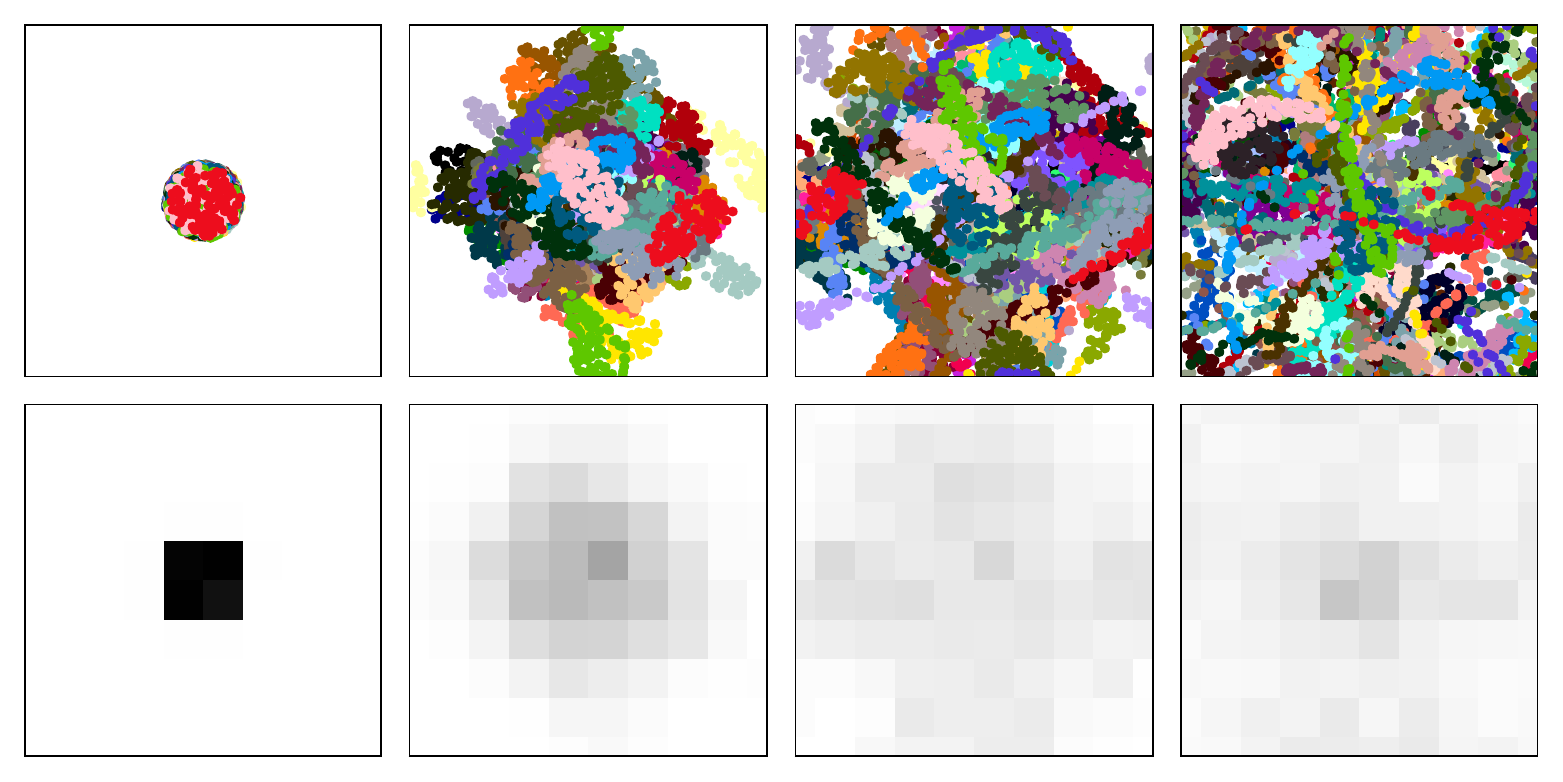}
        \caption{First row: Here we sample $100$ Hamiltonian functions from $\muGPH^{\mathcal{D}}$.
        Then their time-$t$ flows are applied to $100$ points in a radius $0.1$ ball around $(0.5,0.5)$.
        Their positions are shown at $t=0,0.05, 0.1, 0.25$ from left to right.
        The different colors represent different draws.
        Second row: The torus is divided into a $10 \times 10$ grid and the number of points (shown in the first row) in each grid cell is counted. Darker colors represent more points.
        }\label{fig:simulation-2}
    \end{center}
\end{figure}
\subsection{Expected numbers of Lagrangian intersections and Crofton-type formulas}
We conclude this section with a remark on Crofton-type formulas
and with some numerical estimates on the expected number of intersection points
of two Lagrangians under a random Hamiltonian diffeomorphism.
We first briefly introduce the setup.
Let $M$ be a closed symplectic manifold
and let $L,K \subset M$ be two closed Lagrangian submanifolds.
Recently, Dimitroglou Rizell and Evans
have studied the number of surplus intersection points between
Lagrangians, see~\cite{dimitroglou-rizell-evans-2024}.
In the context of 
this work, 
the authors study the problem of computing an 
integral of the form 
\begin{equation*}
    \frac{1}{\mu(G)}\int_G \#(L \cap gK) d\mu(g),
\end{equation*}
where $G$ is a compact Lie group acting on $M$ by 
Hamiltonian diffeomorphisms and $\mu$ is the Haar measure on $G$.
If $M = \C P^n$
and $G = PU(n+1)$,
then this integral can be computed explicitly (see~\cite{le-1993})
as follows:
\begin{equation*}
    \int_{PU(n+1)} \#(\Reals P^n \cap gK) d\mu(g) = c_n \cdot \vol(K),
\end{equation*}
where $c_n$ is a constant depending only on $n$.
This formula, in analogy to the classical Crofton formula 
in Euclidean integral geometry, is called the \textit{Crofton formula}.
In light of the measure constructed in this paper, 
one can ask whether a similar formula holds for the integral 
\begin{align*}
    \int_{\Ham(M,\omega)}
    \#(L \cap \phi( K)) d\muGP^{\mathcal{D}}(\phi)
    = \E\left[\#(L \cap \phi_H^1( K))\right],
\end{align*}
when $\mathcal{D}$
is a centered, time-symmetric, periodic, periodically exhaustive, and frequency unbiased \ldd\
and $H$ is the Gaussian process given by~\eqref{eq:GP_H}.
Due to Theorem~\ref{thm:inversion-invariance}
we have that 
$\E[\#(L \cap \phi_H^1( K))] = \E[\#(L \cap (\phi_H^1)^{-1}( K))] = \E[\#(\phi_H^1( L) \cap K)]$.
Thus, there is a certain symmetry between $L$ and $K$ in this expression
that is not encountered in the classical Crofton formula.

We investigate
this phenomenon 
by estimating 
$\E\left[\#(L \cap \phi_H^1( K))\right]$
numerically.
For this, we consider $K = S^1 \times 0.5 \subset \T^2$.
Due to the aforementioned symmetry, it suffices to
simulate the action of $\phi_H^1$ on $K$.
We can then compute the intersection number with different test Lagrangians $L$.
Figure~\ref{fig:simulation-3} shows a visualization of some deformations of $K$ under samples from $\muGP^{\mathcal{D}}$ for $\reg = 0.14$.
We then estimate $\E[\#(L \cap \phi_H^1( K))]$
for the following choices for $L$:
\begin{enumerate}
    \item The straight vertical lines $L_1 = 0.3 \times S^1, L_2 = 0.5 \times S^1, L_3 = 0.7 \times S^1$; and
    \item the sloped vertical lines $L_4 = \{(\alpha,2\alpha) \mid \alpha \in S^1\}, L_5 = \{(\alpha,3\alpha) \mid \alpha \in S^1\},L_6 = \{(\alpha,4\alpha) \mid \alpha \in S^1\}$; and
    \item the straight horizontal lines $L_7 = S^1 \times 0.3, L_8 = S^1 \times 0.5, L_9 = S^1 \times 0.7$; and
    \item the sloped horizontal lines $L_{10} = \{(2\alpha,\alpha) \mid \alpha \in S^1\}, L_{11} = \{(3\alpha,\alpha) \mid \alpha \in S^1\},L_{12} = \{(4\alpha,\alpha) \mid \alpha \in S^1\}$; and
    \item the centered circles $L_{13} = \partial B((0.5,0.5), 0.1)$ and $L_{14} = \partial B((0.5,0.5), 0.2)$.
\end{enumerate}
\begin{figure}[h]
    \begin{center}
        \includegraphics[width=0.95\textwidth]{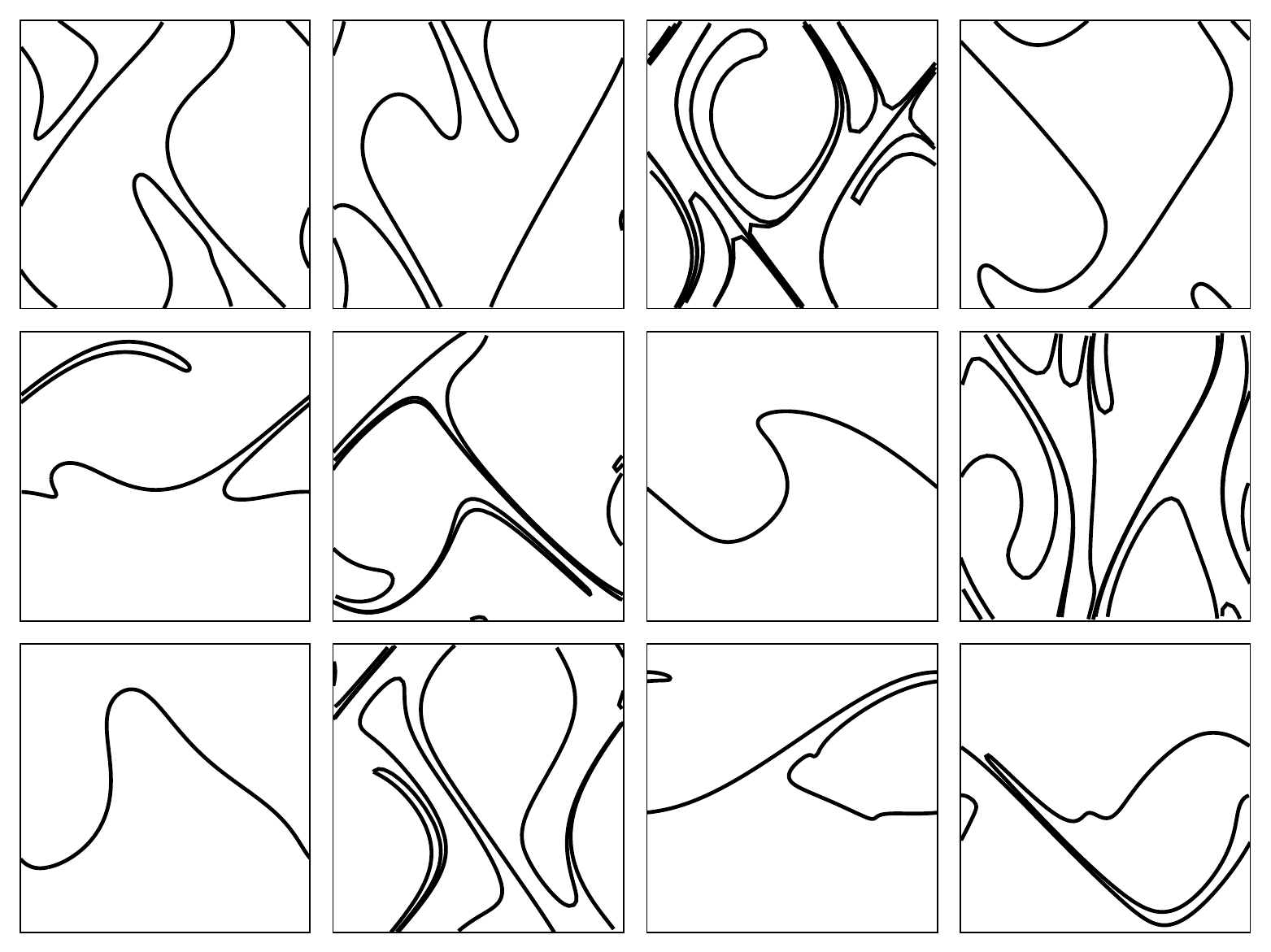}
        \caption{These curves represent $\phi(K)$ for twelve samples $\phi$ from $\muGP^{\mathcal{D}}$ with $\reg = 0.14$.}\label{fig:simulation-3}
    \end{center}
\end{figure}
The following table summarizes the results of these simulations.
It contains estimates of $\E[\#(L \cap \phi_H^1( K))]$
based on simulations for $\reg = 0.04, 0.06, 0.08, 0.1, 0.12,$ and $0.14$.
All these numbers are obtained from $600$ samples drawn from $\muGPH^{\mathcal{D}}$
for each of the values of $\reg$.
\begin{center} 
    \vspace{0.5cm}
\begin{tabular}{|c|c|c|c|c|c|c|c|c|} 
    \hline
    Lagrangian & Length & $\reg = 0.04$ & $\reg = 0.06$ & $\reg = 0.08$ & $\reg = 0.1$ & $\reg = 0.12$ & $\reg = 0.14$ \\ 
    \hline
    $L_1$    & $1$      & $50.24   $ & $26.18 $ & $16.25  $ & $10.55 $ & $6.39  $ & $ 3.45$   \\
    $L_2$    & $1$      & $50.17   $ & $26.58$  & $16.03  $ & $10.45 $ & $6.45  $ & $ 3.44$  \\
    $L_3$    & $1$      & $48.24   $ & $25.88$  & $16.7   $ & $10.54 $ & $6.54  $ & $ 3.4$   \\
    \hline                 
    $L_4$    & $2$      & $103.81  $ & $55.72$  & $35.79  $ & $23.14 $ & $14.38 $ & $ 7.45$  \\
    $L_5$    & $3$      & $145.18  $ & $78.81$  & $50.69  $ & $32.91 $ & $19.94 $ & $ 10.67$  \\
    $L_6$    & $4$      & $187.91  $ & $102.82$ & $66.01  $ & $42.52 $ & $26.17 $ & $ 14.01$  \\
    \hline                 
    $L_7$    & $1$      & $53.35   $ & $29.63$  & $19.71  $ & $12.67 $ & $8      $ & $4.12  $  \\
    $L_8$    & $1$      & $62.38   $ & $34.72$  & $23.3   $ & $15.13 $ & $9.16   $ & $4.88  $  \\
    $L_9$    & $1$      & $53.71   $ & $28.88$  & $19.61  $ & $12.96 $ & $7.84   $ & $4.1   $  \\ 
    \hline                 
    $L_{10}$ & $2$      & $ 104.09 $ & $ 57.57$ & $ 38.56 $ & $ 25.37$ & $ 15.9  $ & $ 8.3   $  \\ 
    $L_{11}$ & $3$      & $ 148.85 $ & $ 81.84$ & $ 56.13 $ & $ 37.16$ & $ 23.06 $ & $ 12.09 $  \\ 
    $L_{12}$ & $4$      & $ 193.41 $ & $ 107.12$& $ 73.33 $ & $ 48.74$ & $ 30.37 $ & $ 15.90 $  \\ 
    \hline                                                            
    $L_{13}$ & $0.63$ &  $ 42.09  $  & $22.22$  & $ 13.47$  & $ 8.52$  & $ 5$ &      $ 2.83 $ \\
    $L_{14}$ & $1.26$ &  $ 76.18  $  & $40.38$  & $ 25.28$  & $ 15.9$  & $ 9.54$   & $ 5.25 $ \\
    \hline
\end{tabular}
    \vspace{0.5cm}
\end{center}
These estimates suggest 
that a Crofton-type formula might hold
for the expected number of intersection points
of two Lagrangians under a random Hamiltonian diffeomorphism.
It is however also clear from this data that the formula does not exactly hold with the standard Riemannian metric on $\T^2$,
as the numbers above are qualitatively different for $L_7, L_8$, and $L_9$ even though these have the same length and the same homology class.
However, this is to be expected: since $L_8 = K$ and $K$ is non-displaceable,
we have $\Prob\left[\#(L_8 \cap \phi_H^1( K) = \emptyset)\right] = 0$.
However, by Theorem~\ref{thm:full-support}, 
we have that $\Prob\left[\#(L_7 \cap \phi_H^1( K) = \emptyset)\right] > 0$
and $\Prob\left[\#(L_9 \cap \phi_H^1( K) = \emptyset)\right] > 0$.
While obstructing the applicability of an exact replica of the Crofton formula,
this phenomenon might be incorporated into a Crofton-type formula by suitably adjusting the metric.
In light of the diffusion phenomenon described above,
one might expect that the correct volume is given by a rescaled version of the metric.
Thus, we make the following conjecture:
\begin{conjecture}\label{conj:crofton}
    Let $(M,\omega)$ be a closed symplectic manifold and $\mathcal{D}$ be a centered, time-symmetric, exhaustive or periodically exhaustive \ldd .
    Furthermore, let $L \subset M$ be a closed Lagrangian submanifold.
    Then there exists a constant $C > 0$ and a smooth function $\rho: M \to \Reals_{>0}$
    satisfying $\int_M \rho \omega^n = \vol(M)$
    depending only on $\mathcal{D}$ and $L$
    such that for any closed Lagrangian submanifold $K \subset M$,
    we have
    \begin{equation*}
        \int_{\Ham(M,\omega)} \#(L \cap \phi( K)) d\muGP^{\mathcal{D}}(\phi)
        = C \cdot \vol_{\rho \cdot g}(K),
    \end{equation*}
    where $g$ is the Riemannian metric associated to $\omega$ and the almost complex structure $J$ specified in $\mathcal{D}$.
    In particular, there exist constants $C', C'' > 0$ such that
    \begin{equation*}
        C' \cdot \vol_g(K) \leq \int_{\Ham(M,\omega)} \#(L \cap \phi( K)) d\muGP^{\mathcal{D}}(\phi) \leq C'' \cdot \vol_g(K)
    \end{equation*}
    for any closed Lagrangian submanifold $K \subset M$.
\end{conjecture}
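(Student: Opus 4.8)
The plan is to read this as a Kac--Rice (Gaussian kinematic) formula adapted to the symplectic setting, reducing the global intersection count to a local density integral over the moving Lagrangian $K$. Fix a tubular neighborhood $U$ of $L$ with normal projection $\pi_L\colon U \to \nu L \cong \Reals^n$, so that a point of $L \cap \phi(K)$ corresponds to a zero $q \in \phi^{-1}(U) \cap K$ of the random field $F_\phi(q) \coloneqq \pi_L(\phi(q))$. Since $\dim K = \dim L = n$, these zeros are generically isolated and $\#(L \cap \phi(K)) = \#\{q \in K : F_\phi(q) = 0\}$. Taking expectations and applying Fubini--Tonelli (legitimate, as the integrand is nonnegative), the Kac--Rice formalism yields
\[
\E\left[\#(L \cap \phi(K))\right] = \int_K \Theta_L(q)\, d\vol_K(q), \qquad \Theta_L(q) = \int_L \E\left[\,\abs{\det D_qF_\phi}\;\middle|\;\phi(q) = p\,\right] \varrho_q(p)\, d\vol_L(p),
\]
where $\varrho_q$ is the density of the push-forward $(ev_q)_* \muGP^{\mathcal{D}}$ and $D_qF_\phi$ is the derivative of $F_\phi$ along $K$. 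The structural content is that $\Theta_L$ is a density on $M$ that is \emph{independent of $K$}, and strictly positive by the full-support Theorem~\ref{thm:full-support}; the $L\leftrightarrow K$ symmetry recorded after Theorem~\ref{thm:inversion-invariance} is a built-in consistency check.

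The second and geometrically decisive step is to identify $\Theta_L$ with $C \cdot \rho^{n/2}$ for a conformal factor $\rho$. Here I would exploit the metric-adapted, frequency-unbiased construction of $\mathcal{D}$ (for instance $\mathcal{D}_2$ of Theorem~\ref{thm:existence-of-ldd}): because the random Hamiltonian $H$ of~\eqref{eq:GP_H} is assembled isotropically from eigenfunctions of $\Laplace_g$, the associated random field carries an intrinsic \enquote{Gaussian metric} conformal to $g$, in the spirit of Nicolaescu's recovery of the Riemannian structure from Laplacian-eigenfunction ensembles~\cite{nicolaescu-2014}. Morally, the conformal factor at $q$ is a local spectral ratio comparing $\Var[\nabla\phi(q)]$ to $\Var[\phi(q)]$; feeding an isotropic field with this metric into the Kac--Rice density of the previous step should produce exactly the exponent $n/2$, which is the volume-density exponent of the rescaled metric $\rho\cdot g$ on the $n$-dimensional $K$. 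The normalization $\int_M \rho\,\omega^n = \vol(M)$ then pins down $\rho$ and $C$ absorbs the universal dimensional constant. Once this factorization is in place, the \enquote{in particular} bounds are immediate: $\rho$ is smooth and strictly positive on the compact $M$, hence bounded between two positive constants, so $C'\vol_g(K) \le C\,\vol_{\rho g}(K) \le C''\vol_g(K)$.

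I would expect the two-sided bounds to be reachable well before the exact formula, and it is worth securing them first. For the upper bound one controls the expected intersection density using the a priori derivative estimates already used in the proof of Theorem~\ref{thm:limit-reg-0-C1}---Gr\"onwall applied to the flow together with the eigenfunction gradient bounds of~\cite{shi-xu-2010,cheng-thalmaier-wang-2024}---which bound $\E[\abs{\det D_qF_\phi}\cdots]$ uniformly in $q$. The linear lower bound is more delicate: I would subdivide $K$ into many well-separated pieces and combine the full-support lower bounds on ball measures (Corollary~\ref{cor:ball_estimates}) with a decorrelation estimate showing that the random flow acts near-independently on pieces at distance $\gg \reg^{1/2}$, so that each contributes a definite expected number of crossings of $L$.

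The main obstacle will be analytic rather than geometric. The field $F_\phi$ is a genuinely \emph{nonlinear} functional of the Gaussian process $H$, since $\phi_H^1$ depends nonlinearly on $H$, so the classical Gaussian Kac--Rice theorem does not apply verbatim; one must first establish that the joint law of $\bigl(\phi(q), D_q\phi\bigr)$ is absolutely continuous with a sufficiently regular density. This is strictly stronger than the (still open) diffusion conjecture for $(ev_q)_*\muGP^{\mathcal{D}}$. The natural route is Malliavin calculus: show that the Malliavin covariance of $(\phi(q), D_q\phi)$ is almost-surely nondegenerate---which I expect to follow from exhaustiveness of $\mathcal{D}$ via a H\"ormander-type bracket condition on the random Hamiltonian vector fields---and then transport the isotropy of $H$ through the flow to obtain the conformality claim. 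Verifying that the induced field-metric is genuinely conformal to $g$, rather than merely comparable to it, is where the real difficulty lies.
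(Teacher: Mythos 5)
The statement you have attacked is Conjecture~\ref{conj:crofton}: the paper does not prove it. It appears at the end of Section~\ref{sec:simulations} supported only by numerical estimates on $\T^2$, and the author explicitly defers a rigorous study to future work. So there is no proof in the paper to compare yours against; the only question is whether your proposal settles the conjecture, and it does not. It is a research programme whose two load-bearing steps are precisely the ones you yourself flag as open, so what you have written should be read as a (sensible) plan of attack, not a proof.

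Concretely, the gaps are these. First, the Kac--Rice reduction is not available off the shelf: $\phi = \phi_H^1$ is a nonlinear functional of the Gaussian process $H$ of~\eqref{eq:GP_H}, so the classical Gaussian Kac--Rice theorem does not apply, and the absolute continuity and regularity of the joint law of $\bigl(\phi(q), D\phi(q)\vert_{T_qK}\bigr)$ --- which your formula for $\Theta_L$ presupposes --- is nowhere established. Your proposed Malliavin/H\"ormander route is only named, not executed, and as you note it is strictly stronger than the diffusion conjecture, which is itself open. Second, even granting Kac--Rice, the resulting local density at $q \in K$ depends a priori on the tangent plane $T_qK \subset T_qM$ and not merely on the point $q$, since the determinant in the integrand involves the restriction of $D\phi(q)$ to $T_qK$. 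A formula of the shape $C\cdot\vol_{\rho\cdot g}(K)$, with $\rho$ a scalar function, requires this density to be direction-independent after a conformal rescaling; this isotropy is exactly the hard content of the conjecture, and it cannot be assumed from the "isotropic assembly from eigenfunctions of $\Laplace_g$": on the flat torus the eigenbasis used in Section~\ref{sec:simulations} singles out preferred directions, and the paper's own table shows direction-sensitive counts (the sloped lines $L_4$--$L_6$ and $L_{10}$--$L_{12}$ deviate from exact proportionality to length). Your own closing sentence --- that verifying genuine conformality rather than mere two-sided comparability "is where the real difficulty lies" --- concedes this. Finally, the claim that the two-sided bounds are "immediate" once the factorization holds is circular, since the factorization is the whole problem; your alternative plan for the lower bound (decorrelation of the flow on pieces of $K$ at mutual distance $\gg \reg^{1/2}$) is likewise asserted without proof. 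In short: plausible strategy, correctly identified obstacles, no proof --- which is consistent with the statement's status in the paper as a conjecture.
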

One might further ask whether for any $\eps > 0$ there exists a suitable \ldd\ $\mathcal{D}$
such that $\rho$ is $\eps$-close in the $C^0$-distance (or another suitable distance) to being identically equal to $1$.
The data seems to suggest that this might happen as $\reg \to 0$,
but this question is less easy to answer numerically.
The main problem here is that as $\reg$ approaches $0$, running accurate simulations becomes more and more computationally expensive.
We will return to this question in future work. 
\bibliographystyle{alpha}
\bibliography{refs}

\end{document}